\documentclass[11pt,a4paper,reqno]{amsart}
\usepackage[T1]{fontenc}
\usepackage{a4wide}
\usepackage{comment}
\usepackage{yhmath,mathrsfs,amsthm,amsmath,amssymb,amsfonts,enumerate,lipsum,appendix,mathtools,bm}
\usepackage{bbold}
\usepackage{pdfpages}
\usepackage[mathscr]{euscript}
\usepackage{graphicx}
\usepackage{pgf,tikz}
\usepackage{doi}
\usepackage{tkz-euclide}
\usepackage{caption}
\usepackage{subcaption}
\usetikzlibrary{shapes.geometric}
\usetikzlibrary{arrows,hobby}
\usepackage[shortlabels]{enumitem}
\usepackage[english]{babel}
\usetikzlibrary{shapes.geometric}
\usetikzlibrary{arrows,hobby}
\newtheorem{theorem}{Theorem}[section]
\newtheorem{lemma}[theorem]{Lemma}
\newtheorem{proposition}[theorem]{Proposition}
\newtheorem{corollary}[theorem]{Corollary}
\newtheorem{definition}[theorem]{Definition}
\theoremstyle{definition}

\newtheorem{remark}[theorem]{Remark}
\numberwithin{equation}{section}
\allowdisplaybreaks
\definecolor{bluee}{rgb}{0.09, 0.25, 0.99}
\definecolor{greeen}{rgb}{0.01, 0.75, 0.24}
\usepackage[sort,numbers]{natbib}

\usepackage{hyperref}
\hypersetup{
    colorlinks=true,
    linkcolor=bluee,
    filecolor=black,
    urlcolor=greeen,
    citecolor=greeen,
}
\usepackage{orcidlink} 
\usepackage{cleveref}  
\usepackage[a4paper,margin=0.90in]{geometry}
\usepackage{bigints}
\usepackage{url}
\usepackage{esint}
\usepackage[hyperpageref]{backref}

\DeclarePairedDelimiter\abs{\lvert}{\rvert}
\DeclarePairedDelimiter\norm{\lVert}{\rVert}
\makeatletter
\let\oldnorm\norm
\def\norm{\@ifstar{\oldnorm}{\oldnorm*}}
\makeatother 

\usepackage{fancyhdr}
\newcommand{\al} {\alpha}

\newcommand{\la} {\lambda}

\newcommand{\no} {\nonumber}
\newcommand{\noi} {\noindent}

\newcommand{\ra} {\rightarrow}

\newcommand{\bee} {\begin{equation}}
\newcommand{\eee} {\end{equation}}
\newcommand{\bea} {\begin{eqnarray}}
\newcommand{\eea} {\end{eqnarray}}
\newcommand{\Bea} {\begin{eqnarray*}}
\newcommand{\Eea} {\end{eqnarray*}}

\providecommand{\fint}{\rlap{\kern.15em$-$}\int}
\newcommand{\R}{\mathbb{R}}
\newcommand{\Sph}{\mathbb{S}}
\newcommand{\D}{\mathcal{D}}
\newcommand{\eps}{\varepsilon}
\DeclareMathOperator{\sgn}{sign}
\newcommand{\pLap}{(-\Delta_p)^s}
\newcommand{\Jp}{J_p}

\def\d{\,{ \rm d}}
\def\dx{\,{\rm d}x}
\def\dy{\,{\rm d}y}
\def\dz{\,{\rm d}z}
\def\dxy{\,{ \rm d}x{ \rm d}y}

\def\dv{\,{\rm d}v}

\def\dr{\,{\rm d}r}

\def\Dsp{{{\mathcal D}^{s,p}(\R^N)}}

\def\C{{\mathcal C}}

\def\cc{{\mathcal{C}_c^\infty}}

\makeatletter
\newcommand{\customlabel}[2]{%
  \phantomsection
  \def\@currentlabel{#2}%
  \label{#1}%
}
\makeatother

\usepackage{xpatch}
\makeatletter   
\xpatchcmd{\@tocline}
{\hfil\hbox to\@pnumwidth{\@tocpagenum{#7}}\par}
{\ifnum#1<0\hfill\else\dotfill\fi\hbox to\@pnumwidth{\@tocpagenum{#7}}\par}
{}{}
\def\l@subsection{\@tocline{2}{0pt}{2pc}{6pc}{}}
\def\l@subsubsection{\@tocline{3}{0pt}{8pc}{8pc}{}}
\makeatother
\title[The homogeneity function and the optimal decay estimates]
{The homogeneity function of the fractional $p$-Laplacian and optimal asymptotic exponents for critical Hardy-Sobolev equations}

\author[N. Biswas and D. Mukherjee]{Nirjan Biswas$^1$ \orcidlink{0000-0002-3528-8388} \and Debangana Mukherjee$^2$ \orcidlink{0000-0002-1516-840X}}
\address{\rm  $^1$ Department of Mathematics, Indian Institute of Science Education and Research Pune \\
Dr. Homi Bhabha Road, Maharashtra 411008, India}
\address{\rm $^3$ Krea University, School of Interwoven Arts and Sciences (SIAS), Sricity, India}
\email{nirjaniitm@gmail.com, debangana.mukherjee@krea.edu.in}
\thanks{$^2$Corresponding author}

\subjclass[2020]{Primary 35R11, 35B40; Secondary 35J92, 35A23, 35B09}
\keywords{fractional $p$-Laplace operator; homogeneous solutions; fractional $p$-Laplace Hardy-Sobolev operator; weighted Caccioppoli estimates; comparison principle; optimal asymptotic estimates}

\date{}

\begin{document}
\begin{abstract}
Let $N \ge 2$, $s\in (0,1)$, and $p \in (1, \infty)$ with $N>sp$. For the fractional $p$-Laplacian, the power function $|x|^{-\gamma}$ with $0<\gamma<\frac{N}{p-1}$ satisfies
$$\pLap\big(|x|^{-\gamma}\big)=\Phi(\gamma)|x|^{-\gamma(p-1)-sp} \, \text{ in }\R^{N}\setminus\{0\}.$$
Starting from the power function formula of Del Pezzo and Quaas (2026), we derive a paired integral representation of $\Phi$, and show that $\Phi$ is strictly increasing and then strictly decreasing, with a unique maximum at $\frac{N-sp}{p}$. This maximum $\bar\mu:=\Phi(\frac{N-sp}{p})$ equals one half of the sharp fractional Hardy constant. Consequently, for every $0<\mu<\bar\mu$, the equation $\Phi(\gamma)=\mu$ has exactly two roots $0<\gamma_{-}(\mu)<\frac{N-sp}{p}<\gamma_{+}(\mu)<\frac{N-sp}{p-1}$. These roots determine the homogeneous solutions $|x|^{-\gamma_{-}(\mu)}$ and $|x|^{-\gamma_{+}(\mu)}$ of the Hardy equation
\begin{equation*}
(-\Delta_p)^s u - \mu \frac{u^{p-1}}{|x|^{sp}}=0
\quad
\text{in }\mathbb R^N\setminus\{0\}, \; u> 0 \text{ in }  \mathbb{R}^N \setminus \{0\}.  
\end{equation*}
For $0<\mu<\bar\mu$, we then study the following critical Hardy-Sobolev equation 
\begin{align}\tag{$\mathcal{P}$}\label{P}
\pLap u-\mu \frac{u^{p-1}}{|x|^{sp}} = u^{\frac{Np}{N-sp}-1} \text{ in } \mathbb{R}^N, \; u > 0 \text{ in }  \mathbb{R}^N.  
\end{align}
Let $p\ge2$ and $U$ be a positive, radially symmetric, nonincreasing finite-energy weak solution to \eqref{P}. We prove that there exists $C>0$ for which
\begin{align*}
U(x)\le C|x|^{-\gamma_-(\mu)}\;\text{as }|x|\to0,\quad \text{ and } \quad U(x)\le C|x|^{-\gamma_+(\mu)} \; \text{as }|x|\to\infty. 
\end{align*}
Further, we show that $\gamma_-(\mu)$ and $\gamma_+(\mu)$ are the optimal exponents describing the singularity at the origin and the decay at infinity respectively. In particular, $U \in L^{q}(\R^{N})$ for $q \in (\tfrac{N}{\gamma_{+}(\mu)},\tfrac{N}{\gamma_{-}(\mu)})$, and this range cannot be enlarged, except possibly at endpoints. Moreover, as $\mu\downarrow0$, we have $\gamma_{-}(\mu)\to0$ and $\gamma_{+}(\mu)\to \frac{N-sp}{p-1}$. The latter coincides with the optimal decay exponent for extremals of the fractional Sobolev inequality due to Brasco, Mosconi, and Squassina (2016). Our approach for the optimal asymptotic estimates is new and based on the two ground-state substitutions, weighted Caccioppoli estimates, finite Moser iterations, and a comparison principle.
\end{abstract}
\maketitle
\tableofcontents
\section{Introduction}\label{sec:intro}
\subsection{Background and motivation}

Hardy potentials introduce a distinguished singular scale into elliptic equations. The behaviour of positive solutions near the singular point, and their decay at infinity, are determined by the homogeneous solutions of the Hardy equation. For the fractional $p$-Laplacian these are determined by a single scalar function.

Throughout the paper we assume
$$
N\ge 2,\quad s\in(0,1),\quad p>1,\; \text{ and } \; sp<N.
$$
The fractional $p$-Laplace operator is defined as 
\begin{equation}\label{eq:op}
(-\Delta_p)^s u(x) :=  \operatorname{P.V.} \int_{\mathbb R^N} \frac{|u(x)-u(y)|^{p-2}(u(x)-u(y))}{|x-y|^{N+sp}}\dy.
\end{equation}
Here $J_p(t):=|t|^{p-2}t$ for $t \in \R$, and and P.V. denotes the principal value. Throughout the paper, we use the normalization constant $C_{N,p,s}=1$.
The action of this operator on power functions $|x|^{-\gamma}$ is given by
\begin{equation}\label{eq:homog}
(-\Delta_p)^s\bigl(|x|^{-\gamma}\bigr) = \Phi(\gamma)\,|x|^{-\gamma(p-1)-sp} \text{ in }\mathbb R^N\setminus\{0\}.
\end{equation}
The coefficient $\Phi$ is one of the main objectives in this paper. In \cite{DPQ25}, Del Pezzo and Quaas computed this $\Phi$ to determine its zeros and sign. We study the behaviour of $\Phi$ between its zeros.

We use
\begin{equation}\label{eq:gammas}
\gamma_c:=\frac{N-sp}{p-1}, \quad \gamma^*:=\frac{N-sp}{p}, \quad p_s^*:=\frac{Np}{N-sp},
\end{equation}
so that $0<\gamma^*<\gamma_c$ and $\gamma^*=N/p_s^*$. The coefficient $\Phi$ has the following paired representation:
\begin{equation}\label{eq:intro-paired}
\Phi(\gamma) = \int_0^1 (1-r^\gamma)^{p-1} \bigl(1-r^{\beta(\gamma)}\bigr) r^{sp-1}\Psi(r)\dr,
\quad
\beta(\gamma):=N-sp-(p-1)\gamma,
\end{equation}
with $\Psi$ as given in \eqref{eq:Psi}. It gives the sign of $\Phi'$, hence the shape of
$\Phi$, and identifies its maximum
\begin{equation}\label{eq:intro-mubar}
\bar\mu := \Phi(\gamma^*),
\end{equation}
with one half of the sharp constant $C_H$ in the fractional Hardy inequality \cite{FS08} (see \cite{Herbst} for $p=2$, $s=\frac12$, and \cite{FT,LLM07} for local counterparts),
\begin{align}\label{intro:fractionalHardy}
\iint_{\R^{N}\times\R^{N}}\frac{|u(x)-u(y)|^{p}}{|x-y|^{N+sp}}\dxy \ge C_H(N,p,s) \int_{\R^{N}}\frac{|u(x)|^{p}}{|x|^{sp}}\dx \quad \forall \, u \in \cc(\R^{N}\setminus\{0\}).
\end{align}
We consider the following Euler Lagrange equation for \eqref{intro:fractionalHardy}:
\begin{equation}\label{eq:intro-hardy}
(-\Delta_p)^s u = \mu \frac{u^{p-1}}{|x|^{sp}}
\quad
\text{in }\mathbb R^N\setminus\{0\}, \; u > 0. \; 
\end{equation}
Solutions of (\ref{eq:intro-hardy}) are understood in the weak sense on $\mathbb R^N\setminus \{0\}$, as defined in Section~\ref{sec:rep}.
We have observed that for $0<\mu<\bar\mu$ the equation $\Phi(\gamma)=\mu$ has exactly two roots
\begin{equation}\label{eq:intro-gammapm}
0<\gamma_-(\mu)
<
\gamma^*
<
\gamma_+(\mu)
<
\gamma_c .
\end{equation}
Thus \eqref{eq:intro-hardy} admits the two positive homogeneous solutions $|x|^{-\gamma_-(\mu)}$ and
$|x|^{-\gamma_+(\mu)}$ in $\mathbb R^N\setminus \{0\}$. Neither of these solutions belongs to $D^{s,p}(\mathbb R^N)$ but both are weak solutions in the punctured space. The two characteristic exponents merge at the Hardy threshold as $\mu\uparrow\bar\mu$.

We also study the homogeneity function of the linearized equation. Its two exponents are $\gamma$ and
$N-sp-(p-1)\gamma$. Unlike the nonlinear one it is reflection-symmetric for every $p>1$. The contrast
appears in the roots. For $p=2$,
\begin{equation}\label{eq:intro-reflection}
\gamma_-(\mu)+\gamma_+(\mu)=\gamma_c ,
\end{equation}
whereas for $p\ne2$ this fails, since
\begin{equation}\label{eq:intro-reflection-failure}
\lim_{\mu\uparrow\bar\mu}
\left(
\gamma_-(\mu)+\gamma_+(\mu)-\gamma_c
\right)
=
\frac{(N-sp)(p-2)}{p(p-1)}.
\end{equation}

As an application, we study the behaviour of positive radially symmetric nonincreasing weak
solutions to the following critical problem driven by the fractional $p$-Laplace Hardy Sobolev operator: 
\begin{equation}\label{eq:P}
(-\Delta_p)^s u-\mu \frac{u^{p-1}}{|x|^{sp}} = u^{p_s^*-1} \text{ in }\mathbb R^N, \; u>0, \; u\in \mathcal{D}^{s,p}(\mathbb R^N).
\end{equation}
For $0 \le \mu <\bar{\mu}=\frac{C_H}{2}$, we consider the following quantity: 
\begin{align}\label{best-constant}
    S_{\mu} \coloneqq  \inf_{u \in  \Dsp \setminus \{0\}} \frac{\displaystyle   \iint_{\R^{2N}} \frac{|u(x)-u(y)|^p}{|x-y|^{N+sp}}\dxy - 2\mu  \int_{\R^N} \frac{\abs{u(x)}^p}{\abs{x}^{sp}}\dx }{\displaystyle \left(\int_{\R^N} \abs{u(x)}^{p^*_s} \dx\right)^{\tfrac{p}{p_s^*}}}.
\end{align}
In \cite[Theorem 1.1]{Shen24}, Shen proved that $S_{\mu}>0$ is attained by a non-negative extremal which is positive, radially symmetric and nonincreasing with respect to the origin. These extremals (up to a multiplication of a normalized constant) weakly satisfy \eqref{eq:P}. In this  paper, we establish the asymptotic decay for these extremals.
Since
$p_s^*-1=(p-1)+(p_s^*-p)$, equation \eqref{eq:P} reads
\begin{equation}\label{eq:intro-rewrite-critical}
(-\Delta_p)^s u = \left(\mu+|x|^{sp}u^{p_s^*-p}\right) |x|^{-sp}u^{p-1} \text{ in }\mathbb R^N.
\end{equation}
When $|x|^{sp}u^{p_s^*-p}$ is small, \eqref{eq:intro-rewrite-critical} is a perturbation of \eqref{eq:intro-hardy}. 
The Hardy term $\mu|x|^{-sp}u^{p-1}$ has the same scaling as $\pLap u$. So it is not a small perturbation. It determines the behaviour of $u$ near the origin and at infinity. By \eqref{eq:intro-rewrite-critical}, the term $|x|^{sp}u^{p^*_s-p}$ is small near the origin and near infinity. 
So the two homogeneous profiles should give the correct powers at the two ends. The lower root governs the behaviour at the origin, while the upper root governs the decay at infinity. 
The power solutions of \eqref{eq:intro-hardy} are $|x|^{-\gamma_-(\mu)}$ and $|x|^{-\gamma_+(\mu)}$. We show that, near the origin and near infinity, the extrema for $S_{\mu}$ behave like these solutions of the Hardy equation \eqref{eq:intro-hardy}.

For this purpose, we prove a comparison principle for
\begin{equation}\label{eq:intro-comparison-equation}
(-\Delta_p)^s w=\nu h(x)w^{p-1} \text{ in }\mathbb R^N, \; h\ge0.
\end{equation}
The comparison principle removes the loss $\eta$ once the preliminary bounds
\begin{equation}\label{eq:intro-M0}
u(x)\le C_\eta |x|^{-\gamma_-(\mu)-\eta} \quad \text{near }0,
\end{equation}
and
\begin{equation}\label{eq:intro-Minf}
u(x) \le C_\eta |x|^{-\gamma_+(\mu)+\eta} \quad \text{near infinity}
\end{equation}
are known for every $\eta>0$. It then yields
\begin{equation}\label{eq:intro-sharp0}
u(x) \le C|x|^{-\gamma_-(\mu)}, \quad \text{as }|x|\to0,
\end{equation}
and
\begin{equation}\label{eq:intro-sharpinf}
u(x) \le C|x|^{-\gamma_+(\mu)}, \quad \text{as }|x|\to\infty.
\end{equation}
As a consequence, any positive radially symmetric nonincreasing weak solution to \eqref{eq:P} satisfies \eqref{eq:intro-M0} and \eqref{eq:intro-Minf}, and hence satisfies \eqref{eq:intro-sharp0} and \eqref{eq:intro-sharpinf} respectively.

\subsection{Previous works}
The sharp fractional Hardy inequality and its nonlinear ground-state representation were established by Frank and Seiringer \cite{FS08}. Their result identifies the sharp variational constant. Here the homogeneity coefficient is studied as a function of $\gamma$, and its unique maximum is shown to recover that constant.

For the local $p$-Laplacian with Hardy potential, characteristic exponents and sharp estimates have been studied in several works, in particular by Xiang \cite{Xia15}. Positive finite-energy solutions of the corresponding local critical equation were shown to be radial by Oliva, Sciunzi and Vaira \cite{OSV20}. The local radial problem has an additional ordinary differential equation structure which is unavailable for the fractional $p$-Laplacian.

{The homogeneous fractional Sobolev space $\Dsp$ is the closure of $\C_{c}^{\infty}(\R^N)$ with respect to the Gagliardo seminorm
\begin{equation}\label{eq:semin}
  [u]_{s,p} := \left(\iint_{\R^{2N}} \frac{|u(x)-u(y)|^p}{|x-y|^{N+sp}}\dxy\right)^{\frac{1}{p}}.
\end{equation}}
Brasco, Mosconi and Squassina \cite{BMS16} studied the fractional Sobolev equation without Hardy potential. They obtained optimal decay estimates for the fractional Sobolev extremals, which are related to $(-\Delta_p)^s u = \vert{}u\vert{}^{p^*_s-2}u$ in $\mathbb{R}^N$. 
More precisely, they have shown that any minimizer $U$ for 
\begin{align*}
    S:= \inf_{u \in \Dsp \setminus \{0\}} \frac{[u]_{s,p}^{p}}{\left( \displaystyle \int_{\R^N} |u(x)|^{p^*_s} \dx \right)^{\frac{p}{p^*_s}}}, 
\end{align*}
is of constant sign, radially nonincreasing, and satisfies 
\begin{align}\label{sharp-brasco}
    \lim_{|x| \ra \infty} U(x) |x|^{\frac{N-sp}{p-1}} = U_{\infty}, \text{ where } U_{\infty} \neq 0, 
\end{align}
and the exponent $\frac{N-sp}{p-1}$ is sharp.  From now onward, we denote $\gamma_c:= \frac{N-sp}{p-1}$.
Later, Marano and Mosconi in \cite{MM19} determined sharp asymptotic profiles for the fractional Hardy-Sobolev optimizers, which are related to the following weighted problem $$(-\Delta_p)^s u =  \frac{\vert{}u\vert{}^{p^*_s(\alpha)-2}u}{|x|^{\alpha}} \text{ in } \mathbb{R}^N, \text{ with } \;p^*_s(\al) := \frac{p(N-\al)}{N-sp} \text{ and } \alpha \in [0,sp).$$ More precisely, they have shown that any minimizer $U$ for 
\begin{align*}
    S_{\alpha}:= \inf_{u \in \Dsp \setminus \{0\}} \frac{[u]_{s,p}^{p}}{\left( \displaystyle \int_{\R^N} \frac{|u(x)|^{p^*_s(\alpha)}}{|x|^{\alpha}}\dx \right)^{\frac{p}{p^*_s(\alpha)}}}, 
\end{align*}
is of constant sign, radially nonincreasing, and fulfills
\begin{align*}
    \frac{1}{C|x|^{\gamma_c}} \le U(x) \le \frac{C}{|x|^{\gamma_c}}, \quad |x| \ge 1, 
\end{align*}
for some positive constant $C=C(N,p,s, \alpha, u)$.

For $p=2$, precise asymptotic information for fractional equations with Hardy-type potentials can also be obtained by linear methods. These include extension techniques, spectral decompositions and Almgren-type monotonicity formulas, see the work of Fall and Felli \cite{FF14}, Mainini, Mukherjee, and Ognibene \cite{MMO26}. Such tools are not directly available for the nonlinear fractional $p$-Laplacian. Our approach instead relies on the homogeneity function, Picone comparison, ground-state substitutions and weighted iteration. For the local regularity theory used throughout, we refer the works of \cite{DKP14,DKP16,BLS18,Coz17,KKL19,GL24}.

Earlier works on fractional problems with Hardy terms mainly study existence, multiplicity, maximum principles, symmetry, or Hardy-Sobolev minimization problems, see for instance \cite{CMS18,MM19,MMO26,NSD26}. Wang and Liu \cite{WL24} proved maximum principles and monotonicity properties for equations involving the fractional $p$-Laplacian with a Hardy potential. Their work does not identify the powers that govern the behaviour at the singularity and at infinity. Even without the Hardy potential, Feng and Du \cite{FD27} recently established the optimal decay at infinity of general nonnegative finite-energy solutions of the critical fractional $p$-Laplace equation. The Hardy potential $|x|^{-sp}$ adds a new difficulty. It produces two distinct characteristic exponents, one for the origin and one for infinity. To the best of our knowledge, this is the first work to identify both roots $\gamma_-(\mu)$ and $\gamma_+(\mu)$ of the Hardy equation as the optimal power exponents of positive radially symmetric nonincreasing finite-energy solutions of \eqref{eq:P} for $p\ge2$.

Finally, our comparison principle for \eqref{eq:intro-comparison-equation} is based on the fractional Picone inequality of Brasco and Franzina \cite{BF14}. The same $p$-homogeneous zeroth-order term appears for both functions compared, and cancels. No smallness of $\nu$ and no upper bound on $h$ are needed, provided the test functions are admissible. Comparison principles of other types are due to Xie \cite{Xie20}, Jarohs \cite{Jar18} and Biswas, Roy, and Sen \cite{BRS26}. 

\subsection{Main results}\label{subsec:mainresults}
Now we state the main results of this paper. Our first result describes the shape of $\Phi$ and the two Hardy exponents.
\begin{theorem}\label{thm:A}
The coefficient $\Phi$ of \eqref{eq:homog} admits the paired representation
\eqref{eq:intro-paired}, and
\[
  \operatorname{sign}\Phi'(\gamma)=\operatorname{sign}\big(\beta(\gamma)-\gamma\big),
  \quad \beta(\gamma)=N-sp-(p-1)\gamma .
\]
Hence $\Phi$ is strictly increasing on $(0,\gamma^{*})$, strictly decreasing on
$\big(\gamma^{*},\tfrac{N}{p-1}\big)$, and $\gamma^{*}$ is its unique maximizer. Its maximum satisfies
$2\bar\mu=C_H$, the sharp constant in the fractional Hardy inequality. For every
$0<\mu<\bar\mu$ the equation $\Phi(\gamma)=\mu$ has exactly the two roots
\eqref{eq:intro-gammapm}, so that \eqref{eq:intro-hardy} has the two characteristic radial power
profiles $|x|^{-\gamma_{-}(\mu)}$ and $|x|^{-\gamma_{+}(\mu)}$. Moreover $\gamma_{-}$ is increasing and
$\gamma_{+}$ decreasing in $\mu$, with $\gamma_{-}\to0$, $\gamma_{+}\to\gamma_{c}$ as $\mu\downarrow0$
and $\gamma_{\pm}\to\gamma^{*}$ as $\mu\uparrow\bar\mu$.
\end{theorem}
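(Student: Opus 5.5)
I plan to start from the Del Pezzo--Quaas formula, which after passing to polar coordinates and integrating over the sphere can be written as
\[
\Phi(\gamma)=\int_0^\infty J_p(1-r^{-\gamma})\,r^{N-1}K(r)\dr ,
\qquad
K(r):=\int_{\Sph^{N-1}}\frac{d\sigma(\omega)}{|e_1-r\omega|^{N+sp}} ,
\]
understood in the principal value sense near $r=1$. The kernel $K$ has the inversion symmetry $K(1/r)=r^{N+sp}K(r)$. Splitting $(0,\infty)$ into $(0,1)$ and $(1,\infty)$ and substituting $r\mapsto 1/r$ on the second piece, the second piece becomes $-\int_0^1(1-r^{\gamma})^{p-1}r^{-\gamma(p-1)}r^{sp-1}\cdots$. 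Adding the two pieces gives $\int_0^1(1-r^\gamma)^{p-1}\bigl(1-r^{\beta(\gamma)}\bigr)r^{sp-1}\Psi(r)\dr$, where $\Psi(r)=r^{N-sp}K(r)$ up to the normalisation of \eqref{eq:Psi}. The pairing removes the principal value: near $r=1$ the integrand is $O((1-r)^{p}\,(1-r)^{-1-sp})$, which is integrable. At $r=0$ convergence requires $\beta(\gamma)>-sp$, i.e. $\gamma<N/(p-1)$. This proves \eqref{eq:intro-paired}.

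For the sign of $\Phi'$ I would differentiate under the integral sign, using that $\beta'=-(p-1)$:
\[
\Phi'(\gamma)=(p-1)\int_0^1 (1-r^\gamma)^{p-2}(-\log r)\,\bigl[r^{\gamma}(1-r^{\beta})-r^{\beta}(1-r^{\gamma})\bigr]\,r^{sp-1}\Psi\dr .
\]
The bracket equals $r^\gamma-r^\beta$. For $r\in(0,1)$ it has the sign of $\beta-\gamma$, and this sign is the same for all $r$. Since $\Psi>0$ and $-\log r>0$, we get $\operatorname{sign}\Phi'=\operatorname{sign}(\beta-\gamma)$. Now $\beta(\gamma)=\gamma$ exactly at $\gamma=(N-sp)/p=\gamma^*$, so $\Phi$ is strictly increasing on $(0,\gamma^*)$ and strictly decreasing on $(\gamma^*,N/(p-1))$.

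The roots are then located as follows. We have $\Phi(0)=0$, because the factor $(1-r^0)$ vanishes. We also have $\Phi(\gamma_c)=0$, because $\beta(\gamma_c)=0$. Hence $\Phi>0$ on $(0,\gamma_c)$ and $\Phi<0$ beyond $\gamma_c$. For $0<\mu<\bar\mu$, continuity and strict monotonicity on each side of $\gamma^*$ give exactly two roots, ordered as in \eqref{eq:intro-gammapm}. The inverse function theorem then shows that $\gamma_-$ increases and $\gamma_+$ decreases in $\mu$, with limits $0$ and $\gamma_c$ as $\mu\downarrow0$ and $\gamma^*$ as $\mu\uparrow\bar\mu$.

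The identification $2\bar\mu=C_H$ is the step I expect to be the main obstacle. It requires matching the normalisation of $\Psi$ with the Frank--Seiringer constant. I would use their ground-state representation: $C_H$ is twice the coefficient of $(-\Delta_p)^s|x|^{-\gamma^*}$, and the factor $2$ comes from the double integral against the single principal value operator in \eqref{eq:op}. Equivalently, I would cite \cite{FS08}, where $C_H$ is given by exactly the integral defining $\Phi(\gamma^*)$, and check that the kernels agree after polar coordinates. The remaining technical point is justifying the differentiation under the integral near $r=0$ and $r=1$, for instance in the region $1<p<2$. There the factor $(1-r^\gamma)^{p-2}$ blows up as $r\to1$, but it is compensated by $r^\gamma-r^\beta=O(1-r)$ and $-\log r=O(1-r)$, so dominated convergence applies locally uniformly in $\gamma$.
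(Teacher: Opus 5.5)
Your route is the paper's: pair $r$ with $1/r$ via \eqref{eq:Psiinv}, differentiate the paired integral so the cross terms collapse to $r^{\gamma}-r^{\beta}$, read off the sign, invert $\Phi$ on each side of $\gamma^*$, and cite \cite{FS08}. The last step is not really an obstacle. At $\gamma=\gamma^*$ one has $\beta=\gamma^*$, so \eqref{eq:rep} becomes $\int_0^1(1-r^{\gamma^*})^p r^{sp-1}\Psi(r)\dr$, which is literally half of the Frank--Seiringer constant with the same kernel.

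There is one bookkeeping slip: $\Psi$ in \eqref{eq:Psi} is exactly your $K$, not $r^{N-sp}K$. The factor $r^{-\gamma(p-1)}$ belongs to the piece on $(0,1)$. There $\Jp(1-r^{-\gamma})=-r^{-\gamma(p-1)}(1-r^\gamma)^{p-1}$ and $r^{N-1-\gamma(p-1)}=r^{sp-1}r^{\beta(\gamma)}$. The piece on $(1,\infty)$ becomes $+\int_0^1(1-r^\gamma)^{p-1}r^{sp-1}K(r)\dr$ after inversion. Adding the two gives \eqref{eq:rep} with $\Psi=K$.

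The step you actually skip is identifying the principal value. When $p(1-s)\le1$, the radial integrand behaves like $\sgn(r-1)|r-1|^{p-2-sp}$ near $r=1$, so the integral is not absolutely convergent. Your cancellation under $r\mapsto1/r$ is then exact only for an inversion-invariant exclusion such as $\{|\log r|<\eps\}$. However, the coefficient $\Phi$ in \eqref{eq:homog} comes from \eqref{eq:op}, where one removes balls $B_\eps(e_1)\subset\R^N$. These balls are not unions of spheres, so the radial formula \eqref{eq:Phidef} with your exclusion is not automatic.

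The paper supplies this in Lemma~\ref{lem:pv}. Writing $z=e_1+\zeta$, it subtracts the odd linearization $\gamma^{p-1}\Jp(\zeta_1)\chi(\zeta)$. Its integral against $|\zeta|^{-N-sp}$ vanishes over any set symmetric under $\zeta_1\mapsto-\zeta_1$, such as a ball or a slab. The lemma then shows the remainder is integrable near $\zeta=0$; for $1<p<2$ this uses the split $|\zeta_1|\gtrless M|\zeta|^2$. Finally, it checks that the symmetric difference between the annulus $\{|\log|e_1+\zeta||<\eps\}$ and the slab $\{|\zeta_1|<\eps\}$ carries vanishing mass. Without that lemma, your argument proves the paired formula for the log-symmetric principal value, not for the coefficient of the operator.
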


Our second result shows that the Hardy term improves the integrability of radial solutions.
\begin{theorem}\label{thm:integ}
Let $0<\mu<\bar\mu$ and let $U\in\Dsp$ be a positive radially symmetric nonincreasing weak
solution of \eqref{eq:P}. We put
\begin{equation}\label{eq:hbeta}
  h(\beta):=\frac{1+p\beta}{(1+\beta)^{p}},\text{ where } \beta>-\frac1p .
\end{equation}
Then $h$ is strictly increasing on $(-\frac1p,0)$, strictly decreasing on $(0,\infty)$,
$h(0)=1$ and $h((-\frac1p)_+)=h(+\infty)=0$. Consequently
$\{h>\frac{\mu}{\bar\mu}\}=(\beta_{-},\beta_{+})$ with $-\frac1p<\beta_{-}<0<\beta_{+}<\infty$, and
\begin{equation}\label{eq:Lq}
  U\in L^{q}(\R^{N})\quad\text{for every } q\in(q_{-},q_{+}),
  \quad q_{\pm}:=(1+\beta_{\pm})p^{*}_{s} .
\end{equation}
In particular, $q_{-}<p^{*}_{s}<q_{+}$.
\end{theorem}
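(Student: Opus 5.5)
The elementary part about $h$ comes first. Differentiating,
\[
h'(\beta)=\frac{p(1+\beta)^{p}-p(1+p\beta)(1+\beta)^{p-1}}{(1+\beta)^{2p}}=\frac{-p(p-1)\beta}{(1+\beta)^{p+1}},
\]
so $h$ increases on $(-\frac1p,0)$ and decreases on $(0,\infty)$. Moreover $h(0)=1$, $h\to0$ as $\beta\downarrow-\frac1p$ (the numerator vanishes), and $h(\beta)\sim p\beta^{1-p}\to0$ as $\beta\to\infty$. Since $0<\mu/\bar\mu<1$, the superlevel set is an interval $(\beta_-,\beta_+)$ with $-\frac1p<\beta_-<0<\beta_+<\infty$. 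Consequently $q_-<p_s^*<q_+$.

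For the integrability, I would use the Moser-type test $\varphi=U_k^{1+p\beta}$, where $U_k$ is a truncation. For $\beta>0$ take $U_k=\min(U,k)$. For $\beta\in(\beta_-,0)$ take $U_k=\max(U,1/k)$, or equivalently $(U+\varepsilon)$, so that negative powers are allowed; here radial monotonicity gives $U\ge c>0$ on bounded sets, which helps. The key inequality is the standard fractional one (Brasco–Parini type):
\[
J_p(a-b)\bigl(a^{1+p\beta}-b^{1+p\beta}\bigr)\ge \frac{1+p\beta}{(1+\beta)^p}\bigl|a^{1+\beta}-b^{1+\beta}\bigr|^p=h(\beta)\bigl|a^{1+\beta}-b^{1+\beta}\bigr|^p .
\]
This holds for $1+p\beta>0$ and follows from the convexity/Jensen estimate on $\int_b^a t^{p\beta}\,dt$. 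Testing \eqref{eq:P} then gives
\[
h(\beta)[U_k^{1+\beta}]_{s,p}^p\le \mu\int\frac{U^{p-1}U_k^{1+p\beta}}{|x|^{sp}}+\int U^{p_s^*-1}U_k^{1+p\beta}.
\]
The sharp Hardy inequality with $C_H=2\bar\mu$ bounds the first term, essentially by $\frac{\mu}{2\bar\mu}\cdot 2[U_k^{1+\beta}]^p$ with the paper's normalization. (The factor $2$ in \eqref{best-constant} suggests that the Hardy term appears as $2\mu$ against the full seminorm, so the effective comparison is $\mu/\bar\mu$.) The strict inequality $h(\beta)>\mu/\bar\mu$ leaves a positive coercive gap $h(\beta)-\mu/\bar\mu$.

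The critical term is handled by splitting $U^{p_s^*-p}$ into a small-$L^{N/sp}$ part and a bounded part. The small part is the set where $U$ is large, i.e. a small ball $B_\rho$, using that $U\in L^{p_s^*}$. It is absorbed by Sobolev into the gap. The remaining part is bounded by $C\int U^{p+p\beta}$ over a region where this is finite a priori. Combining with Sobolev, $\|U_k^{1+\beta}\|_{p_s^*}^p\le C$ uniformly in $k$, and letting $k\to\infty$ yields $U\in L^{(1+\beta)p_s^*}$.

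The main obstacle is localization. $U^{p(1+\beta)}$ need not be globally integrable: for $\beta>0$ the difficulty is at the origin, and for $\beta<0$ it is at infinity. I would therefore treat the two regimes separately using radial monotonicity. For $\beta>0$, $U$ is bounded away from the origin, since $U(x)\le C|x|^{-N/p_s^*}$ from $U\in L^{p_s^*}$ and monotonicity. So only the origin matters, and one works with cut-offs near $0$, where the Hardy term dominates and the critical term is small. For $\beta<0$, integrability near the origin is trivial from $L^{p_s^*}$, and one uses truncations from below at infinity.

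The tail and commutator terms produced by cut-offs in the nonlocal seminorm are the delicate point. I would control them with the pointwise radial bound $U\le C|x|^{-\gamma^*}$. Iterating finitely many steps in $\beta$, starting from $\beta=0$, where the estimate holds by hypothesis, covers the full interval $(\beta_-,\beta_+)$.
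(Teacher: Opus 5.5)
Your treatment of $h$ and the core mechanism match the paper. That mechanism is the inequality $J_p(a-b)(g(a)-g(b))\ge|G(a)-G(b)|^p$ with $G=\int_0^{\cdot}(g')^{1/p}$, which produces the constant $h(\beta)$ and is played against the sharp constant $C_H=2\bar\mu$. The gap is your test function.

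\emph{Case $\beta>0$.} With $\varphi=\min(U,k)^{1+p\beta}$ the Hardy term is $\mu\int|x|^{-sp}U^{p-1}\min(U,k)^{1+p\beta}\,dx$. On $\{U>k\}$ this equals $\mu k^{1+p\beta}\int|x|^{-sp}U^{p-1}$, which strictly exceeds $\mu\int|x|^{-sp}\min(U,k)^{p(1+\beta)}$. So the Hardy inequality for $\min(U,k)^{1+\beta}$ does not bound it. The available margin $\tfrac12(h(\beta)-\mu/\bar\mu)$ is arbitrarily small near $\beta_\pm$, so the excess cannot be absorbed. Bounding it uniformly in $k$ amounts to knowing $|x|^{-sp}U^{p(1+\beta)}\in L^1$ near $0$, which is the conclusion you want.

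\emph{Case $\beta<0$.} Both $\max(U,1/k)^{1+p\beta}$ and $(U+\varepsilon)^{1+p\beta}$ tend to a positive constant at infinity. They are therefore not in $\mathcal D^{s,p}(\mathbb R^N)$ and are not admissible, and the right-hand side integrals need not be finite. Neither Hardy nor Sobolev can be applied to $\max(U,1/k)^{1+\beta}$.

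\emph{The missing idea.} Test with $U$ times a truncated power. Take $\varphi=G(U)$ with $G(t)=t\,T(t)^{p\beta}$ and $T(t)=\min\{\max\{t,m\},M\}$, and set $w=U\,T(U)^{\beta}$.
\begin{itemize}
\item Then $U^{p-1}G(U)=w^p$ exactly, so the Hardy term is $\mu\int|x|^{-sp}w^p\le\frac{\mu}{2\bar\mu}[w]_{s,p}^p$.
\item Both $G$ and $t\mapsto tT(t)^\beta$ are globally Lipschitz and vanish at $0$. Hence $\varphi,w\in\mathcal D^{s,p}(\mathbb R^N)$ with $[w]_{s,p}<\infty$ a priori, which is what makes the absorption legitimate.
\item $(G')^{1/p}\ge h(\beta)^{1/p}(tT(t)^\beta)'$ still holds, with equality on $(m,M)$. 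So the left-hand side is at least $\tfrac12 h(\beta)[w]_{s,p}^p$.
\end{itemize}
One choice handles both signs of $\beta$.

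With this test function your localization paragraph is unnecessary, and as written it is not an argument. You flag the cutoff tail and commutator terms as the delicate point but never estimate them. Once a cutoff $\eta$ is present, you would have to apply the sharp Hardy inequality to $\eta w$ and control those errors uniformly in the truncation level, against a margin that may be tiny.

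With the corrected $w$, your Sobolev splitting of the critical term would work globally. Use a small ball at $0$ for $\beta>0$ and the exterior of a large ball for $\beta<0$; elsewhere $w\le CU$ by radial monotonicity.

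The paper's route is simpler still. Since $|x|^{\gamma^*}U(x)\to0$ at $0$ and at $\infty$, one has $U^{P_s}\le\lambda|x|^{-sp}$ outside a compact annulus $\Omega_\lambda$, on which $c_\lambda\le U\le C_\lambda$. Choosing $m\le c_\lambda$ and $M\ge C_\lambda$ gives
\[
\int U^{P_s}w^p\le\lambda\int|x|^{-sp}w^p+K(\lambda),
\]
with $K(\lambda)$ independent of $m,M$. The critical term is thus merged into the Hardy term with coefficient $\mu+\lambda<h(\beta)\bar\mu$. This yields $[w]_{s,p}^p\le\Lambda$ uniformly, and Sobolev plus Fatou give $U\in L^{(1+\beta)p_s^*}$ for each $\beta\in(\beta_-,\beta_+)$ directly, with no cutoffs and no iteration in $\beta$.
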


Next we compare the Hardy energy with two weighted energies. For $u\in\Dsp$, $\mu>0$, and $\gamma>0$, we consider the following two integrals:
\begin{align}
&\mathcal D_\mu(u):=[u]_{s,p}^p-2\mu\int_{\mathbb R^N}\frac{|u(x)|^p}{|x|^{sp}}\dx, \label{eq:intro-defect} \text{ and }\\
&\mathcal E_\gamma(u):=\iint_{\mathbb R^{2N}}\frac{|u(x)-u(y)|^p}{|x-y|^{N+sp}} |x|^{-\frac{p\gamma}{2}}|y|^{-\frac{p\gamma}{2}}\dxy.\label{eq:intro-Egamma}
\end{align}
Our third result shows that one value of $\mu$ controls both weighted energies.
\begin{theorem}\label{thm:C}
Let $p\ge2$ and $0<\mu<\bar\mu$. There is $c_{p}>0$ such that, for every $u\in\Dsp$,
\begin{equation}\label{eq:intro-two-groundstates}
\mathcal D_\mu(u) \ge c_p\max\left\{\mathcal E_{\gamma_-}\bigl(|x|^{\gamma_-}u\bigr),\ \mathcal E_{\gamma_+}\bigl(|x|^{\gamma_+}u\bigr)\right\}.
\end{equation}
\end{theorem}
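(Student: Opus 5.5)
The plan is to use the Frank--Seiringer ground state representation for $p\ge2$, with each of the two homogeneous solutions $w_\pm(x)=|x|^{-\gamma_\pm}$ of \eqref{eq:intro-hardy} as ground state. Fix $w=w_\pm$ and write $u=wv$, so $v=|x|^{\gamma_\pm}u$. By Theorem~\ref{thm:A}, $w$ is a positive weak solution of $(-\Delta_p)^s w=\Phi(\gamma_\pm)|x|^{-sp}w^{p-1}=\mu|x|^{-sp}w^{p-1}$ in $\R^N\setminus\{0\}$. The normalization is that $[u]_{s,p}^p$ counts the double integral once, while testing the equation with $|u|^p/w^{p-1}$ gives the symmetrized form with factor $2$. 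This is why $\mathcal D_\mu$ carries $2\mu$, consistent with $2\bar\mu=C_H$. Formally,
\begin{align*}
\iint_{\R^{2N}}\frac{J_p(w(x)-w(y))\big(\frac{|u(x)|^p}{w(x)^{p-1}}-\frac{|u(y)|^p}{w(y)^{p-1}}\big)}{|x-y|^{N+sp}}\dxy=2\mu\int_{\R^N}\frac{|u|^p}{|x|^{sp}}\dx .
\end{align*}
Subtracting this identity from $[u]_{s,p}^p$ gives $\mathcal D_\mu(u)=\iint L(x,y)\,|x-y|^{-N-sp}\dxy$. Here $L$ is the pointwise Picone defect
\[
L=|u(x)-u(y)|^p-J_p(w(x)-w(y))\Big(\tfrac{|u(x)|^p}{w(x)^{p-1}}-\tfrac{|u(y)|^p}{w(y)^{p-1}}\Big)\ge0,
\]
which is nonnegative by the Brasco--Franzina inequality.

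The key step is the quantitative pointwise inequality of Frank--Seiringer for $p\ge2$. For $a,b>0$ and $v$ real,
\[
L\ge c_p\,(w(x)w(y))^{p/2}\,|v(x)-v(y)|^p,
\]
with $w(x)=a$, $w(y)=b$ and $u=wv$. It follows from the elementary inequality $|1-t|^p\ge(1-t)^p\ldots$ used in \cite{FS08}, namely
\[
|a-tb|^p\ge (1-t)^{p-1}(a^p-t b^p)+c_p t^{p/2}|a-b|^p,
\]
after scaling. Since $(w(x)w(y))^{p/2}=|x|^{-p\gamma/2}|y|^{-p\gamma/2}$, integrating gives $\mathcal D_\mu(u)\ge c_p\,\mathcal E_{\gamma_\pm}(|x|^{\gamma_\pm}u)$. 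The constant $c_p$ depends only on $p$. Applying the bound for both signs and taking the maximum yields \eqref{eq:intro-two-groundstates}.

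The main obstacle is justifying the test function $|u|^p/w^{p-1}$. The function $w$ is singular at $0$ and not in $\Dsp$, and $|u|^p w^{1-p}=|u|^p|x|^{(p-1)\gamma}$ grows at infinity. I would first prove the estimate for $u\in\cc(\R^N\setminus\{0\})$. For such $u$, the weak formulation in the punctured space (Section~\ref{sec:rep}) applies to the admissible test function $|u|^pw^{1-p}\in\cc(\R^N\setminus\{0\})$, using $\gamma_\pm<\gamma_c<N/(p-1)$ for the local integrability of the kernel term. Then I would extend by density to $\Dsp$. Since $N\ge2$ and $sp<N$, the space $\cc(\R^N\setminus\{0\})$ is dense in $\Dsp$, and the left side is continuous thanks to the fractional Hardy inequality \eqref{intro:fractionalHardy}. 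On the right, for $u_n\to u$ one may pass to an a.e.-convergent subsequence. Fatou's lemma applied to $\mathcal E_{\gamma}(|x|^{\gamma}u_n)$ then gives the inequality in the limit, which suffices.
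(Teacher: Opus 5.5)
Your proposal is correct and follows the paper's proof essentially step by step. The paper also writes $\mathcal{D}_{\Phi(\gamma)}(u)$ as the double integral of the Picone defect $\mathfrak{B}$ for $u\in\cc(\R^N\setminus\{0\})$ (Lemma~\ref{lem:gsreduce}), bounds $\mathfrak{B}$ below via the Frank--Seiringer inequality \cite[(2.16)]{FS08} (Lemma~\ref{lem:gspoint}), extends to $\Dsp$ by density, the Hardy inequality and Fatou's lemma (Theorem~\ref{thm:gsrep}), and then applies this at both $\gamma_-$ and $\gamma_+$. One small correction: $|u|^p w^{1-p}$ is in general only Lipschitz with compact support in $\R^N\setminus\{0\}$, not $C^\infty$, but it is still admissible in \eqref{eq:weakdef} by density.
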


We first specify the notion of optimal exponent used below.

\begin{definition}\label{def:optimal}
Let $U>0$ on $\R^{N}\setminus\{0\}$, and let $\gamma>0$.
\begin{enumerate}
\item[{\rm (a)}] We say that $\gamma$ is the \emph{optimal exponent of $U$ at the origin} if there are $C,r>0$
with $U(x)\le C|x|^{-\gamma}$ for $0<|x|<r$, and if for every $\alpha<\gamma$ there are
$c_{\alpha},r_{\alpha}>0$ with $U(x)\ge c_{\alpha}|x|^{-\alpha}$ for $0<|x|<r_{\alpha}$.
\item[{\rm (b)}] We say that $\gamma$ is the \emph{optimal exponent of $U$ at infinity} if there are $C,R>0$
with $U(x)\le C|x|^{-\gamma}$ for $|x|>R$, and if for every $\beta>\gamma$ there are
$c_{\beta},R_{\beta}>0$ with $U(x)\ge c_{\beta}|x|^{-\beta}$ for $|x|>R_{\beta}$.
\end{enumerate}
\end{definition}

The optimal exponent is unique when it exists, and $\log U(x)/\log(1/|x|)\to\gamma$ as $x\to0$ in (a) and as $|x|\to\infty$ in (b). The constants $c_{\alpha}$, $c_{\beta}$ may tend to $0$, and we only need to check the lower bounds for $\alpha$ and $\beta$ close to $\gamma$.

Our final result gives the optimal exponents of radial solutions at both ends.
\begin{theorem}\label{thm:D}
Let $p\ge2$ and $0<\mu<\bar\mu$, and let $U\in\Dsp$ be a positive radially symmetric
nonincreasing weak solution of \eqref{eq:P}. Then $\gamma_{-}(\mu)$ is the optimal exponent of $U$ at
the origin, and $\gamma_{+}(\mu)$ is the optimal exponent of $U$ at infinity. Consequently,
\begin{align*}
    &U\in L^{q}(\R^{N}) \text{ for } \frac{N}{\gamma_{+}(\mu)}<q<\frac{N}{\gamma_{-}(\mu)}, \text{ and }\\
    &U\notin L^{q}(\R^{N}) \text{ for } 0<q<\frac{N}{\gamma_{+}(\mu)}\ \text{ and for }
  q>\frac{N}{\gamma_{-}(\mu)}.
\end{align*}
\end{theorem}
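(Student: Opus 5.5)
Theorem~\ref{thm:D} has two halves: sharp upper bounds at both ends, and lower bounds with arbitrarily small loss. The $L^q$ statements then follow from these two-sided power bounds by integrating in polar coordinates.

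\textbf{Upper bounds.} First I will establish the preliminary bounds \eqref{eq:intro-M0} and \eqref{eq:intro-Minf} for every $\eta>0$. For this I will use Theorem~\ref{thm:C}. Put $v_\pm:=|x|^{\gamma_\pm}U$. Testing \eqref{eq:P} with truncated powers of $U$ multiplied by $|x|^{p\gamma_\pm}$-type weights should give weighted Caccioppoli estimates. In these estimates the Hardy term is absorbed by $\mathcal D_\mu$, and \eqref{eq:intro-two-groundstates} then controls $\mathcal E_{\gamma_\pm}(v_\pm\cdot)$. The critical term $U^{p_s^*-p}$ is treated as a potential lying in $L^{N/(sp)}$, and its contribution is made small on small balls (respectively, outside large balls). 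A finite Moser iteration, carried out in the weighted measure attached to $\mathcal E_{\gamma_\pm}$, would then give $v_-\in L^{q}$ near $0$ and $v_+\in L^q$ near $\infty$ for large $q$. Theorem~\ref{thm:integ} supplies the starting integrability. Next, radial monotonicity converts this integrability into pointwise bounds: for $U$ nonincreasing,
\[
U(r)^q r^N\lesssim\int_{B_r\setminus B_{r/2}}U^q,
\]
and this yields $U(x)\le C_\eta|x|^{-\gamma_\mp\mp\eta}$ at the respective ends. With these preliminary bounds, \eqref{eq:intro-rewrite-critical} takes the form
\[
(-\Delta_p)^sU=(\mu+\varepsilon(x))|x|^{-sp}U^{p-1},\qquad \varepsilon(x)=|x|^{sp}U^{p_s^*-p}.
\]
Here $\varepsilon(x)\le C|x|^{\delta}$ near $0$, with $\delta=sp-(p_s^*-p)(\gamma_-+\eta)>0$ for small $\eta$ because $\gamma_-<\gamma^*$, and similarly $\varepsilon(x)\le C|x|^{-\delta}$ at infinity. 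I would then build barriers $w=|x|^{-\gamma_-}(1+c|x|^{\delta'})$, or $|x|^{-\gamma_+}$ with correction terms, with small $\delta'>0$, so that they are supersolutions of \eqref{eq:intro-comparison-equation} with $h=|x|^{-sp}(\mu+\varepsilon)$ on the relevant region. By \eqref{eq:homog}, this reduces to the expansion $\Phi(\gamma_\pm\mp\delta')\gtrless\mu$ together with monotonicity from Theorem~\ref{thm:A}. The Picone-based comparison principle then removes $\eta$ and gives \eqref{eq:intro-sharp0} and \eqref{eq:intro-sharpinf}.

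\textbf{Lower bounds.} Fix $\alpha<\gamma_-$ close to $\gamma_-$. Since $\Phi$ is increasing on $(0,\gamma^*)$, we have $\Phi(\alpha)<\mu$. Then $z=c|x|^{-\alpha}$ is a subsolution: $(-\Delta_p)^s z=\Phi(\alpha)|x|^{-sp}z^{p-1}\le (\mu+\varepsilon)|x|^{-sp}z^{p-1}$. Near $0$ I will compare $z$ (suitably cut off) with $U$ on $B_r\setminus\{0\}$. To make comparison at the singular point legitimate, I will use the test function $(z-U)_+$. By the upper bound $U\gtrsim$ its value on $\partial B_r$, and since $z\ll |x|^{-\gamma_-}$ only mildly, admissibility has to be checked using $\alpha>\gamma_-- $small; the boundary is handled by choosing $c$ with $z\le U$ outside $B_r$, where $U\ge U(r)>0$. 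At infinity the argument is symmetric: take $\beta>\gamma_+$, so $\Phi(\beta)<\mu$ (and $\beta<\gamma_c$ keeps the nonlocal tails finite), and compare $c|x|^{-\beta}$ with $U$ on $\R^N\setminus B_R$. Here $U>0$ on $B_R$ bounds the exterior data.

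\textbf{Main obstacle.} The step I expect to be hardest is the Moser iteration yielding \eqref{eq:intro-M0}--\eqref{eq:intro-Minf}. There the nonlocal weighted energy $\mathcal E_\gamma$ must produce a weighted Sobolev gain. I would try to get this by applying the fractional Sobolev inequality to truncations of $v_\pm$ on dyadic annuli, where the weights $|x|^{-p\gamma/2}|y|^{-p\gamma/2}$ are comparable to constants, and then summing with the radial monotonicity. The restriction $p\ge2$ enters both through Theorem~\ref{thm:C} and through the elementary inequalities used to compare $J_p$ differences in the Caccioppoli step.
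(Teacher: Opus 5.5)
There is a genuine gap, and it sits in the upper-bound iteration: the problem is the treatment of the Hardy term, not the Sobolev gain. Theorem~\ref{thm:C} is an inequality for $\mathcal D_\mu(w)$ of a \emph{single} function. Testing \eqref{eq:P} with a truncated power $\varphi\neq U$, weighted or not, produces instead $\mathcal A(U,\varphi)-\mu\int|x|^{-sp}U^{p-1}\varphi$, which is not of that form. To reach $\mathcal D_\mu$ you must first bound $\mathcal A(U,\varphi)$ below by a multiple of $[w]_{s,p}^p$. The best multiple is $h(\beta)=(1+p\beta)/(1+\beta)^p=mp^p/r^p$, which tends to $0$. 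The same constant appears pointwise at $a=b$, so the weights $\omega_\gamma$ do not change this.

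The Hardy term has the same scaling as the operator and a coefficient $\mu$ that is not small. It can therefore be absorbed only while $h(\beta)>\mu/\bar\mu$. This is exactly the proof of Theorem~\ref{thm:integ}: your iteration stalls at $q_+=(1+\beta_+)p^*_s$ and yields the exponents of Theorem~\ref{thm:radialmain}, not \eqref{E0}--\eqref{E1}. Theorem~\ref{thm:C} on its own gives only the step $r=p^*_s$, that is, $U\lesssim|x|^{-\gamma^*}$.

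The missing idea is to \emph{cancel} the Hardy term rather than absorb it. Write $U=\omega_\gamma v$ and subtract the equation of $\omega_\gamma$ tested with $v^{p-1}\varphi$. For every admissible $\varphi$ this gives $\mathcal A(\omega_\gamma v,\varphi)-\mathcal A(\omega_\gamma,v^{p-1}\varphi)=\int(\omega_\gamma v)^{p^*_s-1}\varphi$ (Lemma~\ref{lem:route}). Only the critical potential $U^{P_s}\in L^{N/(sp)}$ survives, and it is small on small balls or outside large balls. The price is a new pointwise coercivity estimate for the bracket $\mathfrak K_M$ arising from $\varphi=\omega_\gamma\eta^pG_M(v)$. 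This is the $(m,r)$-version of the Frank--Seiringer inequality, uniform in $M$ (Lemma~\ref{lem:purepower}, Propositions~\ref{prop:trunc} and~\ref{prop:truncquad}), and its quadratic part controls the cutoff errors (Proposition~\ref{prop:LC}). Since no Hardy term remains to beat, any $c_r>0$ suffices even though $c_r\to0$, provided the radius is chosen after $r$. This is also why only finitely many steps are run.

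Two further steps fail as written.

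\emph{Removing the loss.} Your barrier $|x|^{-\gamma_-}(1+c|x|^{\delta'})$ with $c>0$ is a subsolution near $0$, not a supersolution (exactly so for $p=2$). Its correction carries the exponent $\gamma_--\delta'$, where $\Phi<\mu$, which does not match the exponent $\gamma_-+\delta'$ in your reduction. For $p\neq2$ the coefficient governing the correction is the linearized $\Phi^{(2)}(\gamma_-,\gamma_--\delta')$ of Proposition~\ref{lem:Phi2}, not $\Phi$. The paper avoids corrections altogether. It compares on dyadic balls $B_{r_k}$ with the exact Hardy solutions $A_k|x|^{-\gamma_-(\mu_k)}$, where $\mu_k=\mu+\sup_{B_{r_{k-1}}}|x|^{sp}U^{P_s}\downarrow\mu$. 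The geometric decay of $\gamma_k-\gamma_-$ then keeps $\sup_kA_k$ finite (Theorem~\ref{thm:sharpupper}).

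\emph{Lower bounds.} Testing with $(z-U)_+$ does not close for $p\neq2$, because $\mu\int|x|^{-sp}(z^{p-1}-U^{p-1})(z-U)_+$ cannot be absorbed. You need Picone's inequality, under which the zeroth-order terms cancel. Near the origin the Picone test functions are moreover not admissible, since $z/U$ may be unbounded there: no lower bound on $U$ is available, and choosing $\alpha$ close to $\gamma_-$ does not help. The paper multiplies them by an inner cutoff and uses the radial monotonicity of $U$ to give the $U$-cutoff term a sign. The barrier's cutoff term tends to zero because $\alpha<\gamma^*$ (Lemma~\ref{lem:loccomp}). At infinity, $c|x|^{-\beta}$ exceeds $U$ near the origin, so it must be truncated to $c\min\{1,|x|^{-\beta}\}$. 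The resulting error $E(x)=O(|x|^{-N-sp})$ must then be dominated by $(\mu-\Phi(\beta))|x|^{-sp}\psi^{p-1}$, which uses $\beta<\gamma_c$.
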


\begin{remark}
Near the origin $U$ is unbounded, since $U(x)\ge c_{\alpha}|x|^{-\alpha}$ for $\alpha<\gamma_{-}(\mu)$. At infinity $U$ decays more slowly than the Sobolev extremals in \eqref{sharp-brasco}, since $\gamma_{+}(\mu)<\gamma_c$. As $\mu\downarrow0$ we have $\gamma_{-}(\mu)\to0$ and $\gamma_{+}(\mu)\to\gamma_c$ (Corollary~\ref{cor:roots}), which is the exponent in \eqref{sharp-brasco}.
\end{remark}

\subsection{Strategies for the optimal asymptotic estimates}\label{subsec:difficulties}
The Sobolev exponent $\gamma^*=\frac{N}{p^*_s}$ is the natural scale of the energy space. A power $|x|^{-\gamma}$ has finite energy near the origin only if $\gamma<\gamma^*$. It has finite energy near infinity only if $\gamma>\gamma^*$. Since $0<\gamma_-<\gamma^*<\gamma_+<\gamma_c$,
we expect the profile $|x|^{-\gamma_-}$ at the origin and the profile $|x|^{-\gamma_+}$ at infinity. Both powers solve the same Hardy equation, so at each end only the energy excludes the other root. When $\mu=0$ we have $\gamma_-=0$ and $\gamma_+=\gamma_c$. Then solutions are bounded near the origin, and only the decay at infinity has to be proved, as in \cite{BMS16}.
The method of Brasco, Mosconi and Squassina \cite{BMS16} for the fractional Soboloev equation cannot be applied directly to the present problem. Their decay argument uses the fundamental solution $\Gamma(x)=|x|^{-(N-sp)/(p-1)}$ which satisfies $(-\Delta_p)^s\Gamma=0$ away from the origin. In the presence of a positive Hardy potential, however, this function no longer provides the supersolution needed for the upper decay estimate. The Hardy term has the same scaling as the fractional $p$-Laplacian and therefore changes the characteristic exponent at infinity. Moreover, when $\mu>0$, the solution may be singular at the origin, where a second characteristic exponent is needed. To obtain the sharp bounds, we use the two positive homogeneous solutions of the Hardy equation, establish weighted energy estimates for the corresponding ground-state substitutions, and prove higher integrability through finite Moser iterations. A comparison argument then gives the exact characteristic exponents. We explain this in three points.
\begin{enumerate}
\item[(a)] \emph{Two ground states.} At each end we work with the weight $\omega_\gamma=|x|^{-\gamma}$ adapted to that end, taking $\gamma=\gamma_-$ at the origin and $\gamma=\gamma_+$ at infinity. The weighted energies $\mathcal E_{\gamma_-}$ and $\mathcal E_{\gamma_+}$ in \eqref{eq:intro-Egamma} differ. So are the weighted Sobolev inequalities. For $\gamma_-<\gamma^*$ this inequality is available from \cite{AB17}. For $\gamma_+>\gamma^*$ we obtain it via the inversion $x\mapsto x/|x|^2$. This inversion sends $\gamma$ to $2\gamma^*-\gamma$. This image is positive for all $\gamma<\gamma_c$ exactly when $p\ge2$.
\item[(b)] \emph{No symmetry between the ends.} For $p=2$ we have $\gamma_-+\gamma_+=\gamma_c$. Then the inversion exchanges the two roots, and the result at one end gives the result at the other. For $p\neq2$ this relation breaks down by \eqref{eq:intro-reflection-failure}. Also, the nonlinear operator admits no Kelvin transform. Each end therefore requires its own argument.
\item[(c)] \emph{Different cutoffs.} Near the origin, the weight and the function $v=|x|^{\gamma_-}u$ are singular at $0$. So we cut off the test functions near $0$. At infinity we cut them off at a large radius. The errors produced by these two cutoffs are controlled by different ways.
\end{enumerate}
Finally, the comparison principle removes the small loss $\eta$ in \eqref{eq:intro-M0} and \eqref{eq:intro-Minf}. But it cannot prove these bounds. Its proof uses them as known results, since they make the Picone test functions admissible (see Remark~\ref{rem:barrier}). So the main work of the paper is therefore the proof of \eqref{eq:intro-M0} and \eqref{eq:intro-Minf}.
\vspace{0.2 cm}

\noi \textbf{Roadmap for the proof of \eqref{eq:intro-M0} and \eqref{eq:intro-Minf}:}
Let $p\ge2$ and let $U$ be a positive radially symmetric nonincreasing solution of \eqref{eq:P}. We take $\gamma=\gamma_-$ at the origin and $\gamma=\gamma_+$ at infinity. The proof of \eqref{eq:intro-M0} and \eqref{eq:intro-Minf} consists of six parts.
\begin{enumerate}
\item[(1)] \emph{Ground-state substitution.} We write $U=\omega_\gamma v$ with $\omega_\gamma=|x|^{-\gamma}$. We recall that $\pLap\omega_\gamma=\mu|x|^{-sp}\omega_\gamma^{p-1}$. We test \eqref{eq:P} with $\varphi$, and we test the equation for $\omega_\gamma$ with $v^{p-1}\varphi$. When we subtract, the Hardy term cancels exactly, see Lemma~\ref{lem:route}. Only the term $U^{p^*_s-p}$ remains. This term is small near $0$ and near infinity.
\item[(2)] \emph{Coercivity.} We test with $\varphi=\omega_\gamma\eta^pG_M(v)$. Here $G_M$ is a truncated power of order $r-p+1$, and $\eta$ is a cutoff. We bound the resulting integrand from below by $c\,(ab)^{\frac{p}{2}}|H_M(X)-H_M(Y)|^p$, where $H_M(t)\approx t^{\frac{r}{p}}$, with the notation of Section~\ref{sec:rep}. For $r=p$ this is the ground-state inequality of Frank and Seiringer \cite{FS08}. We extend it to all powers $r\ge p$ and to their truncations, see Section~\ref{sec:route} and Appendix~\ref{app:coerc}.
\item[(3)] \emph{Caccioppoli inequality.} The cutoff $\eta$ produces error terms. They are of first order in the increments of $v$. We control them by an additional quadratic coercivity estimate. Then Proposition~\ref{prop:cacc} gives
\[
  c_r\,\mathcal E_\gamma\big(\eta H_M(v)\big) \le  \int_{\R^N}U^{p^*_s-p}\big(\omega_\gamma\eta H_M(v)\big)^p\dx+\text{cutoff terms} .
\]
\item[(4)] \emph{Weighted Sobolev inequality.} By Proposition~\ref{prop:WS} we have $\mathcal E_\gamma(w)\ge S_\gamma\big(\int|x|^{-\gamma p^*_s}|w|^{p^*_s}\dx\big)^{p/p^*_s}$. By H\"older inequality, the first term on the right in (3) is at most
\[
  \left(\int_{\operatorname{supp}\eta}U^{p^*_s}\dx\right)^{\frac{sp}{N}}\left(\int_{\R^N}|x|^{-\gamma p^*_s}|\eta H_M(v)|^{p^*_s}\dx\right)^{\frac{p}{p^*_s}}.
\]
We choose $\operatorname{supp}\eta$ to be a small ball at the origin, or the complement of a large ball at infinity. Then the first factor is small. So we can move this term to the left-hand side, see Corollary~\ref{cor:onestep}.
\item[(5)] \emph{Finite higher integrability.} We let $M\to\infty$ and obtain the gain
\[
  v\in L^r\big(\Sigma',|x|^{-\gamma p^*_s}\dx\big) \Longrightarrow v\in L^{r\chi}\big(\Sigma'',|x|^{-\gamma p^*_s}\dx\big),\qquad \chi=\frac{p^*_s}{p}>1 .
\]
Here $\Sigma''\subset\Sigma'$ is a smaller region. We start from $r_0=p^*_s$, which only means $U\in L^{p^*_s}$. Then we apply the gain finitely many times, with $r_k=p^*_s\chi^k$, see Theorem~\ref{thm:cond}.
\item[(6)] \emph{Pointwise bounds.} Since $U$ is radially nonincreasing, the bound $\int_{B_{\rho_0}}|x|^{-\gamma p^*_s}v^r\dx<\infty$ gives
\[
  U(\rho)\le C\rho^{-\gamma- p^*_s\frac{\gamma^*-\gamma}{r}},\qquad 0<\rho<\rho_0 ,
\]
see Lemma~\ref{lem:radexp}. We have a similar bound at infinity. The loss $p^*_s|\gamma^*-\gamma| r^{-1}$ tends to $0$ as $r\to\infty$. In particular, for large $r$ we obtain \eqref{eq:intro-M0} and \eqref{eq:intro-Minf}. 
\end{enumerate}
\vspace{0.2 cm}

\noi \textbf{The requirement of the Caccioppoli inequality and the iteration:}
From Theorem~\ref{thm:C} we get $\mathcal E_{\gamma_\pm}(|x|^{\gamma_\pm}U)<\infty$ for every solution $U$. We might hope that this energy bound already gives the exponents $\gamma_\pm$ directly, but it does not. Combined with the weighted Sobolev inequality, it produces only $\int_{\R^N} U^{p^*_s}\dx<\infty$, which we already know. This is the case $r=p^*_s$ in part (6) above, and it gives $U(x)\le C|x|^{-\gamma^*}$. The exponent $\gamma^*$ is identical at both ends, and is independent of $\mu$. The loss in part (6) is of order $1/r$, so the exponents $\gamma_\pm$ become visible only for large $r$. This is why we need integrability of $v$ of high order, which we obtain by iteration.

The iteration has to be carried out locally, for several reasons. First,  after the substitution, the equation still contains the term $U^{p^*_s-p}$, which belong to $L^{N/sp}(\R^N)$. This is exactly the scaling of the Hardy potential $|x|^{-sp}$. The norm of $U^{p^*_s-p}$ on $\R^N$ need not be small, so the Sobolev inequality on the whole space cannot absorb it.
The norm is small, however, on sufficiently small balls around the origin and outside sufficiently large balls. We therefore treat one end at a time.
Second, the test functions are admissible only away from the origin, since the weighted Sobolev inequality holds for functions with compact support in $\R^N\setminus\{0\}$. Third, the localization produces error terms, and the Caccioppoli inequality is the tool that controls them.

We stop the iteration after finitely many steps. The constant $c_r$ in part (3) becomes small as $r\to\infty$, so we choose the radius in part (4) after $r$, and the radius changes at every step. An infinite iteration would require control of all these radii and constants, which we avoid. For a given loss $\eta>0$, a finite $r$ is enough in part (6), and the comparison principle then removes the remaining loss.

In summary, for a positive radially symmetric nonincreasing solution $U$ of \eqref{eq:P}, Theorem~\ref{thm:integ} gives
\begin{equation}\label{eq:intro-first-Lq}
U\in L^q(\mathbb R^N) \quad \text{for every } q\in(q_-,q_+),
\end{equation}
and, when $p\ge2$, Theorem~\ref{thm:D} gives
\begin{equation}\label{eq:intro-final-bounds}
U(x)\le C|x|^{-\gamma_-(\mu)}\quad\text{as }|x|\to0,\quad U(x)\le C|x|^{-\gamma_+(\mu)} \quad \text{as }|x|\to\infty,
\end{equation}
and
\begin{equation}\label{eq:intro-final-Lq}
U\in L^q(\mathbb R^N) \quad \text{for every } \frac{N}{\gamma_+(\mu)}<q<\frac{N}{\gamma_-(\mu)}.
\end{equation}

\subsection{Organization of the paper}
The structure of the paper is as follows.

\begin{itemize}
    \item[Section~\ref{sec:rep}] studies the homogeneity function $\Phi$. We derive the paired representation \eqref{eq:intro-paired}, prove its monotonicity and identify its maximum with one half of the sharp fractional Hardy constant. We then obtain the two characteristic roots $\gamma_-(\mu)$ and $\gamma_+(\mu)$.
    \item[Section~\ref{sec:Phi2}] introduces the homogeneity function associated with the linearized equation. We determine its two homogeneous exponents and study the reflection property. We also compare this symmetry with the nonlinear relation between $\gamma_-(\mu)$ and $\gamma_+(\mu)$.
    \item[Section~\ref{sec:rigidity}] develops the tools needed for the Hardy-Sobolev application. We construct explicit homogeneous subsolutions and supersolutions and prove the Picone-based comparison principle.
    \item[Section~\ref{sec:app}] applies these results to the critical Hardy-Sobolev equation. We first obtain the characteristic upper bounds under the preliminary assumptions \eqref{eq:intro-M0} and \eqref{eq:intro-Minf}. For radially symmetric nonincreasing solutions, we then prove the Hardy-dependent integrability improvement \eqref{eq:intro-first-Lq}.
    \item[Section~\ref{sec:gsrep}] develops the ground-state representation associated with an arbitrary homogeneous Hardy solution. For $p\ge2$, applying this result to $\gamma_-(\mu)$ and $\gamma_+(\mu)$ gives the two-ground-state remainder inequality \eqref{eq:intro-two-groundstates}.
    \item[Section~\ref{sec:verify}] performs the ground-state substitution and proves the coercivity estimate it requires. Section~\ref{sec:scheme} turns that estimate into a weighted Caccioppoli inequality, runs a finite Moser iteration, and proves \eqref{eq:intro-M0} and \eqref{eq:intro-Minf} for positive radially symmetric nonincreasing solutions when $p\ge2$. Together with the comparison argument of Section~\ref{sec:app}, this gives the characteristic bounds \eqref{eq:intro-final-bounds}, and the range \eqref{eq:intro-final-Lq}.
    \item[Section~\ref{sec:proofs}] collects the proofs of the results stated in Section~\ref{subsec:mainresults}.
    \item[Appendices] Appendix~\ref{app:pv} checks the principal value in \eqref{eq:Phidef}. Appendix~\ref{app:power} contains further properties of power solutions (cylinder formulation, touching lemma, compactness and a blow-up dichotomy), which are not needed for the main results. Appendix~\ref{app:coerc} contains the proofs of the pointwise coercivity inequalities of Section~\ref{sec:verify}.
\end{itemize}

\section{The homogeneity function and the Hardy exponents}\label{sec:rep}
\subsection*{Notations and definitions}
We use the following notations and definitions throughout this paper. 
\begin{itemize}
    \item For $sp<N$, we denote $$\gamma_c:=\frac{N-sp}{p-1},
\quad
\gamma^*:=\frac{N-sp}{p},
\quad
p_s^*:=\frac{Np}{N-sp}, \quad P_{s}:=p^{*}_{s}-p=\frac{sp}{\gamma^{*}}.$$
    \item The tail space is defined as 
    \begin{align*}
        L^{p-1}_{sp}(\R^{N}) := \left\{ f\in L^{p-1}_{\mathrm{loc}}(\R^N) : \int_{\R^N}(1+|y|)^{-N-sp}|f(y)|^{p-1}\dy < \infty \right\}.
    \end{align*}
    \item  For an open set $D$, $\mathcal W^{s,p}_{0}(D)$ is the closure of $\cc(D)$ in $W^{s,p}(\R^N)$.
    \item The Gagliardo seminorm is defined as 
    \[ [u]_{s,p} := \left(\iint_{\R^{2N}} \frac{|u(x)-u(y)|^p}{|x-y|^{N+sp}}\dxy\right)^{\frac{1}{p}}.\]
    \item For $sp<N$, $\Dsp$ has the following characterization (see \cite{BGV21}): 
          \[\Dsp:=\left\{u \in L^{p^{*}_{s}}(\R^{N}):[u]_{s,p}<\infty\right\}.\]
    \item We denote $J_p(t):=|t|^{p-2}t$ for $t \in \R$.
    \item A function $u\in W^{s,p}_{\mathrm{loc}}(\Omega)\cap L^{p-1}_{sp}(\R^{N})$ is a weak solution of
$\pLap u=f$ in an open set $\Omega$ if
\begin{equation}\label{eq:weakdef}
  \frac12\iint_{\R^{2N}}\frac{\Jp\big(u(x)-u(y)\big)\big(\varphi(x)-\varphi(y)\big)}{|x-y|^{N+sp}}\dxy
   = \int_{\Omega}f\varphi\dx
  \quad\text{for all }\varphi\in \cc(\Omega),
\end{equation}
subsolutions and supersolutions being defined by the corresponding inequalities for $\varphi\ge0$. In particular, the weak formulation \eqref{eq:weakdef} also works for $u \in \Dsp$ and $\phi \in \mathcal W^{s,p}_{0}(\Omega)$.
\item Taking
$x=e_{1}$ and $y=r\omega$ in \eqref{eq:op} gives the one-dimensional representation
\begin{equation}\label{eq:Phidef}
  \Phi(\gamma) = \mathrm{P.V.}\int_{0}^{\infty}
  \Jp\big(1-r^{-\gamma}\big) r^{N-1} \Psi(r)\dr,
\end{equation}
where
\begin{equation}\label{eq:Psi}
  \Psi(r) := \int_{\Sph^{N-1}}\frac{\d\omega}{|e_{1}-r\omega|^{N+sp}}
   = |\Sph^{N-2}|\int_{0}^{\pi}\frac{\sin^{N-2}\theta\d\theta}
  {\big(1-2r\cos\theta+r^{2}\big)^{\frac{N+sp}{2}}} .
\end{equation}
This is the kernel appearing in the ground-state representation of \cite{FS08}. It is positive and real
analytic on $(0,\infty)\setminus\{1\}$, and satisfies the inversion identity
\begin{equation}\label{eq:Psiinv}
  \Psi\left( \frac{1}{\rho} \right) = \rho^{N+sp} \Psi(\rho),\quad \rho>0.
\end{equation}
 \item $c$ and $C$ denote positive constants whose values may change from line to line, and $f\asymp g$ means $cg\le f\le Cg$ for some $c,C>0$.
 \item  For a family $f_M$ of functions on $[0,\infty)$ depending on a truncation level $M>0$, we write $f_M(t)\approx g(t)$ if $f_M(t)=g(t)$ for $0\le t\le M$ and $f_M(t)\uparrow g(t)$ as $M\uparrow\infty$.
 \item For $a,b\in\R$ and $c,d>0$, we denote 
\begin{equation*}
  \mathfrak{B}(a,b,c,d):=|a-b|^{p}-\Jp(c-d)\left(\frac{|a|^{p}}{c^{p-1}}-\frac{|b|^{p}}{d^{p-1}}\right).
\end{equation*}
\item For $a,b\in\R$ and $c,d>0$, we denote
\begin{align*}
\mathfrak{K}(a,b,c,d):=\Jp(ac-bd)\big[aG(c)-bG(d)\big] -\Jp(a-b)\big[ac^{p-1}G(c)-bd^{p-1}G(d)\big],
\end{align*}
where $G: \R^+ \mapsto \R$ is nondecreasing.
\end{itemize}

\subsection{Representation formula}
We first write $\Phi$ as an absolutely convergent integral over $(0,1)$. This formula follows from the work of Del Pezzo and Quaas \cite[Theorem~1.1]{DPQ25}.
\begin{proposition}\label{prop:rep}
We set
\begin{equation}\label{eq:beta}
  \beta(\gamma):=\;N-sp-\gamma(p-1)=(p-1)\big(\gamma_{c}-\gamma\big).
\end{equation}
Then for every $\gamma\in\big(0,\tfrac{N}{p-1}\big)$ the principal value in \eqref{eq:Phidef} exists
and
\begin{equation}\label{eq:rep}
  \Phi(\gamma)=\int_{0}^{1}
  \big(1-r^{\gamma}\big)^{p-1} \big(1-r^{\beta(\gamma)}\big) r^{sp-1} \Psi(r)\dr,
\end{equation}
the integral on the right being absolutely convergent.
\end{proposition}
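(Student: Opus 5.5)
We split the integral in \eqref{eq:Phidef} at $r=1$ and fold the outer part $(1,\infty)$ onto $(0,1)$ by $r\mapsto 1/r$, using the inversion identity \eqref{eq:Psiinv}. Throughout we keep the principal value as a symmetric limit: fix $\varepsilon\in(0,\tfrac12)$ and integrate over $(0,1-\varepsilon)\cup(1+\varepsilon',\infty)$, with $1+\varepsilon'=(1-\varepsilon)^{-1}$. Excising this pair of intervals differs from the symmetric excision $(1-\varepsilon,1+\varepsilon)$ only by an integral over an interval of length $O(\varepsilon^{2})$. On that interval the integrand is $O(|r-1|^{p-1-N-sp}\cdot|r-1|^{N-1})$, because $\Psi(r)\asymp|1-r|^{-1-sp}$ near $r=1$ (the standard singularity of the spherical average, read off from \eqref{eq:Psi}). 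This discrepancy is the content of Appendix~\ref{app:pv} and tends to $0$. So we may use the inversion-adapted excision.

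\textbf{The folding.} On $(1+\varepsilon',\infty)$ we substitute $r=1/\rho$, $\rho\in(0,1-\varepsilon)$. Then $\mathrm{d}r=\rho^{-2}\mathrm{d}\rho$, $r^{N-1}=\rho^{1-N}$ and $\Psi(1/\rho)=\rho^{N+sp}\Psi(\rho)$. Moreover
\[
J_p(1-\rho^{\gamma})=(1-\rho^{\gamma})^{p-1}
\]
since $\rho<1$. The outer piece therefore becomes $\int_0^{1-\varepsilon}(1-\rho^{\gamma})^{p-1}\rho^{sp-1}\Psi(\rho)\,\mathrm{d}\rho$. On the inner piece $r<1$ we have
\[
J_p(1-r^{-\gamma})=-r^{-\gamma(p-1)}(1-r^{\gamma})^{p-1}.
\]
Its integrand is thus $-(1-r^{\gamma})^{p-1}r^{N-1-\gamma(p-1)}\Psi(r)$, and $N-1-\gamma(p-1)=sp-1+\beta(\gamma)$. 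Adding the two pieces gives exactly
\[
\int_0^{1-\varepsilon}(1-r^{\gamma})^{p-1}\bigl(1-r^{\beta(\gamma)}\bigr)r^{sp-1}\Psi(r)\,\mathrm{d}r .
\]

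\textbf{Absolute convergence and the limit.} The step needing care is showing that this integrand is absolutely integrable on $(0,1)$, so the limit $\varepsilon\to0$ exists and equals \eqref{eq:rep}.

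Near $r=0$: $\Psi$ is bounded, with $\Psi(0)=|\Sph^{N-1}|$. Hence the integrand is $O(r^{sp-1})$ when $\beta(\gamma)\ge0$, and $O(r^{sp-1+\beta(\gamma)})$ when $\beta(\gamma)<0$. In the second case we have $sp+\beta(\gamma)=N-\gamma(p-1)>0$ exactly because $\gamma<\tfrac{N}{p-1}$, so both cases are integrable. This is where the restriction on $\gamma$ enters. Note that $\beta<0$ makes the factor $1-r^{\beta}$ negative but does not affect integrability.

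Near $r=1$: we have $(1-r^{\gamma})^{p-1}\asymp(1-r)^{p-1}$, and $|1-r^{\beta}|\le C(1-r)$ (for $\beta=0$ this factor vanishes identically). Combined with $\Psi(r)\le C(1-r)^{-1-sp}$, the integrand is $O\big((1-r)^{p-1-sp}\big)$. This is integrable, since $p-sp>0$ as $s<1$.

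Dominated convergence then yields existence of the principal value and \eqref{eq:rep}.

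The main obstacle is the near-diagonal estimate $\Psi(r)\asymp|1-r|^{-1-sp}$, which both justifies the modified excision and gives integrability at $r=1$. We would obtain it from \eqref{eq:Psi}: the factor $\sin^{N-2}\theta$ together with $1-2r\cos\theta+r^{2}\approx(1-r)^{2}+\theta^{2}$ gives, after rescaling $\theta=(1-r)t$, the bound $\int_0^\infty t^{N-2}\big(1+t^{2}\big)^{-(N+sp)/2}\,\mathrm{d}t\,(1-r)^{-1-sp}$. Alternatively, we would simply cite \cite[Theorem~1.1]{DPQ25} and \cite{FS08}.
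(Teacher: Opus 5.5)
Your proof is correct and is essentially the paper's: the paper substitutes $r=e^{v}$ and pairs $v$ with $-v$, which is exactly your folding $r\mapsto 1/r$ over the excision $(0,1-\varepsilon)\cup\big((1-\varepsilon)^{-1},\infty\big)$, and it uses the same endpoint estimates near $r=0$ and $r=1$. Your extra $O(\varepsilon^{p-sp})$ estimate, which identifies the $r$-symmetric excision with this log-symmetric one, is also correct; note, however, that Appendix~\ref{app:pv} makes a different comparison, namely with excising the ball $B_{\varepsilon}(e_1)$ in $\R^N$, and that comparison relies on an odd-symmetry cancellation rather than a length estimate.
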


\begin{proof}
We substitute $r=e^{v}$ in \eqref{eq:Phidef}, so that $r^{N-1}dr=e^{Nv}dv$ and
\[
  \Phi(\gamma)=\mathrm{P.V.}\int_{-\infty}^{\infty}
  \Jp\big(1-e^{-\gamma v}\big) e^{Nv} \Psi(e^{v})\dv ,
\]
the principal value being the symmetric one at $v=0$. We fix $v>0$ and add the part of $v$ and
$-v$. Since $1-e^{-\gamma v}>0$ we have $\Jp(1-e^{-\gamma v})=(1-e^{-\gamma v})^{p-1}$, while
$1-e^{\gamma v}=-e^{\gamma v}(1-e^{-\gamma v})$ gives
\begin{equation}\label{eq:Jpid}
  \Jp\big(1-e^{\gamma v}\big) = - e^{\gamma(p-1)v} \big(1-e^{-\gamma v}\big)^{p-1}.
\end{equation}
By \eqref{eq:Psiinv}, $\Psi(e^{-v})=e^{(N+sp)v}\Psi(e^{v})$. The two parts
therefore combine into
\begin{gather*}
    \big(1-e^{-\gamma v}\big)^{p-1} \Psi(e^{v})
  \Big[ e^{Nv}-e^{\gamma(p-1)v+spv} \Big]
  =\big(1-e^{-\gamma v}\big)^{p-1}\big(1-e^{-\beta(\gamma)v}\big) e^{Nv} \Psi(e^{v}),
\end{gather*} 
with $\beta(\gamma)$ as in \eqref{eq:beta}. The two singular parts at $v=0$ have thus been
combined explicitly. Therefore,
\begin{equation*}
\Phi(\gamma)=\int_{0}^{\infty}\big(1-e^{-\gamma v}\big)^{p-1}
  \big(1-e^{-\beta(\gamma) v}\big) e^{Nv} \Psi(e^{v})\dv ,
\end{equation*}
and the substitution $r=e^{-v}$, with \eqref{eq:Psiinv} once more, gives \eqref{eq:rep}.

To prove absolute convergence, note that as $r\uparrow1$ we have  $(1-r^{\gamma})^{p-1}\sim\gamma^{p-1}(1-r)^{p-1}$,
$1-r^{\beta}\sim\beta(1-r)$ and $\Psi(r)\sim c(1-r)^{-1-sp}$, so the integrand is
$O\big((1-r)^{p-1-sp}\big)$, integrable because $p>sp$. As $r\downarrow0$ we have 
$\Psi(r)\to|\Sph^{N-1}|$ and $(1-r^{\gamma})^{p-1}\to1$. If $\beta(\gamma)\ge0$ the integrand is
$O(r^{sp-1})$, and if $\beta(\gamma)<0$ it is $O\big(r^{\beta(\gamma)+sp-1}\big)$, integrable
when $\beta(\gamma)+sp>0$, that is when $\gamma<\frac{N}{p-1}$.
\end{proof}

Lemma~\ref{lem:pv} in Appendix~\ref{app:pv} shows that the principal value in \eqref{eq:Phidef} coincides with the usual one, obtained by removing small balls around $e_1$.

\subsection{Monotonicity and the sharp Hardy constant}\label{sec:props}
We now locate the maximum of $\Phi$ and identify its value. Part (i) below is due to Del Pezzo and Quaas \cite[eq.~(2)]{DPQ25}.

\begin{proposition}\label{thm:props}
Let $\gamma_{c},\gamma^{*}$ be as in \eqref{eq:gammas}. On $I:=\big(0,\tfrac{N}{p-1}\big)$ the following holds.
\begin{enumerate}
\item[{\rm (i)}] $\Phi>0$ on $(0,\gamma_{c})$,
      $\Phi(0^{+})=\Phi(\gamma_{c})=0$, and
      $\Phi<0$ on $\big(\gamma_{c},\tfrac{N}{p-1}\big)$.
\item[{\rm (ii)}] $\Phi\in \C^{1}(I)$ and
\begin{equation}\label{eq:dPhi}
  \Phi'(\gamma)=(p-1)\int_{0}^{1}(-\log r) \big(1-r^{\gamma}\big)^{p-2}
  \big(r^{\gamma}-r^{\beta(\gamma)}\big) r^{sp-1} \Psi(r)\dr .
\end{equation}
\item[{\rm (iii)}] $\Phi$ is strictly increasing on $(0,\gamma^*)$
and strictly decreasing on $(\gamma^*,\frac{N}{p-1})$. In particular, $\gamma^*$ is the unique maximizer of $\Phi$.
\item[{\rm (iv)}] Let $\bar\mu:=\Phi(\gamma^{*})$. Then we have 
      $2\bar\mu=C_H$ where $C_H$ is the sharp constant in the fractional Hardy inequality
\begin{equation}\label{eq:FS}
  \iint_{\R^{N}\times\R^{N}}\frac{|u(x)-u(y)|^{p}}{|x-y|^{N+sp}}\dxy
  \ge C_H\int_{\R^{N}}\frac{|u(x)|^{p}}{|x|^{sp}}\dx ,
  \quad \forall  \, u\in \cc(\R^{N}\setminus\{0\}).
\end{equation}
\end{enumerate}
\end{proposition}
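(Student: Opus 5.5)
The plan is to work entirely from the paired representation \eqref{eq:rep} of Proposition~\ref{prop:rep}.

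\emph{Part (i).} For $r\in(0,1)$ the factors $(1-r^\gamma)^{p-1}$, $r^{sp-1}$ and $\Psi(r)$ are positive. The factor $1-r^{\beta(\gamma)}$ has the sign of $\beta(\gamma)$, which is positive for $\gamma<\gamma_c$, zero at $\gamma_c$, and negative beyond. This gives the sign pattern and $\Phi(\gamma_c)=0$ immediately. For $\Phi(0^+)=0$ I use dominated convergence as $\gamma\downarrow0$:
\begin{itemize}
\item near $r=1$, the integrand is bounded by $C(1-r)^{p-1-sp}$, uniformly for $\gamma$ in a compact set;
\item near $r=0$, it is bounded by $Cr^{sp-1}$;
\item pointwise, $(1-r^\gamma)^{p-1}\to0$.
\end{itemize}

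\emph{Part (ii).} I differentiate under the integral sign. Using $\partial_\gamma r^\gamma=r^\gamma\log r$ and $\beta'=-(p-1)$, the derivative of $(1-r^\gamma)^{p-1}(1-r^{\beta})$ is
\[
(p-1)(-\log r)(1-r^\gamma)^{p-2}\big[r^\gamma(1-r^\beta)-(1-r^\gamma)r^\beta\big],
\]
and the bracket simplifies to $r^\gamma-r^\beta$. This is \eqref{eq:dPhi}.

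Justifying the differentiation is the technical step. Near $r=1$ one has $-\log r\sim 1-r$, $(1-r^\gamma)^{p-2}\sim c(1-r)^{p-2}$ and $|r^\gamma-r^\beta|\le C(1-r)$, so the integrand is $O((1-r)^{p-sp})$, locally uniformly in $\gamma$. Note that $p-2$ may be negative, but the product is still fine. Near $r=0$ the bound is $|\log r|\,r^{\min(\gamma,\beta)+sp-1}$. This is integrable locally uniformly on $I$, since $\beta+sp>0$ there. These local uniform majorants give $\Phi\in\C^1(I)$.

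\emph{Part (iii).} On $(0,1)$ the function $r\mapsto r^t$ is strictly decreasing in $t$. Hence $r^\gamma-r^{\beta(\gamma)}$ has the sign of $\beta(\gamma)-\gamma$ for every $r$, and this sign is uniform in $r$. All other factors in \eqref{eq:dPhi} are positive, so $\operatorname{sign}\Phi'=\operatorname{sign}(\beta-\gamma)$. Finally, $\beta(\gamma)=\gamma$ exactly when $N-sp=p\gamma$, that is, when $\gamma=\gamma^*$, which gives the monotonicity and the unique maximizer.

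\emph{Part (iv), and the main obstacle.} I invoke Frank--Seiringer \cite{FS08}. Their sharp constant is $C_H=2\int_0^1 r^{sp-1}\big(1-r^{(N-sp)/p}\big)^p\Psi(r)\dr$. The factor $2$ reflects the double integral over $\R^{2N}$ against the operator normalization, consistent with the factor $\tfrac12$ in \eqref{eq:weakdef}.

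At $\gamma=\gamma^*$ we have $\beta(\gamma^*)=\gamma^*$, so \eqref{eq:rep} becomes $\Phi(\gamma^*)=\int_0^1(1-r^{\gamma^*})^p r^{sp-1}\Psi\dr$, which is exactly $C_H/2$.

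To make the argument self-contained, I would cross-check the normalization via the ground state argument. The function $\omega=|x|^{-\gamma^*}$ solves $\pLap\omega=\Phi(\gamma^*)|x|^{-sp}\omega^{p-1}$. The Picone/ground state representation then gives $[u]^p\ge 2\Phi(\gamma^*)\int|u|^p|x|^{-sp}$. Sharpness follows from truncations of $\omega$, with the constant computed in \cite{FS08}.

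The main risk is this bookkeeping of the factor $2$ between the Frank--Seiringer formula and the paper's weak formulation convention.
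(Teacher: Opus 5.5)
Your proof is correct and follows the paper's argument: the paired representation \eqref{eq:rep}, the sign of $1-r^{\beta(\gamma)}$, differentiation under the integral with cancellation of the cross terms, $\operatorname{sign}(r^{\gamma}-r^{\beta})=\operatorname{sign}(\beta-\gamma)$, and the Frank--Seiringer formula at $\beta(\gamma^{*})=\gamma^{*}$; your dominated-convergence argument for $\Phi(0^{+})=0$ is more careful than the paper, which only notes that the factor vanishes at $\gamma=0$. One slip: near $r=1$ the differentiated integrand is $O\big((1-r)^{p-1-sp}\big)$, not $O\big((1-r)^{p-sp}\big)$, because $\Psi(r)\sim c(1-r)^{-1-sp}$, but this is still integrable since $p>sp$.
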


\begin{proof}
(i) On $(0,1)$ every factor in \eqref{eq:rep} other than $1-r^{\beta(\gamma)}$ is strictly positive
when $\gamma>0$. Since $0<r<1$, the sign of $1-r^{\beta}$ is the sign of $\beta$, and by \eqref{eq:beta}
$\beta(\gamma)>0$ for $\gamma<\gamma_{c}$. At $\gamma=\gamma_{c}$ the factor
$1-r^{\beta(\gamma_{c})}$ vanishes identically, and at $\gamma=0$ the factor $(1-r^{\gamma})^{p-1}$
vanishes identically.

(ii) Differentiating the integrand of \eqref{eq:rep} and using $\beta'(\gamma)=-(p-1)$,
\begin{equation*}
  \partial_{\gamma}\Big[(1-r^{\gamma})^{p-1}(1-r^{\beta})\Big]
  =(p-1)(-\log r)(1-r^{\gamma})^{p-2}\Big[r^{\gamma}\big(1-r^{\beta}\big)-\big(1-r^{\gamma}\big)r^{\beta}\Big],   
\end{equation*}
and the bracket equals $r^{\gamma}-r^{\gamma+\beta}-r^{\beta}+r^{\gamma+\beta}=r^{\gamma}-r^{\beta}$,
the cross terms cancel. Differentiation under the integral sign is allowed because, for $\gamma$ in a
compact subinterval of $I$, the resulting integrand is bounded near $r=1$ by $C(1-r)^{p-1-sp}$ and
near $r=0$ by $C r^{\min\{0,\beta\}+sp-1}\log(1/r)$, both integrable.

(iii) In \eqref{eq:dPhi} the factors $-\log r$, $(1-r^{\gamma})^{p-2}$, $r^{sp-1}$ and $\Psi(r)$
are strictly positive on $(0,1)$. Further, 
\begin{equation*}
   \sgn\big(r^{\gamma}-r^{\beta}\big)=\sgn\big(\beta-\gamma\big)   \text{ for every }r\in(0,1), 
\end{equation*}
independently of $r$. Thus $\sgn\Phi'=\sgn(\beta(\gamma)-\gamma)$, and $\beta(\gamma)=\gamma$
if and only if $N-sp=\gamma p$, that is $\gamma=\gamma^{*}$. As $\gamma\mapsto\beta(\gamma)-\gamma$ is
strictly decreasing, the monotonicity follows.

(iv) At $\gamma=\gamma^{*}$ we have $\beta(\gamma^{*})=\gamma^{*}$, so the two factors in \eqref{eq:rep}
coincide and
\[
  \Phi(\gamma^{*})=\int_{0}^{1}\big(1-r^{\frac{N-sp}{p}}\big)^{p} r^{sp-1} \Psi(r)\dr,
\]
which is one half of
$C_H=2\int_{0}^{1}r^{sp-1}\big|1-r^{\frac{N-sp}{p}}\big|^{p}\Psi(r)\dr$, the
sharp constant in \eqref{eq:FS} by \cite[Theorem ~1.1]{FS08}.
\end{proof}

\begin{remark}\label{rem:sym}
(a) For $p=2$, \eqref{eq:rep} gives $\Phi(\gamma)=\Phi(\gamma_c-\gamma)$ and $\gamma_=\gamma_c/2$. For $p\neq2$ this symmetry is lost. Since $\gamma^*/\gamma_c=(p-1)/p$, we have $\gamma^*<\gamma_c/2$ if $1<p<2$, and $\gamma^*>\gamma_c/2$ if $p>2$.

(b) The factor $2$ in $2\bar\mu=C_H$ comes from the factor $\frac12$ in \eqref{eq:weakdef}. With the normalization of \cite{DPQ25} we have $\bar\mu=C_H$.
\end{remark}

We next describe the two roots of $\Phi(\gamma)=\mu$.
\begin{corollary}\label{cor:roots}
Let $\bar\mu$ be as in Proposition~\ref{thm:props}-(iv). For every $\mu\in(0,\bar\mu)$ the
equation $\Phi(\gamma)=\mu$ has two solutions in $(0,\gamma_{c})$,
\[
  0<\gamma_{-}(\mu)<\gamma^{*}=\frac{N-sp}{p}<\gamma_{+}(\mu)<\gamma_{c}=\frac{N-sp}{p-1}.
\]
The maps $\mu\mapsto\gamma_{\pm}(\mu)$ are $\C^{1}$ on $(0,\bar\mu)$, with $\gamma_{-}$ strictly
increasing, $\gamma_{+}$ strictly decreasing, and
\begin{equation*}
      \gamma_{-}(0^{+})=0,\quad \gamma_{+}(0^{+})=\gamma_{c},\quad
  \lim_{\mu\uparrow\bar\mu}\gamma_{\pm}(\mu)=\gamma^{*}.
\end{equation*}
In particular, the Hardy equation
\begin{equation}\label{eq:pureHardy}
  \pLap w  =  \mu \frac{w^{p-1}}{|x|^{sp}}\text{ in }\R^{N}\setminus\{0\},\quad w>0,
\end{equation}
has the two characteristic radial power profiles $|x|^{-\gamma_{-}(\mu)}$ and $|x|^{-\gamma_{+}(\mu)}$.
\end{corollary}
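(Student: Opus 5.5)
The plan is to derive everything from Proposition~\ref{thm:props}. Parts (i) and (iii) give that $\Phi$ is continuous on $[0,\gamma_c]$, with $\Phi(0^+)=\Phi(\gamma_c)=0$. It is strictly increasing on $(0,\gamma^*]$ and strictly decreasing on $[\gamma^*,\gamma_c]$, with maximum $\bar\mu=\Phi(\gamma^*)>0$. For $\mu\in(0,\bar\mu)$, the intermediate value theorem on $(0,\gamma^*)$ produces a root $\gamma_-(\mu)$, and on $(\gamma^*,\gamma_c)$ a root $\gamma_+(\mu)$. Strict monotonicity on each branch makes these the only roots in $(0,\gamma_c)$. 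On $(\gamma_c,\frac{N}{p-1})$ we have $\Phi<0<\mu$, so no further roots appear there. Both roots avoid $\gamma^*$ because $\Phi(\gamma^*)=\bar\mu>\mu$.

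Continuity at $\gamma=0$ needs a short check, namely that $\Phi(\gamma)\to0$ as $\gamma\downarrow0$. I would obtain this by dominated convergence in \eqref{eq:rep}. For $\gamma\in(0,\gamma^*]$ we have $\beta(\gamma)\ge\gamma^*>0$. The integrand is bounded near $r=1$ by $C(1-r)^{p-1-sp}$, uniformly for such $\gamma$. This uses $1-r^\gamma\le\gamma(-\log r)$ and $1-r^{\beta}\le\beta(-\log r)\le C(1-r)$ for $r$ near $1$. Near $r=0$ the integrand is bounded by $Cr^{sp-1}$. Since $(1-r^\gamma)^{p-1}\to0$ pointwise, $\Phi(0^+)=0$. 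Continuity at $\gamma_c$ follows from the $\C^1$ regularity in (ii).

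For regularity of the inverse branches, I would apply the inverse function theorem. Formula \eqref{eq:dPhi} gives $\Phi'(\gamma)$ strictly of the sign of $\beta(\gamma)-\gamma$, and this is nonzero whenever $\gamma\ne\gamma^*$. So $\Phi$ restricted to $(0,\gamma^*)$ is a $\C^1$ diffeomorphism onto $(0,\bar\mu)$, and restricted to $(\gamma^*,\gamma_c)$ it is a $\C^1$ diffeomorphism onto $(0,\bar\mu)$. Their inverses are $\gamma_-$ and $\gamma_+$. These are $\C^1$, and $\gamma_-$ is increasing and $\gamma_+$ decreasing, since $\gamma_\pm'(\mu)=1/\Phi'(\gamma_\pm(\mu))$. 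The limits $\mu\downarrow0$ and $\mu\uparrow\bar\mu$ follow because a strictly monotone continuous bijection between intervals maps endpoints to endpoints. For example, $\gamma_-(\mu)\downarrow$ some limit $L\ge0$ with $\Phi(L)=0$, and $L\in[0,\gamma^*)$ forces $L=0$. The other limits are handled in the same way.

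The final claim follows by combining \eqref{eq:homog} with $\Phi(\gamma_\pm)=\mu$. This gives $\pLap |x|^{-\gamma_\pm}=\mu|x|^{-\gamma_\pm(p-1)-sp}=\mu|x|^{-sp}(|x|^{-\gamma_\pm})^{p-1}$ pointwise in $\R^N\setminus\{0\}$. To pass to the weak sense, note that $\gamma_\pm<\gamma_c<\frac{N}{p-1}$. Hence $|x|^{-\gamma_\pm}\in W^{s,p}_{\rm loc}(\R^N\setminus\{0\})\cap L^{p-1}_{sp}(\R^N)$, because $(p-1)\gamma_\pm<N$ gives local integrability and the tail is integrable. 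Since the function is smooth away from $0$, the pointwise identity integrates against $\varphi\in\cc(\R^N\setminus\{0\})$. The main (mild) obstacle is this pointwise-to-weak step; it is standard via Fubini and the principal value of Lemma~\ref{lem:pv}.
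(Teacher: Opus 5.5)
Your proposal is correct and follows the same route as the paper: Proposition~\ref{thm:props}~(i)--(iii) together with the implicit (equivalently, inverse) function theorem, which applies because $\Phi'\neq0$ away from $\gamma^*$, and the final claim read off from \eqref{eq:homog}. Your dominated-convergence check of $\Phi(0^+)=0$, the intermediate value and endpoint arguments, and your remark on passing from the pointwise identity to the weak formulation fill in details that the paper's one-line proof leaves implicit.
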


\begin{proof}
Immediate from Proposition~\ref{thm:props}-((i)-(iii)) and the implicit function theorem, since
$\Phi'\neq0$ at $\gamma_{\pm}(\mu)$ when $\mu<\bar\mu$. The last statement is
\eqref{eq:homog}.
\end{proof}

\section{Linearization and symmetry}\label{sec:Phi2}
We linearize the Hardy equation at a power solution. The resulting homogeneity function has a symmetry that the nonlinear one lacks.
\subsection{The homogeneity function for the linearized equation}
For $\gamma,\gamma'>0$ with $N-\gamma(p-2)-\gamma'>0$ (we call such $\gamma,\gamma'$ admissible), define
$$
\Phi^{(2)}(\gamma,\gamma')
:=\mathrm{P.V.}\int_{\R^N}\big|1-|z|^{-\gamma}\big|^{p-2}\big(1-|z|^{-\gamma'}\big)\frac{dz}{|e_1-z|^{N+sp}}.
$$
For fixed $\gamma$ we put $\ell=\ell(\gamma):=N-sp-\gamma(p-2)$. The meaning of $\Phi^{(2)}$ as a linearized homogeneity coefficient is explained in Remark~\ref{rem:lin}.
\begin{proposition}\label{lem:Phi2}
Let $\gamma,\gamma'>0$ be admissible and $\ell=\ell(\gamma)$.
\begin{enumerate}
\item[{\rm (i)}] We have
\begin{equation}\label{eq:Phi2rep}
\Phi^{(2)}(\gamma,\gamma')=\int_0^1(1-r^\gamma)^{p-2}(1-r^{\gamma'})(1-r^{\ell-\gamma'})r^{sp-1}\Psi(r)\dr,
\end{equation}
the integral being absolutely convergent. Moreover, $\Phi^{(2)}(\gamma,\gamma)=\Phi(\gamma).$
\item[{\rm (ii)}] The function $\gamma'\mapsto\Phi^{(2)}(\gamma,\gamma')$ is positive on $(0,\ell)$, strictly increasing on $(0,\ell/2)$, strictly decreasing on $(\ell/2,\ell)$, and
\begin{equation}\label{eq:Phi2sym}
\Phi^{(2)}(\gamma,\gamma')=\Phi^{(2)}(\gamma,\ell-\gamma').
\end{equation}
\item[{\rm (iii)}] We have $\gamma<\ell(\gamma)/2$ if and only if $\gamma<\gamma^*$. Hence, if $\Phi(\gamma_-)=\mu$, then
\begin{equation}\label{eq:Phi2key}
\Phi^{(2)}(\gamma_-,\gamma')<\mu \text{ for every }\gamma'\in(0,\gamma_-).
\end{equation}
\end{enumerate}
\end{proposition}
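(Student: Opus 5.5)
For (i) I will repeat the folding argument of Proposition~\ref{prop:rep}. Polar coordinates $z=r\omega$ turn $\Phi^{(2)}$ into $\mathrm{P.V.}\int_0^\infty |1-r^{-\gamma}|^{p-2}(1-r^{-\gamma'})r^{N-1}\Psi(r)\dr$. I fold $r\in(1,\infty)$ onto $(0,1)$ using $r\mapsto 1/r$ and the inversion identity \eqref{eq:Psiinv}.

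For $r\in(0,1)$ the original piece is
\[
(r^{-\gamma}-1)^{p-2}(1-r^{-\gamma'})r^{N-1}\Psi(r)=-r^{-\gamma(p-2)-\gamma'}(1-r^\gamma)^{p-2}(1-r^{\gamma'})r^{N-1}\Psi(r).
\]
The reflected piece from $1/r$ is
\[
(1-r^{\gamma})^{p-2}(1-r^{\gamma'})r^{-N-1}r^{N+sp}\Psi(r)=(1-r^{\gamma})^{p-2}(1-r^{\gamma'})r^{sp-1}\Psi(r).
\]
Adding the two pieces gives
\[
(1-r^\gamma)^{p-2}(1-r^{\gamma'})r^{sp-1}\Psi(r)\bigl[1-r^{N-sp-\gamma(p-2)-\gamma'}\bigr],
\]
and the bracket is exactly $1-r^{\ell-\gamma'}$. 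This yields \eqref{eq:Phi2rep}, provided the symmetric principal value is justified as in Proposition~\ref{prop:rep} or Lemma~\ref{lem:pv}.

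Absolute convergence has two ends. Near $r=1$ the integrand is $O((1-r)^{p-2+1+1-1-sp})=O((1-r)^{p-1-sp})$, which is integrable. Near $r=0$ it is $O(r^{\min\{0,\ell-\gamma'\}+sp-1})$, and admissibility $\ell-\gamma'+sp>0$ is exactly what makes this integrable. Setting $\gamma'=\gamma$ gives $\ell-\gamma=\beta(\gamma)$ and $(1-r^\gamma)^{p-2}(1-r^\gamma)=(1-r^\gamma)^{p-1}$, so $\Phi^{(2)}(\gamma,\gamma)=\Phi(\gamma)$ by \eqref{eq:rep}.

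For (ii), the symmetry \eqref{eq:Phi2sym} is visible in \eqref{eq:Phi2rep}, since the factors $(1-r^{\gamma'})$ and $(1-r^{\ell-\gamma'})$ swap under $\gamma'\mapsto\ell-\gamma'$. Positivity on $(0,\ell)$ holds because every factor is positive for $0<r<1$. For monotonicity I will differentiate in $\gamma'$ under the integral, with the same domination as in Proposition~\ref{thm:props}(ii). The derivative of the product is
\[
(-\log r)\bigl[r^{\gamma'}(1-r^{\ell-\gamma'})-(1-r^{\gamma'})r^{\ell-\gamma'}\bigr]=(-\log r)\bigl(r^{\gamma'}-r^{\ell-\gamma'}\bigr),
\]
because the cross terms cancel. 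Its sign is $\sgn(\ell-2\gamma')$ for every $r\in(0,1)$. So the derivative is positive for $\gamma'<\ell/2$ and negative for $\gamma'>\ell/2$, which gives the claimed monotonicity.

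For (iii), $\gamma<\ell/2$ means $2\gamma<N-sp-\gamma(p-2)$, that is $p\gamma<N-sp$, which is $\gamma<\gamma^*$. Since $\gamma_-<\gamma^*$ by Corollary~\ref{cor:roots}, we get $\gamma_-<\ell(\gamma_-)/2$. Every $\gamma'\in(0,\gamma_-)$ then lies in the increasing range of (ii), so $\Phi^{(2)}(\gamma_-,\gamma')<\Phi^{(2)}(\gamma_-,\gamma_-)=\Phi(\gamma_-)=\mu$. These $\gamma'$ are admissible because $\gamma'<\gamma_-<\ell$. The only mildly delicate point in the whole argument is the principal-value bookkeeping near $|z|=1$ in (i), and I would handle it exactly as in Proposition~\ref{prop:rep} and Appendix~\ref{app:pv}.
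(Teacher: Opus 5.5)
Your proposal is correct and follows the paper's proof essentially step by step. The paper substitutes $r=e^{v}$ and pairs $v$ with $-v$, which is exactly your $r\mapsto 1/r$ folding with \eqref{eq:Psiinv}. It then reads off absolute convergence, positivity, the symmetry \eqref{eq:Phi2sym}, the derivative $(-\log r)\bigl(r^{\gamma'}-r^{\ell-\gamma'}\bigr)$ and the equivalence $\gamma<\ell/2\iff\gamma<\gamma^*$ just as you do. Like the paper, you take the principal value to be the symmetric exclusion in $\log r$; the paper's remark on linearization effectively takes \eqref{eq:Phi2rep} as the definition, so nothing is missing relative to its argument.
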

\begin{proof}
{As in Proposition~\ref{prop:rep}, set $r=e^{v}$, pair $v$ with $-v$, use
$|1-e^{\gamma v}|^{p-2}=e^{\gamma(p-2)v}(1-e^{-\gamma v})^{p-2}$, $1-e^{\gamma'v}=-e^{\gamma'v}(1-e^{-\gamma'v})$ and \eqref{eq:Psiinv}, and then set $r=e^{-v}$. This gives \eqref{eq:Phi2rep}. Near $r=1$ the integrand is $O\big((1-r)^{p-1-sp}\big)$, which is integrable since $p>sp$, and near $r=0$ it is integrable because $\ell-\gamma'+sp>0$. For $\gamma'=\gamma$ we have $\ell-\gamma=\beta(\gamma)$, so $\Phi^{(2)}(\gamma,\gamma)=\Phi(\gamma)$ by \eqref{eq:rep}.

(ii) The only factor in \eqref{eq:Phi2rep} that can change sign is $(1-r^{\gamma'})(1-r^{\ell-\gamma'})$, which is positive for $0<\gamma'<\ell$. Replacing $\gamma'$ by $\ell-\gamma'$ exchanges these two factors, which gives \eqref{eq:Phi2sym}. As in Proposition~\ref{thm:props}-(ii), the cross terms cancel and
\[
\partial_{\gamma'}\Phi^{(2)}(\gamma,\gamma')
=\int_0^1 (1-r^\gamma)^{p-2}(-\log r)\big(r^{\gamma'}-r^{\ell-\gamma'}\big)r^{sp-1}\Psi(r)\dr .
\]
For $0<r<1$ the sign of $r^{\gamma'}-r^{\ell-\gamma'}$ is the sign of $\ell-2\gamma'$, which gives the monotonicity.

(iii) The inequality $2\gamma<N-sp-\gamma(p-2)$ is the same as $p\gamma<N-sp$. If $\Phi(\gamma_-)=\mu$, then $\gamma_-<\gamma^*$, so $\gamma_-<\ell(\gamma_-)/2$ and $\Phi^{(2)}(\gamma_-,\cdot)$ is strictly increasing on $(0,\gamma_-]$. Since $\Phi^{(2)}(\gamma_-,\gamma_-)=\mu$, we get \eqref{eq:Phi2key}.}
\end{proof}

\begin{remark}\label{rem:lin}
Let $w_{\gamma}=|x|^{-\gamma}$, $\phi=|x|^{-\gamma'}$ and $\delta f:=f(x)-f(y)$. The derivative of $t\mapsto J_p(\delta w_{\gamma}+t\,\delta\phi)$ at $t=0$ is $(p-1)|\delta w_{\gamma}|^{p-2}\delta\phi$. This gives the operator $L_{\gamma}$ of Corollary~\ref{cor:linexp}. For $1<p<2$ this kernel is singular on $\{|x|=|y|\}$, but after the pairing $r\leftrightarrow1/r$ the integrand in \eqref{eq:Phi2rep} is integrable. So we define the linearized coefficient by \eqref{eq:Phi2rep}. We do not claim that $\pLap$ is differentiable.
\end{remark}

The symmetry gives the exponents of the linearized Hardy equation explicitly.

\begin{corollary}\label{cor:linexp}
Let $0<\mu<\bar\mu$ and $\gamma\in\{\gamma_{-}(\mu),\gamma_{+}(\mu)\}$. Linearizing
\eqref{eq:pureHardy} at $|x|^{-\gamma}$ gives
\[
  L_{\gamma}\phi=\mu(p-1) |x|^{-sp-\gamma(p-2)}\phi,
  \quad
  L_{\gamma}\phi(x):=(p-1) \mathrm{P.V.}\!\int_{\R^{N}}
  \frac{\big||x|^{-\gamma}-|y|^{-\gamma}\big|^{p-2}\big(\phi(x)-\phi(y)\big)}{|x-y|^{N+sp}}\dy .
\]
Its power solutions $\phi=|x|^{-\gamma'}$ with $\gamma'\in(0,\ell(\gamma))$ are given by
\[
  \gamma'=\gamma\quad\text{and}\quad \gamma'=\beta(\gamma)=N-sp-\gamma(p-1),
\]
and these are distinct. For $p=2$ we have  $\beta(\gamma_{-})=\gamma_{+}$, so the exponents of the
linearized equation are those of \eqref{eq:pureHardy}.
For $p\neq2$ the two sets of exponents coincide only if $\beta(\gamma_-)=\gamma_+$.
\end{corollary}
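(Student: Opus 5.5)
The plan is to reduce everything to Proposition~\ref{lem:Phi2}. First I check that the linearization at $w_\gamma=|x|^{-\gamma}$ produces the stated operator and right-hand side. Differentiating $t\mapsto J_p(\delta w_\gamma+t\,\delta\phi)$ at $t=0$ gives the kernel $(p-1)|\delta w_\gamma|^{p-2}\delta\phi$, as in Remark~\ref{rem:lin}. Differentiating $\mu|x|^{-sp}(w_\gamma+t\phi)^{p-1}$ gives $\mu(p-1)|x|^{-sp}w_\gamma^{p-2}\phi$, and $w_\gamma^{p-2}=|x|^{-\gamma(p-2)}$. This yields the displayed equation $L_\gamma\phi=\mu(p-1)|x|^{-sp-\gamma(p-2)}\phi$.

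Next I compute $L_\gamma$ on $\phi=|x|^{-\gamma'}$ by scaling and rotation invariance. I take $x=\rho e_1$ and substitute $y=\rho z$. The homogeneities give $|\rho^{-\gamma}-\rho^{-\gamma}|z|^{-\gamma}|^{p-2}=\rho^{-\gamma(p-2)}|1-|z|^{-\gamma}|^{p-2}$ and $\phi(x)-\phi(y)=\rho^{-\gamma'}(1-|z|^{-\gamma'})$. The measure factor is $\rho^{N}/\rho^{N+sp}$. Hence
\[
L_\gamma |x|^{-\gamma'}=(p-1)\,\Phi^{(2)}(\gamma,\gamma')\,|x|^{-\gamma'-\gamma(p-2)-sp}.
\]
This has exactly the power of the right-hand side, and absolute convergence is guaranteed by Proposition~\ref{lem:Phi2}(i), with admissibility equivalent to $\gamma'<\ell(\gamma)+sp$. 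So $|x|^{-\gamma'}$ with $\gamma'\in(0,\ell)$ solves the linearized equation iff $\Phi^{(2)}(\gamma,\gamma')=\mu$.

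Now I count the roots. By Proposition~\ref{lem:Phi2}(ii), $\gamma'\mapsto\Phi^{(2)}(\gamma,\gamma')$ is strictly increasing on $(0,\ell/2)$, strictly decreasing on $(\ell/2,\ell)$, and symmetric under $\gamma'\mapsto\ell-\gamma'$. It therefore takes each value below its maximum at most twice, at reflected points. One root is $\gamma'=\gamma$, since $\Phi^{(2)}(\gamma,\gamma)=\Phi(\gamma)=\mu$. By symmetry the other is $\ell-\gamma=N-sp-\gamma(p-1)=\beta(\gamma)$.

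For distinctness, note that $\gamma=\beta(\gamma)$ iff $\gamma=\gamma^*$, which is excluded since $\gamma_\pm\ne\gamma^*$ for $\mu<\bar\mu$ (Corollary~\ref{cor:roots}). Positivity of $\beta(\gamma)$ holds because $\gamma<\gamma_c$, and $\beta(\gamma)<\ell$ holds because $\gamma>0$. Thus both roots lie in $(0,\ell)$, and there are no others.

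For $p=2$, Remark~\ref{rem:sym}(a) gives $\Phi(\gamma)=\Phi(\gamma_c-\gamma)$. So $\gamma_c-\gamma_-$ is a root of $\Phi=\mu$ and it exceeds $\gamma_c/2=\gamma^*$. By uniqueness of the root above $\gamma^*$, it equals $\gamma_+$. Since $\beta(\gamma_-)=\gamma_c-\gamma_-$ when $p=2$, we get $\beta(\gamma_-)=\gamma_+$, and similarly $\beta(\gamma_+)=\gamma_-$.

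For $p\ne2$, the linearized set at $\gamma_-$ is $\{\gamma_-,\beta(\gamma_-)\}$ and the nonlinear set is $\{\gamma_-,\gamma_+\}$. These coincide iff $\beta(\gamma_-)=\gamma_+$; the case at $\gamma_+$ is analogous.

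The only delicate step is justifying the principal value for $1<p<2$, where the kernel $|\delta w_\gamma|^{p-2}$ is singular on $|x|=|y|$. Following Remark~\ref{rem:lin}, I take \eqref{eq:Phi2rep} as the definition of the coefficient, so no differentiability of $\pLap$ is needed.
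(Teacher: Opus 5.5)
Your proof is correct and follows the paper's argument: you reduce to $\Phi^{(2)}(\gamma,\gamma')=\mu$ via the scaling identity behind Proposition~\ref{lem:Phi2}, then use unimodality and reflection symmetry about $\ell/2$ together with $\gamma\neq\gamma^*$, and for $p=2$ you use the symmetry $\Phi(\gamma)=\Phi(\gamma_c-\gamma)$, which is exactly the content of Proposition~\ref{cor:defect}. One small precision: at $\gamma=\gamma_+$ the ``analogous'' coincidence condition is $\beta(\gamma_+)=\gamma_-$, not $\beta(\gamma_-)=\gamma_+$, and for $p\neq2$ these two conditions cannot both hold, so the last sentence of the statement should be read at $\gamma=\gamma_-$.
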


\begin{proof}
{By Proposition~\ref{lem:Phi2}, $L_{\gamma}\big(|x|^{-\gamma'}\big)=(p-1)\Phi^{(2)}(\gamma,\gamma')|x|^{-\gamma(p-2)-\gamma'-sp}$, so $|x|^{-\gamma'}$ is a solution if and only if $\Phi^{(2)}(\gamma,\gamma')=\mu$. Since $\Phi^{(2)}(\gamma,\gamma)=\Phi(\gamma)=\mu$ and $\gamma\neq\ell/2$ (because $\gamma\neq\gamma^*$), Proposition~\ref{lem:Phi2}-(ii) shows that this equation has exactly two roots in $(0,\ell)$, symmetric about $\ell/2$. One is $\gamma'=\gamma$, so the other is $\ell-\gamma=\beta(\gamma)$, and they are distinct. For $p=2$, $\beta(\gamma_{-})=\gamma_{c}-\gamma_{-}=\gamma_{+}$ by Proposition~\ref{cor:defect}.}
\end{proof}

\subsection{Reflection of the Hardy exponents}
For $p=2$, $\Phi$ is symmetric under $\gamma\mapsto\gamma_c-\gamma$ (Remark~\ref{rem:sym}). We now see what this gives for the two roots.

\begin{proposition}\label{cor:defect}
    For $p=2$ we have  $\gamma_{-}(\mu)+\gamma_{+}(\mu)=\gamma_{c}$ for every $\mu\in(0,\bar\mu)$.
For $p\neq2$, this identity does not hold in general. More precisely, $\lim_{\mu\uparrow\bar\mu}
\left(\gamma_-(\mu)+\gamma_+(\mu)-\gamma_c\right)=\frac{(N-sp)(p-2)}{p(p-1)}$. Moreover,
$$\gamma_-(\mu)+\gamma_+(\mu)-\gamma_c\longrightarrow0 \, \text{as }\mu\downarrow0. $$
In particular, if $p\neq2$, the reflection identity fails for every
$\mu$ sufficiently close to $\bar\mu$.
\end{proposition}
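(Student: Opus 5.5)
For $p=2$ we have $\beta(\gamma)=N-2s-\gamma=\gamma_c-\gamma$. Substituting into \eqref{eq:rep} gives
\[
\Phi(\gamma)=\int_0^1(1-r^\gamma)(1-r^{\gamma_c-\gamma})r^{2s-1}\Psi(r)\dr ,
\]
which is invariant under $\gamma\mapsto\gamma_c-\gamma$. So if $\Phi(\gamma_-(\mu))=\mu$, then $\Phi(\gamma_c-\gamma_-(\mu))=\mu$ as well. Since $\gamma_-<\gamma^*=\gamma_c/2$, the point $\gamma_c-\gamma_-$ lies in $(\gamma^*,\gamma_c)$. By the uniqueness of the root on $(\gamma^*,\gamma_c)$ from Corollary~\ref{cor:roots}, it equals $\gamma_+(\mu)$.

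For general $p$ the two limits follow directly from the endpoint behaviour in Corollary~\ref{cor:roots}.

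\emph{Limit $\mu\uparrow\bar\mu$.} Here $\gamma_\pm(\mu)\to\gamma^*$, so
\[
\gamma_-+\gamma_+-\gamma_c\to 2\gamma^*-\gamma_c=(N-sp)\Big(\frac2p-\frac1{p-1}\Big)=\frac{(N-sp)(p-2)}{p(p-1)} .
\]

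\emph{Limit $\mu\downarrow0$.} Here $\gamma_-\to0$ and $\gamma_+\to\gamma_c$, so the sum tends to $\gamma_c$ and the difference tends to $0$.

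Finally, if $p\neq2$ the limit $\frac{(N-sp)(p-2)}{p(p-1)}$ is nonzero. By continuity of $\gamma_\pm$ on $(0,\bar\mu)$, the quantity $\gamma_-+\gamma_+-\gamma_c$ stays away from $0$ for all $\mu$ close enough to $\bar\mu$, so the reflection identity fails there.

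No step is a real obstacle. The only point needing care is identifying the reflected root with $\gamma_+$ rather than $\gamma_-$, and that follows from $\gamma_-<\gamma^*=\gamma_c/2$ when $p=2$.
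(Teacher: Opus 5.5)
Your proposal is correct and matches the paper's proof essentially step for step. For $p=2$ you use the reflection symmetry of \eqref{eq:rep} together with uniqueness of the root above $\gamma^*$, and for the two limits you use the endpoint behaviour in Corollary~\ref{cor:roots}; the paper does the same. For the final claim you do not need continuity of $\gamma_\pm$: the nonzero limit alone forces $\gamma_-+\gamma_+-\gamma_c\neq0$ for $\mu$ near $\bar\mu$.
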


\begin{proof}
For $p=2$, we have
$$\Phi(\gamma)=\Phi(\gamma_c-\gamma).$$
Hence $\gamma_c-\gamma_-(\mu)$ is a root of $\Phi(\gamma)=\mu$ lying above $\gamma^*$. By the uniqueness of
the upper root, $\gamma_c-\gamma_-(\mu)=\gamma_+(\mu)$, which proves the first statement. For general $p$, Corollary~\ref{cor:roots} gives $$\gamma_\pm(\mu)\longrightarrow\gamma^*\,\text{as }\mu\uparrow\bar\mu.$$ Therefore, $\gamma_-(\mu)+\gamma_+(\mu)-\gamma_c \longrightarrow2\gamma^*-\gamma_c$. Since
$$2\gamma^*-\gamma_c=(N-sp)\left(\frac{2}{p}-\frac{1}{p-1}\right)=\frac{(N-sp)(p-2)}{p(p-1)},$$
the limit is nonzero when $p\neq2$.  Finally, Corollary~\ref{cor:roots} gives
$$\gamma_-(\mu)\longrightarrow0,\quad\gamma_+(\mu)\longrightarrow\gamma_c. $$
as $\mu\downarrow0$. This proves the result.
\end{proof}
We do not determine whether $\gamma_-(\mu)+\gamma_+(\mu)-\gamma_c$ can vanish at an isolated value of $\mu$ when $p\neq2$.

\section{Homogeneous solutions, compactness and comparison}\label{sec:rigidity}
In this section we collect the tools used in Section~\ref{sec:app}. These are explicit barriers and a comparison principle. Further properties of power solutions (touching, scaling and compactness) are given in Appendix~\ref{app:power}.

\subsection{Subsolutions and supersolutions for the Hardy-Sobolev equation}\label{sec:barriers}
We first use the homogeneity function to construct explicit power subsolutions and supersolutions for \eqref{eq:P}.
We recall that
\begin{equation}\label{eq:Xi}
  P_s:=p_{s}^{*}-p=\frac{sp^{2}}{N-sp},
  \quad\text{so that}\quad P_s=\frac{sp}{\gamma^{*}}.
\end{equation}
We compute the Hardy-Sobolev operator on powers and obtain barriers.
\begin{proposition}\label{lem:barrier}
Let $\mu>0$, $\gamma\in\big(0,\tfrac{N}{p-1}\big)$, $A>0$ and $W(x):=A|x|^{-\gamma}$. Then in
$\R^{N}\setminus\{0\}$,
\begin{equation}\label{eq:barrier}
  \pLap W-\mu|x|^{-sp}W^{p-1}-W^{p_{s}^{*}-1}
   = |x|^{-\gamma(p-1)-sp}\Big[A^{p-1}\big(\Phi(\gamma)-\mu\big)
  -A^{p_{s}^{*}-1}|x|^{P_s(\gamma^{*}-\gamma)}\Big].
\end{equation}
Consequently, if $0<\mu<\bar\mu$, then the following hold.
\begin{enumerate}
\item[{\rm(i)}] for $\gamma\in(\gamma_{-},\gamma^{*})$, $W$ is a supersolution of \eqref{eq:P} on
$B_{r_{0}}\setminus\{0\}$, where
$r_{0}=\big[A^{p-p_{s}^{*}}(\Phi(\gamma)-\mu)\big]^{1/(P_s(\gamma^{*}-\gamma))}$,
\item[{\rm(ii)}] for $\gamma\in(\gamma^{*},\gamma_{+})$, $W$ is a supersolution of \eqref{eq:P} on
$\R^{N}\setminus \overline{B_{R_{0}}}$, where
$R_{0}=\big[A^{p_{s}^{*}-p}(\Phi(\gamma)-\mu)^{-1}\big]^{1/(P_s(\gamma-\gamma^{*}))}$,
\item[{\rm(iii)}] for $\gamma\in\big(0,\gamma_{-}\big)\cup\big(\gamma_{+},\tfrac{N}{p-1}\big)$, $W$ is a subsolution of \eqref{eq:P} on all of $\R^{N}\setminus\{0\}$, for every $A>0$.
\end{enumerate}
\end{proposition}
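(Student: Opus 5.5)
The identity \eqref{eq:barrier} is a direct computation using \eqref{eq:homog}, so I will obtain it first and then read off the three cases from the sign of the bracket.

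\textbf{The identity.} Since $\pLap$ is $(p-1)$-homogeneous, \eqref{eq:homog} gives $\pLap W=A^{p-1}\Phi(\gamma)|x|^{-\gamma(p-1)-sp}$ pointwise in $\R^N\setminus\{0\}$. The Hardy term is $\mu|x|^{-sp}W^{p-1}=\mu A^{p-1}|x|^{-\gamma(p-1)-sp}$. The critical term is
\[
W^{p^*_s-1}=A^{p^*_s-1}|x|^{-\gamma(p-1)-\gamma P_s}=A^{p^*_s-1}|x|^{-\gamma(p-1)-sp}\,|x|^{sp-\gamma P_s}.
\]
Using $sp=P_s\gamma^*$ from \eqref{eq:Xi}, the last factor is $|x|^{P_s(\gamma^*-\gamma)}$. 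Factoring out $|x|^{-\gamma(p-1)-sp}$ then gives \eqref{eq:barrier}.

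\textbf{From pointwise to weak sense.} I must check that $W$ is a weak super- or subsolution in the sense of \eqref{eq:weakdef} on the relevant open sets. For $\gamma<N/(p-1)$ we have $W\in L^{p-1}_{sp}(\R^N)$, since $|y|^{-\gamma(p-1)}$ is locally integrable. Also $W$ is smooth away from $0$, so $W\in W^{s,p}_{\mathrm{loc}}(\R^N\setminus\{0\})$. For such smooth functions with tails in $L^{p-1}_{sp}$, the pointwise principal value coincides with the weak operator when tested against $\varphi\in\cc(\Omega)$ with $\Omega\subset\R^N\setminus\{0\}$. This uses Lemma~\ref{lem:pv} and a standard Fubini/symmetrization argument. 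I expect this to be the only step needing care, and it is routine.

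\textbf{Sign analysis.} Write $\mathcal{B}(x):=A^{p-1}(\Phi(\gamma)-\mu)-A^{p^*_s-1}|x|^{P_s(\gamma^*-\gamma)}$ for the bracket in \eqref{eq:barrier}. By Proposition~\ref{thm:props}(iii) and Corollary~\ref{cor:roots}, $\Phi>\mu$ on $(\gamma_-,\gamma_+)$ and $\Phi<\mu$ on $(0,\gamma_-)\cup(\gamma_+,N/(p-1))$; on $(\gamma_c,N/(p-1))$ this holds because $\Phi<0$ there.

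In case (i), $\gamma<\gamma^*$, so the exponent $P_s(\gamma^*-\gamma)$ is positive and $|x|^{P_s(\gamma^*-\gamma)}$ increases in $|x|$. Hence $\mathcal{B}(x)\ge0$ exactly when $|x|^{P_s(\gamma^*-\gamma)}\le A^{p-p^*_s}(\Phi(\gamma)-\mu)$, that is, when $|x|\le r_0$. So $W$ is a supersolution on $B_{r_0}\setminus\{0\}$.

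In case (ii), $\gamma>\gamma^*$, so the exponent is negative. Now $\mathcal{B}(x)\ge0$ exactly when $|x|^{P_s(\gamma-\gamma^*)}\ge A^{p^*_s-p}(\Phi(\gamma)-\mu)^{-1}$, that is, when $|x|\ge R_0$. So $W$ is a supersolution on $\R^N\setminus\overline{B_{R_0}}$.

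In case (iii), $\Phi(\gamma)-\mu<0$ and the second term of $\mathcal{B}$ is nonpositive. Thus $\mathcal{B}<0$ everywhere, for every $A>0$, and $W$ is a subsolution on all of $\R^N\setminus\{0\}$.
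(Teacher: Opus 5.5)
Your proposal is correct and follows the paper's proof essentially verbatim: compute each term via \eqref{eq:homog}, rewrite the critical exponent using $sp=P_s\gamma^*$ (the paper's identity \eqref{eq:Xiid}), and read off the three cases from the sign of $\Phi(\gamma)-\mu$ and of the exponent $P_s(\gamma^*-\gamma)$. Your extra paragraph on passing from the pointwise identity to the weak formulation covers a point the paper leaves implicit in this proof, and your sketch of it is adequate.
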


\begin{proof}
By \eqref{eq:homog}, $\pLap W=A^{p-1}\Phi(\gamma)|x|^{-\gamma(p-1)-sp}$, while 
$\mu|x|^{-sp}W^{p-1}=\mu A^{p-1}|x|^{-\gamma(p-1)-sp}$ and
$W^{p_{s}^{*}-1}=A^{p_{s}^{*}-1}|x|^{-\gamma(p_{s}^{*}-1)}$. The identity
\begin{equation}\label{eq:Xiid}
  \gamma(p_{s}^{*}-1)-\big[\gamma(p-1)+sp\big]
  =\gamma(p_{s}^{*}-p)-sp=P_s\Big(\gamma-\frac{N-sp}{p}\Big)=P_s(\gamma-\gamma^{*})
\end{equation}
turns $|x|^{-\gamma(p_{s}^{*}-1)}$ into $|x|^{-\gamma(p-1)-sp}|x|^{P_s(\gamma^{*}-\gamma)}$, which is
\eqref{eq:barrier}. For (i), $\gamma\in(\gamma_{-},\gamma_{+})$ gives $\Phi(\gamma)>\mu$ by
Proposition~\ref{thm:props}, and $\gamma<\gamma^{*}$ makes the exponent $P_s(\gamma^{*}-\gamma)$ positive,
so the bracket is positive for $|x|<r_{0}$. The proof of (ii) is analogous, since the exponent
$P_s(\gamma^{*}-\gamma)$ is now negative. For (iii), $\Phi(\gamma)<\mu$ makes the bracket negative for every $x$.
\end{proof}

\subsection{Comparison principle}
The comparison principle below is used to remove the loss $\eta$ in the exponents. It is based on the discrete Picone inequality of Brasco and Franzina \cite{BF14}. For $a,b>0$ and $\alpha,\beta\ge0$ it states that
\begin{equation}\label{eq:picone}
  \Jp(a-b)\left(\frac{\alpha^{p}}{a^{p-1}}-\frac{\beta^{p}}{b^{p-1}}\right)\le|\alpha-\beta|^{p}.
\end{equation}
We now state the comparison principle.
\begin{proposition}\label{thm:comp}
Let $D\subset\R^{N}$ be open with $|\R^{N}\setminus D|>0$, let $\nu\in\R$, and let $h\ge0$ be
measurable. Let
$u,v \in L^{p-1}_{sp}(\R^{N}), u,v>0$, $u$ be a weak subsolution and $v$ a weak
supersolution of $\pLap w=\nu h(x) w^{p-1}$ in $D$, and suppose $u\le v$ almost everywhere in
$\R^{N}\setminus D$. For $\delta\in(0,1]$ put
\[
  \sigma_{\delta}:=(u+\delta)^{p}-(v+\delta)^{p},\quad
  \varphi_{1,\delta}:=\frac{\sigma_{\delta}^{+}}{(u+\delta)^{p-1}},\quad
  \varphi_{2,\delta}:=\frac{\sigma_{\delta}^{+}}{(v+\delta)^{p-1}},
\]
and assume
\begin{enumerate}
\item[$(H_{1})$]\customlabel{H1}{$H_{1}$} $\varphi_{1,\delta},\varphi_{2,\delta}\in\mathcal{W}^{s,p}_{0}(D)$ for every
$\delta\in(0,1]$.
\item[$(H_{2})$]\customlabel{H2}{$H_{2}$} with $\sigma:=u^{p}-v^{p}$,
\[
  \int_{\{u>v\}}h\left[\sigma^{+}+(u+1)^{p-1}\right]\dx<\infty.
\]
\item[$(H_{3})$]\customlabel{H3}{$H_{3}$} $hu^{p-1},hv^{p-1}\in L^{1}_{\mathrm{loc}}(D)$, and for every $\delta\in(0,1]$ the functions $\varphi_{1,\delta},\varphi_{2,\delta}$ are bounded and supported in a compact subset of $D$.
\end{enumerate}
Then $u\le v$ almost everywhere in $\R^{N}$.
\end{proposition}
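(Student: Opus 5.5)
We follow the standard Picone comparison argument (Brasco--Franzina), testing the subsolution inequality for $u$ with $\varphi_{1,\delta}$ and the supersolution inequality for $v$ with $\varphi_{2,\delta}$. Both functions are nonnegative, lie in $\mathcal W^{s,p}_0(D)$ by \eqref{H1}, and are bounded with compact support in $D$ by \eqref{H3}. Since $hu^{p-1},hv^{p-1}\in L^1_{\rm loc}(D)$, the right-hand sides $\nu\int h u^{p-1}\varphi_{1,\delta}$ and $\nu\int h v^{p-1}\varphi_{2,\delta}$ are finite. Because $u\le v$ a.e. outside $D$, $\sigma_\delta^+$ vanishes there, so both test functions are supported in $\{u>v\}\cap D$. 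We write $\tilde u=u+\delta$ and $\tilde v=v+\delta$. The differences $u(x)-u(y)=\tilde u(x)-\tilde u(y)$ are unchanged, so the left-hand sides are the Gagliardo pairings of $\tilde u$ and $\tilde v$.

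Subtracting the two inequalities gives
\[
\frac12\iint\frac{\Jp(\tilde u(x)-\tilde u(y))(\varphi_{1,\delta}(x)-\varphi_{1,\delta}(y))-\Jp(\tilde v(x)-\tilde v(y))(\varphi_{2,\delta}(x)-\varphi_{2,\delta}(y))}{|x-y|^{N+sp}}\dxy
\le \nu\int h\,\sigma_\delta^+\Big[\frac{u^{p-1}}{\tilde u^{p-1}}-\frac{v^{p-1}}{\tilde v^{p-1}}\Big]\dx .
\]
On $\{u>v\}$ the map $t\mapsto t/(t+\delta)$ is increasing, so the bracket is nonnegative and bounded by $1$. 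It tends to $0$ pointwise as $\delta\to0$.

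For the left side we use the standard decomposition, as in Lotfi--Brasco--Franzina type uniqueness proofs. On $\{u>v\}$ we set $\alpha=\tilde u$ and $\beta=\tilde v$, extended suitably. Applying \eqref{eq:picone} twice, with $(a,b)=\tilde u$ and $(\alpha,\beta)=\tilde v$ values and conversely, restricted to the set where $\sigma_\delta>0$, shows the integrand is bounded below by a nonnegative quantity $\mathcal I_\delta(x,y)$. This quantity vanishes only when $\tilde u/\tilde v$ is constant on the relevant pairs; more precisely, by the strict form of Picone, $\mathcal I_\delta=0$ forces $\tilde u(x)/\tilde v(x)=\tilde u(y)/\tilde v(y)$. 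Concretely, one writes $\varphi_{1,\delta}=(\tilde u^p-\tilde v^p)^+/\tilde u^{p-1}$ and splits $\R^{2N}$ into the regions where both, one, or neither of $x,y$ lies in $\{u>v\}$. The mixed regions give nonnegative contributions by monotonicity of $\Jp$.

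The main obstacle is the $\delta\to0$ limit on the right-hand side. Here we use \eqref{H2}. The integrand is bounded by $|\nu|h\sigma_\delta^+$ on $\{u>v\}$, and $\sigma_\delta^+\le \sigma^++C\delta(u+1)^{p-1}$ by the mean value theorem: for $u>v$, $\tilde u^p-\tilde v^p\le u^p-v^p+p\delta\tilde u^{p-1}$ up to constants. With $\delta\le1$, the function $h[\sigma^++(u+1)^{p-1}]$ is an integrable majorant. Dominated convergence then sends the right side to $0$, whatever the sign of $\nu$. Fatou's lemma on the left gives $\iint \liminf\mathcal I_\delta=0$. Hence $u/v$ is a.e. constant on pairs in $\{u>v\}\times\R^N$, so it equals a constant $k$ on $\{u>v\}$ wherever it meets the rest. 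Since $|\R^N\setminus D|>0$ and $u\le v$ there, the mixed-region term forces a contradiction unless $|\{u>v\}|=0$: a positive-measure set $\{u>v\}$ paired with $\R^N\setminus D$ gives a strictly positive Picone defect. Therefore $u\le v$ a.e.
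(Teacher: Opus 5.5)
Your proposal is correct and follows the paper's proof essentially step for step: Picone applied twice on pairs in $\{u>v\}\times\{u>v\}$, (strict) monotonicity of $\Jp$ on mixed pairs, the bound $\sigma_\delta^+\le\sigma^++p\delta(u+1)^{p-1}$ together with $(H_2)$ to send the right-hand side to zero, and then Fatou plus strict positivity of the limiting integrand on $\{u>v\}\times(\R^N\setminus D)$. One thing to tidy: the ``$u/v$ is constant'' detour is not needed and is stated confusedly. The contradiction uses only two facts: the integrand converges pointwise as $\delta\downarrow0$ (since $u,v>0$), and for $x\in\{u>v\}$, $y\notin\{u>v\}$ the limit equals $\sigma(x)\bigl[\Jp\bigl(1-\frac{u(y)}{u(x)}\bigr)-\Jp\bigl(1-\frac{v(y)}{v(x)}\bigr)\bigr]>0$.
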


We need no condition on the size of $\nu$ or on $h$. The zeroth-order terms cancel because both functions satisfy the same equation. The functions $u,v$ may be unbounded, and the condition $|\R^{N}\setminus D|>0$ is only used at the end of the proof. The Hardy threshold $\bar\mu$ plays no role here.

\begin{proof}
We write $\Omega^{+}:=\{u>v\}$, which is contained in $D$ and coincides with
$\{u+\delta>v+\delta\}$ for every $\delta$. Both $\varphi_{i,\delta}$ are nonnegative and vanish
outside $\Omega^{+}$. By \eqref{H1}, they lie in $\mathcal{W}^{s,p}_{0}(D)$.
By \eqref{H3}, we can approximate them by mollifications which are uniformly bounded and supported in a fixed compact subset of $D$. The left-hand sides of \eqref{eq:weakdef} converge by H\"older's inequality, and the right-hand sides by dominated convergence, since $hu^{p-1},hv^{p-1}\in L^{1}_{\mathrm{loc}}(D)$. Hence $\varphi_{1,\delta}$ and $\varphi_{2,\delta}$ are admissible in \eqref{eq:weakdef}. Let $u,v:\R^{N}\to(0,\infty)$ with $\{u>v\}=\Omega^{+}$, put
$\sigma:=u^{p}-v^{p}$, $\psi_{1}:=\sigma^{+}u^{1-p}$, $\psi_{2}:=\sigma^{+}v^{1-p}$, and
\[
  A:=\Jp(\Delta_{xy} u) \Delta_{xy}\psi_{1},\quad B:=\Jp(\Delta_{xy} v) \Delta_{xy} \psi_{2},
  \quad \Delta_{xy}f:=f(x)-f(y).
\]
\textbf{Claim.} $A-B\ge0$ everywhere, and $A-B>0$ when one of
$x,y$ lies in $\Omega^{+}$. 

Both $A$ and $B$ are unchanged when $x$ and $y$ are
exchanged, so there are three cases. If both points lie in $\Omega^{+}$ then $\sigma^{+}=\sigma$, so $\psi_{1}=u-v^{p}u^{1-p}$ and
$\psi_{2}=u^{p}v^{1-p}-v$ at both points, and
\[
  A=|\Delta_{xy} u|^{p}-\Jp(\Delta_{xy} u)\Big(\frac{v(x)^{p}}{u(x)^{p-1}}-\frac{v(y)^{p}}{u(y)^{p-1}}\Big) \ge|\Delta_{xy} u|^{p}-|\Delta_{xy} v|^{p},
\]
by \eqref{eq:picone} with $a=u(x)$, $b=u(y)$, $\alpha=v(x)$, $\beta=v(y)$, while
\[
  B=\Jp(\Delta_{xy} v)\Big(\frac{u(x)^{p}}{v(x)^{p-1}}-\frac{u(y)^{p}}{v(y)^{p-1}}\Big)-|\Delta_{xy} v|^{p}
  \le|\Delta_{xy} u|^{p}-|\Delta_{xy} v|^{p},
\]
by \eqref{eq:picone} with $a=v(x)$, $b=v(y)$, $\alpha=u(x)$, $\beta=u(y)$. If neither point lies in
$\Omega^{+}$ then $\psi_{1}$ and $\psi_{2}$ vanish at both and $A=B=0$. If
$x\in\Omega^{+}$ and $y\notin\Omega^{+}$ then $\psi_{1}(y)=\psi_{2}(y)=0$, and using
$\Jp(t\tau)=t^{p-1}\Jp(\tau)$ for $t>0$,
\[
  A-B
  =\sigma(x)\left[\Jp\Big(1-\frac{u(y)}{u(x)}\Big)-\Jp\Big(1-\frac{v(y)}{v(x)}\Big)\right].
\]
All four values are positive, $u(x)>v(x)$ and $u(y)\le v(y)$, so $u(y)v(x)\le v(y)v(x)<v(y)u(x)$, that
is $\frac{u(y)}{u(x)}<\frac{v(y)}{v(x)}$. As $\Jp$ is strictly increasing and $\sigma(x)>0$, the
bracket is strictly positive. This completes the proof of the claim. 

We now apply the claim with $u=u+\delta$ and $v=v+\delta$, so that
$\psi_{i}=\varphi_{i,\delta}$. We write $A_{\delta},B_{\delta}$ for the resulting
$A,B$. Since $\pLap$ is unchanged by adding a constant, $u+\delta$ is a weak
subsolution of $\pLap w=\nu h u^{p-1}$ and $v+\delta$ a weak supersolution of $\pLap w=\nu h v^{p-1}$.
Testing the first with $\varphi_{1,\delta}$, the second with $\varphi_{2,\delta}$, and subtracting,
\begin{equation}\label{eq:ABdelta}
  \frac12\iint_{\R^{2N}}\big[A_{\delta}-B_{\delta}\big] K\dxy
  \le\nu\int_{\Omega^{+}} h \sigma_{\delta}^{+} g_{\delta}\dx,
  \quad
  g_{\delta}:=\Big(\frac{u}{u+\delta}\Big)^{p-1}-\Big(\frac{v}{v+\delta}\Big)^{p-1},
\end{equation}
with $K:=|x-y|^{-N-sp}$. Since $t\mapsto\big(t/(t+\delta)\big)^{p-1}$ is increasing and $u>v$ on
$\Omega^{+}$, we have  $0\le g_{\delta}\le1$, and $g_{\delta}\to0$ pointwise as $\delta\downarrow0$.
Since
$(u+\delta)^{p}-u^{p}\le p\delta(u+\delta)^{p-1}$ and $(v+\delta)^{p}\ge v^{p}$, for $\delta\le1$ we
have
\begin{gather*}
    \sigma_{\delta}^{+}\le\big(\sigma+p\delta(u+\delta)^{p-1}\big)^{+}
  \le\sigma^{+}+p \delta (u+1)^{p-1}, 
\end{gather*}
so, using $0\le g_{\delta}\le1$,
\begin{gather*}
  \left| \int_{\Omega^{+}}h\sigma_{\delta}^{+}g_{\delta} \right|
  \le\int_{\Omega^{+}}h\sigma^{+}g_{\delta}+p\delta\int_{\Omega^{+}}h (u+1)^{p-1} .
\end{gather*}
By \eqref{H2}, the first integrand is dominated by $h\sigma^{+}\in L^{1}(\Omega^{+})$ and $g_{\delta}\to0$
pointwise, so it tends to $0$. The second term is $O(\delta)$. Hence, the right-hand side of
\eqref{eq:ABdelta} tends to $0$. On the left, the integrand is nonnegative and converges pointwise to $A-B$. Therefore, Fatou's lemma gives
\begin{gather*}
   \frac12\iint_{\R^{2N}}\big[A-B\big] K\dxy\le0 .  
\end{gather*}
Suppose $|\Omega^{+}|>0$. Since $u\le v$ outside $D$ we have
$\R^{N}\setminus D\subset\R^{N}\setminus\Omega^{+}$, so $\Omega^{+}\times(\R^{N}\setminus\Omega^{+})$ has positive
measure in $\R^{2N}$. Since $A-B>0$ on $\Omega^{+}\times(\R^{N}\setminus\Omega^{+})$, and elsewhere $A-B\ge0$, the left-hand side above is strictly positive, a contradiction. Hence $|\Omega^{+}|=0$.
\end{proof}

The next lemma gives a simple condition which implies \eqref{H1}.
\begin{lemma}\label{rem:Hcheck}
Hypothesis \eqref{H1} of Proposition~\ref{thm:comp} holds if
\begin{enumerate}
\item[$(\widetilde{H_{1}})$]\customlabel{Hwide}{$\widetilde{H_{1}}$}
$\overline{\{u>v\}}\Subset V\Subset D$, $u,v\in W^{s,p}(V)$, and $\frac{u}{v}$ is bounded above on $\{u>v\}$.
\end{enumerate}
\end{lemma}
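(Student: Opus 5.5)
To verify \eqref{H1}, I will fix $\delta\in(0,1]$ and show directly that $\varphi_{1,\delta}$ and $\varphi_{2,\delta}$ lie in $W^{s,p}(\R^N)$ and have compact support in $D$. Then a standard mollification puts them in $\mathcal W^{s,p}_0(D)$.

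\textbf{Support.} Since $\sigma_\delta^+$ vanishes outside $\{u>v\}$, both functions are supported in $K:=\overline{\{u>v\}}$. By \eqref{Hwide}, $K$ is compact and $K\Subset V\Subset D$. Mollifying with radius smaller than $\operatorname{dist}(K,\partial V)$ gives functions in $\cc(V)\subset\cc(D)$, converging in $W^{s,p}(\R^N)$. So the real task is to show $\varphi_{i,\delta}\in W^{s,p}(\R^N)$.

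\textbf{Boundedness via a Lipschitz map.} Let $L$ be an upper bound for $u/v$ on $\{u>v\}$. There $v<u\le Lv$, so $1<\frac{u+\delta}{v+\delta}\le\max\{L,1\}=:L'$. Write
\[
\varphi_{1,\delta}=F_1(u+\delta,v+\delta),\quad F_1(a,b):=\frac{(a^p-b^p)^+}{a^{p-1}},\qquad
\varphi_{2,\delta}=F_2(u+\delta,v+\delta),\quad F_2(a,b):=\frac{(a^p-b^p)^+}{b^{p-1}}.
\]
Each $F_i$ is positively $1$-homogeneous. On the cone $\{a,b\ge\delta,\ a/b\le L'\}$ its gradient is $0$-homogeneous and continuous, so it is bounded there. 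This gives
\[
|F_i(a,b)-F_i(a',b')|\le C(p,L')\big(|a-a'|+|b-b'|\big)
\]
for points of this region. Points outside $\{a>b\}$ give $F_i=0$, and since $F_i$ vanishes continuously on $\{a=b\}$ the Lipschitz bound extends across that line. The pointwise values of $u+\delta$ and $v+\delta$ fall into the region $\{a>b,\ a\le L'b\}\cup\{a\le b\}$. Whenever one point of a pair lies in $\{a>b\}$, I connect the two values through a segment point on $\{a=b\}$ to apply the bound piecewise. The plan is to confirm this carefully, possibly using the explicit form $F_1=a(1-(b/a)^p)^+$ with $b/a\in[1/L',\infty)$.

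\textbf{Main obstacle: the double integral.} The Lipschitz bound gives
\[
|\varphi_{i,\delta}(x)-\varphi_{i,\delta}(y)|\le C\big(|u(x)-u(y)|+|v(x)-v(y)|\big).
\]
This is the step I expect to be hardest, since it has to control the Gagliardo seminorm on all of $\R^{2N}$. I split the integration domain as follows.
\begin{itemize}
\item[] On $V\times V$, the seminorm is bounded by $C([u]_{W^{s,p}(V)}+[v]_{W^{s,p}(V)})$.
\item[] On pairs with $x\in K$ and $y\notin V$, the difference is $|\varphi_{i,\delta}(x)|$, and $|x-y|\ge d:=\operatorname{dist}(K,\partial V)>0$. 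This contributes at most $C d^{-sp}\|\varphi_{i,\delta}\|_{L^p(K)}^p$. That is finite because $\varphi_{i,\delta}\le C(u+\delta)$ on $K$ and $u\in L^p(V)$.
\item[] Pairs with both points outside $K$ contribute nothing.
\end{itemize}
The $L^p$ norm is finite by the same bound. Hence $\varphi_{i,\delta}\in W^{s,p}(\R^N)$ with compact support in $V$, which gives \eqref{H1}.
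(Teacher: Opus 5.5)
Your proposal is correct and follows the same route as the paper. In both, $\varphi_{i,\delta}$ is written as $u+\delta$, $v+\delta$ composed with maps that are Lipschitz on the relevant range. The bound on $u/v$ is really needed only for $\varphi_{2,\delta}$, since $F_1$ is globally Lipschitz. One then uses that the support $\overline{\{u>v\}}$ is compactly contained in $V\Subset D$. Your splitting of the Gagliardo double integral into $V\times V$ and the separated pairs, followed by mollification, just spells out the zero-extension step that the paper states in one line.
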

\begin{proof}
For fixed $\delta>0$ the maps
\[
  F_{1,\delta}(a,b)=\frac{\big((a+\delta)^{p}-(b+\delta)^{p}\big)^{+}}{(a+\delta)^{p-1}},\quad
  F_{2,\delta}(a,b)=\frac{\big((a+\delta)^{p}-(b+\delta)^{p}\big)^{+}}{(b+\delta)^{p-1}}
\]
are Lipschitz on the relevant range, the bound on $u/v$ being used for $F_{2,\delta}$. Hence
$\varphi_{i,\delta}=F_{i,\delta}(u,v)\in W^{s,p}(V)$ with support in $\overline{\{u>v\}}\Subset D$, so
their zero extensions lie in $\mathcal{W}^{s,p}_{0}(D)$.
\end{proof}

Removing the origin does not change the Dirichlet space. This is used in
Section~\ref{sec:app}.

\begin{lemma}\label{lem:cap}
If $sp<N$ and $r>0$ then $\mathcal{W}^{s,p}_{0}\big(B_{r}\setminus\{0\}\big)=\mathcal{W}^{s,p}_{0}(B_{r})$.
\end{lemma}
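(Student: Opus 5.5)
The inclusion $\mathcal{W}^{s,p}_{0}(B_r\setminus\{0\})\subset\mathcal{W}^{s,p}_{0}(B_r)$ is immediate, since $\cc(B_r\setminus\{0\})\subset\cc(B_r)$ and both spaces are closures in the same $W^{s,p}(\R^N)$ norm. For the reverse inclusion it suffices, by density, to approximate any fixed $\varphi\in\cc(B_r)$ in $W^{s,p}(\R^N)$ by functions in $\cc(B_r\setminus\{0\})$. I will do this by multiplying $\varphi$ by cutoffs vanishing near the origin: $\varphi_\eps:=\varphi(1-\theta_\eps)$, with $\theta_\eps\in\cc(B_{2\eps})$, $0\le\theta_\eps\le1$, and $\theta_\eps=1$ on $B_\eps$. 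Since $\varphi_\eps$ vanishes on $B_\eps$, after a harmless mollification at a scale smaller than $\eps$ we get $\varphi_\eps\in\cc(B_r\setminus\{0\})$. It then remains to show $\|\varphi\theta_\eps\|_{W^{s,p}(\R^N)}\to0$ for a good choice of $\theta_\eps$.

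The $L^p$ part is trivial: $\|\varphi\theta_\eps\|_p^p\le\|\varphi\|_\infty^p|B_{2\eps}|\to0$. For the Gagliardo part I use
\[
  |\varphi\theta_\eps(x)-\varphi\theta_\eps(y)|\le\|\varphi\|_\infty|\theta_\eps(x)-\theta_\eps(y)|+\|\nabla\varphi\|_\infty|x-y|\,\theta_\eps(y).
\]
The second term contributes at most $C\int_{B_{2\eps}}\int_{\R^N}\min\{|x-y|^p,1\}|x-y|^{-N-sp}\dx\dy\le C\eps^N\to0$. So everything reduces to $[\theta_\eps]_{s,p}\to0$. This is exactly the statement that a point has zero $(s,p)$-capacity when $sp<N$, and it is the only real point of the proof.

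With the plain scaling $\theta_\eps=\theta(\cdot/\eps)$ one gets $[\theta_\eps]_{s,p}^p=\eps^{N-sp}[\theta]_{s,p}^p$. Since $N-sp>0$, this tends to $0$, and no logarithmic cutoff is needed because we are strictly below the critical case $sp=N$. I still check that $\theta\in\cc(B_2)$ with $\theta=1$ on $B_1$ has $[\theta]_{s,p}<\infty$. This holds because $\theta$ is Lipschitz with compact support, using the bound $\min\{|x-y|^p,1\}$ on the increments near the diagonal. Combining the three estimates gives $\varphi_\eps\to\varphi$ in $W^{s,p}(\R^N)$, hence $\varphi\in\mathcal{W}^{s,p}_{0}(B_r\setminus\{0\})$. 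The only step where the hypothesis $sp<N$ enters is the scaling exponent $\eps^{N-sp}$, so that is the step I expect to be the crux, though it is elementary here.
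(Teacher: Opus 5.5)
Your proposal is correct and follows essentially the paper's route: cut off the origin at scale $\eps$ and use the factor $\eps^{N-sp}\to0$, which is where $sp<N$ enters. The only difference is organizational. You estimate via the product rule and the exact scaling of a fixed smooth profile, while the paper splits regions directly for $\phi(1-\eta_\eps)$. Since your cutoff is smooth, the final mollification you mention is not even needed.
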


\begin{proof}
Only one inclusion needs proof. Let $\phi\in \cc(B_{r})$ and let $\eta_{\eps}$ be radial with
$\eta_{\eps}=0$ on $B_{\eps}$, $\eta_{\eps}=1$ outside $B_{2\eps}$ and $|\nabla\eta_{\eps}|\le2/\eps$. We put
$\psi:=\phi(1-\eta_{\eps})$, so that $\text{supp}(\psi) \subset \overline{B_{2\eps}}$, $\|\psi\|_{\infty}\le M:=\|\phi\|_{\infty}$
and $\|\nabla\psi\|_{\infty}\le \frac{C}{\eps}$. For $x,y\in B_{4\eps}$, we use
$|\psi(x)-\psi(y)|\le\min\{C\frac{|x-y|}{\eps},2M\}$ to get
\begin{align*}
\int_{B_{2\eps}}\int_{|y|<2\eps} \frac{|\psi(x)- \psi(y)|^{p}}{|x-y|^{N+sp}}\dy\dx & \le
\int_{B_{2\eps}} \int_{|z|<4\eps}  \frac{\min\{C \frac{|z|}{\eps},2M\}^{p}}{|z|^{N+sp}}\dz\dx \\
& \le C\eps^{N}\Big(\eps^{-p} \int_{0}^{\eps} \rho^{ p-1-sp}\d \rho+M^{p}\int_{\eps}^{4\eps}  \rho^{-1-sp}\d \rho\Big) \le C\eps^{N-sp}.
\end{align*}
For $x\in B_{2\eps}$ and $|y|>2\eps$ we have  $\psi(y)=0$ and
\begin{align*}
   \int_{B_{2\eps}}|\psi(x)|^{p}\int_{|y|>4\eps} \frac{1}{|x-y|^{N+sp}}\dy\dx\le CM^{p}\eps^{N-sp}. 
\end{align*}
Hence
$[\phi-\phi\eta_{\eps}]^{p}_{W^{s,p}}\le C\eps^{N-sp}\to0$ and
$\|\phi-\phi\eta_{\eps}\|^{p}_{L^{p}}\le M^{p}|B_{2\eps}|\to0$, because $sp<N$. Since
$\phi\eta_{\eps}$ is Lipschitz with compact support in $B_{r}\setminus\{0\}$, the claim $ \mathcal{W}^{s,p}_{0}(B_{r}) \subset \mathcal{W}^{s,p}_{0}\big(B_{r}\setminus\{0\}\big)$ follows using
mollification.
\end{proof}

\section{Application to the Hardy-Sobolev equation}\label{sec:app}
Let $0<\mu<\bar\mu$ and let $u$ be a positive solution of \eqref{eq:P}, continuous on
$\R^{N}\setminus\{0\}$. We state separately the assumptions needed near the origin and at infinity.

\begin{center}
  \begin{enumerate}
\item[$(M_{0})$]\customlabel{E0}{$M_{0}$} for every $\eta>0$ there is $C_{\eta}$ with
$u(x)\le C_{\eta}|x|^{-\gamma_{-}-\eta}$ for $|x|$ small,

\item[$(M_{\infty})$]\customlabel{E1}{$M_{\infty}$} for every $\eta>0$ there is $C_{\eta}$ with
$u(x)\le C_{\eta}|x|^{-\gamma_{+}+\eta}$ for $|x|$ large,

\item[$(M)$]\customlabel{M}{$M$} $u(x)=O\big(|x|^{-\gamma^{*}}\big)$ as $x\to0$ and as $|x|\to\infty$.
\end{enumerate}  
\end{center}

Since $\gamma_{-}<\gamma^{*}<\gamma_{+}$, \eqref{E0} gives \eqref{M} near the origin and \eqref{E1} gives \eqref{M} at infinity. So the bound near the origin uses \eqref{E0} and \eqref{M} at infinity, and the bound at infinity uses \eqref{E1} and \eqref{M} near the origin. For $p\ge2$ and positive radially symmetric nonincreasing solutions, these conditions are proved in Lemma~\ref{lem:radial} and Theorem~\ref{thm:cond}. Here we assume them.

The argument is based on one identity. Since $p_{s}^{*}-1=(p-1)+P_s$,
\begin{equation}\label{eq:rewrite}
  \pLap u=\big(\mu+|x|^{sp}u^{P_s}\big) |x|^{-sp}u^{p-1},
\end{equation}
so wherever $|x|^{sp}u^{P_s}\le\delta$ the function $u$ is a subsolution of the pure Hardy equation
\eqref{eq:pureHardy} with constant $\mu+\delta$, for which $|x|^{-\gamma_{-}(\mu+\delta)}$ is an explicit
solution.

\subsection{Characteristic upper bounds} This subsection proves the characteristic upper bound for a positive solution of \eqref{eq:P}.  
\begin{theorem}\label{thm:sharpupper}
We assume \eqref{M}. Under \eqref{E0} the first estimate below holds, and under
\eqref{E1} the second. There exist $r_{0}\in(0,1)$, $R_{0}>1$ and $C<\infty$ with
\begin{align*}
  u(x)\le C |x|^{-\gamma_{-}(\mu)}\ \ \text{for }0<|x|<r_{0},
  \; \text{ and } \;
  u(x)\le C |x|^{-\gamma_{+}(\mu)}\ \ \text{for }|x|>R_{0}.
\end{align*}
\end{theorem}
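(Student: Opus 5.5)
The plan is to compare $u$ with an explicit barrier through Proposition~\ref{thm:comp}. I treat the origin in detail; the estimate at infinity is symmetric.

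\textbf{Setting up the equation.} Following \eqref{eq:rewrite}, I regard $u$ as a solution of the linear-looking equation $\pLap w=h(x)w^{p-1}$ with $\nu=1$ and
\[
h(x):=\big(\mu+|x|^{sp}u^{P_s}\big)|x|^{-sp}.
\]
By \eqref{E0} with a fixed small $\eta$ satisfying $\gamma_-+\eta<\gamma^*$, and by \eqref{eq:Xiid},
\[
|x|^{sp}u^{P_s}\le C|x|^{\theta},\qquad \theta:=P_s(\gamma^*-\gamma_--\eta)>0,
\]
near $0$. Hence $h\le(\mu+C|x|^\theta)|x|^{-sp}$ on $B_r$. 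A pure power $A|x|^{-\gamma_-}$ only solves the equation with $h=\mu|x|^{-sp}$, so it is a subsolution for our $h$, which is the wrong direction.

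\textbf{Corrected barrier.} I would correct the power by a less singular term,
\[
v_0:=|x|^{-\gamma_-}-c|x|^{-\gamma'},\qquad \gamma'\in(\gamma_--\theta,\gamma_-).
\]
Formally linearizing $\pLap$ at $|x|^{-\gamma_-}$ and using Corollary~\ref{cor:linexp} gives
\[
\pLap v_0-\mu|x|^{-sp}v_0^{p-1}\approx(p-1)c\big(\mu-\Phi^{(2)}(\gamma_-,\gamma')\big)|x|^{-\gamma_-(p-2)-\gamma'-sp}.
\]
The coefficient is strictly positive by \eqref{eq:Phi2key}. Relative to $|x|^{-sp}v_0^{p-1}$, this gain is of size $|x|^{\gamma_- -\gamma'}$, which dominates the defect $C|x|^{\theta}$ for small $|x|$ because $\gamma_--\gamma'<\theta$. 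So $v_0$ should be a supersolution of $\pLap w=hw^{p-1}$ in $B_r\setminus\{0\}$ for $r$ small.

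\textbf{Main obstacle.} The hard step is turning this formal computation into a rigorous pointwise inequality. One must control the second-order remainder of the nonlinear difference $\Jp(a-b)$, which should be $O(c^2|x|^{2(\gamma_--\gamma')})$ relative to the main term. This requires care where $|x|\approx|y|$ when $p>2$; here the singular kernel is tamed by the pairing $r\leftrightarrow 1/r$ as in Proposition~\ref{lem:Phi2}. The remainder must also be handled in the far region, where $v_0$ is modified. Concretely, I would define $v$ to equal $v_0$ on $B_{2r}$ and extend it to a positive function of the tail space $L^{p-1}_{sp}(\R^N)$ with decay $|x|^{-\gamma_-}$. The extension changes $\pLap v$ on $B_r$ only by a bounded tail contribution, which is lower order against $|x|^{-\gamma_-(p-1)-sp}$ near $0$.

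\textbf{Comparison and the role of (M).} First I choose $A$ large so that $u\le Av$ on $\R^N\setminus B_r$. This uses continuity of $u$ on $r\le|x|\le R$ and \eqref{M} at infinity, since $u=O(|x|^{-\gamma^*})$ decays faster than $|x|^{-\gamma_-}$. Applied directly, Proposition~\ref{thm:comp} fails, because $u/v$ may blow up at $0$ and then \eqref{Hwide} is not available. So I add $\eps W$, where $W=|x|^{-\gamma''}$ with $\gamma''\in(\gamma_-+\eta,\gamma^*)$. By Proposition~\ref{lem:barrier}(i), together with the same $|x|^\theta$ absorption, $W$ is a strict supersolution near $0$, and the strict gain should survive the sum $Av+\eps W$ by a convexity-type estimate.

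Now $\{u>Av+\eps W\}\Subset B_r\setminus\{0\}$, and on it $u/(Av+\eps W)$ is bounded. Thus \eqref{Hwide} holds, and Lemma~\ref{rem:Hcheck} gives \eqref{H1}. For \eqref{H2} and \eqref{H3} I use $h\,u^{p}\le C|x|^{-sp-p(\gamma_-+\eta)}$, which is integrable on compact subsets of $D$ away from $0$. Proposition~\ref{thm:comp} then gives $u\le Av+\eps W$, and letting $\eps\downarrow0$ yields $u\le A|x|^{-\gamma_-}$.

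\textbf{At infinity.} Here $D=\R^N\setminus\overline{B_R}$, and the barrier is $|x|^{-\gamma_+}-c|x|^{-\gamma'}$ with $\gamma'>\gamma_+$. The analogue of \eqref{eq:Phi2key}, namely $\Phi^{(2)}(\gamma_+,\gamma')<\mu$ for $\gamma'\in(\gamma_+,\ell(\gamma_+)-\gamma_+)$, follows from Proposition~\ref{lem:Phi2}(ii). For the exterior condition I use \eqref{M} near $0$ together with boundedness on annuli. The $\eps$-regularization uses $|x|^{-\gamma''}$ with $\gamma''\in(\gamma^*,\gamma_+-\eta)$, as in Proposition~\ref{lem:barrier}(ii), so that $\{u>v\}$ is bounded.
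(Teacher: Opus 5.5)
Your corrected barrier is a genuinely different route from the paper's, and the formal computation is right. For $\gamma'\in(\gamma_--\theta,\gamma_-)$ with $\gamma'>0$, \eqref{eq:Phi2key} gives a relative gain of order $|x|^{\gamma_--\gamma'}$, which beats the defect $C|x|^{\theta}$.

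\textbf{The gap is the regularization.} To apply Proposition~\ref{thm:comp} you need $Av+\eps W$ to be a weak supersolution. But $\pLap$ has no superposition for $p\neq2$, and no ``convexity-type estimate'' supplies one.
\begin{itemize}
\item At fixed $x$, the increments of $Av$ and of $\eps W$ both have the sign of $|y|-|x|$.
\item For $s,t$ of equal sign, $\Jp(s+t)-\Jp(s)-\Jp(t)$ has the sign of $(p-2)s$.
\item Hence the interaction term $\pLap(Av+\eps W)-\pLap(Av)-\pLap(\eps W)$ changes sign across $\{|y|=|x|\}$.
\item On the right-hand side, $(Av+\eps W)^{p-1}\ge(Av)^{p-1}+(\eps W)^{p-1}$ for $p\ge2$ goes the wrong way.
\end{itemize}
In the crossover region $Av\asymp\eps W$, nothing in your argument controls this interaction, and the gain of $v$ there is only of relative size $|x|^{\gamma_--\gamma'}$. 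Without this step, $u/(Av)$ may be unbounded at $0$. Then \eqref{H1} and \eqref{H3} fail and the comparison does not close. The same problem recurs at infinity with $\eps|x|^{-\gamma''}$.

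\textbf{A repair that keeps your idea.} Put $\eps$ into the exponent instead of adding a second function: take $v_\eps:=|x|^{-\gamma_--\eps}-c|x|^{-\gamma'}$ near $0$, extended as before. Set $\omega_\eps:=|x|^{-\gamma_--\eps}$, $|x|=\rho$, $r=|y|/\rho$ and $\tau:=c\rho^{\gamma_-+\eps-\gamma'}$. Then exactly
$v_\eps(x)-v_\eps(y)=\big(\omega_\eps(x)-\omega_\eps(y)\big)\big(1-\tau q(r)\big)$, with $q(r)=\frac{1-r^{-\gamma'}}{1-r^{-\gamma_--\eps}}\in(0,1]$.
Hence $\Jp\big(v_\eps(x)-v_\eps(y)\big)=(1-\tau q)^{p-1}\Jp\big(\omega_\eps(x)-\omega_\eps(y)\big)$. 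This turns your ``main obstacle'' into a scalar Taylor expansion, valid for every $p>1$.

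With $\mu':=\mu+C\rho^{\theta}$, it follows that $\rho^{(\gamma_-+\eps)(p-1)+sp}\big(\pLap v_\eps-\mu'|x|^{-sp}v_\eps^{p-1}\big)(x)$ equals
$\Phi(\gamma_-+\eps)-\mu'+(p-1)\tau\big(\mu'-\Phi^{(2)}(\gamma_-+\eps,\gamma')\big)+O(\tau^{2})$,
up to the lower-order tail error of the extension. The $O(\tau^2)$ remainder is a smooth radial multiplier of $\Jp\big(\omega_\eps(x)-\omega_\eps(y)\big)$, handled by the $r\leftrightarrow1/r$ pairing.

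Since $\Phi(\gamma_-+\eps)>\mu$ and $\Phi^{(2)}(\gamma_-+\eps,\gamma')\to\Phi^{(2)}(\gamma_-,\gamma')<\mu$, the radius and the constant $A$ can be chosen independently of small $\eps$. Taking $\eta<\eps$ in \eqref{E0} gives $u/v_\eps\to0$ at $0$, so Lemma~\ref{rem:Hcheck} applies. Then let $\eps\downarrow0$.

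\textbf{Comparison with the paper.} The paper never linearizes. It iterates comparisons on $B_{r_k}$, $r_k=\rho^kr_0$, with exact Hardy solutions $A_k|x|^{-\gamma_-(\mu_k)}$, where $\mu_k=\mu+\delta(r_{k-1})$. These are more singular than $|x|^{-\gamma_--\eta}$, so admissibility is automatic. It then bounds $\sup_kA_k$ using $\gamma_k-\gamma_-\lesssim\rho^{k\kappa}$.

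\textbf{A smaller error at infinity.} Your range $(\gamma_+,\ell(\gamma_+)-\gamma_+)$ is empty, since $\ell(\gamma_+)-\gamma_+=\beta(\gamma_+)<\gamma_+$. By Proposition~\ref{lem:Phi2}(ii) and $\gamma_+>\ell(\gamma_+)/2$, the correct range is $\gamma'\in(\gamma_+,\ell(\gamma_+))$.
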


\begin{proof}
We first prove the estimate near the origin. The argument at infinity is similar after the monotonicity is reversed. By \eqref{eq:Xi}, $P_s=sp/\gamma^{*}$. Using \eqref{E0} with $\eta$ sufficiently small, we obtain 
\begin{align}\label{delta}
 \delta(r):=\sup_{0<|x|\le r}|x|^{sp}u(x)^{P_s}\le Cr^{\kappa},   
\end{align}
with
$\kappa=sp\big(1-\gamma_{-}/\gamma^{*}\big)-\eta P_s$.  Since $\gamma_{-}<\gamma^{*}$ due to
Corollary~\ref{cor:roots}, we can choose $\eta$ small enough such that $\kappa>0$.

We fix $\rho\in(0,1)$, $r_{k}=\rho^{k}r_{0}$, $\mu_{k}=\mu+\delta(r_{k-1})$ and
$\gamma_{k}=\gamma_{-}(\mu_{k})$, where we choose $r_0>0$ small so that $\mu_{1}<\bar\mu$. We observe that, near the origin, the sequence $\{ \gamma_{k}\}$ is nonincreasing. 
By Corollary~\ref{cor:roots}, $\gamma_{-}'(\nu)=1/\Phi'(\gamma_{-}(\nu))$, finite and positive by
Proposition~\ref{thm:props}-(iii). On the compact interval $[\mu,\mu_{1}]$ it is bounded by some $L$, so using mean value theorem and \eqref{delta}, 
\begin{equation}\label{eq:gammaloss}
  0\le\gamma_{k}-\gamma_{-}\le L(\mu_k-\mu) =  L \delta(r_{k-1})\le L C r_{0}^{\kappa} \rho^{(k-1)\kappa}.
\end{equation}
Since $\Phi(\gamma_{k})=\mu_{k}$, the function $A|x|^{-\gamma_{k}}$ is an explicit solution of
\eqref{eq:pureHardy} with constant $\mu_{k}$ for every $A>0$. We consider
$$\Gamma_{\infty}:=\sup_{k}\sup_{|x|\ge r_{0}}|x|^{\gamma_{k}}u(x),$$
which is positive and finite by \eqref{M} at $|x|$ near infinity, because $\gamma_{k}<\gamma^{*}$. We set
\[
  A_{1}:=2\Gamma_{\infty},\; \text{ and } \; A_{k+1}:=\big(1+2^{-k}\big) \frac{A_{k}}{r_{k}^{\gamma_{k}-\gamma_{k+1}}} .
\]
{\noi\textit{Claim.} For every $k\ge1$,}
\begin{equation}\label{eq:induct}
  u(x)\le A_{k}|x|^{-\gamma_{k}}, \text{ for }x\neq0 .
\end{equation}

We argue by induction on $k$. In each step we apply Proposition~\ref{thm:comp} on a ball. By Lemma~\ref{lem:cap} we may use the ball instead of the punctured ball. Hypotheses \eqref{H1} and \eqref{H2} are checked at the end.

Let $k=1$. By the definition of $\Gamma_{\infty}$, $u\le\Gamma_{\infty}|x|^{-\gamma_{1}}<A_{1}|x|^{-\gamma_{1}}$ on
$\R^{N}\setminus B_{r_{0}}$. By \eqref{eq:rewrite}, $u$ is a subsolution of \eqref{eq:pureHardy} with
constant $\mu_{1}$ on $B_{r_{0}}\setminus\{0\}$, and $A_{1}|x|^{-\gamma_{1}}$ is a homogeneous solution of the
same equation. Proposition~\ref{thm:comp} therefore gives \eqref{eq:induct} for $k=1$.

Now assume \eqref{eq:induct} for $k$. For $|x|\ge r_{k}$, using
$\gamma_{k+1}\le\gamma_{k}$ and $|x|\ge r_{k}$,
\[
  u(x)\le A_{k}|x|^{\gamma_{k+1}-\gamma_{k}}|x|^{-\gamma_{k+1}}
       \le A_{k}r_{k}^{-(\gamma_{k}-\gamma_{k+1})}|x|^{-\gamma_{k+1}}
       =\frac{A_{k+1}}{1+2^{-k}} |x|^{-\gamma_{k+1}}
       <A_{k+1}|x|^{-\gamma_{k+1}} ,
\]
so the ordering holds, strictly, on all of $\R^{N}\setminus B_{r_{k}}$. On $B_{r_{k}}\setminus\{0\}$, the
function $u$ is a subsolution of \eqref{eq:pureHardy} with coefficient $\mu_{k+1}$. The function
$A_{k+1}|x|^{-\gamma_{k+1}}$ is a homogeneous solution of the same equation. By Lemma~\ref{lem:cap}, Proposition~\ref{thm:comp} can be applied with $D=B_{r_{k}}$. Hence, \eqref{eq:induct} holds for $k+1$.

It remains to check \eqref{H1} and \eqref{H2}. We write $v:=A_{k+1}|x|^{-\gamma_{k+1}}$. Choosing
$\eta<\gamma_{k+1}-\gamma_{-}$ in \eqref{E0} gives $u\le C_{\eta}|x|^{-\gamma_{-}-\eta}$ near the origin, so
\[
  \frac{u(x)}{v(x)}\le\frac{C_{\eta}}{A_{k+1}} |x|^{\gamma_{k+1}-\gamma_{-}-\eta}
  \xrightarrow[\ x\to0\ ]{}0 ,
\]
so $\{u>v\}$ has positive distance from the origin and hence $u/v$ is bounded above on it. The induction step
also gives
\[
  \frac{u}{v}\le\frac{1}{1+2^{-k}} < 1\quad\text{on }\R^{N}\setminus B_{r_{k}}.
\]
By continuity, the set $\{u>v\}$ stays away from $\partial B_{r_{k}}$. The estimate near
the origin also keeps this set away from the origin. Hence, $\overline{\{u>v\}}\Subset B_{r_{k}}\setminus\{0\}$. This verifies \eqref{Hwide}, and Lemma~\ref{rem:Hcheck} gives \eqref{H1}. 
Since $u$ and $v$ are continuous on $\R^{N}\setminus\{0\}$, they are bounded on the compact set $\overline{\{u>v\}}\subset B_{r_k}\setminus\{0\}$, and $v>0$ there. Hence $\varphi_{1,\delta}\le pu$ and $\varphi_{2,\delta}\le p\big((u+1)/v\big)^{p-1}u$ are bounded with compact support in $B_{r_k}\setminus\{0\}$. Also $|x|^{-sp}u^{p-1},|x|^{-sp}v^{p-1}\in L^{1}_{\mathrm{loc}}(B_{r_k}\setminus\{0\})$. So \eqref{H3} holds. Next, we verify \eqref{H2}. We note that $\sigma^{+}\le u^{p}$. Therefore, using the Hardy inequality \eqref{eq:FS},
$$\int_{\R^N} h\sigma^{+} \dx \le\int_{\R^N}|x|^{-sp}u^{p} \dx < \infty, \text{ since } u\in\D^{s,p}(\R^{N}).$$ 
Moreover, the Hardy inequality gives
$\int_{\{u>v\}}|x|^{-sp}\sigma^+\dx<\infty$. Since $\{u>v\}\Subset B_{r_k}\setminus\{0\}$ and $u$ is continuous, we have 
$$\int_{\{u>v\}}|x|^{-sp}(u+1)^{p-1} < \infty.$$ Hence \eqref{H2} also holds. This proves the claim.

Writing $\log(1/r_{k})=k\log\left(1/\rho\right)+\log(1/r_{0})$ and using \eqref{eq:gammaloss}, we get
\begin{align*}
  &\log A_{K}\le C_{\Gamma}
  +LCr_{0}^{\kappa}\Big(\log\tfrac1\rho\sum_{k\ge1}k\rho^{(k-1)\kappa}
  +\log\tfrac1{r_{0}}\sum_{k\ge1}\rho^{(k-1)\kappa}\Big), \\
  &\text{ with } C_{\Gamma}:=\log(2\Gamma_{\infty})+\sum_{k\ge1}\log\big(1+2^{-k}\big),
\end{align*}
where we used $\gamma_{k}-\gamma_{k+1}\le\gamma_{k}-\gamma_{-}$. Here $C_{\Gamma}<\infty$ because
$\log(1+2^{-k})\le2^{-k}$, and the two remaining series converge because $\rho^{\kappa}<1$. Hence
$A_{\infty}:=\sup_{k}A_{k}<\infty$. For
$r_{k+1}\le|x|\le r_{k}$,
\[
  |x|^{\gamma_{-}}u(x)\le A_{\infty}
  \exp\Big[(\gamma_{k+1}-\gamma_{-})\Big((k+1)\log\tfrac1\rho+\log\tfrac1{r_{0}}\Big)\Big],
\]
and by \eqref{eq:gammaloss} the exponent is bounded in $k$, and tends to $0$.

At infinity, $\mu_k\downarrow\mu,\, \gamma_k=\gamma_+(\mu_k)\uparrow\gamma_+(\mu)$. We repeat the comparison on $\R^N\setminus\overline{B}_{R_k}$. We redefine $\mu_{k}$,
$\gamma_{k}$ and $A_{k}$ accordingly. We put $\delta_{\infty}(R):=\sup_{|x|\ge R}|x|^{sp}u^{P_s}$. Since $\gamma_{+}>\gamma^{*}$, hypothesis \eqref{E1} gives $\delta_{\infty}(R)\le CR^{-\kappa_{\infty}}$ with
$\kappa_{\infty}=sp\big(\gamma_{+}/\gamma^{*}-1\big)-\eta P_s>0$ for $\eta$ small, so by
\eqref{eq:rewrite} the function $u$ is a subsolution of \eqref{eq:pureHardy} with constant
$\mu+\delta_{\infty}(R)$ on $\{|x|>R\}$. We take $R_{k}:=\rho^{-k}R_{0}$, so that $R_{k}\uparrow\infty$, and
set $\mu_{k}:=\mu+\delta_{\infty}(R_{k-1})$ and $\gamma_{k}:=\gamma_{+}(\mu_{k})$. Then $\mu_{k}\downarrow\mu$,
and since $\gamma_{+}$ is \emph{decreasing} in $\mu$ by Corollary~\ref{cor:roots}, the exponents are
nondecreasing, $\gamma_{k}\uparrow\gamma_{+}(\mu)$, with
$0\le\gamma_{+}(\mu)-\gamma_{k}\le L_{\infty}\delta_{\infty}(R_{k-1})$ where
$L_{\infty}:=\max_{[\mu,\mu_{1}]}|\gamma_{+}'|<\infty$, again by Proposition~\ref{thm:props}-(iii). We now use the
comparison on the exterior domains $\{|x|>R_{k}\}$. We choose 
$A_{1}:=2\Gamma_{\infty}$ and define $A_{k+1}:=(1+2^{-k})A_{k}R_{k}^{\gamma_{k+1}-\gamma_{k}}$. Here
$\Gamma_{\infty}:=\sup_{k}\sup_{|x|\le R_{0}}|x|^{\gamma_{k}}u(x)$ is finite by the assumption \eqref{M} at the
origin since $\gamma_{k}>\gamma^{*}$. Using $\gamma_{k+1}>\gamma_{k}$ and $|x|\le R_{k}$, we have 
$|x|^{\gamma_{k+1}-\gamma_{k}}\le R_{k}^{\gamma_{k+1}-\gamma_{k}}$, which gives the required induction step. 
The set $\{u>v\}$ is bounded. As before, it stays away from $\partial B_{R_k}$, so \eqref{H1}, \eqref{H2} and \eqref{H3} hold. Indeed, by \eqref{E1}, if  $\eta<\gamma_{+}-\gamma_{k+1}$, then
$u/v\le(C_{\eta}/A_{k+1})|x|^{\gamma_{k+1}-\gamma_{+}+\eta}\to0$ as $|x|\to\infty$. Also,
$\log R_{k}=k\log\left( \frac{1}{\rho} \right)+\log R_{0}$. Therefore, the same estimate as before applies with $\Gamma_{0}$ in place of $\Gamma_{\infty}$. This gives the desired bound at infinity.
\end{proof}

\subsection{Radially symmetric solutions}\label{sec:radial}
In this subsection we consider positive radially symmetric nonincreasing solutions of \eqref{eq:P}. We prove \eqref{M} and an integrability improvement in the spirit of \cite{BMS16}. We also show that \eqref{E0} and \eqref{E1} are equivalent to one integrability statement. We recall $[u]_{s,p}$ from \eqref{eq:semin} and the Sobolev constant $S$ from Section~\ref{sec:intro}. We first record some simple consequences of radial monotonicity.

\begin{lemma}\label{lem:radial}
Let $U\ge0$ be radially symmetric and nonincreasing in $|x|$.
\begin{enumerate}
\item[{\rm(i)}] If $U\in L^{q}(\R^{N})$ with $q>0$, then
$U(x)\le|B_{1}|^{-1/q}\|U\|_{L^{q}(\R^{N})}|x|^{-N/q}$ for every $x\neq0$.
\item[{\rm(ii)}] If $U\in L^{p^{*}_{s}}(\R^{N})$, then $|x|^{\gamma^{*}}U(x)\to0$ as $|x|\to0$ and as
$|x|\to\infty$. In particular $U$ satisfies \eqref{M}, and for every $\lambda>0$ the set
\begin{equation}\label{eq:Omlam}
  \Omega_{\lambda}:=\big\{x\neq0:\ |x|^{sp}U(x)^{P_s}>\lambda\big\}
\end{equation}
is contained in an annulus $\{r_{1}\le|x|\le r_{2}\}$ with $0<r_{1}\le r_{2}<\infty$, and
$0<c_{\lambda}\le U\le C_{\lambda}<\infty$ on $\Omega_{\lambda}$.
\item[{\rm(iii)}] If $U\in L^{p^{*}_{s}}(\R^{N})$, then $U\in L^{p-1}_{sp}(\R^{N})$ and
$U\in L^{\infty}_{\mathrm{loc}}(\R^{N}\setminus\{0\})$.
\end{enumerate}
\end{lemma}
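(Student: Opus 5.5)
The plan is to obtain all three parts from the elementary radial monotonicity estimate $U(x)^q|B_{|x|}|\le\int_{B_{|x|}}U^q$, together with its tail version on annuli.

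\textbf{Part (i).} Fix $x\neq0$ and put $\rho=|x|$. Since $U$ is radially nonincreasing, $U(y)\ge U(x)$ for every $|y|\le\rho$. Hence
\[
|B_1|\rho^N U(x)^q\le\int_{B_\rho}U^q\dy\le\|U\|_{L^q(\R^N)}^q ,
\]
and taking $q$-th roots gives $U(x)\le|B_1|^{-1/q}\|U\|_{L^q(\R^N)}\rho^{-N/q}$.

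\textbf{Part (ii).} I would refine (i) by localizing the integral.

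Near the origin, apply the same argument on $B_\rho$ with $q=p^*_s$. This gives
\[
\rho^{N}U(\rho)^{p^*_s}\le|B_1|^{-1}\int_{B_\rho}U^{p^*_s}\dy ,
\]
and the right-hand side tends to $0$ as $\rho\to0$ by absolute continuity of the integral. Since $N/p^*_s=\gamma^*$, this is $\rho^{\gamma^*}U(\rho)\to0$.

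At infinity, use the annulus $A_\rho=\{\rho/2\le|y|\le\rho\}$. On $A_\rho$ we have $U\ge U(\rho)$, so
\[
c_N\rho^N U(\rho)^{p^*_s}\le\int_{|y|\ge\rho/2}U^{p^*_s}\dy\to0 \quad\text{as }\rho\to\infty .
\]
Together these give \eqref{M}.

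Next observe that $|x|^{sp}U^{P_s}=\big(|x|^{\gamma^*}U\big)^{P_s}$, because $P_s\gamma^*=sp$. Therefore $|x|^{sp}U^{P_s}\to0$ at both ends. For fixed $\lambda>0$ there are then $r_1,r_2$ with $|x|^{sp}U^{P_s}\le\lambda$ for $|x|<r_1$ and for $|x|>r_2$. So $\Omega_\lambda\subset\{r_1\le|x|\le r_2\}$.

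On this annulus I would bound $U$ as follows.
\begin{itemize}
\item Upper bound: $U\le U(r_1)<\infty$. This is finite by (i), which shows $U$ is finite at every $x\ne0$.
\item Lower bound: on $\Omega_\lambda$ we have $U^{P_s}>\lambda|x|^{-sp}\ge\lambda r_2^{-sp}$, so $U>c_\lambda:=(\lambda r_2^{-sp})^{1/P_s}$.
\end{itemize}
If $\Omega_\lambda$ is empty, the statement is vacuous.

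\textbf{Part (iii).} Local boundedness away from $0$ follows from (i): $U\le C|x|^{-\gamma^*}$, which is bounded on every set $\{|x|\ge\epsilon\}$.

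For the tail condition, the pointwise bound alone is not enough near the origin, since $(p-1)\gamma^*$ may be at least $N$. Instead I would argue with integrals.
\begin{itemize}
\item Near the origin, use Hölder's inequality on $B_1$, since $p-1<p^*_s$:
\[
\int_{B_1}U^{p-1}\le|B_1|^{1-(p-1)/p^*_s}\|U\|_{p^*_s}^{p-1}<\infty .
\]
This also shows $U\in L^{p-1}_{\mathrm{loc}}(\R^N)$.
\item Outside $B_1$, the bound $U\le C|x|^{-\gamma^*}$ gives an integrand at most $C|y|^{-N-sp-(p-1)\gamma^*}$. This is integrable at infinity.
\end{itemize}

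No step is a real obstacle. The only care needed is to use Hölder's inequality rather than the pointwise bound near the origin in (iii), and to use annuli for the decay at infinity in (ii).
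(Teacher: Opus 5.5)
Your proof is correct and follows the paper's argument essentially step for step: the ball estimate for (i) and near the origin, the annulus $\{\rho/2\le|y|\le\rho\}$ at infinity, the identity $|x|^{sp}U^{P_s}=(|x|^{\gamma^{*}}U)^{P_s}$, and the same upper and lower bounds on $\Omega_\lambda$. The only deviation is in (iii), where your H\"older argument on $B_1$ is valid but your reason for needing it is wrong: since $(p-1)\gamma^{*}=\frac{(p-1)(N-sp)}{p}<N-sp<N$ always, the pointwise bound $U\le C|x|^{-\gamma^{*}}$ from (i) is already integrable near the origin, and this is exactly what the paper uses.
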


\begin{proof}
(i) Since $U\ge U(x)$ on $B_{|x|}$, we get $|B_{1}||x|^{N}U(x)^{q}\le\int_{B_{|x|}}U^{q}\le\|U\|^{q}_{L^{q}}$.

(ii) The same computation with $q=p^{*}_{s}$ and $N/p^{*}_{s}=\gamma^{*}$ gives
$$|x|^{\gamma^{*}}U(x)\le\left(|B_{1}|^{-1}\int_{B_{|x|}}U^{p^{*}_{s}}\right)^{\frac{1}{p^{*}_{s}}},$$ and the
right-hand side tends to $0$ as $x\to0$. For $|x|$ large we use
$U\ge U(x)$ on $B_{|x|}\setminus B_{|x|/2}$, so that
\[
  \big(1-2^{-N}\big)|B_{1}||x|^{N}U(x)^{p^{*}_{s}}
  \le\int_{\R^{N}\setminus B_{|x|/2}}U^{p^{*}_{s}}\xrightarrow[\ |x|\to\infty\ ]{}0 .
\]
Since $\gamma^{*}P_s=sp$ by \eqref{eq:Xi}, we have $|x|^{sp}U(x)^{P_s}=\big(|x|^{\gamma^{*}}U(x)\big)^{P_s}$,
so $\Omega_{\lambda}$ is bounded and stays away from the origin. Radial monotonicity bounds $U$
from above on $\Omega_{\lambda}$ by $C_{\lambda}:=U(r_{1}/2)$, which is finite by (i). For the lower bound we
use the definition of $\Omega_{\lambda}$ directly. There $U^{P_s}>\lambda|x|^{-sp}\ge\lambda r_{2}^{-sp}$, so
$U\ge c_{\lambda}:=\big(\lambda r_{2}^{-sp}\big)^{1/P_s}>0$. No positivity assumption on $U$ is used.

(iii) By (i) with $q=p^{*}_{s}$ we have $U(x)\le C|x|^{-\gamma^{*}}$, which is locally bounded away from
the origin. Moreover $\gamma^{*}(p-1)<N$ and $\gamma^{*}(p-1)+sp>0$, so
\begin{align*}
    \int_{\R^{N}}(1+|y|)^{-N-sp}|y|^{-\gamma^{*}(p-1)}\dy<\infty \Longrightarrow U\in L^{p-1}_{sp}(\R^{N}), 
\end{align*}
as required.
\end{proof}

\subsubsection{\textbf{An integrability improvement}} We now prove Theorem~\ref{thm:integ}. It is the analogue for \eqref{eq:P} of the integrability improvement of \cite{BMS16}. The Hardy term is estimated by means of \eqref{eq:FS}.

\begin{proof}[{\bf Proof of Theorem~\ref{thm:integ}}]
For $\beta>-\tfrac1p$ we have
$h'(\beta)=p(1-p)\beta(1+\beta)^{-p-1}$, so $h'$ has the sign of $\beta(1-p)$. Since $p>1$ this gives
the stated monotonicity, and the two limits are immediate. As $h(0)=1>\mu/\bar\mu$, the
superlevel set is an open interval containing $0$.

We fix $\beta\in(\beta_{-},\beta_{+})$ and set $c_{\beta}:=h(\beta)^{1/p}$. We note $c_{\beta}\le1$ by
Bernoulli's inequality $(1+\beta)^{p}\ge1+p\beta$. For $0<m\le M<\infty$ let
\[
  T(t):=\min\{\max\{t,m\},M\},\quad G(t):=tT(t)^{p\beta},\quad w:=UT(U)^{\beta}.
\]
Then $G$ is nondecreasing, locally Lipschitz and $G(0)=0$. Furthermore, $T(t)^{p\beta}\le M^{p\beta}$ if
$\beta\ge0$ and $T(t)^{p\beta}\le m^{p\beta}$ if $\beta<0$, so $G$ is Lipschitz on $[0,\infty)$ and
$\varphi:=G(U)\in\Dsp$. Both sides of \eqref{eq:weakdef} are finite for this $\varphi$, because
\begin{equation}\label{eq:idw}
  U^{p-1}G(U)=U^{p}T(U)^{p\beta}=w^{p},\quad
  U^{p^{*}_{s}-1}G(U)=U^{P_s}w^{p},
\end{equation}
while $\int_{\R^N}|x|^{-sp}U^{p}\dx<\infty$ by \eqref{eq:FS} and $\int_{\R^N} U^{p^{*}_{s}}\dx<\infty$. By density,
$\varphi$ is admissible in \eqref{eq:weakdef}, so
\begin{equation}\label{eq:testedeq}
  \frac12\iint_{\R^{2N}}\frac{\Jp\big(U(x)-U(y)\big)\big(G(U(x))-G(U(y))\big)}{|x-y|^{N+sp}}\dxy
  =\mu\int_{\R^{N}}|x|^{-sp}w^{p}\dx+\int_{\R^{N}}U^{P_s}w^{p}\dx .
\end{equation}

\noi\emph{\textbf{Claim 1.}} The left-hand side of \eqref{eq:testedeq} is at least $\tfrac12c_{\beta}^{p}[w]^{p}_{s,p}$.

Indeed, let $g(t):=\int_{0}^{t}G'(\tau)^{1/p}\d\tau$.
For $a>b\ge0$, applying H\"older inequality,
\[
  g(a)-g(b)=\int_{b}^{a}(G')^{1/p}\le\Big(\int_{b}^{a}G'\Big)^{1/p}(a-b)^{1-\frac1p}
  =\big(G(a)-G(b)\big)^{1/p}(a-b)^{\frac{p-1}{p}} ,
\]
that is $\Jp(a-b)\big(G(a)-G(b)\big)\ge|g(a)-g(b)|^{p}$, and the same holds for $a<b$ by symmetry.
Next, with $W(t):=t\,T(t)^{\beta}$ we have $W'=T^{\beta}$ on $(0,m)\cup(M,\infty)$ and
$W'=(1+\beta)t^{\beta}$ on $(m,M)$, while $G'=T^{p\beta}$ on $(0,m)\cup(M,\infty)$ and
$G'=(1+p\beta)t^{p\beta}$ on $(m,M)$. Hence $g'=(G')^{1/p}\ge c_{\beta}W'$ everywhere, and since $g$ and
$W$ are nondecreasing, $|g(a)-g(b)|\ge c_{\beta}|W(a)-W(b)|$. As $w=W(U)$, Claim 1 follows.
\medskip

\noi\emph{\textbf{Claim 2.}} Let $\lambda>0$, let $\Omega_{\lambda}$, $c_{\lambda}$, $C_{\lambda}$ be as in Lemma~\ref{lem:radial}-(ii), and assume $m\le c_{\lambda}$ and $M\ge C_{\lambda}$. Then
\[
  \int_{\R^{N}}U^{P_s}w^{p}\dx
  \le\lambda\int_{\R^{N}}|x|^{-sp}w^{p}\dx+K(\lambda),
\]
where $K(\lambda)<\infty$ is given in \eqref{eq:Klam} and does not depend on $m$ and $M$.

Indeed, outside $\Omega_{\lambda}$ we have $U^{P_s}\le\lambda|x|^{-sp}$, and $T(U)=U$ on $\Omega_{\lambda}$, so
$w=U^{1+\beta}\le C_{\lambda}^{1+\beta}$ there, and therefore
\begin{equation}\label{eq:Klam}
  \int_{\Omega_{\lambda}}U^{P_s}w^{p}\dx\le C_{\lambda}^{P_s+p(1+\beta)}|\Omega_{\lambda}|=:K(\lambda)<\infty ,
\end{equation}
which proves Claim 2.

Combining Claims 1 and 2 with \eqref{eq:testedeq} and then applying
\eqref{eq:FS} to $w\in\Dsp$, together with $C_H=2\bar\mu$, we obtain
\[
  \frac{c_{\beta}^{p}}{2}[w]^{p}_{s,p}
  \le(\mu+\lambda)\int_{\R^{N}}|x|^{-sp}w^{p}\dx+K(\lambda)
  \le\frac{\mu+\lambda}{2\bar\mu}[w]^{p}_{s,p}+K(\lambda).
\]
Since $c_{\beta}^{p}=h(\beta)>\mu/\bar\mu$, we may fix
$\lambda\in\big(0,c^{p}_{\beta}\bar\mu-\mu\big)$, and then
\[
  [w]^{p}_{s,p}\le\frac{2K(\lambda)}{c_{\beta}^{p}-(\mu+\lambda)/\bar\mu}=:\Lambda<\infty ,
\]
with $\Lambda$ independent of $m$ and $M$. By the definition of $S$,
$\|w\|^{p}_{L^{p^{*}_{s}}}\le\Lambda/S$.

Finally, letting $m\downarrow0$ and $M\uparrow\infty$ we have
$w=U\,T(U)^{\beta}\to U^{1+\beta}$ pointwise in $\R^{N}\setminus\{0\}$, so Fatou's lemma gives
\begin{align*}
\int_{\R^{N}}U^{(1+\beta)p^{*}_{s}}\dx \le \left(\frac{\Lambda}{S}\right)^{\frac{p^{*}_{s}}{p}}.    
\end{align*}
This is
\eqref{eq:Lq} for $q=(1+\beta)p^{*}_{s}$, and $\beta$ ranges over $(\beta_{-},\beta_{+})$.
\end{proof}

\subsubsection{\textbf{The estimates for radial solutions}} We can now state the conclusion.

\begin{theorem}\label{thm:radialmain}
Let $0<\mu<\bar\mu$ and let $U\in\Dsp$ be a positive radially symmetric nonincreasing weak
solution of \eqref{eq:P}. Then $U\in \C(\R^{N}\setminus\{0\})$, $U$ satisfies \eqref{M}, and for every
$q\in(q_{-},q_{+})$,
\begin{equation}\label{eq:radialconc}
  U(x)\le|B_{1}|^{-\frac{1}{q}}\|U\|_{L^{q}(\R^{N})}|x|^{-\frac{N}{q}}\quad\text{for every }x\neq0 ,
\end{equation}
with $q_{\pm}$ as in Theorem~\ref{thm:integ}. In particular there are $\gamma_{1}<\gamma^{*}<\gamma_{1}'$
and $C<\infty$ with
\[
  U(x)\le C|x|^{-\gamma_{1}}\ \text{ near the origin},\quad
  U(x)\le C|x|^{-\gamma_{1}'}\ \text{ at infinity},
\]
and $\gamma_{1}$ may be taken anywhere in $\big(\frac{\gamma^{*}}{1+\beta_{+}},\gamma^{*}\big)$ and
$\gamma_{1}'$ anywhere in $\big(\gamma^{*},\frac{\gamma^{*}}{1+\beta_{-}}\big)$. If $U$ satisfies \eqref{E0},
then $U(x)\le C|x|^{-\gamma_{-}(\mu)}$ near the origin. Also, if $U$ satisfies \eqref{E1}, then
$U(x)\le C|x|^{-\gamma_{+}(\mu)}$ at infinity.
\end{theorem}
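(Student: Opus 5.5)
The proof combines results already in place: Theorem~\ref{thm:integ}, Lemma~\ref{lem:radial} and Theorem~\ref{thm:sharpupper}. The only genuinely new ingredient is the continuity of $U$ away from the origin, which Theorem~\ref{thm:sharpupper} needs in its comparison argument.

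\emph{Step 1: the bounds \eqref{M} and \eqref{eq:radialconc}.} Since $U\in\Dsp\subset L^{p^*_s}(\R^N)$, Lemma~\ref{lem:radial}-(ii) gives \eqref{M} directly. Lemma~\ref{lem:radial}-(iii) also gives $U\in L^{p-1}_{sp}(\R^N)\cap L^\infty_{\mathrm{loc}}(\R^N\setminus\{0\})$. For every $q\in(q_-,q_+)$, Theorem~\ref{thm:integ} gives $U\in L^q(\R^N)$. Lemma~\ref{lem:radial}-(i) with this $q$ then yields \eqref{eq:radialconc}.

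\emph{Step 2: choice of the exponents $\gamma_1$ and $\gamma_1'$.} A bound $|x|^{-N/q}$ is good near the origin when $N/q<\gamma^*$, that is, when $q>p^*_s$. It is good at infinity when $N/q>\gamma^*$, that is, when $q<p^*_s$. Take $q=(1+\beta)p^*_s$ with $\beta\in(0,\beta_+)$. Then $N/q=\gamma^*/(1+\beta)$, and this value sweeps out $\big(\frac{\gamma^*}{1+\beta_+},\gamma^*\big)$. Similarly, $\beta\in(\beta_-,0)$ produces every $\gamma_1'\in\big(\gamma^*,\frac{\gamma^*}{1+\beta_-}\big)$. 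In both cases the bound holds for all $x\ne0$, so in particular near the origin and at infinity.

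\emph{Step 3: continuity on $\R^N\setminus\{0\}$.} Fix a ball $B\Subset\R^N\setminus\{0\}$. On $B$ the function $U$ is bounded, so the right-hand side $f=\mu|x|^{-sp}U^{p-1}+U^{p^*_s-1}$ is in $L^\infty(B)$. Moreover $U\in W^{s,p}_{\mathrm{loc}}\cap L^{p-1}_{sp}(\R^N)$. The local H\"older regularity theory for the fractional $p$-Laplacian with bounded right-hand side \cite{DKP16,BLS18} then gives $U\in C^{0,\alpha}_{\mathrm{loc}}(B)$. As an alternative, one could note that a radially nonincreasing function has at most countably many jump radii and rule out jumps using the equation. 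I expect the regularity route to be cleaner, and this is the step where some care is needed to quote the right local boundedness and H\"older estimates.

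\emph{Step 4: the sharp bounds under \eqref{E0} or \eqref{E1}.} By Steps 1 and 3, $U$ is a positive solution of \eqref{eq:P} that is continuous on $\R^N\setminus\{0\}$ and satisfies \eqref{M}. These are exactly the standing assumptions of Section~\ref{sec:app}. Theorem~\ref{thm:sharpupper} therefore applies. Under \eqref{E0} it gives $U\le C|x|^{-\gamma_-(\mu)}$ near the origin. Under \eqref{E1} it gives $U\le C|x|^{-\gamma_+(\mu)}$ at infinity.
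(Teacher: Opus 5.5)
Your proposal is correct and follows the paper's proof essentially step by step. Lemma~\ref{lem:radial}-(ii),(iii) gives \eqref{M}, the tail condition and local boundedness. Interior H\"older estimates with bounded right-hand side give continuity on $\R^{N}\setminus\{0\}$. Theorem~\ref{thm:integ} combined with Lemma~\ref{lem:radial}-(i) gives \eqref{eq:radialconc}, and taking $q=(1+\beta)p^{*}_{s}$ yields the stated exponent ranges. Finally, Theorem~\ref{thm:sharpupper}, whose hypotheses (continuity and \eqref{M}) have then been verified, gives the sharp bounds under \eqref{E0} or \eqref{E1}.
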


\begin{proof}
Lemma~\ref{lem:radial}-(ii),(iii) gives \eqref{M}, $U\in L^{p-1}_{sp}(\R^{N})$ and
$U\in L^{\infty}_{\mathrm{loc}}(\R^{N}\setminus\{0\})$. Hence the right-hand side of \eqref{eq:P} lies
in $L^{\infty}_{\mathrm{loc}}(\R^{N}\setminus\{0\})$, and the interior H\"older estimates of
\cite{DKP16} give $U\in \C(\R^{N}\setminus\{0\})$. Estimate \eqref{eq:radialconc} is
Theorem~\ref{thm:integ} combined with Lemma~\ref{lem:radial}-(i). Taking $q\in(p^{*}_{s},q_{+})$ gives
$N/q<N/p^{*}_{s}=\gamma^{*}$ and taking $q\in(q_{-},p^{*}_{s})$ gives $N/q>\gamma^{*}$, with
$N/q=\gamma^{*}/(1+\beta)$ for $q=(1+\beta)p^{*}_{s}$. The last two statements are
Theorem~\ref{thm:sharpupper}, whose remaining hypothesis \eqref{M} has just been verified.
\end{proof}

\begin{remark}\label{rem:barrier}
The bound $U\le C|x|^{-\gamma_1}$ with $\gamma_1<\gamma^*$ cannot be improved to \eqref{E0} by comparison alone. For a barrier $A|x|^{-\gamma'}$, the ratio $U/(A|x|^{-\gamma'})$ is bounded near $0$ only if $\gamma'\ge\gamma_1$, and then the conclusion is weaker than what we know. The same happens at infinity. For this reason \eqref{E0} and \eqref{E1} are proved in Theorem~\ref{thm:cond} by the ground-state substitution.
\end{remark}

Finally, we record the integrability that corresponds to the two characteristic exponents.

\begin{corollary}\label{cor:sharpint}
Let $U$ be as in Theorem~\ref{thm:radialmain} and assume in addition \eqref{E0} and \eqref{E1}. Then
\begin{equation}\label{eq:sharpint}
  U\in L^{q}(\R^{N})\quad\text{for every } q \text{ with } \frac{N}{\gamma_{+}(\mu)}<q<\frac{N}{\gamma_{-}(\mu)} .
\end{equation}
Conversely, if $U$ is radially symmetric, nonincreasing and satisfies \eqref{eq:sharpint}, then $U$
satisfies \eqref{E0}, \eqref{E1} and \eqref{M}.
\end{corollary}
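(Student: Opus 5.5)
The forward direction follows from Theorem~\ref{thm:radialmain}, which yields the sharp pointwise bounds, combined with the integrability of $U$ on a middle annulus. By Theorem~\ref{thm:radialmain}, \eqref{E0} and \eqref{E1} give $r_0\in(0,1)$, $R_0>1$ and $C$ with $U(x)\le C|x|^{-\gamma_-(\mu)}$ for $0<|x|<r_0$ and $U(x)\le C|x|^{-\gamma_+(\mu)}$ for $|x|>R_0$. We split $\int_{\R^N}U^q\dx$ over $B_{r_0}$, the annulus $\{r_0\le|x|\le R_0\}$, and the exterior $\{|x|>R_0\}$. On the annulus, $U$ is bounded by Lemma~\ref{lem:radial}-(iii) (indeed $U\le U(r_0)<\infty$ by monotonicity), so that piece is finite for every $q>0$. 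On $B_{r_0}$ we get $\int_{B_{r_0}}|x|^{-q\gamma_-}\dx<\infty$ precisely when $q\gamma_-<N$, that is $q<N/\gamma_-(\mu)$. On the exterior we get $\int_{|x|>R_0}|x|^{-q\gamma_+}\dx<\infty$ precisely when $q\gamma_+>N$, that is $q>N/\gamma_+(\mu)$. The interval in \eqref{eq:sharpint} is nonempty because $\gamma_-<\gamma_+$ by Corollary~\ref{cor:roots}. This proves \eqref{eq:sharpint}.

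For the converse, assume $U$ is radial, nonincreasing and lies in $L^q$ for all $q\in(N/\gamma_+,N/\gamma_-)$. We apply Lemma~\ref{lem:radial}-(i), which gives $U(x)\le|B_1|^{-1/q}\|U\|_{L^q}|x|^{-N/q}$ for every $x\ne0$. Given $\eta>0$ with $\eta<\gamma_+-\gamma_-$, choose $q$ in the interval with $N/q=\gamma_-+\eta$, which is possible since $N/q$ then ranges over $(\gamma_-,\gamma_+)$. This gives $U(x)\le C_\eta|x|^{-\gamma_--\eta}$ for all $x$, in particular near $0$; this is \eqref{E0}. Larger $\eta$ follow trivially for $|x|<1$. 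Similarly, choosing $N/q=\gamma_+-\eta$ gives $U(x)\le C_\eta|x|^{-\gamma_++\eta}$, which is \eqref{E1}, with larger $\eta$ again trivial for $|x|>1$. Finally, $\gamma^*\in(\gamma_-,\gamma_+)$, so $q=p^*_s=N/\gamma^*$ is admissible. Then Lemma~\ref{lem:radial}-(i) gives $U\le C|x|^{-\gamma^*}$, and Lemma~\ref{lem:radial}-(ii) even gives the $o$-version, so \eqref{M} holds.

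No step is a genuine obstacle. The only point requiring care is that the forward direction needs boundedness of $U$ on the middle annulus and uses only the endpoint behaviour. Both are supplied by radial monotonicity together with the finiteness of $U(r_0)$ from Lemma~\ref{lem:radial}-(i).
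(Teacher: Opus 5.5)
Your proposal is correct and follows the paper's proof essentially verbatim. The forward direction splits $\int U^q$ over a small ball, a middle annulus where $U$ is bounded, and an exterior region, using the sharp bounds from Theorem~\ref{thm:radialmain}; the converse applies Lemma~\ref{lem:radial}-(i) with $N/q=\gamma_-+\eta$, with $N/q=\gamma_+-\eta$, and with $q=p^*_s$. Your only deviation is harmless: you allow $\eta<\gamma_+-\gamma_-$ instead of the paper's $\eta<\gamma^*-\gamma_-$, and you note explicitly that larger $\eta$ follow from monotonicity of the powers near $0$ and near infinity.
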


\begin{proof}
By Theorem~\ref{thm:radialmain} and \eqref{E0}, \eqref{E1}, we have
$U\le C|x|^{-\gamma_{-}}$ on $B_{r_{0}}\setminus\{0\}$ and
$U\le C|x|^{-\gamma_{+}}$ on $\R^{N}\setminus B_{R_{0}}$, while $U$ is bounded on the intermediate
annulus. The first bound is in $L^{q}(B_{r_{0}})$ if and only if $q\gamma_{-}<N$, and the second is in
$L^{q}(\R^{N}\setminus B_{R_{0}})$ if and only if $q\gamma_{+}>N$. This is \eqref{eq:sharpint}.

Conversely, assume \eqref{eq:sharpint}. Given $\eta>0$ with $\eta<\gamma^{*}-\gamma_{-}$, apply
Lemma~\ref{lem:radial}-(i) with $q=N/(\gamma_{-}+\eta)$, which lies in the range \eqref{eq:sharpint} due to Corollary \ref{cor:roots}.
This gives $U(x)\le C_{\eta}|x|^{-\gamma_{-}-\eta}$ for every $x\neq0$, hence \eqref{E0}. Taking
$q=N/(\gamma_{+}-\eta)$ for $0<\eta<\gamma_{+}-\gamma^{*}$ gives $U(x)\le C_{\eta}|x|^{-\gamma_{+}+\eta}$ for every $x\neq0$,
hence \eqref{E1}, and $q=p^{*}_{s}$ gives \eqref{M}.
\end{proof}

\section{Ground-state representations associated with the Hardy exponents}\label{sec:gsrep}
{For one value of $\mu$, Corollary~\ref{cor:roots} gives two power solutions $|x|^{-\gamma_\pm}$ of \eqref{eq:pureHardy}, hence two ground-state substitutions and two remainder inequalities. Throughout this section $p\ge2$ (see Remark~\ref{rem:gsp2}). We recall $\mathcal D_\mu$ and $\mathcal E_\gamma$ from \eqref{eq:intro-defect} and \eqref{eq:intro-Egamma}. By \eqref{eq:FS} and Proposition~\ref{thm:props}-(iv), $\mathcal D_\mu\ge0$ for $0<\mu\le\bar\mu$. For $0<\gamma<\gamma_c$ put $\omega_\gamma(x):=|x|^{-\gamma}$, so that $\pLap\omega_\gamma=\Phi(\gamma)|x|^{-sp}\omega_\gamma^{p-1}$ in $\R^N\setminus\{0\}$ by \eqref{eq:homog}. The substitution is $u=\omega_{\gamma}v$, that is $v(x)=|x|^{\gamma}u(x)$.}

\subsection{Reduction to a pointwise inequality} We now write the whole computation as a pointwise inequality in four real
variables. For $a,b\in\R$ and $c,d>0$ set
\begin{equation}\label{eq:bracket}
  \mathfrak{B}(a,b,c,d):=|a-b|^{p}-\Jp(c-d)\left(\frac{|a|^{p}}{c^{p-1}}-\frac{|b|^{p}}{d^{p-1}}\right).
\end{equation}

We first write the Hardy energy as a double integral of $\mathfrak{B}$.
\begin{lemma}\label{lem:gsreduce}
Let $0<\gamma<\gamma_{c}$ and $u\in\cc(\R^{N}\setminus\{0\})$. Then
\begin{equation}\label{eq:gsreduce}
  \mathcal{D}_{\Phi(\gamma)}(u)
  =\iint_{\R^{2N}}\frac{\mathfrak{B}\big(u(x),u(y),\omega_{\gamma}(x),\omega_{\gamma}(y)\big)}{|x-y|^{N+sp}}\dxy ,
\end{equation}
and the two double integrals into which the right-hand side splits,  converge absolutely.
\end{lemma}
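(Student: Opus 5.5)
The plan is to expand $\mathfrak{B}$ and identify the two pieces. By definition,
\[
\iint\frac{\mathfrak B(u(x),u(y),\omega_\gamma(x),\omega_\gamma(y))}{|x-y|^{N+sp}}\dxy
= [u]_{s,p}^p-\iint\frac{\Jp(\omega_\gamma(x)-\omega_\gamma(y))\big(\varphi(x)-\varphi(y)\big)}{|x-y|^{N+sp}}\dxy,
\qquad \varphi:=\frac{|u|^p}{\omega_\gamma^{p-1}} .
\]
The first double integral is finite because $u\in\cc$. We then have to show that the second one equals $2\Phi(\gamma)\int|x|^{-sp}|u|^p\dx$, and that it converges absolutely. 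Recall that the factor $2$ matches the $\frac12$ in \eqref{eq:weakdef}.

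\emph{Step 1: $\varphi$ is a legitimate test function.} Since $u$ has compact support $K\subset\R^N\setminus\{0\}$ and $\omega_\gamma$ is smooth and positive there, $\varphi=|u|^p|x|^{\gamma(p-1)}$ is Lipschitz with compact support in $\R^N\setminus\{0\}$ (here $p>1$). The identity $\pLap\omega_\gamma=\Phi(\gamma)|x|^{-sp}\omega_\gamma^{p-1}$ holds pointwise in $\R^N\setminus\{0\}$ by \eqref{eq:homog} and Proposition~\ref{prop:rep}. Testing it weakly against $\varphi$ gives
\[
\frac12\iint\frac{\Jp(\delta\omega_\gamma)\,\delta\varphi}{|x-y|^{N+sp}}\dxy=\int\Phi(\gamma)|x|^{-sp}\omega_\gamma^{p-1}\varphi\dx=\Phi(\gamma)\int\frac{|u|^p}{|x|^{sp}}\dx .
\]
This is exactly what makes $\mathcal D_{\Phi(\gamma)}(u)$ appear on the left of \eqref{eq:gsreduce}.

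\emph{Step 2: passing from the pointwise P.V. identity to the weak form, with absolute convergence.} I would write
\[
\iint \Jp(\delta\omega_\gamma)\delta\varphi\,K=2\int\varphi(x)\Big(\int\Jp(\omega_\gamma(x)-\omega_\gamma(y))K\dy\Big)\dx ,
\]
using the symmetry of the integrand. To justify this, I split $\R^{2N}$ into $K'\times K'$, where $K'$ is a compact neighbourhood of $K$ avoiding $0$, and its complement.

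On $K'\times K'$, both $\omega_\gamma$ and $\varphi$ are Lipschitz. Hence $|\Jp(\delta\omega_\gamma)\delta\varphi|\le C|x-y|^p$, which is integrable against $|x-y|^{-N-sp}$ because $p>sp$. So absolute convergence is clear there. The symmetrization and the P.V. limit are handled by the usual argument: cut out $\{|x-y|<\eps\}$, use Fubini, and apply the pointwise P.V. identity, for which Lemma~\ref{lem:pv} guarantees that the principal value is the ball-removed one. Dominated convergence in $x\in K$ needs a bound on the truncated inner integrals that is uniform in $\eps$. This follows from the $C^{1,1}$ regularity of $\omega_\gamma$ near $K$, which gives the standard second-order cancellation, since $p\ge2$ is assumed in this section. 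For general $p$, one can instead symmetrize and use the Lipschitz estimate directly on $K'\times K'$.

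Off $K'\times K'$, at least one of $\varphi(x),\varphi(y)$ vanishes. Say $x\in K$ and $y\notin K'$, so $|x-y|\ge d>0$. The integrand is then bounded by $|\varphi(x)|\,(\omega_\gamma(x)+\omega_\gamma(y))^{p-1}|x-y|^{-N-sp}$. Integrability in $y$ near $0$ requires $\gamma(p-1)<N$, which holds since $\gamma<\gamma_c$. At infinity, $\omega_\gamma(y)\to0$ and $|x-y|^{-N-sp}$ is integrable. So this part converges absolutely.

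Combining the two regions gives absolute convergence of the second double integral and the claimed value $2\Phi(\gamma)\int|x|^{-sp}|u|^p\dx$. Subtracting it from $[u]_{s,p}^p$ yields \eqref{eq:gsreduce}.

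The main obstacle is the justification in Step 2 near the diagonal: exchanging the principal value with the $x$-integration. I would handle it by the symmetric $\eps$-truncation. The part of the truncated integral over $K'\times K'$ is then absolutely convergent uniformly in $\eps$, by the Lipschitz bound, so the limit $\eps\to0$ is harmless.
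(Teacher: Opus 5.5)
Your proposal is correct and follows the paper's route. You expand $\mathfrak{B}$, test the equation for $\omega_\gamma$ with $\varphi=|u|^p\omega_\gamma^{1-p}$, obtain absolute convergence from Lipschitz bounds near the diagonal and from $\gamma(p-1)<N$ away from it, and then subtract. The paper only asserts the passage from the pointwise principal-value identity to the weak form and appeals to density; your symmetric $\eps$-truncation argument, with the second-order cancellation available for $p\ge2$, actually justifies that passage.
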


\begin{proof}
We set $\nu:=\Phi(\gamma)$ and $\varphi:=|u|^{p}\omega_{\gamma}^{1-p}=|u|^{p}|x|^{\gamma(p-1)}$,
which is Lipschitz with compact support in $\R^{N}\setminus\{0\}$. By the power-function formula of Del Pezzo and Quaas
\cite[Theorem~1.1]{DPQ25}, restated as Proposition~\ref{prop:rep} with the normalization fixed in
\eqref{eq:op}--\eqref{eq:weakdef}, the function $\omega_{\gamma}$ satisfies \eqref{eq:homog}, that is
$\pLap\omega_{\gamma}=\nu|x|^{-sp}\omega_{\gamma}^{p-1}$ in $\R^{N}\setminus\{0\}$. Lemma~\ref{lem:pv}
shows that the principal value used there agrees with the paired representation. Symmetrizing in
$(x,y)$ as in \eqref{eq:op}--\eqref{eq:weakdef} turns this into the weak formulation, the double
integral so obtained being absolutely convergent, as we check below. Being Lipschitz with compact
support in $\R^{N}\setminus\{0\}$, $\varphi$ lies in $\mathcal{W}^{s,p}_{0}(\R^{N}\setminus\{0\})$ and is
admissible by density. Testing with $\varphi$ gives
\[
  \iint_{\R^{2N}}\frac{\Jp\big(\omega_{\gamma}(x)-\omega_{\gamma}(y)\big)\big(\varphi(x)-\varphi(y)\big)}{|x-y|^{N+sp}}\dxy
  =2\nu\int_{\R^{N}}|x|^{-sp}\omega_{\gamma}^{p-1}\varphi\dx
  =2\nu\int_{\R^{N}}\frac{|u|^{p}}{|x|^{sp}}\dx .
\]
Absolute convergence of this integral is checked region by region. The integrand is symmetric in
$(x,y)$. So in each case below we may assume that $x$ is the variable in the stated position. The integrand
vanishes unless at least one of $x,y$ lies in $\operatorname{supp}\varphi$.

\noi (i) If $|x-y|$ is small, then both variables lie in a fixed neighbourhood of
$\operatorname{supp}\varphi$, on which $\omega_{\gamma}$ and $\varphi$ are Lipschitz, so both differences
are $O(|x-y|)$. The integrand is then $O\big(|x-y|^{p-sp-N}\big)$, and $\int_{0}^{1}r^{p-sp-1}\dr<\infty$
because $sp<p$.

\noi (ii) If $|x-y|$ is bounded below and $x$ is close to the origin, then $\varphi(x)=0$, so the
integrand vanishes unless $y\in\operatorname{supp}\varphi$. Moreover, for such $y$ both $\omega_{\gamma}(y)$ and
$|x-y|^{-N-sp}$ are bounded, so the integrand is
$O\big(\omega_{\gamma}(x)^{p-1}\big)=O\big(|x|^{-\gamma(p-1)}\big)$. This is integrable near the origin
because $\gamma<\gamma_{c}$ forces $\gamma(p-1)<N-sp<N$.

\noi (iii) If $|x|$ is large, then again $\varphi(x)=0$ and $y\in\operatorname{supp}\varphi$, while
$|\omega_{\gamma}(x)-\omega_{\gamma}(y)|$ is bounded and $|x-y|^{-N-sp}\le C|x|^{-N-sp}$, which is
integrable at infinity.

\noi On the remaining set every factor is bounded and the domain has finite measure. Therefore, 
\begin{align*}
    \iint_{\R^{2N}}\frac{\Jp\big(\omega_{\gamma}(x)-\omega_{\gamma}(y)\big)\big(\varphi(x)-\varphi(y)\big)}{|x-y|^{N+sp}}\dxy< \infty. 
\end{align*}
Further, since
$u\in\cc(\R^{N}\setminus\{0\})$, $$\iint_{\R^{2N}} \frac{|u(x)-u(y)|^{p}}{|x-y|^{N+sp}}\dxy=[u]^{p}_{s,p}<\infty.$$ Subtracting term-wise and using
$\varphi=|u|^{p}\omega_{\gamma}^{1-p}$ gives \eqref{eq:gsreduce}.
\end{proof}

Next, we bound $\mathfrak{B}$ from below.
\begin{lemma}\label{lem:gspoint}
Let $a,b\in\R$ and $c,d>0$.
\begin{enumerate}
\item[{\rm(i)}] If $p=2$ then
$\mathfrak{B}(a,b,c,d)=cd\,\big|\frac ac-\frac bd\big|^{2}$.
\item[{\rm(ii)}] Let $p\ge2$ and set
\begin{equation}\label{eq:cp}
  c_{p}:=\min_{0<\tau<\frac12}\Big\{(1-\tau)^{p}-\tau^{p}+p\tau^{p-1}\Big\}\in(0,1] .
\end{equation}
Then
\begin{equation}\label{eq:gspoint}
  \mathfrak{B}(a,b,c,d) \ge c_{p}(cd)^{\frac{p}{2}}\Big|\frac ac-\frac bd\Big|^{p} ,
\end{equation}
\end{enumerate}
\end{lemma}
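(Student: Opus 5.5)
The plan is to reduce the four-variable inequality to the one-variable pointwise inequality of Frank and Seiringer \cite[Lemma~2.6]{FS08}, which underlies their ground-state representation and carries exactly the constant $c_p$ of \eqref{eq:cp}. Throughout we write $X:=a/c$ and $Y:=b/d$, so that $a=cX$ and $b=dY$, and
\[
  \mathfrak{B}(a,b,c,d)=|cX-dY|^{p}-\Jp(c-d)\big(c|X|^{p}-d|Y|^{p}\big).
\]

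\emph{Part (i).} For $p=2$ this is a direct expansion:
\[
  (cX-dY)^2-(c-d)(cX^2-dY^2)=cdX^2-2cdXY+cdY^2=cd\,(X-Y)^2 .
\]

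\emph{Properties of $c_p$.} Letting $\tau\downarrow0$ in \eqref{eq:cp} gives the value $1$, so $c_p\le1$. For positivity, the function $(1-\tau)^p-\tau^p+p\tau^{p-1}$ extends continuously to $[0,\tfrac12]$. It is strictly positive on $[0,\tfrac12)$ because there $(1-\tau)^p>\tau^p$, and at $\tau=\tfrac12$ it equals $p2^{1-p}>0$. Hence its minimum on this compact interval is positive.

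\emph{Part (ii): symmetry and the trivial cases.} The expression $\mathfrak{B}$ is invariant under the swap $(a,c)\leftrightarrow(b,d)$: the first term is symmetric, and both factors of the second term change sign. The right-hand side of \eqref{eq:gspoint} is also invariant. So we may assume $c\ge d$. If $c=d$, then $\mathfrak{B}=|a-b|^p=c^p|X-Y|^p$, and the claim follows from $c_p\le1$.

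\emph{Part (ii): normalization for $c>d$ and $Y\neq0$.} Put $t:=d/c\in(0,1)$ and $A:=X/Y\in\R$. Dividing by $c^p|Y|^p$ and using $p$-homogeneity (note that $|cX-dY|^p=c^p|Y|^p|A-t|^p$) gives
\[
  \frac{\mathfrak{B}(a,b,c,d)}{c^{p}|Y|^{p}}=|A-t|^{p}-(1-t)^{p-1}\big(|A|^{p}-t\big).
\]
The Frank--Seiringer lemma states that for $0\le t\le1$ and real $A$ the right-hand side is at least $c_p\,t^{p/2}|A-1|^p$. Multiplying back yields
\[
  \mathfrak{B}\ge c_p\,c^p|Y|^p\Big(\frac dc\Big)^{p/2}\Big|\frac XY-1\Big|^p=c_p(cd)^{p/2}|X-Y|^p ,
\]
which is \eqref{eq:gspoint}.

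\emph{Part (ii): the case $Y=0$.} Both sides of \eqref{eq:gspoint} are continuous in $b$, so the inequality follows by letting $b\to0$ in the case just treated.

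The only substantial step is the one-variable inequality. I plan to cite it from \cite{FS08}. If a self-contained argument is wanted, the approach is as follows. For fixed $t$, study
\[
  f(A)=|A-t|^p-(1-t)^{p-1}(|A|^p-t)-c_p t^{p/2}|A-1|^p .
\]
For $A<0$, replacing $A$ by $|A|$ does not increase $f$, so one reduces to $A\ge0$. Then $f$ is handled by convexity for $p\ge2$, with the extremal configuration giving rise to the minimization over $\tau\in(0,\tfrac12)$ that defines $c_p$. I expect this minimization to be the main obstacle, so I would rely on the published lemma rather than reprove it.
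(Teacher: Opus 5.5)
Your argument is correct and is essentially the paper's proof. You use the $(a,c)\leftrightarrow(b,d)$ symmetry to assume $c\ge d$, normalize with $t=d/c$ and the ratio $A=(a/c)/(b/d)$, and invoke \cite[Lemma~2.6]{FS08}; the only cosmetic differences are that you handle $b=0$ by continuity in $b$ (the paper lets $A\to\infty$ in the one-variable inequality) and treat $c=d$ separately via $c_p\le 1$.

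One caution concerns only your optional self-contained sketch: the proposed reduction to $A\ge 0$ is false in general. For $p>2$, fixed $t<c_p^{2/(p-2)}$ and $|A|$ small, one has $f(A)-f(|A|)\approx 2p|A|\bigl(t^{p-1}-c_p t^{p/2}\bigr)<0$, so replacing $A$ by $|A|$ increases $f$. Citing the published lemma, as you intend, is the right choice.
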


\begin{proof}
(i) The identity follows by direct calculation. 


(ii) For $p=2$ the expression in \eqref{eq:cp} equals $1-2\tau+\tau^{2}-\tau^{2}+2\tau=1$ for every
$\tau$, so $c_{2}=1$, consistent with (i). By \cite[Lemma~2.6, (2.16)]{FS08}, for $p\ge2$,
$0\le t\le1$ and $A\in\R$,
\begin{equation}\label{eq:FSpoint}
  |A-t|^{p} \ge (1-t)^{p-1}\big(|A|^{p}-t\big)+c_{p}t^{\frac{p}{2}}|A-1|^{p},
\end{equation}
with $c_{p}$ given by \eqref{eq:cp}, which is \cite[(1.13)]{FS08}.

Both $\mathfrak{B}$ and the right-hand side of \eqref{eq:gspoint} are unchanged under the exchange
$(a,c)\leftrightarrow(b,d)$, so we may assume $c\ge d$. We put
\[
  t:=\frac dc\in(0,1],\quad A:=\frac ac,\quad B:=\frac bd ,
\]
so that $a=cA$ and $b=ctB$. Then $|a-b|^{p}=c^{p}|A-tB|^{p}$, $\Jp(c-d)=c^{p-1}(1-t)^{p-1}$,
$|a|^{p}c^{1-p}=c|A|^{p}$ and $|b|^{p}d^{1-p}=ct|B|^{p}$, therefore
\begin{equation}\label{eq:Bnorm}
  \mathfrak{B}(a,b,c,d)=c^{p}\Big(|A-tB|^{p}-(1-t)^{p-1}\big(|A|^{p}-t|B|^{p}\big)\Big).
\end{equation}
Suppose first $B\neq0$. Since
\[
  |A-tB|^{p}=|B|^{p}\Big|\frac AB-t\Big|^{p},\quad
  |A|^{p}-t|B|^{p}=|B|^{p}\Big(\Big|\frac AB\Big|^{p}-t\Big),\quad
  |A-B|^{p}=|B|^{p}\Big|\frac AB-1\Big|^{p},
\]
applying \eqref{eq:FSpoint} with $A/B$ in place of $A$ and multiplying by $c^{p}|B|^{p}$ gives
\[
  \mathfrak{B}(a,b,c,d)\ \ge\ c_{p}\,c^{p}t^{\frac{p}{2}}|A-B|^{p}
  =c_{p}\,(cd)^{\frac{p}{2}}\Big|\frac ac-\frac bd\Big|^{p},
\]
because $c^{p}t^{\frac{p}{2}}=c^{\frac{p}{2}}d^{\frac{p}{2}}$. If $B=A=0$ both sides of \eqref{eq:gspoint} vanish, and if
$B=0\neq A$, dividing \eqref{eq:Bnorm} by $c^{p}|A|^{p}$ turns the claim into
$1-(1-t)^{p-1}\ge c_{p}t^{\frac{p}{2}}$, which follows from \eqref{eq:FSpoint} by letting $A\to\infty$. This
is \eqref{eq:gspoint}.

If $a/c=b/d$, that is, if $u$ is proportional to $\omega_{\gamma}$ (here $a=u(x)$, $b=u(y)$, $c=\omega_{\gamma}(x)$, $d=\omega_{\gamma}(y)$), then both sides of \eqref{eq:gspoint} vanish. Indeed, with $a=\lambda c$ and $b=\lambda d$,
\[
\mathfrak{B}(a,b,c,d)=|\lambda|^{p}|c-d|^{p}-\Jp(c-d)|\lambda|^{p}(c-d)=0 .
\]
Conversely, if \eqref{eq:FSpoint} is strict for $A\neq1$ and $t>0$ when $p>2$ (see \cite[Lemma~2.6]{FS08}), then, since $t=d/c>0$, equality in \eqref{eq:gspoint} with $b\neq0$ forces $a/c=b/d$.

\end{proof}

\subsection{The representation} We can now state the representation for one characteristic exponent.

\begin{theorem}\label{thm:gsrep}
Let $p\ge2$ and $0<\gamma<\gamma_{c}$. Then for every $u\in\Dsp$,
\begin{equation}\label{eq:gsrepmain}
  [u]^{p}_{s,p}-2\Phi(\gamma)\int_{\R^{N}}\frac{|u|^{p}}{|x|^{sp}}\dx
   \ge c_{p}\mathcal{E}_{\gamma}\big(|x|^{\gamma}u\big),
\end{equation}
with $c_{p}$ as in Lemma~\ref{lem:gspoint}. For $p=2$ equality holds, with $c_{2}=1$, whenever
$u\in\cc(\R^{N}\setminus\{0\})$.
\end{theorem}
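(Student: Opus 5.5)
The plan is to prove \eqref{eq:gsrepmain} first for $u\in\cc(\R^{N}\setminus\{0\})$ and then extend it to all of $\Dsp$ by density.

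\emph{Smooth case.} Fix $u\in\cc(\R^{N}\setminus\{0\})$ and put $\nu=\Phi(\gamma)$. By Lemma~\ref{lem:gsreduce},
\[
\mathcal D_{\nu}(u)=\iint_{\R^{2N}}\frac{\mathfrak B\big(u(x),u(y),\omega_\gamma(x),\omega_\gamma(y)\big)}{|x-y|^{N+sp}}\dxy .
\]
We then apply Lemma~\ref{lem:gspoint}(ii) pointwise with $a=u(x)$, $b=u(y)$, $c=|x|^{-\gamma}$, $d=|y|^{-\gamma}$. Since $a/c=|x|^{\gamma}u(x)$ and $(cd)^{p/2}=|x|^{-p\gamma/2}|y|^{-p\gamma/2}$, integrating against the kernel gives exactly $c_p\,\mathcal E_\gamma(|x|^\gamma u)$. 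For $p=2$, Lemma~\ref{lem:gspoint}(i) is an identity, so the same computation yields equality with $c_2=1$.

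\emph{Density step.} For general $u\in\Dsp$ we take $u_n\in\cc(\R^{N}\setminus\{0\})$ with $u_n\to u$ in $\Dsp$. This is possible because $\cc(\R^N)$ is dense by definition, and the capacity argument of Lemma~\ref{lem:cap} (cutting off near the origin at cost $C\eps^{N-sp}\to0$) shows that removing the origin does not change the closure. Passing to a subsequence, $u_n\to u$ a.e. On the left of \eqref{eq:gsrepmain}, $[u_n]_{s,p}\to[u]_{s,p}$. The fractional Hardy inequality \eqref{eq:FS} makes $u\mapsto\int|x|^{-sp}|u|^p$ continuous on $\Dsp$, so the Hardy term converges as well. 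On the right, $|x|^\gamma u_n\to|x|^\gamma u$ a.e., so the integrand of $\mathcal E_\gamma$ converges a.e. in $\R^{2N}$. Fatou's lemma then gives
\[
\mathcal E_\gamma(|x|^\gamma u)\le\liminf_n\mathcal E_\gamma(|x|^\gamma u_n)\le \lim_n \mathcal D_\nu(u_n)/c_p=\mathcal D_\nu(u)/c_p .
\]

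\emph{Main obstacle.} The substantive point is verifying Lemma~\ref{lem:gsreduce}'s hypotheses and the density claim in $\Dsp$ without the origin; both are supplied by earlier results, and only the capacity estimate needs adapting. That adaptation requires $\phi\in\cc(\R^N)$ and the seminorm in $\R^N$ rather than $W^{s,p}$ on a ball. The estimate in Lemma~\ref{lem:cap} is purely local near $0$ and only bounds $[\phi(1-\eta_\eps)]_{s,p}^p\le C\eps^{N-sp}$, so it transfers verbatim. The remaining care point is using Fatou only on the right-hand side, where the integrand is nonnegative. No convergence of $\mathcal E_\gamma$ itself is needed, so no weighted compactness enters.
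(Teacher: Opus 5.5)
Your proposal is correct and follows essentially the same route as the paper. The case $u\in\cc(\R^{N}\setminus\{0\})$ comes from combining Lemma~\ref{lem:gsreduce} with the pointwise bound of Lemma~\ref{lem:gspoint}, with equality for $p=2$ from part (i); the general case approximates $u$ in $\Dsp$ by functions cut off near the origin as in Lemma~\ref{lem:cap}, uses convergence of the Gagliardo and Hardy terms (the latter via \eqref{eq:FS}), and applies Fatou's lemma to $\mathcal{E}_{\gamma}$ along an a.e.\ convergent subsequence.
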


\begin{proof}
For $u\in\cc(\R^{N}\setminus\{0\})$, \eqref{eq:gsrepmain} follows after combining Lemma~\ref{lem:gsreduce} with Lemma~\ref{lem:gspoint},
noting that $\omega_{\gamma}(x)^{\frac{p}{2}}\omega_{\gamma}(y)^{\frac{p}{2}}=|x|^{- \frac{p\gamma}{2}}|y|^{- \frac{p\gamma}{2}}$ and
$u(x)/\omega_{\gamma}(x)=|x|^{\gamma}u(x)$. In particular, for $p=2$ part (i) of Lemma \ref{lem:gspoint} gives the equality.

For general $u\in\Dsp$ we use that $\cc(\R^{N}\setminus\{0\})$ is dense in $\Dsp$, which holds because
$sp<N$. By the definition of $\Dsp$, we can approximate $u$ in $[\cdot]_{s,p}$ by functions of $\cc(\R^{N})$. To each approximation we apply the cut-off construction at the origin from the
proof of Lemma~\ref{lem:cap}. This removes a neighbourhood of the origin and changes
$[\cdot]_{s,p}$ by an arbitrarily small amount. We choose $u_{n}\in\cc(\R^{N}\setminus\{0\})$ with
$[u_{n}-u]_{s,p}\to0$. Then $[u_{n}]^{p}_{s,p}\to[u]^{p}_{s,p}$, and using \eqref{eq:FS} we get
$\int_{\R^N}|x|^{-sp}|u_{n}|^{p}\to\int_{\R^N}|x|^{-sp}|u|^{p}$. Passing to a subsequence with $u_{n}\to u$ a.e., we
have $|x|^{\gamma}u_{n}\to|x|^{\gamma}u$ a.e., and Fatou's lemma applied to \eqref{eq:intro-Egamma} gives
$\mathcal{E}_{\gamma}(|x|^{\gamma}u)\le\liminf_{n}\mathcal{E}_{\gamma}(|x|^{\gamma}u_{n})$. Now passing to the limit in \eqref{eq:gsrepmain} written for $u_{n}$, we get \eqref{eq:gsrepmain} for every $u \in \Dsp$.
\end{proof}

A single $\mu$ gives two such representations, one for each root of $\Phi=\mu$. This
leads to Theorem~\ref{thm:C}, which is proved in Section~\ref{sec:proofs}.


\begin{corollary}\label{cor:gsHS}
Let $p\ge2$, $0<\mu<\bar\mu$, and let $u\in\Dsp$ be a weak solution of \eqref{eq:P}. Then
\begin{equation}\label{eq:gsHS}
\mathcal{E}_{\gamma_{-}}\big(|x|^{\gamma_{-}}u\big)+\mathcal{E}_{\gamma_{+}}\big(|x|^{\gamma_{+}}u\big)
\le \frac{4}{c_{p}}\int_{\R^{N}}|u|^{p^{*}_{s}}\dx.
\end{equation}
\end{corollary}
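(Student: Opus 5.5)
We apply Theorem~\ref{thm:gsrep} twice, once with $\gamma=\gamma_-$ and once with $\gamma=\gamma_+$. Since $\Phi(\gamma_\pm)=\mu$ by Corollary~\ref{cor:roots}, both applications have the same left-hand side $\mathcal D_\mu(u)$. Adding the two inequalities gives
\[
c_p\Big(\mathcal{E}_{\gamma_{-}}\big(|x|^{\gamma_{-}}u\big)+\mathcal{E}_{\gamma_{+}}\big(|x|^{\gamma_{+}}u\big)\Big)\le 2\,\mathcal D_\mu(u)=2[u]_{s,p}^p-4\mu\int_{\R^N}\frac{|u|^p}{|x|^{sp}}\dx .
\]
It then remains to bound $\mathcal D_\mu(u)$ by the critical integral, which is where the equation enters.

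We test \eqref{eq:P} with $\varphi=u$ itself. This is admissible since $u\in\Dsp$, and the weak formulation \eqref{eq:weakdef} holds for test functions in $\Dsp$. The Hardy term is finite by \eqref{eq:FS}, and $\int|u|^{p^*_s}<\infty$. With the factor $\tfrac12$ in \eqref{eq:weakdef}, this yields
\[
\tfrac12[u]_{s,p}^p-\mu\int_{\R^N}\frac{|u|^p}{|x|^{sp}}\dx=\int_{\R^N}|u|^{p^*_s}\dx ,
\]
so $\mathcal D_\mu(u)=2\int|u|^{p^*_s}\dx$. Substituting into the summed inequality gives exactly $\frac{4}{c_p}\int|u|^{p^*_s}\dx$, which is \eqref{eq:gsHS}.

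The only delicate point is the admissibility of $u$ as a test function. The definition of weak solution is stated for $\varphi\in\cc$, and it extends by density to $\Dsp$. The left-hand side of \eqref{eq:weakdef} is continuous in $\varphi$ by H\"older's inequality. The right-hand side $\mu|x|^{-sp}u^{p-1}+u^{p^*_s-1}$ defines a continuous functional on $\Dsp$, the first term by the Hardy inequality together with H\"older, and the second by Sobolev together with H\"older, since $u^{p^*_s-1}\in L^{(p^*_s)'}$. If the problem is posed with $u>0$, one uses $u^{p-1}$ and $u^{p^*_s-1}$ in place of $|u|^{p-2}u$ and $|u|^{p^*_s-2}u$, and the computation is unchanged.
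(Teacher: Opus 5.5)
Your proposal is correct and follows the paper's proof: test \eqref{eq:P} with $u$ to obtain $\mathcal{D}_\mu(u)=2\int_{\R^N}|u|^{p^*_s}\dx$, then apply Theorem~\ref{thm:gsrep} at $\gamma=\gamma_-$ and $\gamma=\gamma_+$ (using $\Phi(\gamma_\pm)=\mu$) and add the two inequalities. Your density argument for the admissibility of $u$ as a test function is also sound, and it supplies a bit more detail than the paper, which only notes that every term is finite.
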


\begin{proof}
Since $u\in\Dsp$, we have 
$[u]_{s,p}<\infty$, and $\int_{\R^N}|x|^{-sp}|u|^{p}\dx<\infty$ by \eqref{eq:FS}, 
$\int_{\R^N}|u|^{p^{*}_{s}}\dx<\infty$ by the Sobolev inequality. The choice $\varphi=u$ is admissible in \eqref{eq:weakdef} by density, since every term is finite.
Testing with $u$ gives
$\tfrac12[u]^{p}_{s,p}-\mu\int_{\R^N}|x|^{-sp}|u|^{p}=\int_{\R^N}|u|^{p^{*}_{s}}$, that is
$\mathcal{D}_{\mu}(u)=2\int_{\R^N}|u|^{p^{*}_{s}}$. By Corollary~\ref{cor:roots},
$\Phi(\gamma_{\pm})=\mu$. So Theorem~\ref{thm:gsrep} with $\gamma=\gamma_{-}$ and with
$\gamma=\gamma_{+}$ gives $\mathcal{D}_{\mu}(u)\ge c_{p}\mathcal{E}_{\gamma_{\pm}}(|x|^{\gamma_{\pm}}u)$.
Adding the two inequalities gives \eqref{eq:gsHS}.
\end{proof}

Estimate \eqref{eq:gsHS} requires neither \eqref{E0}, \eqref{E1} nor \eqref{M}, and no symmetry or
monotonicity of $u$.

\begin{remark}\label{rem:gsp2}
For $p=2$, Lemma~\ref{lem:gspoint}-(i) is an identity, so $\mathcal{D}_{\mu}(u)=\mathcal{E}_{\gamma_{-}}(|x|^{\gamma_{-}}u)=\mathcal{E}_{\gamma_{+}}(|x|^{\gamma_{+}}u)$ for $u\in\cc(\R^{N}\setminus\{0\})$. For $p\neq2$ the two weights are different (Proposition~\ref{cor:defect}). We assume $p\ge2$ because we do not know whether \eqref{eq:gspoint} holds for $1<p<2$.
\end{remark}

\section{Verification of the characteristic bounds}\label{sec:verify}
In this section and the next we prove \eqref{E0} and \eqref{E1} for positive radially symmetric nonincreasing solutions when $p\ge2$. We use the substitution $u=\omega_\gamma v$ of Section~\ref{sec:gsrep}, which removes the Hardy term, and then a weighted Moser iteration with finitely many steps. In this section we prove the coercivity estimates needed for the iteration. In Sections~\ref{sec:verify} and \ref{sec:scheme}, $c$ and $C$ denote positive constants which may depend on $N,p,s$ and on the exponent $r$, but never on the truncation level $M$.

\subsection{Ground-state substitution and coercivity}\label{sec:route}
We fix the lower root and write
\begin{equation}\label{eq:omminus}
  \omega:=\omega_{\gamma_{-}}=|x|^{-\gamma_{-}},\quad u=\omega v,\quad v=|x|^{\gamma_{-}}u ,
\end{equation}
so that $\Phi(\gamma_{-})=\mu$ and $\pLap\omega=\mu|x|^{-sp}\omega^{p-1}$ in
$\R^{N}\setminus\{0\}$ by \eqref{eq:homog}. We write
\[
  \mathcal{A}(w,\varphi):=\frac12\iint_{\R^{2N}}
  \frac{\Jp\big(w(x)-w(y)\big)\big(\varphi(x)-\varphi(y)\big)}{|x-y|^{N+sp}}\dxy .
\]

The next identity removes the Hardy term.
\begin{lemma}\label{lem:route}
Let $u$ be a positive weak solution of \eqref{eq:P} and let $v$ be as in \eqref{eq:omminus}. Then for
every $\varphi$ for which all three terms below are defined,
\begin{equation}\label{eq:routeid}
  \mathcal{A}(\omega v,\varphi)-\mathcal{A}\big(\omega,v^{p-1}\varphi\big)
  =\int_{\R^{N}}(\omega v)^{p^{*}_{s}-1}\varphi\dx .
\end{equation}
\end{lemma}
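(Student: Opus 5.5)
The identity is obtained by subtracting two weak formulations: the equation \eqref{eq:P} for $u=\omega v$, tested with $\varphi$, and the Hardy equation for $\omega$, tested with $v^{p-1}\varphi$. The Hardy term is the same in both and cancels.

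First, since $u=\omega v$ is a weak solution of \eqref{eq:P}, the weak formulation \eqref{eq:weakdef} (extended by density to every $\varphi$ for which the terms are finite) gives
\[
\mathcal{A}(\omega v,\varphi)=\mu\int_{\R^N}|x|^{-sp}(\omega v)^{p-1}\varphi\dx+\int_{\R^N}(\omega v)^{p^*_s-1}\varphi\dx .
\]
Second, by \eqref{eq:homog} with $\Phi(\gamma_-)=\mu$ (Corollary~\ref{cor:roots}), $\omega$ is a weak solution of $\pLap\omega=\mu|x|^{-sp}\omega^{p-1}$ in $\R^N\setminus\{0\}$. The symmetrized weak form is available by Lemma~\ref{lem:pv} and the argument of Lemma~\ref{lem:gsreduce}. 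Testing with $v^{p-1}\varphi$ gives
\[
\mathcal{A}(\omega,v^{p-1}\varphi)=\mu\int_{\R^N}|x|^{-sp}\omega^{p-1}v^{p-1}\varphi\dx .
\]
The two Hardy integrals coincide pointwise, because $(\omega v)^{p-1}=\omega^{p-1}v^{p-1}$ with $v>0$. Subtracting the two identities therefore yields \eqref{eq:routeid}. The subtraction is legitimate because each term is assumed finite; the Hardy integral is finite as the difference of finite quantities.

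The main obstacle is admissibility. We must know that $\varphi$ and $v^{p-1}\varphi$ are legitimate test functions in the respective weak formulations. The equation for $\omega$ holds only on $\R^N\setminus\{0\}$, and $\omega\notin\Dsp$. I would interpret the hypothesis ``all three terms are defined'' as including the following two conditions:
\begin{itemize}
\item $\varphi\in\mathcal W^{s,p}_0$ is admissible for $u$;
\item $v^{p-1}\varphi$ is Lipschitz (or in $\mathcal W^{s,p}_0$, bounded) with compact support in $\R^N\setminus\{0\}$.
\end{itemize}
The test functions used later are of the form $\omega\eta^pG_M(v)$ with $\eta$ vanishing near the origin, and these satisfy both conditions. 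Under these conditions, the approximation argument is the one in Lemma~\ref{lem:gsreduce} and Proposition~\ref{thm:comp}. One mollifies the test functions, keeping their supports in a fixed compact set. The bilinear forms then converge by H\"older's inequality, using that $u\in\Dsp$ and that $\omega$ is locally Lipschitz away from $0$ with the tail controlled as in cases (ii)--(iii) of Lemma~\ref{lem:gsreduce}. The right-hand sides converge by dominated convergence, using $|x|^{-sp}u^{p-1}\in L^1_{\mathrm{loc}}$ and $u^{p^*_s-1}\in L^{p^*_s/(p^*_s-1)}$.
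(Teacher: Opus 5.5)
Your proposal is correct and follows the same route as the paper: test \eqref{eq:P} with $\varphi$, test the Hardy equation for $\omega$ with $v^{p-1}\varphi$, and subtract, so that the Hardy terms cancel because $(\omega v)^{p-1}=\omega^{p-1}v^{p-1}$. The admissibility issues you flag are the ones the paper does not treat inside this lemma; it handles them separately in Lemma~\ref{lem:admiss} for the test functions $\omega_\gamma\eta^{p}G_M(v)$ that are actually used.
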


\begin{proof}
Testing \eqref{eq:P} with $\varphi$ and using $u^{p-1}=\omega^{p-1}v^{p-1}$ gives
$$\mathcal{A}(\omega v,\varphi)-\mu\int_{\R^N}|x|^{-sp}\omega^{p-1}v^{p-1}\varphi\dx
=\int_{\R^N}(\omega v)^{p^{*}_{s}-1}\varphi\dx.$$ Testing the equation \eqref{eq:intro-hardy} for $\omega$ with $v^{p-1}\varphi$ gives
$$\mathcal{A}(\omega,v^{p-1}\varphi)-\mu\int_{\R^N}|x|^{-sp}\omega^{p-1}v^{p-1}\varphi\dx=0.$$ Subtracting removes the Hardy term, and \eqref{eq:routeid} holds. 
\end{proof}

Identity \eqref{eq:routeid} differs from Lemma~\ref{lem:gsreduce} in that $\varphi$ is still free, so it
retains the equation and not only the energy. Taking
\begin{equation}\label{eq:routetest}
  \varphi=\omega\,\eta^{p}G(v),
\end{equation}
with $\eta$ a cutoff supported in an annulus and $G$ nondecreasing, the left-hand side of
\eqref{eq:routeid} is the double integral against $|x-y|^{-N-sp}$ of an expression which, for
$\eta\equiv1$, equals
\begin{equation}\label{eq:routebracket}
  \mathfrak{K}(a,b,X,Y):=\Jp(aX-bY)\big[aG(X)-bG(Y)\big]
  -\Jp(a-b)\big[aX^{p-1}G(X)-bY^{p-1}G(Y)\big],
\end{equation}
where $a=\omega(x)$, $b=\omega(y)$, $X=v(x)$, $Y=v(y)$, together with terms carrying derivatives of
$\eta$. Since $\mathfrak{K}(a,b,X,X)=0$, this term measures the variation of $v$.

\subsubsection{\textbf{A nonlinear coercivity inequality}} What is needed is a bound of the form
\begin{equation}\label{eq:routeC}
  \mathfrak{K}(a,b,X,Y)\ge c(ab)^{\frac{p}{2}}\big|H(X)-H(Y)\big|^{p},
\end{equation}
for $a,b>0$ and $X,Y\ge0$, with $c>0$ and $H$ determined by $G$. Three cases are proved.

\begin{enumerate}
\item[{\rm(a)}] If $G(t)=t$ then $\mathfrak{K}(a,b,X,Y)=\mathfrak{B}(aX,bY,a,b)$ with $\mathfrak{B}$ as
in \eqref{eq:bracket}, so Lemma~\ref{lem:gspoint} gives \eqref{eq:routeC} for every $p\ge2$, with
$H(t)=t$ and $c=c_{p}$.
\item[{\rm(b)}] If $p=2$ and $G$ is nondecreasing and locally Lipschitz, then
$\mathfrak{K}(a,b,X,Y)=ab\,(X-Y)\big(G(X)-G(Y)\big)$, and the argument of Claim~1 in the
proof of Theorem~\ref{thm:integ} gives \eqref{eq:routeC} with $H=g$, $g:=\int_{0}^{\cdot}(G')^{\frac12}$ and
$c=1$.
\item[{\rm(c)}] If $G(t)=t^{m}$ with $m=r-p+1$ and $r\ge p>2$, then \eqref{eq:routeC} holds with
$H(t)=t^{\frac{r}{p}}$. This is Lemma~\ref{lem:purepower} below.
\end{enumerate}

\noi Case (c) is the Moser version of Lemma~\ref{lem:gspoint}, which is the case $r=p$. Written in the
variables of its proof it is the extension of \cite[Lemma~2.6, (2.16)]{FS08} from the exponent pair
$(1,p)$ to $(m,r)$, the case $r=p$ being (2.16) itself. The truncated powers $G_{M}$ needed for an iteration are covered by
Proposition~\ref{prop:trunc}.

We also need \eqref{eq:routeC} in case (c) and its truncated versions. These are elementary but long pointwise inequalities in four variables. We prove them in Appendix~\ref{app:coerc}. Lemma~\ref{lem:purepower} treats pure powers. Proposition~\ref{prop:trunc} treats the truncated powers $G_M$ of \eqref{eq:GH}. Proposition~\ref{prop:truncquad} and Corollary~\ref{cor:truncboth} give an additional quadratic lower bound. We need it to control the errors produced by cutoff functions. All constants there are independent of the truncation level $M$.

\subsubsection{\textbf{Consequence of the coercivity estimate}} {By Corollary~\ref{cor:gsHS}, $\mathcal E_{\gamma_-}(v)\le\frac{4}{c_p}\int_{\R^N}|u|^{p^*_s}\dx<\infty$ for every solution, so the energy needed to start an iteration is available without any hypothesis on $u$. With \eqref{eq:routeC}, identity \eqref{eq:routeid} gives a weighted Caccioppoli inequality (Proposition~\ref{prop:cacc}), and the weighted Sobolev inequality of Proposition~\ref{prop:WS} then gives the iteration of Section~\ref{sec:scheme}. The test function $v^{p-1}\varphi$ is admissible by Lemma~\ref{lem:admiss}. At infinity we use $\omega_{\gamma_+}$ instead, since $\Phi(\gamma_+)=\mu$ as well. The energy is essential here. Indeed, $|x|^{-\gamma_+}$ also solves \eqref{eq:pureHardy}, and $v=|x|^{\gamma_--\gamma_+}$ is unbounded at the origin. It is excluded because $|x|^{-\gamma}\in W^{s,p}$ near the origin only when $\gamma<\gamma^*$.}

\section{Weighted Caccioppoli estimates and the finite iteration}\label{sec:scheme}
This section turns the coercivity estimate of Section~\ref{sec:verify} into a Caccioppoli inequality. We then run a Moser iteration for finitely many steps.

\subsection{Cutoff estimates}\label{sec:cutoffs}
We write $\chi:=\frac{p^{*}_{s}}{p}>1$ and note that $N=\gamma^{*}p^{*}_{s}$ by \eqref{eq:Xi}.
For $r\ge p^{*}_{s}$ and $M>0$ put $T_{M}(t):=\min\{t,M\}$, $m:=r-p+1$ and
\begin{equation}\label{eq:GH}
  G_{M}(t):=t\,T_{M}(t)^{m-1},\quad H_{M}(t):=t\,T_{M}(t)^{\frac{m-1}{p}} ,
\end{equation}
so that
\begin{equation}\label{eq:GHid}
  H_{M}(t)^{p}=t^{p-1}G_{M}(t),\quad H_{M}(t)^{p}\le t^{r},\quad H_{M}(t)\uparrow t^{\frac{r}{p}}\ \ (M\uparrow\infty).
\end{equation}
Since also $H_{M}(t)=t^{\frac{r}{p}}$ for $0\le t\le M$, we have $H_{M}(t)\approx t^{\frac{r}{p}}$ as in the notation of Section~\ref{sec:rep}. Moreover $H_{M}'\ge T_{M}^{\frac{r-p}{p}}$, and $T_{M}$ is nondecreasing, so for all $X,Y\ge0$,
\begin{equation}\label{eq:Hslope}
  T_{M}\big(\max\{X,Y\}\big)^{\frac{r-p}{p}}|X-Y| \le \big|H_{M}(X)-H_{M}(Y)\big|.
\end{equation}
For $X>Y$ and below the level $M$, the above inequality is equivalent to $X^{1+\beta}-Y^{1+\beta}\ge X^{\beta}(X-Y)$ with
$\beta=\frac{r-p}{p}$, that is to $X^{\beta}Y\ge Y^{1+\beta}$, and it is an identity above $M$.

Let $\gamma\in\{\gamma_{-},\gamma_{+}\}$, $\omega=\omega_{\gamma}$ and $v=v_{\gamma}:=|x|^{\gamma}U$. The first identity in \eqref{eq:GHid} is what makes the choice $\varphi=\omega\eta^{p}G_{M}(v)$ in
\eqref{eq:routeid} useful. Since $U^{p^{*}_{s}-1}\omega G_{M}(v)=U^{P_s}\omega^{p}v^{p-1}G_{M}(v)$, the
right-hand side of \eqref{eq:routeid} becomes
\begin{equation}\label{eq:rhsform}
  \int_{\R^{N}}(\omega v)^{p^{*}_{s}-1}\varphi\dx=\int_{\R^{N}}U^{P_s}\big(\omega\,\eta\,H_{M}(v)\big)^{p}\dx .
\end{equation}

The next identity separates the effect of the cutoff $\eta$.
\begin{lemma}\label{lem:cutoff}
Let $u=\omega_{\gamma}v$ and $\varphi=\omega_{\gamma}\eta^{p}G_{M}(v)$, and write
$a=\omega_{\gamma}(x)$, $b=\omega_{\gamma}(y)$, $X=v(x)$, $Y=v(y)$. Then
\begin{equation}\label{eq:cutoffid}
\begin{aligned}
  &\Jp\big(u(x)-u(y)\big)\big(\varphi(x)-\varphi(y)\big)
   -\Jp\big(\omega_{\gamma}(x)-\omega_{\gamma}(y)\big)
    \big(v^{p-1}\varphi(x)-v^{p-1}\varphi(y)\big)\\
  &\quad=\eta(x)^{p}\,\mathfrak{K}_{M}(a,b,X,Y)
   +b\,G_{M}(Y)\big[\eta(x)^{p}-\eta(y)^{p}\big]
    \Big[\Jp(aX-bY)-Y^{p-1}\Jp(a-b)\Big],
\end{aligned}
\end{equation}
where $\mathfrak{K}_{M}$ is \eqref{eq:routebracket} with $G=G_{M}$. The bracket in the second term
vanishes whenever $X=Y$.
\end{lemma}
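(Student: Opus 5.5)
This is a pointwise algebraic identity, and I will prove it by direct expansion, splitting the cutoff factor as $\eta(y)^p=\eta(x)^p-[\eta(x)^p-\eta(y)^p]$. With the notation of the statement we have $u(x)=aX$, $u(y)=bY$ and $\varphi(x)=a\,\eta(x)^pG_M(X)$, $\varphi(y)=b\,\eta(y)^pG_M(Y)$. The second test function is $v^{p-1}\varphi(x)=aX^{p-1}\eta(x)^pG_M(X)$, and similarly at $y$. The left-hand side of \eqref{eq:cutoffid} therefore equals
\[
\Jp(aX-bY)\big[a\eta(x)^pG_M(X)-b\eta(y)^pG_M(Y)\big]-\Jp(a-b)\big[aX^{p-1}\eta(x)^pG_M(X)-bY^{p-1}\eta(y)^pG_M(Y)\big].
\]

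Next I substitute $\eta(y)^p=\eta(x)^p-[\eta(x)^p-\eta(y)^p]$ in both brackets. The terms carrying $\eta(x)^p$ collect into
\[
\eta(x)^p\Big(\Jp(aX-bY)\big[aG_M(X)-bG_M(Y)\big]-\Jp(a-b)\big[aX^{p-1}G_M(X)-bY^{p-1}G_M(Y)\big]\Big)=\eta(x)^p\,\mathfrak{K}_M(a,b,X,Y),
\]
by the definition \eqref{eq:routebracket} with $G=G_M$. The remaining terms carry the factor $[\eta(x)^p-\eta(y)^p]$. From the first bracket they contribute $+\Jp(aX-bY)\,bG_M(Y)[\eta(x)^p-\eta(y)^p]$. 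From the second bracket they contribute $-\Jp(a-b)\,bY^{p-1}G_M(Y)[\eta(x)^p-\eta(y)^p]$. Factoring out $bG_M(Y)[\eta(x)^p-\eta(y)^p]$ gives exactly the second term of \eqref{eq:cutoffid}.

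For the final assertion, take $X=Y$. Since $a,b>0$ and $X\ge0$, the homogeneity $\Jp(t\tau)=t^{p-1}\Jp(\tau)$ for $t\ge0$ gives $\Jp(aX-bX)=X^{p-1}\Jp(a-b)$, so the bracket vanishes.

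There is no real obstacle here. The only care needed is in the sign bookkeeping: one must split off $\eta(y)^p$ rather than $\eta(x)^p$, so that the error term carries $b\,G_M(Y)$ as stated.
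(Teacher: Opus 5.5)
Your proposal is correct and follows the paper's proof: both expand the left-hand side with $\varphi(x)=a\eta(x)^pG_M(X)$ and $v^{p-1}\varphi(x)=aX^{p-1}\eta(x)^pG_M(X)$, split off $\eta(x)^p\,\mathfrak{K}_M(a,b,X,Y)$, and collect the remaining terms as $bG_M(Y)[\eta(x)^p-\eta(y)^p]\bigl[\Jp(aX-bY)-Y^{p-1}\Jp(a-b)\bigr]$. Both conclude the $X=Y$ case from $\Jp(aX-bX)=X^{p-1}\Jp(a-b)$.
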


\begin{proof}
We expand the left-hand side of \eqref{eq:cutoffid} using $\varphi(x)=a\eta(x)^{p}G_{M}(X)$ and
$v(x)^{p-1}\varphi(x)=aX^{p-1}\eta(x)^{p}G_{M}(X)$, and likewise at $y$. Subtracting
$\eta(x)^{p}\mathfrak{K}_{M}(a,b,X,Y)$ leaves the quantity
$\Jp(aX-bY)\,bG_{M}(Y)\big[\eta(x)^{p}-\eta(y)^{p}\big]$ from the first group and
$-\Jp(a-b)\,bY^{p-1}G_{M}(Y)\big[\eta(x)^{p}-\eta(y)^{p}\big]$ from the second. If $X=Y$ then $\Jp(aX-bX)=X^{p-1}\Jp(a-b)$, so the bracket vanishes.
\end{proof}

We now combine the coercivity estimates with the cutoff. In the following proposition we use the elementary inequality:
\begin{equation}\label{eq:weightdom}
  a^{p-1}b+ab^{p-1} \le C_{p}\Big[(ab)^{\frac{p}{2}}+ab\,|a-b|^{p-2}\Big],\quad a,b>0,
\end{equation}
which, for $b\le a$ and $t=\frac{b}{a}$, follows from $$\inf_{0<t\le1}\big[t^{\frac{p}{2}-1}+(1-t)^{p-2}\big]>0.$$
\begin{proposition}\label{prop:LC}
Let $p>2$, $r>p$, $M>0$, and let $\mathcal{L}_{e,f}$ denote the left-hand side of \eqref{eq:cutoffid},
that is
\begin{align*}
 \mathcal{L}_{e,f} & := \Jp(aX-bY)\big(ae^{p}G_{M}(X)-bf^{p}G_{M}(Y)\big) \\
  & \quad -\Jp(a-b)\big(aX^{p-1}e^{p}G_{M}(X)-bY^{p-1}f^{p}G_{M}(Y)\big).
\end{align*}
There are $c_r>0$ and $C_r<\infty$, both independent of $M$, such that for all $a,b>0$,
$X,Y\ge0$ and $e,f\in[0,1]$, writing $H_{X}=H_{M}(X)$, $H_{Y}=H_{M}(Y)$,
\begin{equation}\label{eq:LC}
\begin{aligned}
  \mathcal{L}_{e,f} & \ge c_r(ab)^{\frac{p}{2}}\big|eH_{X}-fH_{Y}\big|^{p}\\
  &\quad -C_r(ab)^{\frac{p}{2}}\big(H_{X}+H_{Y}\big)^{p}|e-f|^{p}
  -C_rab|a-b|^{p-2}\big(H_{X}+H_{Y}\big)^{p}|e-f|^{2}.
\end{aligned}
\end{equation}
\end{proposition}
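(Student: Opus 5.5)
The plan is to reduce \eqref{eq:LC} to the two coercivity results announced for Appendix~\ref{app:coerc}, namely Proposition~\ref{prop:trunc} and Corollary~\ref{cor:truncboth}, by peeling the cutoff off through the algebraic identity of Lemma~\ref{lem:cutoff}. By the symmetry $(a,X,e)\leftrightarrow(b,Y,f)$ of $\mathcal L_{e,f}$ and of the right-hand side of \eqref{eq:LC}, I may assume $e\ge f$.

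\emph{First step: the splitting.} With $e=\eta(x)$ and $f=\eta(y)$, Lemma~\ref{lem:cutoff} gives
\[
\mathcal L_{e,f}=e^{p}\mathfrak K_M(a,b,X,Y)+bG_M(Y)(e^{p}-f^{p})\big[\Jp(aX-bY)-Y^{p-1}\Jp(a-b)\big].
\]
Proposition~\ref{prop:trunc} bounds the first term below by $c\,e^{p}(ab)^{p/2}|H_X-H_Y|^{p}$. Corollary~\ref{cor:truncboth} adds a quadratic piece of the form $c\,e^{p}\,ab\,(a^{p-2}+b^{p-2})\,T_M(\max\{X,Y\})^{r-p}|X-Y|^{2}$, or an equivalent weight, with constants independent of $M$. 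Half of the first bound will later be converted into the target term. I use
\[
e^{p}|H_X-H_Y|^{p}\ge 2^{1-p}|eH_X-fH_Y|^{p}-|e-f|^{p}H_Y^{p},
\]
which follows from $eH_X-fH_Y=e(H_X-H_Y)+(e-f)H_Y$.

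\emph{Second step: the error term.} The bracket vanishes at $X=Y$. By the mean value theorem in the variable $aX$,
\[
\big|\Jp(aX-bY)-Y^{p-1}\Jp(a-b)\big|\le (p-1)\,a\,|X-Y|\,\big(|aX-bY|+Y|a-b|\big)^{p-2}.
\]
Also $e^{p}-f^{p}\le p\,e^{p-1}(e-f)$. The error is therefore bounded by
\[
C\,e^{p-1}(e-f)\,abG_M(Y)|X-Y|\big(a|X-Y|+Y|a-b|+bY\big)^{p-2},
\]
after writing $aX-bY=a(X-Y)+Y(a-b)$. Note that $G_M(Y)\le H_Y^{p}/Y^{p-1}$ by \eqref{eq:GHid}. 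Expanding the $(p-2)$-power gives two kinds of term.

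\begin{itemize}
\item Terms containing $|X-Y|^{p-1}$ are absorbed by Young's inequality with exponents $p/(p-1)$ and $p$. The large part is controlled by $e^{p}(ab)^{p/2}T_M^{(r-p)}|X-Y|^{p}$, which \eqref{eq:Hslope} dominates by $|H_X-H_Y|^{p}$. The remainder is $C(ab)^{p/2}H^{p}|e-f|^{p}$, and \eqref{eq:weightdom} is used to trade $a^{p-1}b$-type weights for $(ab)^{p/2}+ab|a-b|^{p-2}$.
\item Terms linear in $|X-Y|$ are paired with the quadratic coercivity by Young's inequality with exponent $2$. This produces the $ab|a-b|^{p-2}(H_X+H_Y)^{p}|e-f|^{2}$ term, together with the $(ab)^{p/2}$ part handled by \eqref{eq:weightdom} again.
\end{itemize}

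Throughout, the identity $Y^{1-p}G_M(Y)T_M^{\,\cdot}\lesssim H^{p}/Y^{2}$ must be matched with the $T_M(\max)^{r-p}$ weight so that no constant depends on $M$.

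\emph{Third step, the main obstacle: the regime $Y\gg X$.} Here $G_M(Y)$ is large while the coercivity weight $T_M(\max)^{r-p}$ uses $\max\{X,Y\}=Y$. I expect this to match once I bound $G_M(Y)Y^{-1}\lesssim T_M(Y)^{r-p}Y^{\,\cdot}$ carefully above and below the level $M$. In the regime $X\gg Y$ the error is small compared with $H_X^{p}$, so I would absorb it directly into the $|e-f|$ terms. If the mixed weight $|aX-bY|^{p-2}$ cannot be split cleanly, my fallback is a case division on $a/b$. In the case $a\le 2b$ the $(ab)^{p/2}$ terms suffice. In the case $a>2b$ the quantity $|a-b|\sim a$, and the $ab|a-b|^{p-2}$ term covers it.
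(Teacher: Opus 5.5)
The architecture you propose (the splitting of Lemma~\ref{lem:cutoff}, coercivity of $\mathfrak K_M$, an increment bound for $\Jp$, \eqref{eq:Hslope}, Young's inequality) is the paper's, and assuming $e\ge f$ instead of symmetrizing the splitting is harmless. There is, however, a genuine gap: the quadratic coercivity you plan to use does not exist.

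Corollary~\ref{cor:truncboth} does not give $c\,e^{p}ab(a^{p-2}+b^{p-2})T_M(\max\{X,Y\})^{r-p}|X-Y|^{2}$. In fact no lower bound whose weight is nondegenerate at $a=b$ can hold. Indeed, $\mathfrak K_M(a,a,X,Y)=a^{p}\Jp(X-Y)\big(G_M(X)-G_M(Y)\big)$ is of order $|X-Y|^{p}=o(|X-Y|^{2})$ as $X\to Y$, since $p>2$. (Your weight also has homogeneity $r-p+2\neq r$ in $(X,Y)$.) The quadratic term actually available is $c_*\,ab|a-b|^{p-2}(H_X+H_Y)^{p-2}|H_X-H_Y|^{2}$, and it vanishes at $a=b$.

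Consequently, every error term that is linear in $\Delta:=|H_X-H_Y|$ must carry exactly the weight $Q:=ab|a-b|^{p-2}$. A linear term with weight $ab^{p-1}$ or $(ab)^{p/2}$ can only be played against $e^{p}(ab)^{p/2}\Delta^{p}$. Young's inequality with exponents $p,\frac{p}{p-1}$ then leaves a factor $|e-f|^{p/(p-1)}$, which is not of the form allowed in \eqref{eq:LC}. At $a=b$ such a term would in fact make \eqref{eq:LC} false. Your estimate produces exactly such terms:
\begin{itemize}
\item The superfluous $bY$ inside $(a|X-Y|+Y|a-b|+bY)^{p-2}$ creates $e^{p-1}(e-f)\,ab^{p-1}G_M(Y)Y^{p-2}|X-Y|$.
\item You also propose to split the weight of the linear terms by \eqref{eq:weightdom} and treat ``the $(ab)^{p/2}$ part''.
\end{itemize}
For the same reason your fallback claim fails: when $a\le2b$ the $(ab)^{p/2}$ terms do not suffice, because for $a\neq b$ nearby the $Q$-weighted linear term is present and needs the quadratic coercivity. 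Likewise, for $X\gg Y$ the error cannot be absorbed ``directly'' into the $|e-f|$ terms, since it is only linear in $e-f$.

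The repair is to keep the sharp increment bound. With $q=(a-b)Y$ and $z=a(X-Y)$,
\[
|\Jp(aX-bY)-Y^{p-1}\Jp(a-b)|\le C_p\big(a|a-b|^{p-2}Y^{p-2}|X-Y|+a^{p-1}|X-Y|^{p-1}\big).
\]
There is no $bY$ term, so the linear part has weight exactly $Q$. Next, use $G_M(Y)=H_Y^{p}Y^{1-p}$ and $H_Y/Y=T_M(Y)^{(r-p)/p}\le T_M(\max\{X,Y\})^{(r-p)/p}$ together with \eqref{eq:Hslope}. This gives, for all $X,Y\ge0$,
\[
G_M(Y)Y^{p-2}|X-Y|\le H_Y^{p-1}\Delta, \qquad G_M(Y)|X-Y|^{p-1}\le H_Y\Delta^{p-1}.
\]
This disposes of your ``main obstacle'' $Y\gg X$ with no case distinction. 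Then:
\begin{itemize}
\item Apply \eqref{eq:weightdom} only to the $\Delta^{p-1}$ term.
\item Reduce its $Q$-part with $\Delta^{p-1}\le S^{p-2}\Delta$, where $S:=H_X+H_Y$.
\item Use Young with exponents $2,2$ against $e^{p}QS^{p-2}\Delta^{2}$; this leaves $|e-f|^{2}QS^{p}$.
\item Use Young with exponents $\frac{p}{p-1},p$ against $e^{p}(ab)^{p/2}\Delta^{p}$; this leaves $|e-f|^{p}(ab)^{p/2}S^{p}$.
\end{itemize}
Your final conversion of $e^{p}\Delta^{p}$ into $|eH_X-fH_Y|^{p}$ then completes the argument, as in the paper.
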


\begin{proof}
Throughout write $S:=H_{X}+H_{Y}$, $\Delta:=|H_{X}-H_{Y}|$, $Q:=ab|a-b|^{p-2}$ and
$W:=a^{p-1}b+ab^{p-1}$, and note $\Delta\le S$.

Applying \eqref{eq:cutoffid} and the same identity with $x$ and $y$
interchanged, and averaging,
\begin{equation}\label{eq:LCsym}
  2\mathcal{L}_{e,f}=(e^{p}+f^{p})\,\mathfrak{K}_{M}(a,b,X,Y)
  +(e^{p}-f^{p})\Big[bG_{M}(Y)R_{Y}+aG_{M}(X)R_{X}\Big],
\end{equation}
where $R_{Y}:=\Jp(aX-bY)-Y^{p-1}\Jp(a-b)$ and $R_{X}:=\Jp(aX-bY)-X^{p-1}\Jp(a-b)$.

We now bound the second term. Since $aX-bY=(a-b)Y+a(X-Y)=(a-b)X+b(X-Y)$, both $R_{X}$ and $R_{Y}$ are
increments of $\Jp$, and
\begin{equation}\label{eq:incr}
  |\Jp(q+z)-\Jp(q)|\le C_{p}\big(|q|^{p-2}|z|+|z|^{p-1}\big)\quad(p\ge2)
\end{equation}
gives
\[
  |R_{Y}|\le C_{p}\Big[a|a-b|^{p-2}Y^{p-2}|X-Y|+a^{p-1}|X-Y|^{p-1}\Big],
\]
and the same with $a,X$ and $b,Y$ interchanged for $R_{X}$. By \eqref{eq:GHid},
$G_{M}(Y)=H_{Y}^{p}Y^{1-p}$ and $H_{Y}/Y=T_{M}(Y)^{\frac{r-p}{p}}$, so
\begin{align*}
    & G_{M}(Y)Y^{p-2}|X-Y|=H_{Y}^{p-1}T_{M}(Y)^{\frac{r-p}{p}}|X-Y|\le H_{Y}^{p-1}\Delta, \text{ and } \\
    & G_{M}(Y)|X-Y|^{p-1}=H_{Y}\Big(T_{M}(Y)^{\frac{r-p}{p}}|X-Y|\Big)^{p-1}\le H_{Y}\Delta^{p-1},
\end{align*}
both by $T_{M}(Y)\le T_{M}(\max\{X,Y\})$ and \eqref{eq:Hslope}. Using the same bounds at $X$,
\begin{equation}\label{eq:LCerr}
  \Big|bG_{M}(Y)R_{Y}+aG_{M}(X)R_{X}\Big| \le C_{p}\Big[Q\,S^{p-1}\Delta+W\,S\,\Delta^{p-1}\Big].
\end{equation}
Moreover $|e^{p}-f^{p}|\le p\max\{e,f\}^{p-1}|e-f|\le p(e^{p}+f^{p})^{\frac{p-1}{p}}|e-f|$, and by
\eqref{eq:weightdom} $W\le C_{p}\big[(ab)^{\frac{p}{2}}+Q\big]$. Hence the second term of \eqref{eq:LCsym} is
bounded in absolute value by $C_{p}\big[(\mathrm{I})+(\mathrm{II})+(\mathrm{III})\big]$, where
\[
  (\mathrm{I})=\Sigma\,Q\,S^{p-1}\Delta,\quad
  (\mathrm{II})=\Sigma\,(ab)^{\frac{p}{2}}S\,\Delta^{p-1},\quad
  (\mathrm{III})=\Sigma\,Q\,S\,\Delta^{p-1},
\]
with $\Sigma:=(e^{p}+f^{p})^{\frac{p-1}{p}}|e-f|$. Since $\Delta\le S$ we have
$\Delta^{p-1}\le S^{p-2}\Delta$, so $(\mathrm{III})\le(\mathrm{I})$ and only two terms remain.

We estimate these two terms by Young's inequality. Let $c_*$ be the constant of
Corollary~\ref{cor:truncboth}, so that
$\mathfrak{K}_{M}\ge c_*\big[(ab)^{\frac{p}{2}}\Delta^{p}+QS^{p-2}\Delta^{2}\big]$. For $(\mathrm{I})$ write
\[
  (\mathrm{I})=\Big[(e^{p}+f^{p})^{\frac12}Q^{\frac12}S^{\frac{p-2}{2}}\Delta\Big]
  \cdot\Big[(e^{p}+f^{p})^{\frac{p-1}{p}-\frac12}|e-f|\,Q^{\frac12}S^{\frac p2}\Big],
\]
so that, by Young's inequality with exponents $2,2$ and $e^{p}+f^{p}\le2$,
\[
  (\mathrm{I})\le\eps\,(e^{p}+f^{p})QS^{p-2}\Delta^{2}+C_{\eps}\,|e-f|^{2}QS^{p} ,
\]
the remaining factor $(e^{p}+f^{p})^{1-\frac2p}$ being bounded because $p>2$. For $(\mathrm{II})$,
Young's inequality with exponents $\tfrac{p}{p-1},p$ applied to
$\big[(e^{p}+f^{p})^{\frac{p-1}{p}}(ab)^{\frac{p-1}{2}}\Delta^{p-1}\big]\cdot
\big[|e-f|(ab)^{\frac12}S\big]$ gives
\[
  (\mathrm{II})\le\eps\,(e^{p}+f^{p})(ab)^{\frac{p}{2}}\Delta^{p}+C_{\eps}\,|e-f|^{p}(ab)^{\frac{p}{2}}S^{p} .
\]
Choosing $\eps$ so that $C_{p}(2\eps)\le c_*/2$ and inserting these into \eqref{eq:LCsym},
\[
  2\mathcal{L}_{e,f}\ \ge\ \tfrac12c_*(e^{p}+f^{p})(ab)^{\frac{p}{2}}\Delta^{p}
  -C\big[(ab)^{\frac{p}{2}}S^{p}|e-f|^{p}+QS^{p}|e-f|^{2}\big].
\]

Finally, from $|eH_{X}-fH_{Y}|\le e\Delta+H_{Y}|e-f|$ and its mirror
image, $|eH_{X}-fH_{Y}|^{p}\le2^{p-1}\big[(e^{p}+f^{p})\Delta^{p}+S^{p}|e-f|^{p}\big]$, so
$(e^{p}+f^{p})\Delta^{p}\ge2^{1-p}|eH_{X}-fH_{Y}|^{p}-S^{p}|e-f|^{p}$. Substituting this and dividing by
$2$ gives \eqref{eq:LC} with $c_r=2^{-p-1}c_*$ and a suitable $C_r$, none of which depends
on $M$.
\end{proof}

\subsubsection{\textbf{Removing the inner cutoff}} The test function \eqref{eq:routetest} may not be admissible near the origin, so we multiply it by a cutoff $\zeta_{\eps}$ vanishing near $0$ and let $\eps\downarrow0$. The scaling is critical, and the smallness comes from Lemma~\ref{lem:radial}-(ii).

\begin{lemma}\label{lem:innercut}
Let $p \ge 2$, $\gamma \in (0,\gamma^{*})$, $\omega=\omega_{\gamma}$, and let $U\in\Dsp$ be a positive radially symmetric
nonincreasing solution of \eqref{eq:P}, $v=|x|^{\gamma}U$ and $H=H_{M}$ as in \eqref{eq:GH}. We put
\[
  \kappa(\rho):=\sup_{0<\tau\le\rho}\tau^{\gamma^{*}}U(\tau),\quad
  A:=N-\tfrac{p\gamma}{2},\quad B:=N-(p-1)\gamma ,
\]
and for $\eps>0$ let $\zeta_{\eps}$ satisfy $0\le\zeta_{\eps}\le1$, $\zeta_{\eps}=0$ on $B_{\eps}$,
$\zeta_{\eps}=1$ outside $B_{2\eps}$ and $|\nabla\zeta_{\eps}|\le \frac{C}{\eps}$. Then $A,B>0$ and
\begin{equation}\label{eq:innercut}
  I_{p,\eps}+I_{2,\eps}\longrightarrow0\quad(\eps\downarrow0),
\end{equation}
where, with $H_{x}:=H(v(x))$ and $\omega_{x}:=\omega(x)$,
\begin{align*}
  &I_{p,\eps}:=\iint_{\R^{2N}}\frac{(\omega_{x}\omega_{y})^{\frac{p}{2}}\big(H_{x}+H_{y}\big)^{p}
  |\zeta_{\eps}(x)-\zeta_{\eps}(y)|^{p}}{|x-y|^{N+sp}}\dxy, \text{ and } \\
  &I_{2,\eps}:=\iint_{\R^{2N}}\frac{\omega_{x}\omega_{y}|\omega_{x}-\omega_{y}|^{p-2}
  \big(H_{x}+H_{y}\big)^{p}|\zeta_{\eps}(x)-\zeta_{\eps}(y)|^{2}}{|x-y|^{N+sp}}\dxy 
\end{align*}
\end{lemma}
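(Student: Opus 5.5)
The approach is a scaling argument. The cutoff $\zeta_\eps$ lives at the critical scale, so the near-diagonal part of each integral should be bounded by a power of $\kappa(4\eps)$ times a scale-invariant constant. The remaining parts should carry a positive power of $\eps$, through $A,B>0$. First, $A,B>0$ is immediate: $\gamma<\gamma^*<\gamma_c$ gives $p\gamma/2<(N-sp)/2<N$ and $(p-1)\gamma<N-sp<N$.

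\emph{Pointwise bounds.} Since $M$ is fixed, the definition \eqref{eq:GH} gives $H_M(t)\le M^{\frac{m-1}{p}}t$, so $H_x\le C_M v(x)=C_M|x|^{\gamma}U(x)$. By Lemma~\ref{lem:radial}-(i) with $q=p^*_s$, and by the definition of $\kappa$,
\[
H_x\le C_M\,\kappa(\rho)\,|x|^{\gamma-\gamma^*}\ \text{ for }|x|\le\rho,\qquad H_x\le C_M\,\|U\|_{L^{p^*_s}}\,|x|^{\gamma-\gamma^*}\ \text{ for all }x\neq0 .
\]
Consequently $(\omega_x\omega_y)^{p/2}(H_x+H_y)^p$ and $\omega_x\omega_y|\omega_x-\omega_y|^{p-2}(H_x+H_y)^p$ are bounded by explicit sums of products of powers of $|x|$ and $|y|$. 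In every such product the total homogeneity is $-p\gamma^*=-(N-sp)$.

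\emph{Splitting.} The integrand vanishes unless at least one of $x,y$ lies in $B_{2\eps}$. By symmetry take $x\in B_{2\eps}$ and split into two regions. (a) $y\in B_{4\eps}$: use $|\zeta_\eps(x)-\zeta_\eps(y)|\le\min\{C|x-y|/\eps,1\}$ and rescale $x=\eps x'$, $y=\eps y'$. Counting powers gives $\eps^{2N-N-sp-(N-sp)}=\eps^0$, so this part is at most $C_M\kappa(4\eps)^p\,\mathcal I$. Here $\mathcal I$ is a fixed integral over $B_2\times B_4$, which converges as follows.
\begin{itemize}
\item Near the diagonal, the factor $|x'-y'|^{p-N-sp}$ or $|x'-y'|^{2-N-sp}$ is integrable since $sp<p$. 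For the $I_2$ term with $1<p<2$ this needs a separate check, but $p\ge2$ here, so $|\omega_x-\omega_y|^{p-2}$ is bounded locally.
\item Near the origin, the singular powers $|x'|^{-p\gamma/2}$, $|x'|^{-(p-1)\gamma}$ and $|x'|^{p\gamma/2-p\gamma^*}$ are integrable because $A,B>0$ and $p\gamma^*<N$.
\end{itemize}
Since $\kappa(4\eps)\to0$ by Lemma~\ref{lem:radial}-(ii), part (a) tends to $0$.

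(b) $|y|\ge4\eps$: use $|\zeta_\eps(x)-\zeta_\eps(y)|\le1$ and $|x-y|\ge|y|/2$. Each term then factorizes into an $x$-integral over $B_{2\eps}$ and a $y$-integral against $|y|^{-N-sp}$ over $\{|y|>4\eps\}$.
\begin{itemize}
\item The $x$-integrals are of the form $\int_{B_{2\eps}}|x|^{-p\gamma/2}$ or $\int_{B_{2\eps}}|x|^{-(p-1)\gamma}$, possibly with an extra factor $|x|^{p\gamma-p\gamma^*}$ coming from $H_x$. These give $\eps^{A}$, $\eps^{B}$, or the corresponding shifted powers.
\item The $y$-integrals converge at infinity, because the weights decay like $|y|^{-N-sp-(\text{positive})}$. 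Near $|y|\sim\eps$ they are at most a negative power of $\eps$.
\end{itemize}
The product of the two factors is again of order $\eps^0$ times $\kappa(C\eps)^p$ for the range $4\eps<|y|<1$, where the local bound on $H$ applies. For $|y|>1$ it is $O(\eps^{\min\{A,B\}})$.

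The main obstacle is the bookkeeping in (b). For each monomial one must check that the $x$- and $y$-exponents combine to exactly $\eps^{0}$, with a factor $\kappa$ in front, or to a strictly positive power of $\eps$. One must also check that no $y$-integral diverges at infinity. I would organize this by writing every bound as $|x|^{\alpha}|y|^{\beta}$ with $\alpha+\beta=-(N-sp)$, $\alpha>-N$. The cases $|y|<1$ and $|y|>1$ are then handled uniformly, the first by scaling and the second by the crude bound $\eps^{N+\alpha}\to0$.
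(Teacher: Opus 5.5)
Your decomposition matches the paper's: critical rescaling on $\{|x|,|y|\le 4\eps\}$, factorization on $\{|x|\le 2\eps,\ |y|\ge 4\eps\}$, and smallness coming only from $\kappa\to0$. However, the step you call bookkeeping is where the argument breaks.

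Take the $H_y^p$ contribution with $4\eps<|y|<1$. The only available bound there is $H_y\le C_M\kappa(|y|)\,|y|^{\gamma-\gamma^*}$. Since $\kappa$ is nondecreasing, $\kappa(|y|)\ge\kappa(4\eps)$ on this range, so there is no prefactor $\kappa(C\eps)^p$. If you instead use the local bound with $\rho=1$, your exponent count gives
\[
C_M\,\kappa(1)^p\,\eps^{A}\int_{4\eps}^{1}\rho^{-A-1}\,\mathrm{d}\rho\le C_M\,\kappa(1)^p ,
\]
which is bounded but not small. The same happens with $B$ in place of $A$ for $I_{2,\eps}$.

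The missing idea is that this term is $C_M J(\eps)$, where
\[
J(\eps)=\eps^{A}\int_{4\eps}^{\infty}\kappa(\rho)^p\rho^{-A-1}\,\mathrm{d}\rho .
\]
The measure $\eps^{A}\rho^{-A-1}\,\mathrm{d}\rho$ on $(4\eps,\infty)$ has total mass $4^{-A}/A$ and concentrates at scale $\eps$. You can then show $J(\eps)\to0$ in either of two ways.
\begin{itemize}
\item Split at a fixed $\delta$, as the paper does, to get $\limsup_{\eps\downarrow0} J(\eps)\le C\sup_{\rho\le\delta}\kappa(\rho)^p$, and then let $\delta\downarrow0$.
\item Substitute $\rho=\eps\sigma$ and apply dominated convergence, using $\kappa(\eps\sigma)\to0$, $\kappa$ bounded, and $\sigma^{-A-1}\in L^1(4,\infty)$.
\end{itemize}
Because the scaling is critical, this concentration argument is the only source of smallness for this piece, so it cannot be left to exponent counting.

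There is also an error in region (a). The kernel $|x'-y'|^{2-N-sp}$ is integrable near the diagonal only when $sp<2$, not whenever $sp<p$. The case $sp\ge 2$ is allowed when $p>2$ and $s$ is close to $1$.

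So for $I_{2,\eps}$ it is not enough that $|\omega_x-\omega_y|^{p-2}$ is locally bounded. You need the mean value bound $|\omega_x-\omega_y|\le C|x|^{-\gamma-1}|x-y|$ on $\{\tfrac12|x|\le|y|\le2|x|\}$, which restores the integrable kernel $|x-y|^{p-N-sp}$. Treat the regions $|y|\le|x|/2$ and $|x|\le|y|/2$ separately; this is the paper's $D_3$, $D_1$, $D_2$ splitting. That splitting also covers the pairs that are simultaneously near the origin and near the diagonal, which your two separate bullet points do not handle.
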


\begin{proof}
Note that, $\gamma<\gamma^{*}$ implies $p\gamma<N-sp<N$, which implies $A\ge \tfrac N2$ and $B>N-p\gamma>0$. By Lemma~\ref{lem:radial}-(ii), $\kappa$ is bounded and $\kappa(\rho)\to0$ as $\rho\downarrow0$. Since $H_{M}(t)\le M^{\frac{r-p}{p}}t$ and $v(x)=|x|^{\gamma-\gamma^{*}}|x|^{\gamma^{*}}U(x)$, we get 
\begin{equation}\label{eq:Hkappa}
  H_{x}^{p} \le M^{r-p}\kappa(|x|)^{p}|x|^{p(\gamma-\gamma^{*})}.
\end{equation}
The integrands vanish unless one of $|x|,|y|$ is at most $2\eps$, so we only need to treat the region where $|x|,|y|\le4\eps$ and the region where $|x|\le2\eps$ and $|y|\ge4\eps$, together with its mirror image.

We first treat the region where $|x|,|y|\le4\eps$. The substitution $x=\eps\xi$, $y=\eps\eta$ multiplies $\dxy$ by $\eps^{2N}$, the weight $(\omega_{x}\omega_{y})^{\frac{p}{2}}$ by $\eps^{-p\gamma}$, the kernel by $\eps^{-N-sp}$ and, by \eqref{eq:Hkappa}, the factor $(H_{x}+H_{y})^{p}$ by $M^{r-p}\kappa(4\eps)^{p} \eps^{p(\gamma-\gamma^{*})}$. The total power of $\eps$ is $$ 2N-p\gamma-N-sp+p(\gamma-\gamma^{*})=N-sp-p\gamma^{*}=0,$$ which indicates that no smallness comes from the scaling. What remains is the prefactor $M^{r-p}\kappa(4\eps)^{p}$ times the corresponding integral $\mathcal{I}_p$ for the fixed profile $\zeta_{1}$. We now calculate 
\begin{align*}
    \mathcal{I}_p\Big|_{\{|\xi|\le 4, |\eta|\le 4\}} :=  \iint_{\{|\xi|\le 4, |\eta|\le 4\}} \frac{(\omega_{\xi}\omega_{\eta})^{\frac{p}{2}}\big(H_{\xi}+H_{\eta}\big)^{p} |\zeta_{1}(\xi)-\zeta_{1}(\eta)|^{p}}{|\xi-\eta|^{N+sp}}\d\xi \d \eta
\end{align*}
Using $|\zeta_{1}(\xi)-\zeta_{1}(\eta)|\le C|\xi-\eta|$ and Lemma~\ref{lem:radial}-(ii), we estimate 
\begin{align*}
    \mathcal{I}_p\Big|_{\{|\xi|\le 4, |\eta|\le 4\}} & \le C \iint_{\{|\xi|\le 4, |\eta|\le 4\}} \frac{(\omega_{\xi}\omega_{\eta})^{\frac{p}{2}}\big(H_{\xi}+H_{\eta}\big)^{p}}{|\xi-\eta|^{N+sp-p}}\d\xi \d \eta \\
    & \le C(p)\iint_{\{|\xi|\le 4, |\eta|\le 4\}} \frac{(|\xi|^{-\gamma}|\eta|^{-\gamma})^{\frac{p}{2}}\left(|\xi|^{(\gamma-\gamma^{*})p}+|\eta|^{(\gamma-\gamma^{*})p}\right)}{|\xi-\eta|^{N+sp-p}}\d\xi \d \eta.
\end{align*}
We divide the region into three parts $D_1:=\{ |\xi| \le \frac{|\eta|}{2} \}, D_2:= \{ |\xi| \ge 2 |\eta|\}$, and $D_3:= \{ \frac{|\xi|}{2} \le |\eta| \le 2|\xi|\}$. 

On $D_1$, we have $\frac{1}{2} |\eta| \le |\eta| - |\xi| \le |\xi-\eta| \le \frac{3}{2}|\eta|$ and $|\xi|^{p(\gamma -\gamma^*)} +|\eta|^{p(\gamma-\gamma^{*})} \le 2 |\xi|^{p(\gamma-\gamma^{*})}$. Hence using the fact that $p(\gamma-\gamma^*)-\frac{p\gamma}{2} = sp-N+\frac{p\gamma}{2}$, 
\begin{align*}
    & \iint_{D_1} \frac{(|\xi|^{-\gamma}|\eta|^{-\gamma})^{\frac{p}{2}}\left(|\xi|^{(\gamma-\gamma^{*})p}+|\eta|^{(\gamma-\gamma^{*})p}\right)}{|\xi-\eta|^{N+sp-p}}\d\xi \d\eta \\
    & \le C(N,p,s) \int_{|\eta|\le 4}  |\eta|^{p-sp-N - \frac{\gamma p}{2}} \left(\int_{|\xi| \le \frac{|\eta|}{2}} |\xi|^{sp-N+\frac{p\gamma}{2}} \d \xi \right) \d \eta \\
    & \le C(N,p,s) \int_{|\eta|\le 4} |\eta|^{p-sp-N - \frac{\gamma p}{2}} |\eta|^{sp+\frac{p\gamma}{2}}  \d \eta \le C(N,p,s). 
\end{align*}
On $D_2$, by exchanging the roles of $\xi$ and $\eta$, we similarly obtain 
\begin{align*}
    \iint_{D_2} \frac{(|\xi|^{-\gamma}|\eta|^{-\gamma})^{\frac{p}{2}}\left(|\xi|^{(\gamma-\gamma^{*})p}+|\eta|^{(\gamma-\gamma^{*})p}\right)}{|\xi-\eta|^{N+sp-p}}\d\xi \d\eta \le C(N,p,s).
\end{align*}
On $D_3$, $|\xi|\asymp |\eta|$, $|\xi|^{-\frac{p\gamma}{2}} \asymp |\eta|^{-\frac{p \gamma}{2}}$ and $|\xi-\eta| \le 3|\xi|$. Further, $p(\gamma-\gamma^*) - p \gamma = sp-N$, which yield
\begin{align*}
    &\iint_{D_3} \frac{(|\xi|^{-\gamma}|\eta|^{-\gamma})^{\frac{p}{2}}\left(|\xi|^{(\gamma-\gamma^{*})p}+|\eta|^{(\gamma-\gamma^{*})p}\right)}{|\xi-\eta|^{N+sp-p}}\d\xi \d\eta \\
    &\le C(N,p,s)\int_{|\xi| \le 4} |\xi|^{sp-N} \left( \int_{|z| \le 3|\xi|} |z|^{p-N-sp} \dz \right) \d \xi \\
    & \le C(N,p,s)\int_{|\xi| \le 4} |\xi|^{p-N} \d \xi \le C(N,p,s). 
\end{align*}
Therefore, on the region $\{|x| \le 4\eps,|y|\le4\eps\}$, the integral value of $I_{p,\eps}$ is at most $CM^{r-p}\kappa(4\eps)^{p}$.

For $I_{2,\eps}$ we use the same substitution. The weight
$\omega_{x}\omega_{y}|\omega_{x}-\omega_{y}|^{p-2}$ also scales like $\eps^{-p\gamma}$, and here total effective power of $\eps$ is also zero. So the remaining prefactor is $M^{r-p}\kappa(4\eps)^{p}$ which multiplies the corresponding integral $\mathcal{I}_2$ for the fixed profile $\zeta_{1}$. For $\mathcal{I}_2$, we similarly divide the region into $D_1$, $D_2$, and $D_3$. On $D_1$, using $|\xi|^{-\gamma}|\eta|^{-\gamma} \left| |\xi|^{-\gamma} - |\eta|^{-\gamma}\right|^{p-2} \le |\xi|^{-\gamma (p-1)} |\eta|^{- \gamma}$, and  $(\gamma-\gamma^{*})p - (p-1)\gamma = \gamma - N + sp$, 
\begin{align*}
    &\iint_{D_1} \frac{|\xi|^{-\gamma}|\eta|^{-\gamma} \left| |\xi|^{-\gamma} - |\eta|^{-\gamma}\right|^{p-2}\left(|\xi|^{(\gamma-\gamma^{*})p}+|\eta|^{(\gamma-\gamma^{*})p}\right)}{|\xi-\eta|^{N+sp-2}} \d\xi \d\eta \\
    & \le C(N,p,s) \int_{|\eta|\le 4}  |\eta|^{-\gamma - N - sp+2} \left(\int_{|\xi| \le \frac{|\eta|}{2}} |\xi|^{\gamma - N +sp} \d\xi \right) \d \eta \\
    & \le C(N,p,s)  \int_{|\eta|\le 4} |\eta|^{- N +2} \d \eta \le C(N,p,s). 
\end{align*}
On $D_2$, by exchanging the roles of $\xi$ and $\eta$, we similarly obtain the integral value is less than some $C(N,p,s)$. On $D_3$, using the mean value theorem, $||\xi|^{-\gamma} - |\eta|^{-\gamma}| \le C(\gamma) |\xi|^{-\gamma -1} |\xi - \eta|$. Further, $|\xi|^{-\gamma} \asymp |\eta|^{-\gamma}$ and  $|\xi|^{(\gamma-\gamma^*)p} \asymp |\eta|^{(\gamma-\gamma^*)p}$ gives 
\begin{align*}
    &\iint_{D_3} \frac{|\xi|^{-\gamma}|\eta|^{-\gamma} \left| |\xi|^{-\gamma} - |\eta|^{-\gamma}\right|^{p-2}\left(|\xi|^{(\gamma-\gamma^{*})p}+|\eta|^{(\gamma-\gamma^{*})p}\right)}{|\xi-\eta|^{N+sp-2}} \d\xi \d\eta \\
    & \le C(N,p,s) \iint_{D_3} \frac{|\xi|^{-2\gamma - (\gamma +1)(p-2) + p(\gamma - \gamma^*)}}{|\xi-\eta|^{N+sp-2-(p-2)}} \d\xi \d\eta \\
    & \le C(N,p,s) \int_{|\xi| \le 4} |\xi|^{2-N-p+sp} \left( \int_{|z| \le 3 |\xi|}  |z|^{p-N-sp} \dz \right) \d \xi \\ & \le C(N,p,s) \int_{|\xi| \le 4} |\xi|^{2-N} \d\xi \le C(N,p,s).
\end{align*}
Therefore, on the region $\{|x| \le 4\eps,|y|\le4\eps\}$, the integral value of $I_{p,\eps}$ is at most $O\big(\kappa(4\eps)^{p}\big)$.

Next,  we treat the region where $|x|\le2\eps$ and $|y|\ge4\eps$. Then $|x-y|\simeq|y|$ and $|\zeta_{\eps}(x)-\zeta_{\eps}(y)|\le1$. The contribution of $H_{x}^{p}$ is bounded, after integrating in $y$, by $CM^{r-p}\kappa(2\eps)^{p}$ as above. For the contribution of $H_{y}^{p}$ to $I_{p,\eps}$,
\eqref{eq:Hkappa} and $\int_{|x|\le2\eps}|x|^{-\frac{p\gamma}{2}}\dx=C\eps^{A}$ give
\begin{align}\label{bound-I}
    CM^{r-p}\,\eps^{A}\int_{4\eps}^{\infty}\kappa(\rho)^{p}\rho^{-A-1}\d\rho ,
\end{align}
because the radial exponent is
$-\tfrac{p\gamma}{2}+p(\gamma-\gamma^{*})-N-sp+N-1=-A-1$. For $I_{2,\eps}$ we
use $\omega_{x}\ge\omega_{y}$ and $|\omega_{x}-\omega_{y}|\le\omega_{x}$, so that
$\omega_{x}\omega_{y}|\omega_{x}-\omega_{y}|^{p-2}\le\omega_{x}^{p-1}\omega_{y}$. Since
$\int_{|x|\le2\eps}|x|^{-(p-1)\gamma}\dx=C\eps^{B}$ and the radial exponent is now
$-\gamma+p(\gamma-\gamma^{*})-N-sp+N-1=-B-1$, the same bound \eqref{bound-I} holds with $A$ replaced by $B$.

It remains to show that these two terms tend to $0$. Both have the form
$$J(\eps):=\eps^{a}\int_{4\eps}^{\infty}\kappa(\rho)^{p}\rho^{-a-1}\d\rho, \quad \text{ with } a>0.$$ For any
$\delta>0$,
\[
  J(\eps) \le  \sup_{\rho\le\delta}\kappa(\rho)^{p}\,\eps^{a}\int_{4\eps}^{\delta}\rho^{-a-1}\d\rho
  +\|\kappa\|_{\infty}^{p}\,\eps^{a}\int_{\delta}^{\infty}\rho^{-a-1}\d\rho
   \le  \frac{4^{-a}}{a}\sup_{\rho\le\delta}\kappa(\rho)^{p}+\frac{\|\kappa\|_{\infty}^{p}}{a}
  \Big(\frac{\eps}{\delta}\Big)^{a},
\]
so $\limsup_{\eps\downarrow0}J(\eps)\le C\sup_{\rho\le\delta}\kappa(\rho)^{p}$ for every $\delta>0$,
and letting $\delta\downarrow0$ gives $J(\eps)\to0$. Together with the near estimates, this proves \eqref{eq:innercut}.
\end{proof}

We use the following weighted Sobolev inequality of Abdellaoui and Bentifour \cite[Theorem~1.5]{AB17}.
\begin{proposition}\label{prop:WS}
Let $0<\gamma<\gamma^{*}$. Then there is $S_{\gamma}>0$ such that
\begin{equation}\label{eq:WS}
  S_{\gamma}\Big(\int_{\R^{N}}|x|^{-\gamma p^{*}_{s}}|w|^{p^{*}_{s}}\dx\Big)^{\frac{p}{p^{*}_{s}}}\le\mathcal{E}_{\gamma}(w)
  \quad\text{for all }w\in\cc(\R^{N}\setminus\{0\}).
\end{equation}
\end{proposition}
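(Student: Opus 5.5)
The plan is to obtain \eqref{eq:WS} from the ground-state representation of Section~\ref{sec:gsrep} together with the fractional Sobolev inequality. The paper attributes the statement to \cite{AB17}; what follows is an independent derivation for $p\ge2$.

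Fix $w\in\cc(\R^{N}\setminus\{0\})$ and set $u:=\omega_\gamma w=|x|^{-\gamma}w$. Then $u\in\cc(\R^{N}\setminus\{0\})$ and $|x|^{\gamma}u=w$, so
$$\int_{\R^N}|x|^{-\gamma p^*_s}|w|^{p^*_s}\dx=\int_{\R^N}|u|^{p^*_s}\dx .$$
The desired inequality is therefore equivalent to
$$\mathcal E_\gamma(|x|^\gamma u)\ \ge\ S_\gamma\,\|u\|_{L^{p^*_s}}^{p}.$$

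The first step is an upper bound for the weighted energy. By Lemma~\ref{lem:gsreduce}, with $\nu=\Phi(\gamma)$,
$$\mathcal D_{\Phi(\gamma)}(u)=\iint_{\R^{2N}}\frac{\mathfrak B\bigl(u(x),u(y),\omega_\gamma(x),\omega_\gamma(y)\bigr)}{|x-y|^{N+sp}}\dxy .$$
I would then prove a pointwise reverse estimate
$$\mathfrak B(a,b,c,d)\le C_p\Bigl[(cd)^{\frac p2}\Bigl|\tfrac ac-\tfrac bd\Bigr|^{p}+cd\,|c-d|^{p-2}\Bigl|\tfrac ac-\tfrac bd\Bigr|^{2}\max\Bigl\{\tfrac{|a|}{c},\tfrac{|b|}{d}\Bigr\}^{p-2}\Bigr].$$
The normalization is the one in the proof of Lemma~\ref{lem:gspoint}: with $t=d/c$, $A=a/c$, $B=b/d$, the quantity $|A-tB|^p-(1-t)^{p-1}(|A|^p-t|B|^p)$ vanishes to second order at $A=B$, and a Taylor expansion of it gives the estimate.

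The extra term above would not be controlled by $\mathcal E_\gamma$ alone, so this route is too lossy. I would instead argue in the opposite direction.

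The second step is to compare $\mathcal E_\gamma$ with $[u]_{s,p}^p$ directly. Since $\gamma<\gamma^*$, one has $\Phi(\gamma)<\bar\mu$ by Proposition~\ref{thm:props}(iii). The key estimate to aim for is
$$\mathcal E_\gamma(|x|^\gamma u)\ \ge\ c\,\mathcal D_{\Phi(\gamma)}(u).$$
Granting this, the sharp Hardy inequality \eqref{eq:FS} with $C_H=2\bar\mu$ gives
$$\mathcal D_{\Phi(\gamma)}(u)\ge\Bigl(1-\frac{\Phi(\gamma)}{\bar\mu}\Bigr)[u]_{s,p}^p ,$$
and the fractional Sobolev inequality gives $[u]_{s,p}^p\ge S\|u\|_{p^*_s}^p$. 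Chaining the three inequalities yields \eqref{eq:WS} with $S_\gamma=c\,(1-\Phi(\gamma)/\bar\mu)\,S$.

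The main obstacle is the lower bound $\mathfrak B\ge$ weighted energy in the reverse direction, namely $\mathcal D_{\Phi(\gamma)}(u)\lesssim\mathcal E_\gamma(|x|^\gamma u)$ up to the weight terms. If this pointwise route fails, I would fall back on a direct radial decomposition. Write $|x|=e^{t}$ and split $w$ into its dyadic shells. The weight $|x|^{-p\gamma/2}|y|^{-p\gamma/2}$ is comparable to $|x|^{-p\gamma}$ on the near-diagonal region $|x|\asymp|y|$. On each shell $\{2^k\le|x|\le2^{k+1}\}$, the local fractional Sobolev inequality on an annulus, after scaling, gives
$$\Bigl(\int_{\text{shell}}|x|^{-\gamma p^*_s}|w|^{p^*_s}\dx\Bigr)^{p/p^*_s}\lesssim 2^{-kp\gamma}[w]^p_{\text{shell}}+2^{-k(p\gamma+sp)}\|w\|^p_{L^p(\text{shell})}.$$
The lower-order $L^p$ terms would then be absorbed by a weighted Hardy inequality for $\mathcal E_\gamma$. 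That inequality should hold precisely because $\gamma<\gamma^*$, which is the Abdellaoui--Bentifour condition; it can be proved by testing against $|x|^{-(\gamma^*-\gamma)}$-type ground states via Picone \eqref{eq:picone}. Finally, summing over $k$ uses $p^*_s/p>1$ to pass from the sum of $p/p^*_s$-powers to the power of the sum.
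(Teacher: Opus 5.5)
As written, this is not a proof. Your primary chain rests entirely on the estimate $\mathcal{E}_\gamma(|x|^\gamma u)\ge c\,\mathcal{D}_{\Phi(\gamma)}(u)$, which you simply grant. Nothing in the paper supplies it: Theorem~\ref{thm:gsrep} is the opposite inequality.

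The pointwise route cannot supply it either. The extra term in your reverse bound for $\mathfrak{B}$ carries the weight $|x|^{-\gamma}|y|^{-\gamma}\big||x|^{-\gamma}-|y|^{-\gamma}\big|^{p-2}$. For $p>2$ this is much larger than $(|x||y|)^{-p\gamma/2}$ when $|x|\ll|y|$ or $|y|\ll|x|$, so it cannot be absorbed into $\mathcal{E}_\gamma$.

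Note also that $\Phi(\gamma)<\bar\mu$ makes $\mathcal{D}_{\Phi(\gamma)}(u)$ comparable to $[u]_{s,p}^p$. So your key estimate is just $[|x|^{-\gamma}w]_{s,p}^p\le C\,\mathcal{E}_\gamma(w)$, which together with the unweighted Sobolev inequality already is the statement. The detour through $\mathcal{D}_{\Phi(\gamma)}$ and \eqref{eq:FS} adds nothing, and it needlessly restricts you to $p\ge2$, whereas the proposition is stated for all $p>1$. For comparison, the paper gives no argument of its own: it quotes \cite[Theorem~1.5]{AB17} with $\beta=p\gamma/2$ and checks that $0<\beta<\frac{N-sp}{2}$ is exactly $0<\gamma<\gamma^*$.

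Your fallback has the right architecture. The shell-wise local Sobolev bounds and the summation using $p/p^*_s<1$ are fine. Its decisive ingredient, however, is only asserted: the weighted Hardy inequality $\mathcal{E}_\gamma(w)\ge\Lambda\int_{\R^N}|x|^{-p\gamma-sp}|w|^p\dx$ with $\Lambda>0$. This is the only place where $\gamma<\gamma^*$ enters, so it must be proved.

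It follows from the paper's own tools. Define $\mathcal{L}_\gamma\phi(x):=|x|^{-p\gamma/2}\,\mathrm{P.V.}\int_{\R^N}\Jp(\phi(x)-\phi(y))\,|y|^{-p\gamma/2}|x-y|^{-N-sp}\dy$. The pairing $r\leftrightarrow1/r$ from Proposition~\ref{prop:rep} gives $\mathcal{L}_\gamma(|x|^{-\theta})=\Lambda(\theta)|x|^{-\theta(p-1)-sp-p\gamma}$, where
\[
\Lambda(\theta)=\int_0^1(1-r^\theta)^{p-1}\bigl(1-r^{p(\gamma^*-\gamma)-(p-1)\theta}\bigr)\,r^{sp+\frac{p\gamma}{2}-1}\,\Psi(r)\dr .
\]
This is positive exactly for $0<\theta<\frac{p(\gamma^*-\gamma)}{p-1}$; in particular your guess $\theta=\gamma^*-\gamma$ is admissible. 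Testing with $|w|^p|x|^{\theta(p-1)}$ and using \eqref{eq:picone} then gives the Hardy inequality with constant $2\Lambda(\theta)$.

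Once you have this, the shells are unnecessary. Write $u=|x|^{-\gamma}w$ and split $[u]_{s,p}^p$ into two regions:
\begin{itemize}
\item On $\{\frac12|x|\le|y|\le2|x|\}$, use $|u(x)-u(y)|\le|x|^{-\gamma}|w(x)-w(y)|+|w(y)|\,\big||x|^{-\gamma}-|y|^{-\gamma}\big|$.
\item On the complement, use $|u(x)-u(y)|^p\le2^{p-1}(|u(x)|^p+|u(y)|^p)$ together with $|x-y|\ge\frac12\max\{|x|,|y|\}$.
\end{itemize}
This yields $[u]_{s,p}^p\le C\bigl(\mathcal{E}_\gamma(w)+\int_{\R^N}|x|^{-p\gamma-sp}|w|^p\dx\bigr)\le C'\mathcal{E}_\gamma(w)$. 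Finally, $S\|u\|_{L^{p^*_s}}^p\le[u]_{s,p}^p$ and $\|u\|_{L^{p^*_s}}^{p^*_s}=\int_{\R^N}|x|^{-\gamma p^*_s}|w|^{p^*_s}\dx$ finish the proof for every $p>1$.
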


\noi Indeed, with $\beta=\frac{p\gamma}{2}$ the weight in \eqref{eq:intro-Egamma} is $|x|^{-\beta}|y|^{-\beta}$, the
weight $|x|^{-\frac{2\beta p^{*}_{s}}{p}}$ of \cite[Theorem~1.5]{AB17} is $|x|^{-\gamma p^{*}_{s}}$, and the condition
$0<\beta<\frac{N-sp}{2}$ imposed in \cite[Theorem~1.5]{AB17} is exactly $0<\gamma<\gamma^{*}$.

Proposition~\ref{prop:WS} covers only $\gamma<\gamma^{*}$.The upper root is obtained from it via inversion, as shown in the following lemma.

\begin{lemma}\label{lem:inv}
For $w\in\cc(\R^{N}\setminus\{0\})$ set $W(X):=w\big(\frac{X}{|X|^{2}}\big)$. Then for every $\gamma>0$,
\begin{equation}\label{eq:invid}
  \mathcal{E}_{\gamma}(w)=\mathcal{E}_{2\gamma^{*}-\gamma}(W), \; \text{ and } \;
  \int_{\R^{N}}|x|^{-\gamma p^{*}_{s}}|w|^{p^{*}_{s}} \dx=\int_{\R^{N}}|X|^{-(2\gamma^{*}-\gamma)p^{*}_{s}}|W|^{p^{*}_{s}}\d X .
\end{equation}
Consequently \eqref{eq:WS} for $\gamma$ is equivalent to \eqref{eq:WS} for $2\gamma^{*}-\gamma$.
Moreover, if $p\ge2$ then $\gamma_{c}\le2\gamma^{*}$, so $\gamma\in(\gamma^{*},\gamma_{c})$ implies
$2\gamma^{*}-\gamma\in(0,\gamma^{*})$, and \eqref{eq:WS} at the upper root follows from
Proposition~\ref{prop:WS}.
\end{lemma}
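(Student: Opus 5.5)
The plan is a direct change of variables in both integrals, using the Kelvin-type inversion $x=X/|X|^{2}$. Three standard facts do all the work:
\[
\dx=|X|^{-2N}\d X,\qquad
\Big|\frac{X}{|X|^{2}}-\frac{Y}{|Y|^{2}}\Big|=\frac{|X-Y|}{|X||Y|}.
\]
Under the substitution $x=X/|X|^2$, $y=Y/|Y|^2$ we have $w(x)-w(y)=W(X)-W(Y)$ and $|x|=|X|^{-1}$. Hence the integrand of $\mathcal E_\gamma(w)$ becomes
\[
\frac{|W(X)-W(Y)|^{p}}{|X-Y|^{N+sp}}\,(|X||Y|)^{N+sp}\,|X|^{\frac{p\gamma}{2}}|Y|^{\frac{p\gamma}{2}}\,|X|^{-2N}|Y|^{-2N}\d X\d Y .
\]
The power of $|X|$ (and likewise of $|Y|$) is
\[
N+sp+\tfrac{p\gamma}{2}-2N=-(N-sp)+\tfrac{p\gamma}{2}=-\tfrac{p}{2}\,(2\gamma^*-\gamma),
\]
since $N-sp=p\gamma^*$. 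This is exactly the weight $|X|^{-\frac{p(2\gamma^*-\gamma)}{2}}$ of $\mathcal E_{2\gamma^*-\gamma}$. For the second identity, the weight $|x|^{-\gamma p^*_s}$ becomes $|X|^{\gamma p^*_s}$, and the Jacobian contributes $|X|^{-2N}$. Using $N=\gamma^*p^*_s$, the total power is
\[
\gamma p^*_s-2\gamma^*p^*_s=-(2\gamma^*-\gamma)p^*_s,
\]
as claimed.

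For the equivalence statement, note that the inversion is an involution on $\R^N\setminus\{0\}$ preserving $\cc(\R^N\setminus\{0\})$, since compact sets away from $0$ map to compact sets away from $0$. Hence $w\mapsto W$ is a bijection of this class onto itself. By \eqref{eq:invid}, the inequality \eqref{eq:WS} holds for all $w$ with exponent $\gamma$ if and only if it holds for all $W$ with exponent $2\gamma^*-\gamma$, with the same constant $S_\gamma=S_{2\gamma^*-\gamma}$. No positivity of $\gamma$ is used in this computation, so the identities hold for every $\gamma>0$; the integrals may be infinite only if both sides are, which does not occur for compactly supported $w$ away from the origin.

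Finally, the arithmetic comparison is
\[
2\gamma^*-\gamma_c=\frac{(N-sp)(p-2)}{p(p-1)}\ge0 \quad\text{for } p\ge 2,
\]
which is the same computation as in Proposition~\ref{cor:defect}. Thus for $\gamma\in(\gamma^*,\gamma_c)$ we get $2\gamma^*-\gamma<\gamma^*$ and $2\gamma^*-\gamma>2\gamma^*-\gamma_c\ge0$. Applying Proposition~\ref{prop:WS} at $2\gamma^*-\gamma$ then gives \eqref{eq:WS} at $\gamma$, in particular at $\gamma=\gamma_+(\mu)$ by Corollary~\ref{cor:roots}. There is no real obstacle here; the only point to watch is the bookkeeping of powers of $|X|,|Y|$ from the kernel, the weight and the two Jacobians.
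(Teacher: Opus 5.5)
Your proof is correct and follows essentially the paper's argument: the same inversion change of variables with $|x-y|=|X-Y|/(|X||Y|)$ and Jacobian $|X|^{-2N}$, the same exponent bookkeeping via $N-sp=p\gamma^{*}$ and $N=\gamma^{*}p^{*}_{s}$, and the same final comparison $\gamma_c\le 2\gamma^{*}$ for $p\ge2$ (you write it as a difference, the paper as the ratio $\gamma_c/\gamma^{*}=p/(p-1)\le2$). One harmless inaccuracy: your remark that infinite values do not occur for $w\in\cc(\R^{N}\setminus\{0\})$ is false when $\frac{p\gamma}{2}\ge N$, since then $|y|^{-\frac{p\gamma}{2}}$ is not integrable at the origin and $\mathcal{E}_{\gamma}(w)=\infty$ for every $w\neq0$. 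So read the identities in $[0,\infty]$ via Tonelli; finiteness holds in the range actually used, $\gamma<\gamma_c$ with $p\ge2$.
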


\begin{proof}
Under $x=\frac{X}{|X|^{2}}$, $y=\frac{Y}{|Y|^{2}}$ we have $|x-y|=\frac{|X-Y|}{|X||Y|}, \dx=|X|^{-2N} \d X$, and $\dy=|Y|^{-2N} \d Y$. Hence, with
$\beta:=\frac{p\gamma}{2}$,
\begin{align*}
  \iint_{\R^{2N}}\frac{|w(x)-w(y)|^{p}}{|x|^{\beta}|y|^{\beta}|x-y|^{N+sp}}\dxy
  =\iint_{\R^{2N}}\frac{|W(X)-W(Y)|^{p}}{|X|^{N-sp-\beta}|Y|^{N-sp-\beta}|X-Y|^{N+sp}}\d X \d Y,  
\end{align*}
and $N-sp-\beta=p\gamma^{*}-\frac{p\gamma}{2}=\frac p2(2\gamma^{*}-\gamma)$, which is the exponent $\beta$ associated with $2\gamma^{*}-\gamma$. For the second identity,
\begin{align*}
  \int_{\R^N} |x|^{-\gamma p^{*}_{s}}|w|^{p^{*}_{s}}\dx = \int_{\R^N}  |X|^{\gamma p^{*}_{s}-2N}|W|^{p^{*}_{s}} \d X,  
\end{align*}
and
$\gamma p^{*}_{s}-2N=-(2\gamma^{*}-\gamma)p^{*}_{s}$ because $N=\gamma^{*}p^{*}_{s}$. Finally, $\frac{\gamma_{c}}{\gamma^{*}}=\frac{p}{p-1}\le2$ exactly when $p\ge2$.
\end{proof}

The following lemma converts weighted integrability of $v_{\gamma}$ into a pointwise bound, and is where
the losses in \eqref{E0} and \eqref{E1} appear.

\begin{lemma}\label{lem:radexp}
Let $U\ge0$ be radially symmetric and nonincreasing, let $\gamma>0$ and $r>p^{*}_{s}$.
\begin{enumerate}
\item[{\rm(i)}] If $\int_{B_{\rho_{0}}}|x|^{-\gamma p^{*}_{s}}v_{\gamma}^{r}\dx<\infty$, then
$U(\rho)\le C\rho^{-\gamma-p^{*}_{s}\frac{\gamma^{*}-\gamma}{r}}$ for $0<\rho<\rho_{0}$.
\item[{\rm(ii)}] If $\int_{\R^{N}\setminus B_{R_{0}}}|x|^{-\gamma p^{*}_{s}}v_{\gamma}^{r}\dx<\infty$, then
$U(\rho)\le C\rho^{-\gamma+p^{*}_{s}\frac{\gamma-\gamma^{*}}{r}}$ for $\rho>2R_{0}$.
\end{enumerate}
\end{lemma}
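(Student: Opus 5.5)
The plan follows the proof of Lemma~\ref{lem:radial}-(i): monotonicity lets us bound $U(\rho)$ from below on a region attached to radius $\rho$, and the integral over that region bounds $U(\rho)^r$. Recall $v_\gamma=|x|^\gamma U$, so the integrand is $|x|^{\gamma r-\gamma p^*_s}U^r$. Throughout we use $N=\gamma^*p^*_s$.

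For (i), fix $0<\rho<\rho_0$. Use the half-annulus $B_\rho\setminus B_{\rho/2}$, where $U(x)\ge U(\rho)$ and $|x|\asymp\rho$. Then $|x|^{\gamma(r-p^*_s)}\ge c\rho^{\gamma(r-p^*_s)}$ there, whatever the sign of the exponent, since $|x|\in[\rho/2,\rho]$. Hence
\[
\infty>\int_{B_{\rho_0}}|x|^{-\gamma p^*_s}v_\gamma^r\dx\ \ge\ c\,\rho^{N}\rho^{\gamma(r-p^*_s)}U(\rho)^r .
\]
This gives $U(\rho)\le C\rho^{-\gamma-(N-\gamma p^*_s)/r}$. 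Since $N-\gamma p^*_s=p^*_s(\gamma^*-\gamma)$, the exponent is $-\gamma-p^*_s\frac{\gamma^*-\gamma}{r}$, which is the claim.

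For (ii), fix $\rho>2R_0$. Here the ball around the origin lies partly outside the region of integration. Instead use the annulus $B_\rho\setminus B_{\rho/2}$, which is contained in $\R^N\setminus B_{R_0}$ because $\rho/2>R_0$; this is exactly why the hypothesis reads $\rho>2R_0$. On this annulus we still have $U\ge U(\rho)$ and $|x|\asymp\rho$, so the same computation gives
\[
c\,\rho^{N+\gamma(r-p^*_s)}U(\rho)^r\le \int_{\R^N\setminus B_{R_0}}|x|^{-\gamma p^*_s}v_\gamma^r\dx .
\]
This yields $U(\rho)\le C\rho^{-\gamma+p^*_s\frac{\gamma-\gamma^*}{r}}$.

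The only point requiring care is that the power weight is comparable to $\rho^{\gamma(r-p^*_s)}$ on a dyadic annulus, with constants depending only on $\gamma,r,p^*_s$. No further obstacle is expected. The hypothesis $r>p^*_s$ is not needed for the argument; it only fixes the regime of interest. The constant $C$ depends on the finite integral and on $N,\gamma,r$.
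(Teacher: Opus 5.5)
Your proof is correct and is essentially the paper's argument: the paper also bounds $U(\rho)^{r}\rho^{N+\gamma(r-p^{*}_{s})}$ by the weighted integral using monotonicity of $U$, and in (ii) it uses the same annulus $\{\rho/2<|x|<\rho\}$. The only difference is in (i), where the paper integrates over the full ball $B_{\rho}$ and uses $\gamma(r-p^{*}_{s})>0$; your annulus choice handles both parts uniformly and confirms that the hypothesis $r>p^{*}_{s}$ is not needed for the estimate.
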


\begin{proof}
Since $v_{\gamma}=|x|^{\gamma}U$, the integrand is $U^{r}|x|^{\gamma(r-p^{*}_{s})}$, and $\gamma(r-p^{*}_{s})>0$. For
(i), monotonicity gives
$\int_{B_{\rho}}U^{r}|x|^{\gamma(r-p^{*}_{s})}\dx\ge cU(\rho)^{r}\rho^{N+\gamma(r-p^{*}_{s})}$, so
$U(\rho)\le C\rho^{-\gamma-\frac{N-\gamma p^{*}_{s}}{r}}$, and $N-\gamma p^{*}_{s}=p^{*}_{s}(\gamma^{*}-\gamma)$ because
$N=\gamma^{*}p^{*}_{s}$. For (ii), the same bound on the annulus $\{\frac{\rho}{2}<|x|<\rho\}$, where
$|x|^{\gamma(r-p^{*}_{s})}\ge(\frac{\rho}{2})^{\gamma(r-p^{*}_{s})}$, gives
$U(\rho)\le C\rho^{-\gamma-\frac{N-\gamma p^{*}_{s}}{r}}=C\rho^{-\gamma+p^{*}_{s}\frac{\gamma-\gamma^{*}}{r}}$.
\end{proof}

\subsubsection{\textbf{Admissibility of the test pair}} Before estimating the cutoff terms we record that, for
each fixed $M$, both test functions occurring in \eqref{eq:routeid} are admissible. 

\begin{lemma}\label{lem:admiss}
Let $\gamma\in(0,\gamma_{c})$, $r\ge p^{*}_{s}$, $M>0$, and let $U\in\Dsp$ be a positive weak solution of
\eqref{eq:P} with $U\in L^{\infty}_{\mathrm{loc}}(\R^{N}\setminus\{0\})$. Let $v=v_{\gamma}=|x|^{\gamma}U$
and let $\eta$ be Lipschitz with $0\le\eta\le1$ and $\operatorname{supp}\eta\Subset\R^{N}\setminus\{0\}$.
We put $\varphi:=\omega_{\gamma}\,\eta^{p}G_{M}(v)$. Then
\[
  \varphi\ \text{ and }\ v^{p-1}\varphi=\omega_{\gamma}\,\eta^{p}H_{M}(v)^{p}
\]
both belong to $W^{s,p}(\R^{N})\cap L^{\infty}(\R^{N})$ and have compact support in
$\R^{N}\setminus\{0\}$. Consequently, $\varphi$ is admissible in \eqref{eq:weakdef} for $U$,
$v^{p-1}\varphi$ is admissible in \eqref{eq:weakdef} for $\omega_{\gamma}$, and \eqref{eq:routeid} holds
for this $\varphi$ with all three terms finite.
\end{lemma}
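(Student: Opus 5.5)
The approach is to reduce everything to the fact that compositions of $U$ with Lipschitz functions, multiplied by Lipschitz functions with compact support away from the origin, stay in $W^{s,p}\cap L^\infty$. Fix a compact set $K\Subset\R^N\setminus\{0\}$ containing $\operatorname{supp}\eta$, and an open neighbourhood $V$ of $K$ with $\overline V\Subset\R^N\setminus\{0\}$. On $\overline V$ the weight $\omega_\gamma=|x|^{-\gamma}$ is smooth and bounded above and below. Since $U\in L^\infty_{\mathrm{loc}}(\R^N\setminus\{0\})$, we also have $0<U\le C_V$ and $0\le v\le C_V'$ on $V$. Because $G_M(t)=t\,T_M(t)^{m-1}$ and $H_M(t)^p=t^{p-1}G_M(t)$ are locally Lipschitz on $[0,\infty)$, they are Lipschitz on the bounded range $[0,C_V']$. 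Hence $\varphi$ and $v^{p-1}\varphi=\omega_\gamma\eta^pH_M(v)^p$ are bounded, vanish outside $K$, and are supported compactly in $\R^N\setminus\{0\}$.

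The main step is the $W^{s,p}$ membership. First, $v=|x|^\gamma U\in W^{s,p}(V)$, since $U\in\Dsp\subset W^{s,p}_{\mathrm{loc}}$ and $|x|^\gamma$ is smooth and bounded on $V$. To see this, use $|f g(x)-fg(y)|\le |f(x)||g(x)-g(y)|+|g(y)||f(x)-f(y)|$, with $U\in L^p(V)$ by $U\in L^{p^*_s}$. Composition with a Lipschitz function then gives $F(v)\in W^{s,p}(V)\cap L^\infty(V)$ for $F\in\{G_M,H_M^p\}$. Multiplying by the Lipschitz, bounded function $\omega_\gamma\eta^p$ preserves this.

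To pass from $W^{s,p}(V)$ to $W^{s,p}(\R^N)$ for the zero extension $\Theta$, I would split $\R^{2N}$. On $V\times V$ the estimate is the previous one. On $K\times(\R^N\setminus V)$ the integrand is at most $\|\Theta\|_\infty^p\,\mathrm{dist}(K,\R^N\setminus V)^{-N-sp}$ times an integrable tail, which is finite. Pairs with both points outside $K$ contribute nothing. Thus $\varphi,\,v^{p-1}\varphi\in \mathcal W^{s,p}_0(\R^N\setminus\{0\})$; mollification keeps the support compact in $\R^N\setminus\{0\}$.

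It remains to check admissibility and that all three terms in \eqref{eq:routeid} are finite. For $U$ the left side of \eqref{eq:weakdef} is finite by Hölder, since $U\in\Dsp$ and $\varphi\in W^{s,p}$. For $\omega_\gamma$ the left side involves $J_p(\omega_\gamma(x)-\omega_\gamma(y))$ tested against a bounded function with compact support away from $0$. Its absolute convergence follows exactly as in the region-by-region argument (i)–(iii) in the proof of Lemma~\ref{lem:gsreduce}. There the Lipschitz bound near the diagonal is replaced by Hölder with $[\Theta]_{s,p}$ and $\omega_\gamma$ Lipschitz on $V$, and the region near $0$ uses $\gamma(p-1)<N$, guaranteed by $\gamma<\gamma_c$. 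The right-hand sides $\mu\int|x|^{-sp}U^{p-1}\varphi$ and $\int U^{p^*_s-1}\varphi$ are integrals of bounded functions over $K$. So both weak formulations extend from $\cc$ to these test functions by density, using dominated convergence on the right as in the proof of Proposition~\ref{thm:comp}. Subtracting them gives \eqref{eq:routeid} as in Lemma~\ref{lem:route}, with every term finite. The only delicate point I expect is the absolute convergence for the unbounded-at-the-origin weight $\omega_\gamma$, which is handled by the near-origin integrability just noted.
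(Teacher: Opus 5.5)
Your proposal is correct and follows essentially the same route as the paper. Both arguments use local boundedness of $U$ and $\omega_\gamma$ near $\operatorname{supp}\eta$, Lipschitz composition and multiplication to get $W^{s,p}\cap L^\infty$, and finiteness of the terms via $\gamma(p-1)<N$ together with the Hardy/Sobolev integrability of $U$. You are somewhat more explicit than the paper about the zero extension to $W^{s,p}(\R^N)$ and the density step, which the paper only asserts.
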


\begin{proof}
We write $\Sigma:=\operatorname{supp}\eta$, a compact subset of $\R^{N}\setminus\{0\}$. On $\Sigma$ the
weight $\omega_{\gamma}$ is smooth and bounded above and below, and $U$ is bounded. Hence $v$ is bounded
on $\Sigma$, and $v\in W^{s,p}(\Sigma)$ because $U\in\Dsp$ and $|x|^{\gamma}$ is smooth and bounded on a
neighbourhood of $\Sigma$.

By \eqref{eq:GH}, $G_{M}(t)=t\,T_{M}(t)^{r-p}$ and $H_{M}(t)^{p}=t^{p-1}G_{M}(t)=t^{p}T_{M}(t)^{r-p}$.
Both are nondecreasing, vanish at $0$, and satisfy
$G_{M}(t)\le M^{r-p}t$ and $H_{M}(t)^{p}\le M^{r-p}t^{p}$. Moreover $G_{M}$ is Lipschitz on
$[0,\infty)$ with constant $(r-p+1)M^{r-p}$, and $t\mapsto H_{M}(t)^{p}$ is Lipschitz on every bounded
interval. If we compose a Lipschitz function vanishing at $0$ with a bounded $W^{s,p}$ function, we again get a
bounded $W^{s,p}$ function. Multiplication by the bounded Lipschitz function $\omega_{\gamma}\eta^{p}$,
which is supported in $\Sigma$, preserves this property. Hence $\varphi,v^{p-1}\varphi\in
W^{s,p}(\R^{N})\cap L^{\infty}(\R^{N})$ with support in $\Sigma$.

For the second equation we need $\omega_{\gamma}$ to be a legitimate weak solution against such test
functions. Since $\gamma<\gamma_{c}$ we have $0<\gamma(p-1)<N-sp<N$, so
$$\int_{\R^{N}}(1+|y|)^{-N-sp}|y|^{-\gamma(p-1)}\dy<\infty,$$ that is
$\omega_{\gamma}\in L^{p-1}_{sp}(\R^{N})\cap W^{s,p}_{\mathrm{loc}}(\R^{N}\setminus\{0\})$. This is the
integrability required in \eqref{eq:op}--\eqref{eq:weakdef} and already used in \eqref{eq:homog}. Both
sides of \eqref{eq:weakdef} for $\omega_{\gamma}$ are then finite for a bounded $W^{s,p}$ test function
supported in $\Sigma$. For $U$ the three terms of \eqref{eq:weakdef} are finite because
$[U]_{s,p}<\infty$, $\int_{\R^N} |x|^{-sp}U^{p}\dx<\infty$ by \eqref{eq:FS} and $\varphi$ is bounded with
support away from the origin. Subtracting as in the proof of Lemma~\ref{lem:route} gives
\eqref{eq:routeid}.
\end{proof}

\subsubsection{\textbf{Removing the cutoff at infinity}} At the upper root we also cut off at a large radius $\Lambda$ and let $\Lambda\to\infty$. The argument is the analogue of Lemma~\ref{lem:innercut} at infinity.

\begin{lemma}\label{lem:outercut}
Let $p\ge2$, $\gamma\in(0,\gamma_{c})$, $r\ge p^{*}_{s}$ and $M>0$, let $\omega=\omega_{\gamma}$, and let
$U\in\Dsp$ be a positive radially symmetric nonincreasing solution of \eqref{eq:P}, $v=|x|^{\gamma}U$ and
$H=H_{M}$. We put
\[
  \kappa_{\infty}(\rho):=\sup_{\tau\ge\rho}\tau^{\gamma^{*}}U(\tau),\quad
  A:=N-\tfrac{p\gamma}{2},\quad B:=N-(p-1)\gamma ,
\]
and for $\Lambda>0$ let $\theta_{\Lambda}(x):=\theta_{1}(\frac{x}{\Lambda})$ with $0\le\theta_{1}\le1$,
$\theta_{1}=1$ on $B_{1}$, $\theta_{1}=0$ outside $B_{2}$ and $|\nabla\theta_{1}|\le C_{0}$. Then $A,B>0$,
$\kappa_{\infty}$ is bounded, nonincreasing and $\kappa_{\infty}(\rho)\to0$ as $\rho\to\infty$, and
\begin{equation}\label{eq:outercut}
  I^{\infty}_{p,\Lambda}+I^{\infty}_{2,\Lambda}\longrightarrow0\quad(\Lambda\to\infty),
\end{equation}
where, with $H_{x}:=H(v(x))$ and $\omega_{x}:=\omega(x)$,
\begin{align*}
    & I^{\infty}_{p,\Lambda}:=\iint_{\R^{2N}}\frac{(\omega_{x}\omega_{y})^{\frac{p}{2}}\big(H_{x}+H_{y}\big)^{p} |\theta_{\Lambda}(x)-\theta_{\Lambda}(y)|^{p}}{|x-y|^{N+sp}}\dxy, \text{ and } \\
    & I^{\infty}_{2,\Lambda}:=\iint_{\R^{2N}}\frac{\omega_{x}\omega_{y}|\omega_{x}-\omega_{y}|^{p-2}
  \big(H_{x}+H_{y}\big)^{p}|\theta_{\Lambda}(x)-\theta_{\Lambda}(y)|^{2}}{|x-y|^{N+sp}}\dxy. 
\end{align*}
\end{lemma}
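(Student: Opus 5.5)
I would mirror the proof of Lemma~\ref{lem:innercut}, with $x\mapsto x/\Lambda$ in place of $x\mapsto x/\eps$ and $\kappa_\infty$ in place of $\kappa$.

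\textbf{Preliminary facts.} Since $\gamma<\gamma_c$, we have $(p-1)\gamma<N-sp$, so $B>sp>0$. If $\gamma\le\gamma_c$, then $p\gamma/2\le (p-1)\gamma\cdot\frac{p}{2(p-1)}$, and $p\ge2$ gives $A\ge B>0$. The function $\kappa_\infty$ is nonincreasing by definition. It is bounded and tends to $0$ at infinity by Lemma~\ref{lem:radial}-(ii), since $\tau^{\gamma^*}U(\tau)\to0$ at both ends and is bounded on compacts (by (i)). As before, $H_M(t)\le M^{(r-p)/p}t$ gives
\[
  H_x^p\le M^{r-p}\kappa_\infty(|x|)^p|x|^{p(\gamma-\gamma^*)}
\]
for $|x|\ge\rho$, with a uniform constant $\|\kappa_\infty\|_\infty$ everywhere.

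\textbf{Region splitting.} The integrands vanish unless one of $|x|,|y|$ lies in $[\Lambda,2\Lambda]$, or one point is in $B_\Lambda$ and the other outside $B_{2\Lambda}$.

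\emph{Region $\Lambda/2\le|x|,|y|\le4\Lambda$.} After the scaling $x=\Lambda\xi$, $y=\Lambda\eta$ the powers of $\Lambda$ cancel exactly as in Lemma~\ref{lem:innercut}, using $N-sp-p\gamma^*=0$. The weights are bounded above and below on the annulus, and $|\theta_1(\xi)-\theta_1(\eta)|\le C|\xi-\eta|$. The remaining integrals are therefore finite, since $p-sp>0$ and $2-sp>0$ when $p\ge2$. This contributes $CM^{r-p}\kappa_\infty(\Lambda/2)^p\to0$.

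\emph{Region $|x|\le\Lambda/2$, $|y|\ge\Lambda$.} Here $|x-y|\simeq|y|$ and the cutoff difference is at most $1$. The delicate part is the factor $H_x$, because $x$ ranges down to the origin, where $\kappa_\infty$ gives no decay. I would instead bound $H_x^p\le M^{r-p}\|\kappa\|^p|x|^{p(\gamma-\gamma^*)}$.

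For $I^\infty_p$, integrate $|x|^{-p\gamma/2+p(\gamma-\gamma^*)}$ over $B_{\Lambda/2}$. This requires
\[
  N+\tfrac{p\gamma}{2}-p\gamma^*=sp+\tfrac{p\gamma}{2}>0,
\]
which holds, and gives $\Lambda^{sp+p\gamma/2}$. Integrating $|y|^{-p\gamma/2-N-sp}$ over $|y|\ge\Lambda$ gives $\Lambda^{-sp-p\gamma/2}$. The product is $O(1)$, not small. So I would split further: put $|x|\le\delta\Lambda$ in one piece, where the $x$-integral gains $\delta^{sp+p\gamma/2}$, and $\delta\Lambda\le|x|\le\Lambda/2$ in the other, where $\kappa_\infty(\delta\Lambda)\to0$. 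Letting $\Lambda\to\infty$ and then $\delta\to0$ removes this term.

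For the $H_y$ part with $|y|\ge\Lambda$, use $\kappa_\infty(\Lambda)$ directly. The $x$-integral $\int_{B_{\Lambda/2}}|x|^{-p\gamma/2}$ is finite since $A>0$, and the $y$-integral has radial exponent $-A-1$. This gives $\kappa_\infty(\Lambda)^p\Lambda^A\Lambda^{-A}\to0$.

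For $I^\infty_2$, use $\omega_x\ge\omega_y$ and $\omega_x\omega_y|\omega_x-\omega_y|^{p-2}\le\omega_x^{p-1}\omega_y$. The same computations go through, with $A$ replaced by $B$ and the $x$-exponent $sp+\gamma$ in place of $sp+p\gamma/2$.

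\emph{Mirror region} ($|y|\le\Lambda/2$, $|x|\ge\Lambda$): this follows by symmetry.

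\emph{Far region} $|x|\ge4\Lambda$, $\Lambda\le|y|\le2\Lambda$: the kernel is bounded by $|x|^{-N-sp}$ and everything is controlled by $\kappa_\infty(\Lambda)$, so this also tends to $0$.

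\textbf{Main obstacle.} The hardest step is the near-origin factor $H_x$ in the cross region. This is where the smallness must come from the two-parameter $\delta$ split rather than from $\kappa_\infty$.
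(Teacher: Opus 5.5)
Your strategy matches the paper's. It uses critical scaling where both points are comparable to $\Lambda$, the bound $|x-y|\ge\frac12\max\{|x|,|y|\}$ in the cross region, and a two-scale split for the factor $H$ at the point near the origin. The paper splits at a fixed $\Lambda_0$, you split at $\delta\Lambda$, and both work. Your shortcut $A\ge B>0$, from $\frac p2\le p-1$ when $p\ge2$, is correct and simpler than the paper's argument.

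There is, however, a false step in the annular region $\frac\Lambda2\le|x|,|y|\le4\Lambda$ for $I^{\infty}_{2,\Lambda}$.
\begin{itemize}
\item You bound the weight $\omega_\xi\omega_\eta|\omega_\xi-\omega_\eta|^{p-2}$ by a constant and use only $|\theta_1(\xi)-\theta_1(\eta)|^{2}\le C|\xi-\eta|^{2}$. This leaves the kernel $|\xi-\eta|^{2-N-sp}$.
\item You then assert $2-sp>0$ when $p\ge2$. That is false for $p>2$. For example, $N=3$, $p=3$, $s=\frac45$ gives $sp=\frac{12}{5}\in(2,N)$.
\item In that case $|\xi-\eta|^{2-N-sp}$ is not locally integrable. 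Your upper bound for this piece is then infinite, and the step proves nothing.
\end{itemize}
The missing ingredient is that $\omega$ is smooth on the annulus. Hence $|\omega_\xi-\omega_\eta|^{p-2}\le C|\xi-\eta|^{p-2}$ there. Combined with the Lipschitz bound on $\theta_1$, this turns the kernel into $|\xi-\eta|^{p-N-sp}$, which is integrable because $p>sp$. This is exactly how the paper treats this term.

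Your list of regions also omits $\{|x|\ge4\Lambda,\ \frac\Lambda2\le|y|<\Lambda\}$ and its mirror image. There $\theta_\Lambda(y)=1\neq0=\theta_\Lambda(x)$, so the integrands do not vanish. This region is handled exactly like your far region, with $\kappa_\infty(\Lambda/2)$ in place of $\kappa_\infty(\Lambda)$.

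With these two repairs the argument is complete.
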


\begin{proof}
We first check that $A,B>0$. Since $\gamma<\gamma_{c}$ we have $(p-1)\gamma<N-sp<N$, so $B>sp>0$. For $A$,
$\tfrac{p\gamma}{2}<\tfrac{p\gamma_{c}}{2}=\tfrac{p(N-sp)}{2(p-1)}$, and
$\tfrac{p(N-sp)}{2(p-1)}<N$ is equivalent to $N(p-2)+sp^{2}>0$, which holds for $p\ge2$.

By
Lemma~\ref{lem:radial}-(ii), $\kappa_{\infty}$ is finite, nonincreasing, and tends to $0$ at infinity.
Since $H_{M}(t)\le M^{\frac{r-p}{p}}t$ and $v(x)=|x|^{\gamma-\gamma^{*}}\big(|x|^{\gamma^{*}}U(x)\big)$,
\begin{equation}\label{eq:Hkappainf}
  H_{x}^{p} \le  M^{r-p}\,\kappa_{\infty}(|x|)^{p}\,|x|^{p(\gamma-\gamma^{*})} .
\end{equation}
If $\theta_{\Lambda}(x)\neq\theta_{\Lambda}(y)$ then $\max\{|x|,|y|\}\ge\Lambda$ and
$\min\{|x|,|y|\}\le2\Lambda$. It is therefore enough to treat the region where $|x|,|y|\ge\frac{\Lambda}{2}$ and
the region where $|x|\ge\Lambda$ and $|y|\le\frac{\Lambda}{2}$, together with the symmetric estimate obtained by interchanging $x$ and $y$.

We first treat the region where $|x|,|y|\ge\frac{\Lambda}{2}$. The substitution $x=\Lambda\xi$, $y=\Lambda\zeta$ with $|\xi|,|\zeta|\ge \frac12$
multiplies $\dxy$ by $\Lambda^{2N}$, the weight $(\omega_{x}\omega_{y})^{\frac{p}{2}}$ by $\Lambda^{-p\gamma}$,
the kernel by $\Lambda^{-N-sp}$ and, by \eqref{eq:Hkappainf} and the monotonicity of $\kappa_{\infty}$,
the factor $(H_{x}+H_{y})^{p}$ by $M^{r-p}\kappa_{\infty}(\frac{\Lambda}{2})^{p}\Lambda^{p(\gamma-\gamma^{*})}$.
The total power of $\Lambda$ is
\[
  2N-p\gamma-N-sp+p(\gamma-\gamma^{*})=N-sp-p\gamma^{*}=0 ,
\]
exactly as in Lemma~\ref{lem:innercut}. The scaling is critical and no smallness comes from it. What
remains is $M^{r-p}\kappa_{\infty}(\frac{\Lambda}{2})^{p}$ times the fixed integral
\[
  \mathcal{I}_{p}^{\infty}:=\iint_{\{|\xi|\ge\frac12,|\zeta|\ge\frac12\}}
  \frac{(|\xi||\zeta|)^{-\frac{p\gamma}{2}}\big(|\xi|^{p(\gamma-\gamma^{*})}+|\zeta|^{p(\gamma-\gamma^{*})}\big)
  |\theta_{1}(\xi)-\theta_{1}(\zeta)|^{p}}{|\xi-\zeta|^{N+sp}}\d\xi\d\zeta ,
\]
which is finite. Indeed, near the diagonal $|\theta_{1}(\xi)-\theta_{1}(\zeta)|\le C_{0}|\xi-\zeta|$
turns the kernel into $|\xi-\zeta|^{p-N-sp}$, integrable because $sp<p$, while all weights are bounded
on $\{\tfrac12\le\abs{\cdot}\le3\}$. Away from the diagonal one variable lies in $\{\frac12 \le \abs{\cdot}\le 2\}$. If
the other, say $\zeta$, is large, then $|\xi-\zeta|\simeq|\zeta|$ and the two radial exponents are
\[
  -\frac{p\gamma}{2}+p(\gamma-\gamma^{*})-N-sp+N-1=-A-1, \text{ and }  -\frac{p\gamma}{2}-N-sp+N-1=-\frac{p\gamma}{2}-sp-1 ,
\]
both $<-1$, using $p\gamma^{*}=N-sp$ and $A>0$. Hence this part of $I^{\infty}_{p,\Lambda}$ is at most
$CM^{r-p}\kappa_{\infty}(\frac{\Lambda}{2})^{p}$.

For $I^{\infty}_{2,\Lambda}$ the weight $\omega_{x}\omega_{y}|\omega_{x}-\omega_{y}|^{p-2}$ similarly scales like
$\Lambda^{-p\gamma}$. Near the diagonal
$|\omega_{x}-\omega_{y}|\le C\Lambda^{-\gamma-1}|x-y|$ and
$|\theta_{\Lambda}(x)-\theta_{\Lambda}(y)|\le C_{0}|x-y|/\Lambda$ again turn the kernel into
$|x-y|^{p-N-sp}$. Away from the diagonal, if $|\zeta|\ge2|\xi|$ then $\omega_{\zeta}\le\omega_{\xi}$ and
$|\omega_{\xi}-\omega_{\zeta}|\le\omega_{\xi}$, so the weight is at most
$\omega_{\xi}^{p-1}\omega_{\zeta}=|\xi|^{-(p-1)\gamma}|\zeta|^{-\gamma}$, and the radial exponents become
$-\gamma+p(\gamma-\gamma^{*})-N-sp+N-1=-B-1$ and $-\gamma-sp-1$, both $<-1$ because $B>0$.

Next we treat the region where $|x|\ge\Lambda$ and $|y|\le\frac{\Lambda}{2}$. Then $|x-y|\ge|x|/2$ and
$|\theta_{\Lambda}(x)-\theta_{\Lambda}(y)|\le1$. For the contribution of $H_{x}^{p}$ to
$I^{\infty}_{p,\Lambda}$, \eqref{eq:Hkappainf} and $\int_{|y|\le\frac{\Lambda}{2}}|y|^{- \frac{p\gamma}{2}}\dy=C\Lambda^{A}$
give
\[
  CM^{r-p}\Lambda^{A}\int_{\Lambda}^{\infty}\kappa_{\infty}(\rho)^{p}\rho^{-A-1}\d\rho
   \le  \frac{C}{A}\,M^{r-p}\kappa_{\infty}(\Lambda)^{p},
\]
because the radial exponent is $-\tfrac{p\gamma}{2}+p(\gamma-\gamma^{*})-N-sp+N-1=-A-1$ and
$\kappa_{\infty}$ is nonincreasing. This tends to $0$.

For the contribution of $H_{y}^{p}$ we use $\int_{|x|\ge\Lambda}|x|^{-\frac{p\gamma}{2}-N-sp}\dx=C\Lambda^{-A'}$
with $A':=\tfrac{p\gamma}{2}+sp>0$, so that this contribution is at most
\[
  CM^{r-p}\,\Lambda^{-A'}\int_{|y|\le\frac{\Lambda}{2}}\big(|y|^{\gamma^{*}}U(y)\big)^{p}
  |y|^{-\frac{p\gamma}{2}+p(\gamma-\gamma^{*})}\dy ,
\]
whose $y$-density has radial exponent
$-\tfrac{p\gamma}{2}+p\gamma-p\gamma^{*}+N-1=\tfrac{p\gamma}{2}+sp-1=A'-1>-1$. We fix
$\Lambda_{0}>0$. Splitting the $y$-integral at $\Lambda_{0}$, using
$|y|^{\gamma^{*}}U(y)\le C$ on $B_{\Lambda_{0}}$ from Lemma~\ref{lem:radial}-(i) and
$|y|^{\gamma^{*}}U(y)\le\kappa_{\infty}(\Lambda_{0})$ on $B_{\frac{\Lambda}{2}}\setminus B_{\Lambda_{0}}$, we
obtain the bound
\[
  CM^{r-p}\Big[C(\Lambda_{0})\Lambda^{-A'}+\kappa_{\infty}(\Lambda_{0})^{p}\Big],
\]
so its $\limsup$ as $\Lambda\to\infty$ is at most $CM^{r-p}\kappa_{\infty}(\Lambda_{0})^{p}$. Letting
$\Lambda_{0}\to\infty$ gives $0$.

For $I^{\infty}_{2,\Lambda}$ in this region, $|y|\le \frac{\Lambda}{2}<|x|$ gives $\omega_{y}\ge\omega_{x}$ and
$|\omega_{x}-\omega_{y}|\le\omega_{y}$, so
$\omega_{x}\omega_{y}|\omega_{x}-\omega_{y}|^{p-2}\le\omega_{x}\omega_{y}^{p-1}
=|x|^{-\gamma}|y|^{-(p-1)\gamma}$. The two contributions are estimated exactly as above with $A$ replaced
by $B$. For $H_{x}^{p}$ the radial exponent in $x$ is
$-\gamma+p(\gamma-\gamma^{*})-N-sp+N-1=-B-1$ while
$\int_{|y|\le\frac{\Lambda}{2}}|y|^{-(p-1)\gamma}\dy=C\Lambda^{B}$. For $H_{y}^{p}$ we have
$\int_{|x|\ge\Lambda}|x|^{-\gamma-N-sp}\dx=C\Lambda^{-\gamma-sp}$ and the $y$-density has radial exponent
$-(p-1)\gamma+p\gamma-p\gamma^{*}+N-1=\gamma+sp-1>-1$, so the same splitting at $\Lambda_{0}$ applies.
Adding the four bounds proves \eqref{eq:outercut}.
\end{proof}

\subsubsection{\textbf{The remainder of the cutoff at scale $R$}} The cutoff $\eta_{R}$ is \emph{not} removed.
It is the localization of the Moser step, and we need its remainder to be finite and bounded
independently of the truncation parameter $M$. This follows from the second identity in
\eqref{eq:GHid}, $H_{M}(t)^{p}\le t^{r}$, which makes every bound below $M$-free.

\begin{lemma}\label{lem:cutrem}
Let $p\ge2$, $\gamma\in(0,\gamma_{c})$, $r\ge p^{*}_{s}$ and $M>0$, and let $U,v,H=H_{M}$ be as in
Lemma~\ref{lem:outercut}. We set
\[
  \tau:=\max\Big\{\tfrac{p\gamma}{2},\,(p-1)\gamma\Big\},\quad \sigma:=\gamma .
\]
Then
\begin{equation}\label{eq:expchain}
  0<\tau<\min\{N, \gamma p^{*}_{s}\}\quad\text{and}\quad \gamma p^{*}_{s}<\sigma+N+sp .
\end{equation}
Let $R>0$ and let $\eta_{R}$ be Lipschitz with $0\le\eta_{R}\le1$, $|\nabla\eta_{R}|\le \frac{C_{0}}{R}$,
constant on $B_{R}$ and constant on $\R^{N}\setminus B_{2R}$. Then, with
\[
  \Pi_{R}(x):=\begin{cases}
  \left(\frac{|x|}{R}\right)^{-\tau}, & |x|\le4R,\\[2pt]
  \left(\frac{|x|}{R}\right)^{-\sigma-N-sp}, & |x|>4R,
  \end{cases}
\]
there is $C=C(N,p,s,\gamma,C_{0})$, independent of $M,R$ and $r$, such that
\begin{equation}\label{eq:cutrem}
  \mathcal{R}_{p}[\eta_{R}]+\mathcal{R}_{2}[\eta_{R}]
   \le  C R^{-p\gamma-sp}\int_{\R^{N}}v(x)^{r}\,\Pi_{R}(x)\dx ,
\end{equation}
where $\mathcal{R}_{p}[\eta_{R}]$ and $\mathcal{R}_{2}[\eta_{R}]$ are $I^{\infty}_{p,\Lambda}$ and
$I^{\infty}_{2,\Lambda}$ of Lemma~\ref{lem:outercut} with $\theta_{\Lambda}$ replaced by $\eta_{R}$.
Moreover,
\begin{align*}
    \Pi_{R}(x)\le4^{\gamma p^{*}_{s}}R^{\gamma p^{*}_{s}}|x|^{-\gamma p^{*}_{s}}, \text{ for every } x \neq 0.
\end{align*}
Furthermore, the right-hand side of \eqref{eq:cutrem} is finite whenever
$\int_{\Sigma'}|x|^{-\gamma p^{*}_{s}}v^{r}\dx<\infty$, where $\Sigma':=B_{4R}$ if $\gamma<\gamma^{*}$ and
$\Sigma':=\R^{N}\setminus B_{R/2}$ if $\gamma>\gamma^{*}$.
\end{lemma}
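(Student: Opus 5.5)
The plan is to check the exponent relations \eqref{eq:expchain} first, then estimate the two remainders region by region exactly as in Lemmas~\ref{lem:innercut} and \ref{lem:outercut}, but now keeping $v$ inside the integrals instead of bounding it by $\kappa$.

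\textbf{Exponent relations.} We have $\gamma p^*_s>\frac{p\gamma}{2}$ because $p^*_s>p$. Also $(p-1)\gamma<\gamma p^*_s$ is trivial, and $(p-1)\gamma<N-sp<N$ since $\gamma<\gamma_c$. For the remaining bound $\frac{p\gamma}{2}<N$ we use $p\ge2$, as in Lemma~\ref{lem:outercut}. The last inequality $\gamma p^*_s<\gamma+N+sp$ says $\gamma(p^*_s-1)<N+sp$. I expect to get it from $\gamma<\gamma_c$ together with $p^*_s-1=(p-1)+P_s$, since then $\gamma(p^*_s-1)<N-sp+\gamma P_s$. One still needs $\gamma P_s<2sp$, that is $\gamma<2\gamma^*$, and this follows from $\gamma_c\le2\gamma^*$ for $p\ge2$ (Lemma~\ref{lem:inv}). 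The bound $\Pi_R\le 4^{\gamma p^*_s}R^{\gamma p^*_s}|x|^{-\gamma p^*_s}$ then comes from comparing exponents on each piece. For $|x|\le4R$ we use $\tau<\gamma p^*_s$, so $(|x|/R)^{\gamma p^*_s-\tau}\le 4^{\gamma p^*_s}$. For $|x|>4R$ we use $\sigma+N+sp>\gamma p^*_s$.

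\textbf{Pointwise reduction.} Next, $(H_x+H_y)^p\le 2^{p-1}(v(x)^r+v(y)^r)$ by \eqref{eq:GHid}, and this is what makes the bound $M$-free. By symmetry of the kernels it suffices to bound
\[
\int v(x)^r\,K_R(x)\dx,\qquad K_R(x):=\int \frac{W(x,y)\,|\eta_R(x)-\eta_R(y)|^q}{|x-y|^{N+sp}}\dy,
\]
where either $W=(\omega_x\omega_y)^{p/2}$ with $q=p$, or $W=\omega_x\omega_y|\omega_x-\omega_y|^{p-2}$ with $q=2$. By scaling $x=R\xi$ the claim reduces to $K_1(\xi)\le C\,\Pi_1(\xi)$, because $K_R(x)=R^{-p\gamma-sp}K_1(x/R)$.

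\textbf{Estimating $K_1(\xi)$.} Split into $|\xi|\le4$ and $|\xi|>4$.

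For $|\xi|>4$ the difference $\eta_1(\xi)-\eta_1(\zeta)$ vanishes unless $|\zeta|\le2$. There $|\xi-\zeta|\simeq|\xi|$, so the weights contribute $|\xi|^{-p\gamma/2}\int_{B_2}|\zeta|^{-p\gamma/2}\d\zeta$, respectively (using $\omega_\zeta\ge\omega_\xi$) $|\xi|^{-\gamma}\int_{B_2}|\zeta|^{-(p-1)\gamma}\d\zeta$. Both integrals are finite because $\tau<N$. This gives decay $|\xi|^{-\frac{p\gamma}{2}-N-sp}\le|\xi|^{-\gamma-N-sp}$ when $p\ge2$, that is $\Pi_1$.

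For $|\xi|\le4$ I use the same three-region decomposition $D_1,D_2,D_3$ as in Lemma~\ref{lem:innercut}, but now with $\xi$ fixed and integrating only in $\zeta$.
\begin{itemize}
\item \emph{Near region.} Here $|\zeta|\simeq|\xi|$ and we use the Lipschitz bound on $\eta_1$, together with the mean value bound $|\omega_\xi-\omega_\zeta|\lesssim|\xi|^{-\gamma-1}|\xi-\zeta|$. This gives $|\xi|^{-p\gamma}\min\{1,|\xi|^{p-sp}\}$-type terms. These are at most $C|\xi|^{-\tau}$ once combined with the nonvanishing-support restriction $\max(|\xi|,|\zeta|)\ge1$.
\item \emph{Regions $|\zeta|\le|\xi|/2$ and $|\zeta|\ge2|\xi|$.} These give at most $|\xi|^{-p\gamma/2}\int|\zeta|^{-p\gamma/2}|\xi-\zeta|^{-N-sp}\min\{1,|\xi-\zeta|^q\}$. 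This is bounded by $C|\xi|^{-\tau}$, using integrability of $|\zeta|^{-\tau}$ near $0$ and the kernel tail at infinity.
\end{itemize}

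\textbf{Finiteness.} The final statement follows from $\Pi_R\le CR^{\gamma p^*_s}|x|^{-\gamma p^*_s}$ on $\Sigma'$. Off $\Sigma'$ we have either $|x|>4R$ with $\gamma<\gamma^*$, or $|x|<R/2$ with $\gamma>\gamma^*$. In both cases $v$ is controlled by $U\le C|x|^{-\gamma^*}$ (Lemma~\ref{lem:radial}), so $v^r\Pi_R$ is integrable. This uses $(\gamma-\gamma^*)r-\sigma-N-sp<-N$ in the first case and $(\gamma-\gamma^*)r-\tau>-N$ in the second. I expect this step to be the main obstacle, specifically the case $\gamma>\gamma^*$ near the origin. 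There $v^r\simeq|x|^{(\gamma-\gamma^*)r}$ against $|x|^{-\tau}$ is fine since $(\gamma-\gamma^*)r>0$. For $\gamma<\gamma^*$ at infinity, the needed $r(\gamma^*-\gamma)+\gamma+sp>0$ holds trivially.
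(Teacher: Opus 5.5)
Your proposal is correct and follows essentially the paper's proof: the exponent checks, the $M$-free bound $(H_x+H_y)^p\le 2^{p-1}(v(x)^r+v(y)^r)$, the reduction by symmetry to a one-variable kernel, the exact scaling to $R=1$, the bound of that kernel by $C\,\Pi_1$, and finiteness via $U\le C|x|^{-\gamma^*}$ all match the paper. The only variation is cosmetic: you prove $\gamma(p^*_s-1)<N+sp$ by splitting off $P_s$, where the paper evaluates it directly at $\gamma_c$.

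One point to tighten concerns the two far regions for $|\xi|\le4$. There you must also use that $\eta_1(\xi)\neq\eta_1(\zeta)$ forces $\max\{|\xi|,|\zeta|\}\ge1$; for $|\xi|<1$ this leaves only $|\zeta|\ge1$, which is the paper's case $|\xi|\le\frac12$. Without it, the Lipschitz bound alone gives terms like $|\xi|^{-p\gamma+p-sp}$, which can exceed $|\xi|^{-\tau}$. In addition, for the $q=2$ weight on $\{|\zeta|\ge2|\xi|\}$ the correct bound is $\omega_\xi^{p-1}\omega_\zeta$. This yields $|\xi|^{-(p-1)\gamma}$ rather than the $|\xi|^{-p\gamma/2}$ you wrote, and it is exactly why $\tau$ contains $(p-1)\gamma$.
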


\begin{proof}
We first prove \eqref{eq:expchain}. As in Lemma~\ref{lem:outercut}, $\tfrac{p\gamma}{2}<N$
and $(p-1)\gamma<N-sp<N$, so $\tau<N$. Since $p^{*}_{s}>\frac{p}{2}$ and $p^{*}_{s}>p-1$ we get $\tau<\gamma p^{*}_{s}$. For the last
inequality, $\gamma p^{*}_{s}<\gamma+N+sp$ is $\gamma(p^{*}_{s}-1)<N+sp$, and
$\gamma_{c}(p^{*}_{s}-1)=\frac{N-sp}{p-1}\frac{Np-N+sp}{N-sp}=\frac{Np-N+sp}{p-1}$, while
$\tfrac{Np-N+sp}{p-1}\le N+sp$ is equivalent to $2sp\le sp^{2}$, that is to $p\ge2$. As $\gamma<\gamma_{c}$, we must have $\gamma(p^{*}_{s}-1)<N+sp$.

Next, we prove the bound $\Pi_{R}\le4^{\gamma p^{*}_{s}}R^{\gamma p^{*}_{s}}|x|^{-\gamma p^{*}_{s}}$, which follows from \eqref{eq:expchain}.
For $|x|\le R$ we have $(\frac{|x|}{R})^{-\tau}\le(\frac{|x|}{R})^{-\gamma p^{*}_{s}}$ because $\tau<\gamma p^{*}_{s}$ and $\frac{|x|}{R}\le1$.
For $R<|x|\le4R$ the same comparison costs the factor $4^{\gamma p^{*}_{s}-\tau}$. For $|x|>4R$,
$(\frac{|x|}{R})^{-\sigma-N-sp}\le(\frac{|x|}{R})^{-\gamma p^{*}_{s}}$ since $\frac{|x|}{R}>1$ and $\sigma+N+sp>\gamma p^{*}_{s}$.

Finally, we prove \eqref{eq:cutrem}. By \eqref{eq:GHid}, $H_{x}^{p}\le v(x)^{r}$ uniformly in
$M$, so $(H_{x}+H_{y})^{p}\le2^{p-1}\big(v(x)^{r}+v(y)^{r}\big)$, and by the symmetry of the integrands
in $(x,y)$,
\begin{align*}
    & \mathcal{R}_{p}[\eta_{R}]+\mathcal{R}_{2}[\eta_{R}]
   \le  2^{p}\int_{\R^{N}}v(x)^{r}\,\mathcal{N}[\eta_{R}](x)\dx, \text{ where } \\
   & \mathcal{N}[\eta_{R}](x):=\int_{\R^{N}}
  \frac{(\omega_{x}\omega_{y})^{\frac{p}{2}}|\eta_{R}(x)-\eta_{R}(y)|^{p}
  +\omega_{x}\omega_{y}|\omega_{x}-\omega_{y}|^{p-2}|\eta_{R}(x)-\eta_{R}(y)|^{2}}{|x-y|^{N+sp}}\dy. 
\end{align*}
Since $\eta_{R}(x)=\eta_{1}(\frac{x}{R})$ for a fixed profile $\eta_{1}$, and both weights are homogeneous of
degree $-p\gamma$, the substitution $y=Rz$ gives the exact scaling
$\mathcal{N}[\eta_{R}](R\xi)=R^{-p\gamma-sp}\,\mathcal{N}_{1}(\xi)$. So \eqref{eq:cutrem} follows from
$$\mathcal{N}_{1}(\xi)\le C\Pi_{1}(\xi), \quad \xi \in \R^N,$$ and we check this bound in three regions.

Firstly, let $|\xi|\le\frac12$. Then $\eta_{1}(\xi)$ is the interior constant, so
$\eta_{1}(z)\neq\eta_{1}(\xi)$ forces $|z|\ge1\ge2|\xi|$ and hence $|\xi-z|\ge \frac{|z|}{2}$. Bounding
$|\eta_{1}(\xi)-\eta_{1}(z)|\le1$, the first term is at most
\[
  C|\xi|^{-\frac{p\gamma}{2}}\int_{\{|z|\ge1 \}}|z|^{-\frac{p\gamma}{2}-N-sp}\dz=C|\xi|^{-\frac{p\gamma}{2}} .
\]
For the second, $|\xi|<|z|$ gives $\omega_{\xi}>\omega_{z}$ and $|\omega_{\xi}-\omega_{z}|\le\omega_{\xi}$,
so the weight is at most $\omega_{\xi}^{p-1}\omega_{z}$ and the term is at most
$$C|\xi|^{-(p-1)\gamma}\int_{\{|z|\ge1\}}|z|^{-\gamma-N-sp}\dz=C|\xi|^{-(p-1)\gamma}.$$ Hence
$\mathcal{N}_{1}(\xi)\le C|\xi|^{-\tau}$.

Now let $\frac12\le|\xi|\le4$, where all weights are bounded above and below. Near the diagonal we have
$|\eta_{1}(\xi)-\eta_{1}(z)|^{p}\le C|\xi-z|^{p}$, respectively
$|\omega_{\xi}-\omega_{z}|^{p-2}|\eta_{1}(\xi)-\eta_{1}(z)|^{2}\le C|\xi-z|^{p}$. This turns the kernel into
$|\xi-z|^{p-N-sp}$, which is integrable because $sp<p$. Away from the diagonal we argue as in the other two regions. Hence
$\mathcal{N}_{1}(\xi)\le C$, and $\Pi_{1}\ge4^{-\tau}$ there.

Finally, let $|\xi|>4$. Then $\eta_{1}(\xi)$ is the exterior constant, so
$\eta_{1}(z)\neq\eta_{1}(\xi)$ forces $|z|\le2\le \frac{|\xi|}{2}$ and $|\xi-z|\ge \frac{|\xi|}{2}$. The first term is at
most $$C|\xi|^{-\frac{p\gamma}{2}-N-sp}\int_{\{|z|\le2\}}|z|^{-\frac{p\gamma}{2}}\dz,$$ which is finite because
$\tfrac{p\gamma}{2}<N$. The second, using $|\omega_{\xi}-\omega_{z}|\le\omega_{z}$, is at most
$$C|\xi|^{-\gamma-N-sp}\int_{\{|z|\le2\}}|z|^{-(p-1)\gamma}\dz,$$ which is finite because $(p-1)\gamma<N$. Since
$\sigma=\gamma\le\tfrac{p\gamma}{2}$, this gives $\mathcal{N}_{1}(\xi)\le C|\xi|^{-\sigma-N-sp}$.

It remains to check that the right-hand side of \eqref{eq:cutrem} is finite. By the bound on $\Pi_{R}$, the part of the integral in \eqref{eq:cutrem} over
$\Sigma'$ is at most $C R^{\gamma p^{*}_{s}}\int_{\Sigma'}|x|^{-\gamma p^{*}_{s}}v^{r}\dx$. For the part over
$\R^{N}\setminus\Sigma'$ we use the unconditional bound $U(x)\le C|x|^{-\gamma^{*}}$ of
Lemma~\ref{lem:radial}-(i), that is $v(x)\le C|x|^{\gamma-\gamma^{*}}$. If $\gamma<\gamma^{*}$, then
$\R^{N}\setminus\Sigma'=\R^{N}\setminus B_{4R}$ and using \eqref{eq:expchain}, the radial exponent of $v^{r}\Pi_{R}$ is
$r(\gamma-\gamma^{*})-\sigma-N-sp+N-1<-1$ because $r(\gamma-\gamma^{*})\le0$ and $\sigma+sp>0$. If
$\gamma>\gamma^{*}$, then $\R^{N}\setminus\Sigma'=B_{R/2}$ and again using \eqref{eq:expchain}, the radial exponent is
$r(\gamma-\gamma^{*})-\tau+N-1>-1$ because $\tau<N$. In both cases, integrals give finite values.
\end{proof}

\subsubsection{\textbf{Comparison of $\mathcal{E}_{\gamma}$ with the Gagliardo seminorm}} The next lemma shows that $\mathcal{E}_{\gamma}$ is finite on functions supported in a compact annulus, and that \eqref{eq:WS} applies to them.

\begin{lemma}\label{lem:dens}
Let $p\ge2$ and $\gamma\in(0,\gamma_{c})$, so that $\tfrac{p\gamma}{2}<N$ by Lemma~\ref{lem:outercut}. Let
$K=\{a\le|x|\le b\}$ with $0<a<b<\infty$ and put $K':=\{\frac{a}{2}\le|x|\le2b\}$. Then there is
$C_{K}=C(N,p,s,\gamma,a,b)$ such that
\begin{equation}\label{eq:Egamloc}
  \mathcal{E}_{\gamma}(h) \le  C_{K}\Big([h]_{s,p}^{p}+\|h\|_{L^{p}(\R^{N})}^{p}\Big)
  \quad\text{for every }h\in W^{s,p}(\R^{N})\text{ with }\operatorname{supp}h\subseteq K .
\end{equation}
Consequently, if $w\in W^{s,p}(\R^{N})\cap L^{\infty}$ is supported in $K$, then there exists
$w_{n}\in\cc(\R^{N}\setminus\{0\})$, supported in $K'$, with
\begin{equation}\label{eq:densconv}
  \lim_{n \ra \infty} \mathcal{E}_{\gamma}(w_{n}) = \mathcal{E}_{\gamma}(w),\quad
  \lim_{n \ra \infty} \int_{\R^{N}}|x|^{-\gamma p^{*}_{s}}|w_{n}|^{p^{*}_{s}}\dx = \int_{\R^{N}}|x|^{-\gamma p^{*}_{s}}|w|^{p^{*}_{s}}\dx,
\end{equation}
and therefore \eqref{eq:WS} holds for $w$.
\end{lemma}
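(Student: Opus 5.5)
We need $\mathcal E_\gamma(h)\le C_K([h]^p_{s,p}+\|h\|_p^p)$ for $h$ supported in the annulus $K$, and then a density statement. The plan is to split $\R^{2N}$ according to where $x$ and $y$ lie relative to $K'$, and to use that the weight $|x|^{-p\gamma/2}|y|^{-p\gamma/2}$ is bounded on $K'\times K'$.

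\textbf{Step 1: the three regions.}
\begin{itemize}
\item On $K'\times K'$ the weight is at most $(a/2)^{-p\gamma}$. So this part of $\mathcal E_\gamma(h)$ is bounded by $(a/2)^{-p\gamma}[h]^p_{s,p}$.
\item If $x\in K$ and $y\notin K'$, then $h(y)=0$ and $|x-y|\ge c(1+|y|)$ with $c$ depending on $a,b$. This part is at most
\[
C\int_K|h(x)|^p\Big(\int_{\R^N\setminus K'}|y|^{-\frac{p\gamma}{2}}(1+|y|)^{-N-sp}\dy\Big)\dx .
\]
The inner integral is finite: near the origin because $\tfrac{p\gamma}{2}<N$, and at infinity because $\tfrac{p\gamma}{2}+sp>0$.
\item If $x\in K'\setminus K$ and $y\notin K'$, both values of $h$ vanish. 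The symmetric region is handled the same way.
\end{itemize}
Collecting the three bounds gives \eqref{eq:Egamloc}.

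\textbf{Step 2: approximation.} For $w\in W^{s,p}\cap L^\infty$ supported in $K$, take $w_n=w*\rho_{1/n}$ with $1/n<a/2$. Then $w_n$ is smooth, supported in $K'\subset\R^N\setminus\{0\}$, satisfies $\|w_n\|_\infty\le\|w\|_\infty$, and $w_n\to w$ in $W^{s,p}(\R^N)$.

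Now apply \eqref{eq:Egamloc} on the annulus $K'$ (with its own enlargement $K''$) to $h=w_n-w$. This gives $\mathcal E_\gamma(w_n-w)\to0$. Since $\mathcal E_\gamma^{1/p}$ is a weighted $L^p$ seminorm of difference quotients, Minkowski's inequality yields $\mathcal E_\gamma(w_n)\to\mathcal E_\gamma(w)$.

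For the second limit in \eqref{eq:densconv}, note that $|x|^{-\gamma p^*_s}$ is bounded on $K'$. Pass to a subsequence with $w_n\to w$ a.e. The integrands are uniformly bounded and supported in $K'$, so dominated convergence applies. (Alternatively, use the convergence $w_n\to w$ in $L^{p^*_s}$ given by the fractional Sobolev embedding.)

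\textbf{Step 3: conclusion.} Apply \eqref{eq:WS} to each $w_n$. For $\gamma<\gamma^*$ this is Proposition~\ref{prop:WS}. For $\gamma>\gamma^*$ it is Lemma~\ref{lem:inv}, which is where $p\ge2$ enters. Passing to the limit with \eqref{eq:densconv} gives \eqref{eq:WS} for $w$.

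\textbf{Main obstacle.} The only delicate point is the far-field integral in Step 1. It needs $p\gamma/2<N$ near the origin, which was checked in Lemma~\ref{lem:outercut} using $p\ge2$. Beyond that, one must keep track that the constants depend only on $a$ and $b$.
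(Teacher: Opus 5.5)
Your proof is correct and follows the paper's argument essentially step for step. You use the same three-region split, with the weight bounded on $K'\times K'$ and a uniform tail integral controlled by $\tfrac{p\gamma}{2}<N$ and $\tfrac{p\gamma}{2}+sp>0$. You then mollify, apply \eqref{eq:Egamloc} on $K'$ to $w_n-w$ together with the triangle inequality for $\mathcal{E}_\gamma^{1/p}$, and use a bounded weight on $K'$ for the second limit. The only differences are cosmetic: you bound $|x-y|\ge c(1+|y|)$ in one stroke instead of splitting $\R^N\setminus K'$ into $\{|y|<\tfrac a2\}$ and $\{|y|>2b\}$, and you use the uniform $L^\infty$ bound with dominated convergence where the paper invokes the Sobolev embedding.
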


\begin{proof}
We first prove \eqref{eq:Egamloc}. We split the domain of \eqref{eq:intro-Egamma} into three parts. If $x,y\in K'$, both weights lie between $(2b)^{- \frac{p\gamma}{2}}$ and $( \frac{a}{2} )^{- \frac{p\gamma}{2}}$, so this part is at
most $C_{K}[h]^{p}_{s,p}$. If $x\in K'$ and $y\notin K'$ (and, symmetrically, the reverse, which doubles the constant), then
$h(y)=0$ because $\operatorname{supp}h\subseteq K\subset K'$, so the numerator is $|h(x)|^{p}$ and may be
taken to vanish unless $x\in K$. Since $\operatorname{dist}(K,\R^{N}\setminus K')\ge\min\{ \frac{a}{2} ,b\}>0$,
there is no diagonal singularity, and this part is at most
\[
  \int_{K}|h(x)|^{p}|x|^{-\frac{p\gamma}{2}}
  \bigg(\int_{\R^{N}\setminus K'}\frac{|y|^{-\frac{p\gamma}{2}}}{|x-y|^{N+sp}}\dy\bigg)\dx .
\]
The inner integral is bounded uniformly for $x\in K$. Indeed $\R^{N}\setminus K'=\{|y|< \frac{a}{2} \}\cup\{|y|>2b\}$.
On $\{|y|< \frac{a}{2} \}$ we have $|x-y|\ge  \frac{a}{2} $, so that piece is at most
$$\left( \frac{a}{2} \right)^{-N-sp}\int_{ \{ |y|< \frac{a}{2} \} }|y|^{- \frac{p\gamma}{2}}\dy<\infty,$$
where the finiteness comes using $\frac{p\gamma}{2}<N$. 
On $\{|y|>2b\}$ we have $|x-y| \ge \frac{|y|}{2}$, so that piece is at most
$$2^{N+sp}\int_{\{|y|>2b\}}|y|^{-\frac{p\gamma}{2}-N-sp}\dy<\infty,$$ the kernel alone giving convergence since
$\tfrac{p\gamma}{2}+sp>0$. As $|x|^{- \frac{p\gamma}{2}}\le(a)^{- \frac{p\gamma}{2}}$ on $K$, this part is at most
$C_{K}\|h\|^{p}_{L^{p}}$.
If $x,y\notin K'$, then $h(x)=h(y)=0$ and the contribution vanishes. Adding the three parts gives
\eqref{eq:Egamloc}.

Next, we prove \eqref{eq:densconv}. Let $\rho_{n}$ be a standard mollifier of radius
$\delta_{n}<\min\{ \frac{a}{2} ,b\}$, $\delta_{n}\to0$, and $w_{n}:=w*\rho_{n}$. Then
$w_{n}\in\cc(\R^{N}\setminus\{0\})$, $\operatorname{supp}w_{n}\subseteq K'$, and $w_{n}\to w$ in
$W^{s,p}(\R^{N})$. Since $h_{n}:=w_{n}-w$ is supported in $K'$, \eqref{eq:Egamloc} applied with $K'$ in
place of $K$ gives
\[
  \mathcal{E}_{\gamma}(h_{n}) \le  C_{K'}\Big([h_{n}]^{p}_{s,p}+\|h_{n}\|^{p}_{L^{p}}\Big)
  \ \longrightarrow\ 0 .
\]
Now $\mathcal{E}_{\gamma}^{1/p}$ is a seminorm, being the $L^{p}\big(\R^{2N},\d\nu\big)$ norm of
$\big(h(x)-h(y)\big)|x-y|^{-\frac{N+sp}{p}}|x|^{-\frac{\gamma}{2}}|y|^{-\frac{\gamma}{2}}$,
so the triangle inequality gives
$\big|\mathcal{E}_{\gamma}(w_{n})^{1/p}-\mathcal{E}_{\gamma}(w)^{1/p}\big|\le\mathcal{E}_{\gamma}(h_{n})^{1/p}\to0$,
which is the first limit in \eqref{eq:densconv}. For the second, $w_{n}\to w$ in $W^{s,p}(\R^{N})$ implies
$w_{n}\to w$ in $L^{p^{*}_{s}}(\R^{N})$ by the Sobolev inequality, and the weight $|x|^{-\gamma p^{*}_{s}}$ is bounded
above and below on $K'$, which contains every support involved. Finally, applying \eqref{eq:WS} to each
$w_{n}$ and passing to the limit gives \eqref{eq:WS} for $w$.
\end{proof}

\subsubsection{\textbf{The weighted Caccioppoli inequality}}

We now prove the weighted Caccioppoli inequality, with the auxiliary cutoff still present.
\begin{proposition}\label{prop:cacc}
Let $p\ge2$, $0<\mu<\bar\mu$, $\gamma\in\{\gamma_{-}(\mu),\gamma_{+}(\mu)\}$, $r\ge p^{*}_{s}$ and
$M>0$, and let $U\in\Dsp$ be a positive radially symmetric nonincreasing weak solution of \eqref{eq:P},
$v=v_{\gamma}=|x|^{\gamma}U$. We take $\eta_{R}$ as in Lemma~\ref{lem:cutrem} with the choice corresponding to the end under consideration. We put
\begin{equation}\label{eq:etaends}
\begin{aligned}
  \gamma=\gamma_{-}:\quad &\eta_{R}=1\ \text{on }B_{R},\ \ \eta_{R}=0\ \text{outside }B_{2R},
  &&\eta:=\zeta_{\eps}\,\eta_{R},\\
  \gamma=\gamma_{+}:\quad &\eta_{R}=0\ \text{on }B_{R},\ \ \eta_{R}=1\ \text{outside }B_{2R},
  &&\eta:=\eta_{R}\,\theta_{\Lambda},
\end{aligned}
\end{equation}
with $\zeta_{\eps}$ as in Lemma~\ref{lem:innercut} and $\theta_{\Lambda}$ as in
Lemma~\ref{lem:outercut}, and $w_{\eps,\Lambda}:=\eta H_{M}(v)$. We omit the unused parameter in each case. We write
\begin{equation}\label{eq:Raux}
  \mathcal{R}_{\mathrm{aux}}:=\begin{cases}
  I_{p,\eps}+I_{2,\eps}, & \gamma=\gamma_{-},\\[2pt]
  I^{\infty}_{p,\Lambda}+I^{\infty}_{2,\Lambda}, & \gamma=\gamma_{+}.
  \end{cases}
\end{equation}
Then \eqref{eq:cacc} below holds with the constants $c_r>0$ and $C_r<\infty$ of Proposition~\ref{prop:LC}, which are independent of $M$, $\eps$ and $\Lambda$,
\begin{equation}\label{eq:cacc}
  c_r\,\mathcal{E}_{\gamma}\big(w_{\eps,\Lambda}\big)
   \le  \int_{\R^{N}}U^{P_s}\big(\omega_{\gamma}w_{\eps,\Lambda}\big)^{p}\dx
  +C_r\Big(\mathcal{R}_{\mathrm{aux}}
  +\mathcal{R}_{p}[\eta_{R}]+\mathcal{R}_{2}[\eta_{R}]\Big).
\end{equation}
Moreover $w_{\eps,\Lambda}$ is bounded, belongs to $W^{s,p}(\R^{N})$ and is supported in a compact
annulus contained in $\R^{N}\setminus\{0\}$, and $\mathcal{E}_{\gamma}(w_{\eps,\Lambda})<\infty$.
\end{proposition}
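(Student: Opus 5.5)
We plan to test the identity \eqref{eq:routeid} of Lemma~\ref{lem:route} with $\varphi:=\omega_{\gamma}\eta^{p}G_{M}(v)$, where $\eta$ is the product cutoff of \eqref{eq:etaends}. First the \emph{moreover} part. Since $\eta$ is Lipschitz with $0\le\eta\le1$ and, for fixed $\eps$ (resp.\ $\Lambda$), is supported in a compact annulus $\Sigma\subset\R^{N}\setminus\{0\}$, Lemma~\ref{lem:admiss} applies. Recall that $U\in L^{\infty}_{\mathrm{loc}}(\R^N\setminus\{0\})$ by Lemma~\ref{lem:radial}-(iii). Hence $\varphi$ and $v^{p-1}\varphi$ are admissible, \eqref{eq:routeid} holds with finite terms, and the same composition/product argument shows that $w_{\eps,\Lambda}=\eta H_{M}(v)$ is bounded, lies in $W^{s,p}(\R^N)$, and is supported in $\Sigma$. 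Lemma~\ref{lem:dens}, estimate \eqref{eq:Egamloc}, then gives $\mathcal{E}_\gamma(w_{\eps,\Lambda})<\infty$. By \eqref{eq:rhsform}, the right-hand side of \eqref{eq:routeid} equals $\int U^{P_s}(\omega_\gamma w_{\eps,\Lambda})^p\dx$.

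The left-hand side of \eqref{eq:routeid} is $\frac12\iint \mathcal{L}_{\eta(x),\eta(y)}|x-y|^{-N-sp}\dxy$, with $\mathcal{L}_{e,f}$ as in Proposition~\ref{prop:LC} and $a=\omega_\gamma(x)$, $b=\omega_\gamma(y)$, $X=v(x)$, $Y=v(y)$; this is exactly the expansion of Lemma~\ref{lem:cutoff}. For $p>2$ (and $r>p$; the case $r=p$ does not occur since $r\ge p^*_s>p$) we insert the pointwise bound \eqref{eq:LC} with $e=\eta(x)$, $f=\eta(y)$. The positive term integrates to $\frac{c_r}{2}\mathcal{E}_\gamma(w_{\eps,\Lambda})$, since $eH_X-fH_Y=w_{\eps,\Lambda}(x)-w_{\eps,\Lambda}(y)$ and $(ab)^{p/2}=|x|^{-p\gamma/2}|y|^{-p\gamma/2}$. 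The two error terms are the integrals of $(ab)^{p/2}(H_X+H_Y)^p|\eta(x)-\eta(y)|^p$ and $Q(H_X+H_Y)^p|\eta(x)-\eta(y)|^2$. All three integrals are finite, so splitting is legitimate. After renaming constants this gives \eqref{eq:cacc}, up to the control of $|\eta(x)-\eta(y)|$. For $p=2$ we use instead case (b) of \eqref{eq:routeC} together with the analogous elementary expansion; the term $Q$ then degenerates to $ab$, and the two error integrals coincide in form.

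The key remaining step is to split the cutoff differences. For $\eta=\zeta_\eps\eta_R$ with $0\le\zeta_\eps,\eta_R\le1$ we have
\[
|\eta(x)-\eta(y)|\le|\zeta_\eps(x)-\zeta_\eps(y)|+|\eta_R(x)-\eta_R(y)|,
\]
and similarly for $\eta_R\theta_\Lambda$. Raising to the powers $p$ and $2$ costs at most a factor $2^{p-1}$. The error integrals are therefore bounded by $C(I_{p,\eps}+I_{2,\eps}+\mathcal{R}_p[\eta_R]+\mathcal{R}_2[\eta_R])$ at the origin, and by the analogous quantity with $I^{\infty}_{p,\Lambda}+I^\infty_{2,\Lambda}$ at infinity. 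This is the bracket in \eqref{eq:cacc}. The constants come only from Proposition~\ref{prop:LC} and from $2^{p-1}$, so they are independent of $M,\eps,\Lambda$. We do not need these remainder terms to be finite at this stage, since if they are infinite the inequality is trivial; their finiteness and vanishing are handled later by Lemmas~\ref{lem:innercut}, \ref{lem:outercut} and \ref{lem:cutrem}.

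The main obstacle we expect is justifying the pointwise-to-integral passage without losing absolute integrability. Splitting $\mathcal{L}$ into a positive part and error parts requires each piece to be integrable against the kernel. For fixed $M,\eps,\Lambda$ this follows from the compact support of $\varphi$ in $\R^N\setminus\{0\}$, the boundedness of $H_M(v)$ on the support, the local Lipschitz bounds near the diagonal, and the tail estimates of the kind used in Lemma~\ref{lem:gsreduce} and in the proof of Lemma~\ref{lem:dens}.
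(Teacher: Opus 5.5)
Your proposal is correct and follows the paper's proof essentially step for step. You obtain admissibility and the properties of $w_{\eps,\Lambda}$ from Lemma~\ref{lem:admiss} and \eqref{eq:Egamloc}, you integrate the pointwise bound \eqref{eq:LC} of Proposition~\ref{prop:LC} (with Remark~\ref{rem:pis2} covering $p=2$), and you split $|\eta(x)-\eta(y)|$ by the triangle inequality and convexity into $\mathcal{R}_{\mathrm{aux}}$ and $\mathcal{R}_{p}[\eta_{R}]+\mathcal{R}_{2}[\eta_{R}]$.

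Your handling of the factor $\frac12$ in $\mathcal{A}$ and of the factor $2^{p-1}$, by renaming $c_r$ and $C_r$, is slightly more careful than the paper. The paper asserts the constants are literally those of Proposition~\ref{prop:LC}, which drops these factors. The renaming is harmless, because later steps only use that the constants are independent of $M$, $\eps$ and $\Lambda$.
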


\begin{proof}
In either case of \eqref{eq:etaends} the cutoff $\eta$ is Lipschitz, takes values in $[0,1]$ and is
supported in a compact annulus. This annulus is $\overline{B_{2R}}\setminus B_{\eps}$ at the lower root, and
$\overline{B_{2\Lambda}}\setminus B_{R}$ at the upper root. By Lemma~\ref{lem:admiss}, the pair
$\varphi=\omega_{\gamma}\eta^{p}G_{M}(v)$, $v^{p-1}\varphi$ is therefore admissible and
\eqref{eq:routeid} holds. The same lemma gives the stated properties of $w_{\eps,\Lambda}$, and
$\mathcal{E}_{\gamma}(w_{\eps,\Lambda})<\infty$ by \eqref{eq:Egamloc} of Lemma~\ref{lem:dens}.

Writing $a=\omega_{\gamma}(x)$, $b=\omega_{\gamma}(y)$, $X=v(x)$, $Y=v(y)$, $e=\eta(x)$, $f=\eta(y)$ and
integrating \eqref{eq:LC} against $|x-y|^{-N-sp}\dxy$, the left-hand side of \eqref{eq:routeid} is
bounded below by $c_r\mathcal{E}_{\gamma}(w_{\eps,\Lambda})$ minus the two cutoff remainders, while
the right-hand side is $\int_{\R^N} U^{P_s}(\omega_{\gamma}w_{\eps,\Lambda})^{p}\dx$ by \eqref{eq:rhsform}. For
$p=2$ this is as in Remark~\ref{rem:pis2}, the two remainders then coinciding. Since $\eta$ is a
product of two functions with values in $[0,1]$,
\[
  |\eta(x)-\eta(y)| \le  |\eta_{R}(x)-\eta_{R}(y)|+
  \begin{cases}|\zeta_{\eps}(x)-\zeta_{\eps}(y)|, & \gamma=\gamma_{-},\\
  |\theta_{\Lambda}(x)-\theta_{\Lambda}(y)|, & \gamma=\gamma_{+},\end{cases}
\]
so by convexity the two remainders are bounded by $2^{p-1}$, respectively $2$, times the sum of the
corresponding two expressions, which are exactly $\mathcal{R}_{p}[\eta_{R}]+\mathcal{R}_{2}[\eta_{R}]$
and $\mathcal{R}_{\mathrm{aux}}$. Every constant is independent of $M$, $\eps$ and $\Lambda$, using
Corollary~\ref{cor:truncboth} and Proposition~\ref{prop:LC}.
\end{proof}

\noi In Proposition~\ref{prop:cacc} the auxiliary cutoff is kept, because \eqref{eq:WS} is applied only to functions with compact support in $\R^{N}\setminus\{0\}$. We remove it in the next corollary.

The next corollary gives one step of the iteration.
\begin{corollary}\label{cor:onestep}
In the setting of Proposition~\ref{prop:cacc}, put $\Sigma:=\operatorname{supp}\eta_{R}$, so that
$\Sigma\subseteq B_{2R}$ if $\gamma=\gamma_{-}$ and $\Sigma\subseteq\R^{N}\setminus B_{R}$ if
$\gamma=\gamma_{+}$, and write
\begin{equation}\label{eq:Rel}
  \mathcal{R}_{\eps,\Lambda}:=\mathcal{R}_{\mathrm{aux}}
  +\mathcal{R}_{p}[\eta_{R}]+\mathcal{R}_{2}[\eta_{R}] ,
\end{equation}
with $\mathcal{R}_{\mathrm{aux}}$ as in \eqref{eq:Raux}. We assume $R$ has been chosen so that
\begin{equation}\label{eq:smallthr}
  \Big(\int_{\Sigma}U^{p^{*}_{s}}\dx\Big)^{\frac{P_s}{p^{*}_{s}}} \le  \frac12c_rS_{\gamma} .
\end{equation}
If $\int_{\Sigma'}|x|^{-\gamma p^{*}_{s}}v^{r}\dx<\infty$, with $\Sigma'$ as in Lemma~\ref{lem:cutrem}, then
\begin{equation}\label{eq:onestep}
  \left(\int_{\R^{N}}|x|^{-\gamma p^{*}_{s}}\left|\eta_{R}H_{M}(v)\right|^{p^{*}_{s}}\dx\right)^{\frac{p}{p^{*}_{s}}}
   \le\frac{2C_r}{c_rS_{\gamma}}
  \left(\mathcal{R}_{p}[\eta_{R}]+\mathcal{R}_{2}[\eta_{R}]\right) < \infty,
\end{equation}
with a bound independent of $M$.
\end{corollary}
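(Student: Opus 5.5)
The plan is to start from the Caccioppoli inequality \eqref{eq:cacc} of Proposition~\ref{prop:cacc} with the auxiliary cutoff still present. We apply the weighted Sobolev inequality to $w_{\eps,\Lambda}=\eta H_M(v)$, absorb the nonlinear term using \eqref{eq:smallthr}, and then remove the auxiliary cutoff by Lemmas~\ref{lem:innercut} and \ref{lem:outercut}.

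\textbf{Lower bound via weighted Sobolev.} By Proposition~\ref{prop:cacc}, $w_{\eps,\Lambda}$ is bounded, lies in $W^{s,p}(\R^N)$ and is supported in a compact annulus. Lemma~\ref{lem:dens} therefore applies and gives \eqref{eq:WS} for it. This uses Proposition~\ref{prop:WS} when $\gamma=\gamma_-<\gamma^*$, and Lemma~\ref{lem:inv} when $\gamma=\gamma_+$, which is where $p\ge2$ enters. Writing $\mathcal{S}:=\big(\int|x|^{-\gamma p^*_s}|w_{\eps,\Lambda}|^{p^*_s}\dx\big)^{p/p^*_s}$, we obtain $c_rS_\gamma\,\mathcal S\le c_r\mathcal E_\gamma(w_{\eps,\Lambda})$.

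\textbf{Absorbing the nonlinear term.} The integrand of the first term on the right of \eqref{eq:cacc} vanishes outside $\operatorname{supp}\eta\subseteq\Sigma$. Since $\omega_\gamma^{p}=|x|^{-p\gamma}$, Hölder's inequality with exponents $\frac{p^*_s}{P_s}$ and $\frac{p^*_s}{p}$ (note $\frac{P_s}{p^*_s}+\frac{p}{p^*_s}=1$) gives
\[
\int_{\R^N}U^{P_s}(\omega_\gamma w_{\eps,\Lambda})^p\dx\le\Big(\int_\Sigma U^{p^*_s}\dx\Big)^{\frac{P_s}{p^*_s}}\mathcal S\le\tfrac12c_rS_\gamma\,\mathcal S .
\]
Because $\mathcal S<\infty$ (the function is bounded with compact support away from $0$), this term can be moved to the left. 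We obtain
\[
\tfrac12c_rS_\gamma\,\mathcal S\le C_r\,\mathcal R_{\eps,\Lambda}.
\]

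\textbf{Removing the auxiliary cutoff.} Now let $\eps\downarrow0$ (case $\gamma_-$) or $\Lambda\to\infty$ (case $\gamma_+$), with $M$ fixed.
\begin{itemize}
\item Lemma~\ref{lem:innercut}, resp.\ Lemma~\ref{lem:outercut}, gives $\mathcal R_{\mathrm{aux}}\to0$. Lemma~\ref{lem:innercut} requires $\gamma<\gamma^*$, which holds for $\gamma_-$. These lemmas allow constants depending on $M$, which is harmless since $M$ is fixed in this limit.
\item The terms $\mathcal R_p[\eta_R]$ and $\mathcal R_2[\eta_R]$ do not depend on $\eps$ or $\Lambda$. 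By Lemma~\ref{lem:cutrem} and the hypothesis $\int_{\Sigma'}|x|^{-\gamma p^*_s}v^r<\infty$, they are finite and bounded independently of $M$, because $H_M^p\le v^r$.
\item On the left, $\eta=\zeta_\eps\eta_R\uparrow\eta_R$ (resp.\ $\eta_R\theta_\Lambda\uparrow\eta_R$) pointwise on $\R^N\setminus\{0\}$. Fatou's lemma (or monotone convergence) then yields \eqref{eq:onestep}.
\end{itemize}

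The main obstacle is making sure the absorption is legitimate. This requires finiteness of $\mathcal S$ before passing to the limit, which is why the auxiliary cutoff is kept at that stage. It also requires that the smallness constant $c_r$ in \eqref{eq:smallthr} is the same $c_r$ as in \eqref{eq:cacc}, which holds since both come from Proposition~\ref{prop:LC}.
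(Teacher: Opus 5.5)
Your proposal is correct and follows essentially the same argument as the paper. You apply \eqref{eq:WS} to $w_{\eps,\Lambda}$ through Lemma~\ref{lem:dens} (with Lemma~\ref{lem:inv} at $\gamma_{+}$), absorb the critical term by H\"older and \eqref{eq:smallthr} while the auxiliary cutoff keeps the left side finite, and then let $\eps\downarrow0$ or $\Lambda\to\infty$ using Lemmas~\ref{lem:innercut}, \ref{lem:outercut}, \ref{lem:cutrem} and Fatou's lemma. The convergence $\zeta_{\eps}\eta_{R}\to\eta_{R}$ (resp.\ $\eta_R\theta_\Lambda\to\eta_R$) need not be monotone for an arbitrary family of cutoffs, but pointwise convergence with Fatou's lemma, which you also invoke, is all that is needed.
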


\begin{proof}
We fix the auxiliary parameter and $M$, and set
\[
  Y_{\eps,\Lambda}:=\left(\int_{\R^{N}}|x|^{-\gamma p^{*}_{s}}\left|w_{\eps,\Lambda}\right|^{p^{*}_{s}}\dx\right)^{\frac{p}{p^{*}_{s}}},
\]
which is finite because $w_{\eps,\Lambda}$ is bounded with support in a compact annulus. By Proposition~\ref{prop:cacc}, $w_{\eps,\Lambda}\in
W^{s,p}(\R^{N})\cap L^{\infty}$ is supported in a compact annulus $K\Subset\R^{N}\setminus\{0\}$, so
Lemma~\ref{lem:dens} applies and gives \eqref{eq:WS} for $w_{\eps,\Lambda}$, that is
\[
  S_{\gamma} Y_{\eps,\Lambda} \le  \mathcal{E}_{\gamma}\big(w_{\eps,\Lambda}\big) .
\]
At $\gamma=\gamma_{+}$ the inequality \eqref{eq:WS} follows from Lemma~\ref{lem:inv}. 

With exponents $\frac{p^{*}_{s}}{P_s}$ and $\frac{p^{*}_{s}}{p}$, and $\omega_{\gamma}^{p^{*}_{s}}=|x|^{-\gamma p^{*}_{s}}$,
\[
  \int_{\R^{N}}U^{P_s}\big(\omega_{\gamma}w_{\eps,\Lambda}\big)^{p}\dx
   \le  \Big(\int_{\Sigma}U^{p^{*}_{s}}\dx\Big)^{\frac{P_s}{p^{*}_{s}}}\,Y_{\eps,\Lambda} ,
\]
since $\operatorname{supp}w_{\eps,\Lambda}\subseteq\operatorname{supp}\eta\subseteq\Sigma$. In
particular the first factor is independent of the auxiliary parameter. Combining the above with \eqref{eq:cacc},
\[
c_rS_{\gamma} Y_{\eps,\Lambda} \le  \Big(\int_{\Sigma}U^{p^{*}_{s}}\dx\Big)^{\frac{P_s}{p^{*}_{s}}}Y_{\eps,\Lambda}
+C_r \mathcal{R}_{\eps,\Lambda},
\]
By \eqref{eq:smallthr}, the first term on the right is at most half of the left-hand side. Since $Y_{\eps,\Lambda}<\infty$, we may subtract it. This gives
\begin{equation}\label{eq:Ybound}
  Y_{\eps,\Lambda} \le \frac{2C_r}{c_rS_{\gamma}} \mathcal{R}_{\eps,\Lambda}.
\end{equation}

 The cutoff is removed only in \eqref{eq:Ybound}, and there is a single limit to take at each end. We let $\eps\downarrow0$ at
$\gamma_{-}$, where $\mathcal{R}_{\mathrm{aux}}=I_{p,\eps}+I_{2,\eps}\to0$ by \eqref{eq:innercut} and
$\gamma_{-}<\gamma^{*}$ is exactly the hypothesis of Lemma~\ref{lem:innercut}. Also
$\Lambda\uparrow\infty$ at $\gamma_{+}$, where
$\mathcal{R}_{\mathrm{aux}}=I^{\infty}_{p,\Lambda}+I^{\infty}_{2,\Lambda}\to0$ by \eqref{eq:outercut},
which holds for every $\gamma\in(0,\gamma_{c})$. What remains in either case is
$\mathcal{R}_{p}[\eta_{R}]+\mathcal{R}_{2}[\eta_{R}]$, finite and independent of $M$ by
Lemma~\ref{lem:cutrem}. On the left, $w_{\eps,\Lambda}\to\eta_{R}H_{M}(v)$ pointwise on
$\R^{N}\setminus\{0\}$ along that limit, so Fatou's lemma gives
\[
  \left(\int_{\R^{N}}|x|^{-\gamma p^{*}_{s}}\big|\eta_{R}H_{M}(v)\big|^{p^{*}_{s}}\dx \right)^{\frac{p}{p^{*}_{s}}}
   \le  \liminf_{\eps \ra 0, \Lambda \ra \infty} Y_{\eps,\Lambda} ,
\]
and \eqref{eq:onestep} follows from \eqref{eq:Ybound}.
\end{proof}

\subsection{Proof of \eqref{E0} and \eqref{E1}}\label{sec:Mproof}
We now combine the previous estimates. The iteration is run for a finite number of steps.
\begin{theorem}\label{thm:cond}
Let $p\ge2$ and $0<\mu<\bar\mu$, and let $U\in\Dsp$ be a
positive radially symmetric nonincreasing weak solution of \eqref{eq:P}. Then $U$ satisfies \eqref{E0}
and \eqref{E1}, and therefore
\[
  U(x)\le C|x|^{-\gamma_{-}(\mu)}\ \text{ as }|x|\to0,
  \quad
  U(x)\le C|x|^{-\gamma_{+}(\mu)}\ \text{ as }|x|\to\infty .
\]
\end{theorem}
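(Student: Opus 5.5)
The plan is to run the finite Moser iteration of the roadmap at each end separately, using Corollary~\ref{cor:onestep} as the one-step gain. Then Lemma~\ref{lem:radexp} converts high weighted integrability into the pointwise bounds \eqref{E0} and \eqref{E1}, and Theorem~\ref{thm:radialmain} (equivalently Theorem~\ref{thm:sharpupper}) removes the loss $\eta$. By Theorem~\ref{thm:radialmain}, $U$ is continuous on $\R^N\setminus\{0\}$ and satisfies \eqref{M}, so it remains only to prove \eqref{E0} and \eqref{E1}.

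\emph{The origin.} Take $\gamma=\gamma_-$ and $v=|x|^{\gamma_-}U$. The starting exponent is $r_0=p^*_s$. Since $|x|^{-\gamma p^*_s}v^{p^*_s}=U^{p^*_s}$, we have $\int_{\R^N}|x|^{-\gamma p^*_s}v^{r_0}\dx<\infty$. Given $\eta>0$, fix $k$ so large that $r_k:=p^*_s\chi^k$ satisfies $p^*_s(\gamma^*-\gamma_-)/r_k<\eta$.

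Inductively, suppose $\int_{B_{4R_j}}|x|^{-\gamma p^*_s}v^{r_j}\dx<\infty$ for some radius $R_j$. Since $U\in L^{p^*_s}$, we may choose $R_{j+1}\le R_j/4$ so small that the smallness condition \eqref{eq:smallthr} holds on $\Sigma\subseteq B_{2R_{j+1}}$ with the constant $c_{r_j}$. This order matters: $r_j$ is fixed before the radius, because $c_{r_j}$ depends on $r_j$.

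Lemma~\ref{lem:cutrem} then shows that the right-hand side of \eqref{eq:onestep} is finite and independent of $M$. Corollary~\ref{cor:onestep} gives the bound uniformly in $M$. Letting $M\uparrow\infty$ with monotone convergence ($H_M(v)\uparrow v^{r_j/p}$ by \eqref{eq:GHid}) yields $\int_{B_{R_{j+1}}}|x|^{-\gamma p^*_s}v^{r_j\chi}\dx<\infty$. Here we use $\eta_R=1$ on $B_R$ and $(v^{r_j/p})^{p^*_s}=v^{r_j\chi}$.

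To restart the step we need integrability on $B_{4R_{j+1}}$, and this holds because $4R_{j+1}\le R_j$. After $k$ steps, Lemma~\ref{lem:radexp}(i) gives $U(\rho)\le C\rho^{-\gamma_--\eta}$ near $0$, which is \eqref{E0}.

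\emph{Infinity.} Take $\gamma=\gamma_+$, with $\Sigma'=\R^N\setminus B_{R/2}$ and $\eta_R=1$ outside $B_{2R}$. The same induction applies with radii chosen increasingly large, $R_{j+1}\ge4R_j$. These can be chosen so that $\int_{\R^N\setminus B_{R_{j+1}}}U^{p^*_s}\dx$ is small, as \eqref{eq:smallthr} requires. The weighted Sobolev inequality at $\gamma_+$ is supplied by Lemma~\ref{lem:inv}, and this is where $p\ge2$ enters. The outer cutoff is removed by Lemma~\ref{lem:outercut}. Finally, Lemma~\ref{lem:radexp}(ii) gives $U(\rho)\le C\rho^{-\gamma_++\eta}$ for large $\rho$, which is \eqref{E1}.

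The main obstacle is bookkeeping in the induction. At each step the domain $\Sigma'$ required by Lemma~\ref{lem:cutrem} must be contained in the region where the previous integrability is known. This is the reason for the factor-$4$ nesting of radii. The constants $c_{r_j}$, $C_{r_j}$ and $S_\gamma$ must not depend on $M$; this is guaranteed by Proposition~\ref{prop:LC}. The case $p=2$ requires the separate coercivity of case (b) in Section~\ref{sec:route}, since Proposition~\ref{prop:LC} is stated for $p>2$. Because only finitely many steps are taken, no uniformity of the radii in $j$ is needed.
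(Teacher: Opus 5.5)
Your proposal is correct and follows the paper's proof essentially step for step. It runs a finite iteration along $r_k=p^*_s\chi^k$ via Corollary~\ref{cor:onestep}, choosing each radius after the exponent and nesting the radii so that the new $\Sigma'$ lies inside the previous $\Sigma''$, and then uses Lemma~\ref{lem:radexp} and Theorem~\ref{thm:sharpupper} to remove the loss. One small indexing slip at the origin: the induction hypothesis should be integrability of $v^{r_j}$ on $B_{R_j}$ rather than on $B_{4R_j}$, because a step with radius $R_{j+1}$ only yields $v^{r_{j+1}}$ on $B_{R_{j+1}}$; the choice $R_{j+1}\le R_j/4$ is exactly what places $\Sigma'=B_{4R_{j+1}}$ inside $B_{R_j}$, and with this re-indexing the induction closes.
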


\begin{proof}
{Take $\gamma=\gamma_{-}$ near the origin and $\gamma=\gamma_{+}$ at infinity, and write $v=v_{\gamma}$.

\noi\emph{\textbf{One step of the iteration.}} Let $r\ge p^{*}_{s}$. We claim the following:

There is $R=R(r)>0$ such that, with $\Sigma'$ as in Lemma~\ref{lem:cutrem}, and with $\Sigma''=B_{R}$ at the origin and $\Sigma''=\R^{N}\setminus B_{2R}$ at infinity,
\begin{equation}\label{eq:onestepgain}
  v\in L^{r}\big(\Sigma',|x|^{-\gamma p^{*}_{s}}\dx\big) 
  \Longrightarrow 
  v\in L^{r\chi}\big(\Sigma'',|x|^{-\gamma p^{*}_{s}}\dx\big).
\end{equation}
Moreover, $R(r)$ may be replaced by any smaller radius at the origin, and by any larger radius at infinity. 

Indeed, fix $M>0$. By Lemma~\ref{lem:admiss} and Proposition~\ref{prop:cacc}, the test function $\varphi=\omega_{\gamma}\eta^{p}G_{M}(v)$, with $\eta$ as in \eqref{eq:etaends}, gives \eqref{eq:cacc} with constants independent of $M$ and of the auxiliary parameter. Since $U\in L^{p^{*}_{s}}(\R^{N})$, we have $\int_{B_{2R}}U^{p^{*}_{s}}\to0$ as $R\to0$ and $\int_{\R^{N}\setminus B_{R}}U^{p^{*}_{s}}\to0$ as $R\to\infty$, so we can choose $R=R(r)$ satisfying \eqref{eq:smallthr}, and \eqref{eq:smallthr} remains true for smaller (respectively larger) radii. We have to choose the radius after $r$. By Proposition~\ref{prop:trunc}-(iii), our constant $c_r$ is of order at most $r^{-p}$. Corollary~\ref{cor:onestep} and Lemma~\ref{lem:cutrem} now give
\[
  \Big(\int_{\R^{N}}|x|^{-\gamma p^{*}_{s}}\big|\eta_{R}H_{M}(v)\big|^{p^{*}_{s}}\dx\Big)^{\frac{p}{p^{*}_{s}}}
   \le  C(R,r)\Big(\int_{\Sigma'}|x|^{-\gamma p^{*}_{s}}v^{r}\dx+1\Big),
\]
with $C(R,r)$ independent of $M$. By \eqref{eq:GHid}, $H_{M}(t)\uparrow t^{\frac{r}{p}}$ as $M\uparrow\infty$. Since $r\frac{p^{*}_{s}}{p}=r\chi$ and $\eta_{R}\equiv1$ on $\Sigma''$, monotone convergence in $M$ proves the claim.

\noi\emph{\textbf{Iteration begins.}} To iterate, we consider $r_{k}:=p^{*}_{s}\chi^{k}$. The exponent $r_{0}=p^{*}_{s}$ is admissible, since $\int_{\R^N}|x|^{-\gamma p^{*}_{s}}v^{p^{*}_{s}}\dx=\int_{\R^N} U^{p^{*}_{s}}\dx<\infty$. Given $\eta>0$, we fix $k$ with $p^{*}_{s}(\gamma^{*}-\gamma_{-})/r_{k}<\eta$ and $p^{*}_{s}(\gamma_{+}-\gamma^{*})/r_{k}<\eta$. We apply the claim with $r=r_{0},r_{1},\dots,r_{k-1}$. We choose each radius after the exponent. At the origin we take it so small, and at infinity so large, that the new domain $\Sigma'$ lies in the previous domain $\Sigma''$. This gives $v\in L^{r_{k}}$ with weight $|x|^{-\gamma p^{*}_{s}}$ near the origin, respectively near infinity. We use only finitely many radii. This is why the iteration is stopped after $k$ steps. Lemma~\ref{lem:radexp} now gives $U(\rho)\le C_{\eta}\rho^{-\gamma_{-}-\eta}$ for small $\rho$ and $U(\rho)\le C_{\eta}\rho^{-\gamma_{+}+\eta}$ for large $\rho$, which are \eqref{E0} and \eqref{E1}. Since $U$ satisfies \eqref{M} by Lemma~\ref{lem:radial}-(ii), Theorem~\ref{thm:sharpupper} applies at both ends.}
\end{proof}

\subsection{Optimality of the characteristic exponents}\label{sec:optimal}
For $p\ge2$, Theorem~\ref{thm:cond} and Theorem~\ref{thm:radialmain} give the upper bounds
\eqref{eq:intro-final-bounds}. We now show that their exponents cannot be improved. For every
$\alpha<\gamma_{-}(\mu)$ and every $\beta\in(\gamma_{+}(\mu),\gamma_{c})$, a positive radially symmetric
nonincreasing solution satisfies $U\ge c_{\alpha}|x|^{-\alpha}$ near the origin and
$U\ge c_{\beta}|x|^{-\beta}$ at infinity. In both cases we compare $U$ with a truncated power. The power is
a subsolution of the Hardy equation, because $\Phi(\alpha)<\mu$ and $\Phi(\beta)<\mu$.

We first check that truncated powers have finite energy.
\begin{lemma}\label{lem:barenergy}
Let $R>0$, and let $\chi\in \C^{\infty}(\R^{N})$ be radially symmetric and nonincreasing, with $\chi=1$ on
$B_{R}$ and $\chi=0$ outside $B_{2R}$. Then the following hold:
\begin{enumerate}
\item[{\rm(i)}] If $0<\alpha<\gamma^{*}$, then $\chi|x|^{-\alpha}\in\Dsp$.
\item[{\rm (ii)}] If $\beta>\gamma^{*}$, then $\min\{1,|x|^{-\beta}\}\in\Dsp$.
\end{enumerate}
\end{lemma}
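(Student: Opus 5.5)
By the characterization $\Dsp=\{u\in L^{p^*_s}(\R^N):[u]_{s,p}<\infty\}$ recalled in Section~\ref{sec:rep}, each part splits into two checks: membership in $L^{p^*_s}$ and finiteness of the Gagliardo seminorm. Integrability is immediate from the exponents. In (i), $\chi|x|^{-\alpha}$ has compact support and $|x|^{-\alpha p^*_s}$ is integrable near $0$ because $\alpha p^*_s<\gamma^*p^*_s=N$. In (ii), $\min\{1,|x|^{-\beta}\}$ is bounded and $|x|^{-\beta p^*_s}$ is integrable at infinity because $\beta p^*_s>N$. Everything then reduces to bounding the seminorm, and I would do this by homogeneity together with a dyadic annular decomposition.

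For (i), I first treat the pure power $w(x)=|x|^{-\alpha}$ restricted to a bounded region. By scaling, $\int_{B_\rho}\int_{B_\rho}|w(x)-w(y)|^p|x-y|^{-N-sp}\dxy = c\,\rho^{N-sp-p\alpha}$, provided the integral over $B_1\times B_1$ is finite. Here $N-sp-p\alpha=p(\gamma^*-\alpha)>0$, so this is the favourable direction. To prove finiteness on $B_1\times B_1$, I would split into the same three regions $D_1,D_2,D_3$ used in Lemma~\ref{lem:innercut}.
\begin{itemize}
\item On $D_3$, where $|x|\asymp|y|$, the mean value theorem gives $|w(x)-w(y)|\le C|x|^{-\alpha-1}|x-y|$. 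The inner integral of $|x-y|^{p-N-sp}$ over $|x-y|\le 3|x|$ is then of order $|x|^{p-sp}$, which leaves $\int_{B_1}|x|^{-p\alpha-sp}\dx<\infty$ because $p\alpha+sp<N$.
\item On $D_1$ (and, by symmetry, $D_2$), I bound $|w(x)-w(y)|\le|x|^{-\alpha}$ and $|x-y|\asymp|y|$. This gives $\int|y|^{-N-sp}\int_{|x|<|y|/2}|x|^{-p\alpha}\dx\dy\sim\int_{B_1}|y|^{-p\alpha-sp}\dy$, which is finite for the same reason.
\end{itemize}

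Next I account for the cutoff. I write $\chi w-\chi w(y)=\chi(x)(w(x)-w(y))+w(y)(\chi(x)-\chi(y))$. The first piece is controlled on $B_{4R}\times B_{4R}$ by the estimate above. On the complementary regions one variable lies outside $B_{4R}$, where $\chi$ vanishes, so the integrand is at most $|\chi w(x)|^p|x-y|^{-N-sp}$ with $|x-y|\gtrsim|y|$, and this is integrable. The second piece is at most $\|\nabla\chi\|_\infty^p|y|^{-p\alpha}|x-y|^{p}$ near the diagonal and at most $2^p|y|^{-p\alpha}$ away from it. Both are integrable, since $|y|^{-p\alpha}$ is locally integrable ($p\alpha<N$) and $sp<p$.

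For (ii), I write $u=\min\{1,|x|^{-\beta}\}$. This function is Lipschitz and bounded, so the region $B_4\times B_4$, and more generally the near-diagonal region $|x-y|\le|x|/2$ with $|x|\le 4$, contribute finitely. For large $|x|$ in the near-diagonal region, $|\nabla u|\le C|x|^{-\beta-1}$ gives a contribution $\int|x|^{-p\beta-p}|x|^{p-sp}\dx=\int|x|^{-p\beta-sp}\dx$, finite at infinity because $p\beta+sp>p\gamma^*+sp=N$. In the far region $|x-y|\ge\max\{|x|,|y|\}/2$, I bound $|u(x)-u(y)|^p\le C(u(x)^p+u(y)^p)$. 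By symmetry this reduces to $\int u(x)^p\int_{|x-y|\gtrsim|x|}|x-y|^{-N-sp}\dy\dx\lesssim\int u(x)^p(1+|x|)^{-sp}\dx$, which again converges because $p\beta+sp>N$. Equivalently, one could deduce (ii) from (i) using the inversion of Lemma~\ref{lem:inv} with weight $0$, but the direct estimate is simpler.

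The only delicate point is the region of comparable radii ($D_3$ and the near-diagonal parts), where the singularity of the power and the singularity of the kernel interact. This is handled by the mean value bound together with $sp<p$. In every case the exponent condition reduces exactly to $\alpha<\gamma^*$ or $\beta>\gamma^*$.
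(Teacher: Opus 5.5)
Your proof is correct and is essentially the paper's argument: near-diagonal pairs are handled by a gradient (mean value) bound at the scale $|x|$ together with $sp<p$, far pairs by the pointwise size of the function, and convergence is decided exactly by the sign of $N-sp-p\alpha$, resp.\ $N-sp-p\beta$. The paper organizes this differently, over dyadic shells containing the inner point after using symmetry to restrict to pairs with $|y|\ge|x|$ (where $0\le w(x)-w(y)\le w(x)$), and it absorbs $\chi$ into the gradient bound rather than splitting it off. There are two harmless slips: in the far region of (ii) the inner integral is $C|x|^{-sp}$, not $O\big((1+|x|)^{-sp}\big)$, which still converges since $sp<N$ (or since $B_4\times B_4$ was already treated); and on $D_3$ the mean value theorem should be applied to the radial profile $t\mapsto t^{-\alpha}$ between $|x|$ and $|y|$, because the segment from $x$ to $y$ can pass near the origin.
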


\begin{proof}
In both cases $w$ denotes the function, and $w$ is radially nonincreasing and nonnegative. By symmetry,
$[w]_{s,p}^{p}$ is twice the integral over the pairs with $|y|\ge|x|$. For such a pair,
$0\le w(x)-w(y)\le w(x)$. We divide the pairs according to the annulus containing the inner point $x$.

(ii) Here $w\in L^{p^{*}_{s}}$, because $\beta p^{*}_{s}>N$ exactly when $\beta>\gamma^{*}$. The function
$w$ is Lipschitz and $0\le w\le1$. So the pairs with $x\in B_{1}$ contribute at most
$\int_{B_{1}}\int\min\{1,\beta|h|\}^{p}|h|^{-N-sp}\,\mathrm{d}h\dx<\infty$. Now let $k\ge0$ and
$x\in A_{k}:=\{2^{k}<|x|\le2^{k+1}\}$. Then $w(x)\le2^{-k\beta}$. Also $|\nabla w|\le C2^{-k(\beta+1)}$ on
$\{|z|\ge2^{k-1}\}$. If $|x-y|<2^{k-1}$, the segment from $x$ to $y$ stays in $\{|z|\ge2^{k-1}\}$, so
$|w(x)-w(y)|\le C2^{-k(\beta+1)}|x-y|$. These pairs contribute at most
\[
  C2^{-kp(\beta+1)}\int_{A_{k}}\int_{\{|h|<2^{k-1}\}}|h|^{p-N-sp}\,\mathrm{d}h\dx\le C2^{k(N-sp-p\beta)} ,
\]
since $p>sp$. If $|x-y|\ge2^{k-1}$, we use $w(x)-w(y)\le2^{-k\beta}$. These pairs contribute at most
\[
  2^{-kp\beta}\int_{A_{k}}\int_{\{|h|\ge2^{k-1}\}}|h|^{-N-sp}\,\mathrm{d}h\dx\le C2^{k(N-sp-p\beta)} .
\]
This covers all pairs, including those between different annulus. Since
$N-sp-p\beta=-p(\beta-\gamma^{*})<0$, the sum over $k\ge0$ is finite.

(i) Here $w\le|x|^{-\alpha}$ and $w=0$ outside $B_{2R}$. So $w\in L^{p^{*}_{s}}$, because
$\alpha p^{*}_{s}<N$ exactly when $\alpha<\gamma^{*}$. Let $k\ge0$ and
$x\in A_{k}:=\{2^{-k-1}R<|x|\le2^{-k}R\}$, and put $r_{k}:=2^{-k}R$. Then $w(x)\le Cr_{k}^{-\alpha}$.
Also $|\nabla w|\le Cr_{k}^{-\alpha-1}$ on $\{|z|\ge r_{k}/4\}$, with $C$ depending on $\chi$. If
$|x-y|<r_{k}/4$, the segment from $x$ to $y$ stays in $\{|z|\ge r_{k}/4\}$. As in (ii), these pairs
contribute at most $Cr_{k}^{-p(\alpha+1)}r_{k}^{N}r_{k}^{p-sp}=Cr_{k}^{N-sp-p\alpha}$. The pairs with
$|x-y|\ge r_{k}/4$ contribute at most $Cr_{k}^{-p\alpha}r_{k}^{N}r_{k}^{-sp}=Cr_{k}^{N-sp-p\alpha}$.
Since $N-sp-p\alpha=p(\gamma^{*}-\alpha)>0$, the sum over $k\ge0$ is finite. Finally, the pairs with
$|x|>R$ have both points in $\{|z|>R\}$. There $w$ is Lipschitz, bounded, and zero outside $B_{2R}$. So
these pairs give a finite contribution.
\end{proof}

\subsubsection{\textbf{At the origin}}
Proposition~\ref{thm:comp} does not apply directly, since the ratio of the power to $U$ may be unbounded near $0$ and \eqref{H1} may fail. We therefore localize its proof away from the origin. Only \eqref{H1} changes.

\begin{lemma}\label{lem:loccomp}
Let $\rho>0$, $\nu\in\R$, and let $h\ge0$ be measurable. Let $u,v\in \mathcal{D}^{s,p}(\R^{N})$ with $u\ge0$
and $v>0$. Let $u$ be a weak subsolution and $v$ a weak supersolution of $\pLap w=\nu h(x)w^{p-1}$ in
$B_{\rho}\setminus\{0\}$, and let $u\le v$ almost everywhere in $\R^{N}\setminus B_{\rho}$. Let
$\sigma_{\delta}$, $\varphi_{1,\delta}$, $\varphi_{2,\delta}$ be as in Proposition~\ref{thm:comp}, and
assume \eqref{H2}. We fix a radially symmetric nondecreasing $\zeta\in \C^{\infty}(\R^{N})$ with $\zeta=0$ on
$B_{1}$ and $\zeta=1$ outside $B_{2}$, and put $\eta_{\eps}(x):=\zeta(\frac{x}{\eps})^{p}$ and
$K:=|x-y|^{-N-sp}$. In place of \eqref{H1}, we assume the following:
\begin{enumerate}
\item[{\rm (a)}] $\eta_{\eps}\varphi_{1,\delta},\ \eta_{\eps}\varphi_{2,\delta}\in
\mathcal{W}^{s,p}_{0}(B_{\rho}\setminus\{0\})$ for all $\eps,\delta\in(0,1]$,
\item[{\rm (b)}] $v$ is radially symmetric and nonincreasing,
\item[{\rm (c)}] as $\eps\downarrow0$,
\[
  \mathcal{R}_{\eps}:=\iint_{\R^{2N}}\big|\eta_{\eps}(x)-\eta_{\eps}(y)\big|\,
  \big|u(x)-u(y)\big|^{p-1}\big(u(x)+u(y)\big)K\dxy\ \longrightarrow\ 0,
\]
\end{enumerate}
and we assume \eqref{H2}. Then $u\le v$ almost everywhere in $\R^{N}$.
\end{lemma}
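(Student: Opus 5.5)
We follow the proof of Proposition~\ref{thm:comp}, but test with $\eta_{\eps}\varphi_{1,\delta}$ and $\eta_{\eps}\varphi_{2,\delta}$, which are admissible by (a). Since $\eta_\eps$ vanishes near the origin, these test functions live in $\mathcal W^{s,p}_0(B_\rho\setminus\{0\})$. The Picone claim ($A-B\ge0$, and $A-B>0$ on $\Omega^+\times(\R^N\setminus\Omega^+)$) is pointwise, so it is unchanged. Testing the subsolution inequality for $u+\delta$ and the supersolution inequality for $v+\delta$, and subtracting, gives
\[
\frac12\iint \big[\eta_\eps(x)A_\delta-\eta_\eps(y)\ldots\big]K \le \nu\int h\,\eta_\eps\,\sigma_\delta^+ g_\delta .
\]
We use the product rule $\eta_\eps(x)\psi(x)-\eta_\eps(y)\psi(y)=\tfrac12(\eta_\eps(x)+\eta_\eps(y))(\psi(x)-\psi(y))+\tfrac12(\eta_\eps(x)-\eta_\eps(y))(\psi(x)+\psi(y))$. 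The left side then splits into a main part $\tfrac14\iint(\eta_\eps(x)+\eta_\eps(y))(A_\delta-B_\delta)K\ge0$ and commutator errors
\[
E_{i,\eps,\delta}=\tfrac14\iint(\eta_\eps(x)-\eta_\eps(y))\,J_p(\Delta w_i)\,(\psi_i(x)+\psi_i(y))K ,
\]
with $w_1=u+\delta$, $w_2=v+\delta$. As before, the right side tends to $0$ as $\delta\to0$ by \eqref{H2}, since $0\le\eta_\eps\le1$.

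\textbf{Controlling the errors.} For the $u$-term we use $0\le\varphi_{1,\delta}\le p(u+\delta)$ on $\Omega^+$, which follows from $\sigma_\delta^+\le(u+\delta)^p$. This gives $|E_{1}|\le C\iint|\eta_\eps(x)-\eta_\eps(y)||u(x)-u(y)|^{p-1}(u(x)+u(y)+2\delta)K$. The $\delta$-part must be handled with care; we would take $\delta\to0$ first with $\eps$ fixed, using dominated convergence since $\eta_\eps$ is Lipschitz and $u\in\Dsp$. What remains is bounded by $\mathcal R_\eps$, which tends to $0$ by (c).

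The $v$-term is the main obstacle, since $\varphi_{2,\delta}$ carries the possibly unbounded ratio $(u/v)^{p-1}u$ near the origin. Here we use (b) together with the sign of $\eta_\eps(x)-\eta_\eps(y)$. Both $\eta_\eps$ and $-v$ are radially nondecreasing, so for $|x|<|y|$ we have $\eta_\eps(x)-\eta_\eps(y)\le0$ and $J_p(v(x)-v(y))\ge0$. Hence $(\eta_\eps(x)-\eta_\eps(y))J_p(\Delta v)\le0$ pointwise, and the same holds with $v+\delta$. As a result $-E_{2}\ge0$ enters the inequality with the favorable sign and can be dropped. We must check that the sign bookkeeping works: $B$ is subtracted, so the $v$-commutator appears as $-E_2$, and this is the term we discard.

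\textbf{Conclusion.} Letting $\delta\downarrow0$ and then $\eps\downarrow0$, Fatou's lemma applied to the nonnegative main part, with $\eta_\eps\to1$ a.e., gives $\iint(A-B)K\le0$. The final contradiction argument of Proposition~\ref{thm:comp} then applies verbatim, using that $u\le v$ outside $B_\rho$ and $|\R^N\setminus B_\rho|>0$. Hence $|\{u>v\}|=0$.
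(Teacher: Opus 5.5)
Your proposal is correct and follows the paper's proof step by step. It uses the same product-rule splitting of $\eta_{\eps}\varphi_{i,\delta}$, the same sign argument from (b) to discard the $v$-commutator, and the same bound of the $u$-commutator by $\mathcal{R}_{\eps}$.

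The one difference is the order of limits. The paper uses the $\delta$-free mean-value bound $0\le\varphi_{1,\delta}\le p(u-v)^{+}\le pu$, which lets it send $\eps\downarrow0$ first at fixed $\delta$ and recover \eqref{eq:ABdelta} exactly. Your cruder bound $\varphi_{1,\delta}\le u+\delta$ forces you to send $\delta\downarrow0$ first. That still works, since $\delta\iint|\eta_{\eps}(x)-\eta_{\eps}(y)|\,|u(x)-u(y)|^{p-1}K\dxy\le\delta\,[\eta_{\eps}]_{s,p}[u]_{s,p}^{p-1}\to0$ for fixed $\eps$.
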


\begin{proof}
We follow the proof of Proposition~\ref{thm:comp} and indicate only the changes. Let
$\Omega^{+}:=\{u>v\}\subset B_{\rho}$, and let $A_{\delta},B_{\delta},g_{\delta}$ be as there. The
Picone claim is applied to $u+\delta$ and $v+\delta$. These are positive even where $u$ vanishes, so
$A_{\delta}-B_{\delta}\ge0$.

By (a) and density, $\eta_{\eps}\varphi_{1,\delta}$ and
$\eta_{\eps}\varphi_{2,\delta}$ are admissible for $u+\delta$ and $v+\delta$. Testing and subtracting,
\begin{equation}\label{eq:locAB}
  \frac12\iint_{\R^{2N}}\Big[\Jp(\Delta_{xy}u)\,\Delta_{xy}(\eta_{\eps}\varphi_{1,\delta})
  -\Jp(\Delta_{xy}v)\,\Delta_{xy}(\eta_{\eps}\varphi_{2,\delta})\Big]K\dxy
  \le\nu\int_{\Omega^{+}}h\,\eta_{\eps}\sigma_{\delta}^{+}g_{\delta}\dx .
\end{equation}
We put $m_{\eps}:=\frac12\big(\eta_{\eps}(x)+\eta_{\eps}(y)\big)$ and
$d_{\eps}:=\frac12\big(\eta_{\eps}(x)-\eta_{\eps}(y)\big)$. Then
$\Delta_{xy}(\eta_{\eps}\varphi)=m_{\eps}\Delta_{xy}\varphi+d_{\eps}\big(\varphi(x)+\varphi(y)\big)$, so
the integrand on the left of \eqref{eq:locAB} is
\[
  m_{\eps}\big(A_{\delta}-B_{\delta}\big)
  +d_{\eps}\Jp(\Delta_{xy}u)\big(\varphi_{1,\delta}(x)+\varphi_{1,\delta}(y)\big)
  -d_{\eps}\Jp(\Delta_{xy}v)\big(\varphi_{2,\delta}(x)+\varphi_{2,\delta}(y)\big).
\]

The last term has a sign. The function $\eta_{\eps}$ is radially nondecreasing and
$v$ is radially nonincreasing. If $|x|\le|y|$, then $d_{\eps}\le0\le\Delta_{xy}v$. If $|x|\ge|y|$, then
$d_{\eps}\ge0\ge\Delta_{xy}v$. So $d_{\eps}\Jp(\Delta_{xy}v)\le0$ for all $x,y$, and the last term above
is nonnegative. We drop it. No information on $v$ near the origin is needed.

For the middle term we use that, for $a\ge b\ge0$,
$(a+\delta)^{p}-(b+\delta)^{p}\le p(a+\delta)^{p-1}(a-b)$. Hence
$0\le\varphi_{1,\delta}\le p(u-v)^{+}\le pu$. The middle term above is therefore bounded in absolute value
by $\frac p2\big|\eta_{\eps}(x)-\eta_{\eps}(y)\big|\,|\Delta_{xy}u|^{p-1}\big(u(x)+u(y)\big)$, and
\eqref{eq:locAB} gives
\begin{equation}\label{eq:locAB2}
  \frac12\iint_{\R^{2N}}m_{\eps}\big(A_{\delta}-B_{\delta}\big)K\dxy
  \le\nu\int_{\Omega^{+}}h\,\eta_{\eps}\sigma_{\delta}^{+}g_{\delta}\dx+\frac p4\,\mathcal{R}_{\eps}.
\end{equation}

Now let $\eps\downarrow0$ with $\delta$ fixed. Off the origin, $\eta_{\eps}\to1$
and $m_{\eps}\to1$. Since $A_{\delta}-B_{\delta}\ge0$, Fatou's lemma applies on the left of
\eqref{eq:locAB2}. On the right, $0\le\eta_{\eps}\le1$ and $0\le g_{\delta}\le1$, and
$\sigma_{\delta}^{+}\le\sigma^{+}+p\delta(u+1)^{p-1}$ as in the proof of Proposition~\ref{thm:comp}. So
the integrand is dominated by $h\big[\sigma^{+}+p(u+1)^{p-1}\big]$, which is integrable on $\Omega^{+}$
by \eqref{H2}, and dominated convergence applies. Finally $\mathcal{R}_{\eps}\to0$ by (c). The limit is
exactly \eqref{eq:ABdelta}.

From \eqref{eq:ABdelta} on, the proof of Proposition~\ref{thm:comp} applies unchanged. There,
positivity of $u$ is only used in the strict inequality on $\Omega^{+}\times(\R^{N}\setminus\Omega^{+})$.
For $x\in\Omega^{+}$ we have $u(x)>v(x)>0$, and for $y\notin\Omega^{+}$ we have $0\le u(y)\le v(y)$. This
is all that argument needs.
\end{proof}

We now show that $\gamma_-$ cannot be improved at the origin.
\begin{proposition}\label{prop:optorigin}
Let $p \ge 2$, $0<\mu<\bar\mu$, and let $U\in\Dsp$ be a positive radially symmetric nonincreasing weak
solution of \eqref{eq:P}. For every $\alpha\in(0,\gamma_{-}(\mu))$ there are $c_{\alpha},r_{\alpha}>0$
such that
\begin{equation}\label{eq:optorigin}
  U(x)\ge c_{\alpha}|x|^{-\alpha},\quad\text{ for }0<|x|<r_{\alpha}.
\end{equation}
Consequently, no bound $U(x)=O(|x|^{-\beta})$ as $|x|\to0$ holds with $\beta<\gamma_{-}(\mu)$. In
particular, for $p\ge2$ the exponent $\gamma_{-}(\mu)$ in \eqref{eq:intro-final-bounds} cannot be
replaced by a smaller one.
\end{proposition}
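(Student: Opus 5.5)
We compare $U$ from below with a truncated power, using the localized comparison principle Lemma~\ref{lem:loccomp} with the roles chosen so that the singular object near the origin is the subsolution. Fix $\alpha\in(0,\gamma_-(\mu))$, so $0<\alpha<\gamma^*$ and $\Phi(\alpha)<\mu$ by Proposition~\ref{thm:props}-(iii). Let $\chi$ be as in Lemma~\ref{lem:barenergy} with some $R=\rho/2$, and put $u:=c\,\chi|x|^{-\alpha}$, $v:=U$, $h:=|x|^{-sp}$, $\nu:=\mu$. Then $u\in\Dsp$ by Lemma~\ref{lem:barenergy}-(i), and $v=U$ is radially nonincreasing, which gives (b).

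First I check the two differential inequalities on $B_\rho\setminus\{0\}$. Since $U^{p_s^*-1}\ge0$, $U$ is a weak supersolution of $\pLap w=\mu|x|^{-sp}w^{p-1}$ in $\R^N\setminus\{0\}$. For $u$, on $B_R$ we have $u=c|x|^{-\alpha}$, and $\pLap(c|x|^{-\alpha})=c^{p-1}\Phi(\alpha)|x|^{-sp-\alpha(p-1)}$. The truncation $\chi\le1$ decreases the values outside $B_R$, and this only increases $\pLap u$ at points of $B_R$. The truncation error is $O(1)$ uniformly for $|x|<R/2$, while $(\mu-\Phi(\alpha))c^{p-1}|x|^{-sp-\alpha(p-1)}\to\infty$. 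So I take $\rho$ small, say $\rho=R/2$ with $\chi$ supported in $B_{2R}$ after rescaling, so that $u$ is a subsolution with constant $\mu$ on $B_\rho\setminus\{0\}$. This step requires a short tail computation. Next, the boundary ordering: on $\R^N\setminus B_\rho$ we have $u\le c\rho^{-\alpha}$, and $u=0$ outside $B_{2R}$. Since $U$ is positive, continuous and nonincreasing, we have $U\ge U(2R)>0$ on $B_{2R}$, and I choose $c$ small to get $u\le U$ there.

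The hypotheses (a), (c) and \eqref{H2} are the main obstacle. For \eqref{H2}, Theorem~\ref{thm:cond} gives $U\le C|x|^{-\gamma_-}$, but the set $\{u>v\}$ may approach the origin. I would use the crude bound $\sigma^+\le u^p$, so $\int h\sigma^+\le\int|x|^{-sp}u^p<\infty$ by the Hardy inequality \eqref{eq:FS}, since $u\in\Dsp$. Also $|x|^{-sp}(u+1)^{p-1}\lesssim|x|^{-sp-\alpha(p-1)}$ is integrable near $0$ because $\alpha(p-1)<N-sp$. For (a), the factor $\eta_\eps$ cuts off the origin. On $\operatorname{supp}\eta_\eps\cap\overline{B_\rho}$ both $u$ and $U$ are bounded, $U$ is bounded below, and both lie in $W^{s,p}$ there. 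The argument of Lemma~\ref{rem:Hcheck} then applies. Near $\partial B_\rho$ the set $\{u>v\}$ stays away, by strict ordering, after slightly decreasing $c$. For (c), $\mathcal R_\eps$ is controlled by the scaling argument of Lemma~\ref{lem:innercut}, applied to $u=c\chi|x|^{-\alpha}$. Substituting $x=\eps\xi$, the power of $\eps$ is $N-sp-p\alpha=p(\gamma^*-\alpha)>0$. The estimates over the regions $D_1,D_2,D_3$ carry over, now with positive power, so $\mathcal R_\eps\lesssim\eps^{p(\gamma^*-\alpha)}\to0$. Lemma~\ref{lem:loccomp} then gives $c|x|^{-\alpha}\le U$ on $B_R$, which is \eqref{eq:optorigin} with $r_\alpha=R$ and $c_\alpha=c$.

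For the consequence: if $U\le C|x|^{-\beta}$ near $0$ with $\beta<\gamma_-$, pick $\alpha\in(\beta,\gamma_-)$. Then $c_\alpha|x|^{-\alpha}\le C|x|^{-\beta}$, which is impossible as $x\to0$.
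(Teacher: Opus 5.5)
Your proposal follows the paper's proof essentially step by step. You show that the truncated power $c\chi|x|^{-\alpha}$ is a subsolution on a small ball using $\Phi(\alpha)<\mu$, you obtain the ordering outside that ball from $U\ge U(2R)>0$ and a small multiple, and you apply Lemma~\ref{lem:loccomp} after checking $(H_2)$, (a), (b) and (c). Your Hardy-inequality bound for $\sigma^+$ is a harmless variant. Your direct $\eps$-scaling gives $\mathcal{R}_\eps\lesssim\eps^{p(\gamma^*-\alpha)}$, where the paper instead passes through H\"older's inequality and $Q_\eps$.

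One correction is needed. For $p>2$ the truncation error is not $O(1)$: it is of size $c^{p-1}|x|^{-\alpha(p-2)}R^{-\alpha-sp}$, because $J_p'$ evaluated at arguments of order $|x|^{-\alpha}$ contributes $|x|^{-\alpha(p-2)}$. Its ratio to the gap $c^{p-1}(\mu-\Phi(\alpha))|x|^{-sp-\alpha(p-1)}$ is therefore $\lesssim(|x|/R)^{\alpha+sp}$. So you must take $\rho=\theta R$ with $\theta$ small depending on $\mu-\Phi(\alpha)$, rather than a fixed ratio such as $\rho=R/2$; this is exactly what the paper does.
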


\begin{proof}
We fix $\alpha\in(0,\gamma_{-})$ and write $\omega:=|x|^{-\alpha}$.
 By Proposition~\ref{thm:props}, $\Phi$ is strictly increasing
on $(0,\gamma^{*})$. Since $\alpha<\gamma_{-}<\gamma^{*}$, the number
$\delta_{\alpha}:=\mu-\Phi(\alpha)$ is positive. By \eqref{eq:homog}, due to Del Pezzo and
Quaas \cite{DPQ25},
\[
  \pLap\omega=\big(\mu-\delta_{\alpha}\big)|x|^{-sp}\omega^{p-1}\quad\text{in }\R^{N}\setminus\{0\}.
\]
 We fix $R>0$ and a radially symmetric nonincreasing
$\chi\in \C^{\infty}(\R^{N})$ with $\chi=1$ on $B_{R}$ and $\chi=0$ outside $B_{2R}$. We put
$\psi:=\chi\omega$. Then $\psi\le\omega$, and $\psi=\omega$ on $B_{R}$. Let $0<|x|<R$. For every $y$,
$\psi(x)-\psi(y)\ge\omega(x)-\omega(y)$, with equality unless $|y|>R$. Since $\Jp$ is increasing,
\begin{align*}
    & \pLap\psi(x)=\pLap\omega(x)+E(x), \text{ where } \\
    & E(x):=\int_{\{|y|>R\}}\Big[\Jp\big(\omega(x)-\psi(y)\big)-\Jp\big(\omega(x)-\omega(y)\big)\Big]
      \frac{\dy}{|x-y|^{N+sp}} \ge 0. 
\end{align*}
So the truncation increases the operator. This error has the wrong sign, and we show that it is small.
Let $\theta\in(0,\frac12]$ with $\theta^{\alpha}\le\frac12$, and let $0<|x|\le\theta R$. For $|y|>R$,
\[
  \omega(y)\le R^{-\alpha}\le\theta^{\alpha}\omega(x)\le\tfrac12\,\omega(x),
\]
so both arguments of $\Jp$ above lie in $\big[\frac12\omega(x),\omega(x)\big]$. On this interval
$\Jp'\le C_{p}\,\omega(x)^{p-2}$, with $C_{p}:=(p-1)\max\{1,2^{2-p}\}$. Also
$\omega(y)-\psi(y)\le|y|^{-\alpha}$ and $|x-y|\ge\frac12|y|$. Hence
\[
  0\le E(x)\le2^{N+sp}C_{p}\,\omega(x)^{p-2}\int_{\{|y|>R\}}|y|^{-\alpha-N-sp}\dy
  =C\,|x|^{-\alpha(p-2)}R^{-\alpha-sp},
\]
and so
\[
  \frac{E(x)}{|x|^{-sp}\omega(x)^{p-1}} \le  C\left(\frac{|x|}{R}\right)^{\alpha+sp} \le  C\theta^{\alpha+sp}.
\]
We fix $\theta=\theta_{\alpha}$ so small that the right-hand side is at most $\frac12\delta_{\alpha}$. Then
\begin{equation}\label{eq:psisub}
  \pLap\psi \le  \Big(\mu-\frac{\delta_{\alpha}}{2}\Big)|x|^{-sp}\psi^{p-1} \le  \mu|x|^{-sp}\psi^{p-1}
  \qquad\text{in }B_{\theta_{\alpha}R}\setminus\{0\}.
\end{equation}
This also holds in the weak sense. Let $0\le\varphi\in\cc(B_{\theta_{\alpha}R}\setminus\{0\})$. If both
points lie outside $B_{R}$, both integrands in \eqref{eq:weakdef} vanish. If exactly one point, say
$y$, lies outside $B_{R}$, the two integrands differ by
$\big[\Jp(\omega(x)-\psi(y))-\Jp(\omega(x)-\omega(y))\big]\varphi(x)|x-y|^{-N-sp}$. Counting both orders,
the left-hand sides of \eqref{eq:weakdef} for $\psi$ and for $\omega$ differ by $\int\varphi E\dx$. This
integral converges absolutely, since $|x-y|\ge\frac12|y|$ for $x\in\operatorname{supp}\varphi$ and
$|y|>R$.
Since \eqref{eq:homog} also holds in the weak sense, this gives \eqref{eq:psisub}
in the weak sense. By homogeneity, the same
holds for $\lambda\psi$, for every $\lambda>0$.

 We put $\rho:=\theta_{\alpha}R$. By
Theorem~\ref{thm:radialmain}, $U$ is continuous on $\R^{N}\setminus\{0\}$. Since $U$ is positive and
radially nonincreasing, $U(x)\ge U(2R)>0$ for $0<|x|\le2R$, where $U(2R)$ denotes the value on the
sphere of radius $2R$. We choose $\lambda>0$ with $\lambda(\rho/2)^{-\alpha}<U(2R)$, and put
$w:=\lambda\psi$. Then $w<U$ for $\rho/2\le|x|\le2R$, and $w=0<U$ for $|x|>2R$. Hence
\[
  \Omega^{+}:=\{w>U\}\subset B_{\rho/2},\qquad w\le U\ \text{ in }\R^{N}\setminus B_{\rho}.
\]
 We take $u=w$, $v=U$, $\nu=\mu$ and
$h=|x|^{-sp}$. By the definition of weak solution, $U\in L^{p-1}_{sp}(\R^{N})$. Also,
$w\in L^{p-1}_{sp}(\R^{N})$, since $w$ has compact support and $(p-1)\alpha<N$. By \eqref{eq:psisub}, $w$ is a weak
subsolution in $B_{\rho}\setminus\{0\}$. Since $U$ solves \eqref{eq:P} and $U^{p^{*}_{s}-1}\ge0$, it is a
weak supersolution there. By the choice of $\lambda$, $w\le U$ outside $B_{\rho}$.

We check \eqref{H2}. On $\Omega^{+}\subset B_{\rho/2}$ we have $\sigma^{+}\le w^{p}\le\lambda^{p}|x|^{-p\alpha}$
and $(w+1)^{p-1}\le C\big(|x|^{-\alpha(p-1)}+1\big)$. Both $sp+p\alpha<N$ and $sp+\alpha(p-1)<N$ hold,
because $\alpha<\gamma^{*}$. So \eqref{H2} holds.

We now check the remaining hypothesis of Lemma \ref{lem:loccomp}.

(a) Fix $\eps,\delta\in(0,1]$. The function $\eta_{\eps}\varphi_{i,\delta}$ vanishes outside the
compact set $\overline{B_{\rho/2}}\setminus B_{\eps}$. On $V:=B_{\rho}\setminus\overline{B_{\eps/2}}$,
both $w$ and $U$ lie in $W^{s,p}(V)$, $w$ is bounded, and $U\ge U(\rho)>0$. So the maps $F_{i,\delta}$
of Lemma~\ref{rem:Hcheck} are Lipschitz on the range of $(w,U)$ over $V$, and
$\varphi_{i,\delta}\in W^{s,p}(V)$. Multiplying by the smooth function $\eta_{\eps}$, and extending by
zero as in Lemma~\ref{rem:Hcheck}, gives
$\eta_{\eps}\varphi_{i,\delta}\in\mathcal{W}^{s,p}_{0}(B_{\rho}\setminus\{0\})$.

(b) $U$ is radially symmetric and nonincreasing by assumption.

(c) By H\"older inequality,
$\mathcal{R}_{\eps}\le2\,[w]_{s,p}^{p-1}Q_{\eps}^{1/p}$, where
\[
  Q_{\eps}:=\iint_{\R^{2N}}\big|\eta_{\eps}(x)-\eta_{\eps}(y)\big|^{p}\big(w(x)+w(y)\big)^{p}K\dxy .
\]
First, $[w]_{s,p}<\infty$, from Lemma~\ref{lem:barenergy}-(i). Let
$\eps\le\rho/8$. The integrand of $Q_{\eps}$ vanishes unless $\min\{|x|,|y|\}<2\eps$. Suppose
$|x|,|y|<4\eps$. There $w=\lambda|x|^{-\alpha}$, and the substitution $x=\eps\xi$, $y=\eps\eta$ shows
that this part of $Q_{\eps}$ equals $\eps^{N-sp-p\alpha}$ times
\[
  \lambda^{p}\iint_{B_{4}\times B_{4}}\big|\zeta(\xi)^{p}-\zeta(\eta)^{p}\big|^{p}
  \big(|\xi|^{-\alpha}+|\eta|^{-\alpha}\big)^{p}\,\frac{\mathrm{d}\xi\,\mathrm{d}\eta}{|\xi-\eta|^{N+sp}} .
\]
This integral is finite, because $|\zeta(\xi)^{p}-\zeta(\eta)^{p}|\le C|\xi-\eta|$ with $p>sp$, and
$p\alpha<N$. Suppose $|x|<2\eps$ and $|y|\ge4\eps$, or the same with $x,y$ exchanged. There
$|x-y|\ge\frac12|y|$ and $|\eta_{\eps}(x)-\eta_{\eps}(y)|\le1$. The part with $w(x)^{p}$ is at most
$$C\int_{B_{2\eps}}|x|^{-p\alpha}\dx\int_{\{|y|\ge4\eps\}}|y|^{-N-sp}\dy\le C\eps^{N-sp-p\alpha}.$$ The part
with $w(y)^{p}$ is at most $$C\eps^{N}\int_{\{|y|\ge4\eps\}}w(y)^{p}|y|^{-N-sp}\dy
\le C\eps^{N}\big(\eps^{-p\alpha-sp}+1\big)\le C\eps^{N-sp-p\alpha}.$$ 
Therefore,
\[
  Q_{\eps} \le  C\eps^{N-sp-p\alpha}=C\eps^{p(\gamma^{*}-\alpha)} \longrightarrow\ 0,
\]
since $\alpha<\gamma^{*}$. So $\mathcal{R}_{\eps}\to0$.

Lemma~\ref{lem:loccomp} gives $w\le U$ almost everywhere. Both functions
are continuous on $\R^{N}\setminus\{0\}$, so $w\le U$ at every $x\neq0$. For $0<|x|<\rho$ we have
$\chi(x)=1$, so $U(x)\ge\lambda|x|^{-\alpha}$. This is \eqref{eq:optorigin} with $c_{\alpha}=\lambda$ and
$r_{\alpha}=\rho$.
Finally, suppose $U(x)\le C|x|^{-\beta}$ near $0$ for some $\beta<\gamma_{-}$. We choose
$\alpha\in(\beta,\gamma_{-})$. Then $c_{\alpha}|x|^{-\alpha}\le C|x|^{-\beta}$, that is
$c_{\alpha}\le C|x|^{\alpha-\beta}$, for all small $|x|$. The right-hand side tends to $0$, which is a
contradiction.
\end{proof}

\subsubsection{\textbf{At infinity}}
At infinity the barrier is bounded. For a bounded subsolution the Picone test functions of
Proposition~\ref{thm:comp} are admissible without any cutoff. Only \eqref{H2} must be replaced. On an
unbounded set $\{u>v\}$ the term $(u+1)^{p-1}$ in \eqref{H2} need not be integrable against
$|x|^{-sp}$. The next lemma shows that the Hardy integrability of $u$ is enough.

\begin{lemma}\label{lem:extcomp}
Let $R>0$, $D:=\R^{N}\setminus\overline{B_{R}}$ and $\nu\in\R$. Let $u,v\in\Dsp$ with $v>0$ and $0<u\le M$ for a constant $M$. Let $u$ be a weak subsolution and $v$ a weak supersolution of $\pLap w=\nu|x|^{-sp}w^{p-1}$ in $D$, and let $\{u>v\}\subset\R^{N}\setminus B_{R'}$ for some $R'>R$. Then $u\le v$ almost everywhere in $\R^{N}$.
\end{lemma}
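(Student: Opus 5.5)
I will follow the proof of Proposition~\ref{thm:comp} directly, on $D=\R^{N}\setminus\overline{B_{R}}$, and change only the two points where \eqref{H1} and \eqref{H2} entered: admissibility of the test functions, and the limit $\delta\downarrow0$ on the right-hand side of \eqref{eq:ABdelta}.

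\emph{Admissibility.} For fixed $\delta\in(0,1]$ I will check that $\varphi_{1,\delta},\varphi_{2,\delta}\in\mathcal W^{s,p}_0(D)$ and that they are legitimate test functions for $u+\delta$ and $v+\delta$. Since $0<u\le M$, on $\{u>v\}$ we also have $v<M$, so the pair $(u,v)$ takes values in $[0,M]^2$ there. On this range the maps $F_{1,\delta},F_{2,\delta}$ of Lemma~\ref{rem:Hcheck} are Lipschitz, because $a+\delta,b+\delta\ge\delta$ and $F_{i,\delta}$ vanishes where $a\le b$. Since $u,v\in\Dsp$, the composition gives $[\varphi_{i,\delta}]_{s,p}<\infty$ and $\varphi_{i,\delta}\in L^{p^*_s}$, with $0\le\varphi_{i,\delta}\le p(u-v)^+\le pu$ bounded. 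The support lies in $\overline{\{u>v\}}\subset\R^N\setminus B_{R'}$, at positive distance from $\partial D$.

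The support need not be bounded, so I will truncate at infinity with $\theta_\Lambda$ as in Lemma~\ref{lem:outercut}. Using $\theta_\Lambda\varphi_{i,\delta}$ and letting $\Lambda\to\infty$ via the Lipschitz bound on $\theta_\Lambda$, the functions $\varphi_{i,\delta}$ lie in the $\Dsp$-closure of functions with compact support in $D$. Both sides of \eqref{eq:weakdef} pass to the limit. The left side does so by H\"older, since $u,v\in\Dsp$. The right side is controlled by $|x|^{-sp}(u+\delta)^{p-1}\varphi_{1,\delta}\le C|x|^{-sp}(u+1)^{p-1}u$, which is integrable on $\R^N\setminus B_{R'}$ by Hardy's inequality \eqref{eq:FS} together with $u\le M$. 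The analogous bound for $\varphi_{2,\delta}$ uses $(v+\delta)^{p-1}\varphi_{2,\delta}=\sigma_\delta^+\le p(u+\delta)^{p-1}u$.

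\emph{Replacing \eqref{H2}.} After subtracting, the right side of \eqref{eq:ABdelta} is $\nu\int_{\Omega^+}|x|^{-sp}\sigma_\delta^+g_\delta$. This is the main obstacle: the term $(u+1)^{p-1}$ from the bound on $\sigma_\delta^+$ need not be $|x|^{-sp}$-integrable on the unbounded set $\Omega^+$. I will avoid it by sharpening the estimate for $\sigma_\delta^+$ using $u\le M$.

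On $\Omega^+$ we have $u>v\ge0$, and the mean value theorem gives
\[
\sigma_\delta^+=(u+\delta)^p-(v+\delta)^p\le p(u+\delta)^{p-1}(u-v)\le p(M+1)^{p-1}u .
\]
This bound is uniform in $\delta\le1$. Since $u\le M$ it gives a dominating function $C|x|^{-sp}u^{p}\cdot u^{1-p}$, i.e. $C|x|^{-sp}u$, but integrability of that is unclear, so I will use a better split.

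On $\{u\ge v+\delta\}$ I have $u-v\le u$ and $u+\delta\le 2u$, hence $\sigma_\delta^+\le p2^{p-1}u^{p}$. On $\{v<u<v+\delta\}$ I have $\sigma_\delta^+\le p(M+1)^{p-1}\delta$, and $g_\delta\le (u/(u+\delta))^{p-1}\le (u/\delta)^{p-1}$. Their product is at most $Cu^{p-1}\delta^{2-p}\cdot$, which for $p\ge2$ I would rather bound using $g_\delta\le 1$ and $\delta< \ldots$. To keep things clean I will instead take the bound $\sigma_\delta^+ g_\delta\le p(u+\delta)^{p-1}(u-v)\,\frac{u^{p-1}}{(u+\delta)^{p-1}}\le p\,u^{p}$, valid on all of $\Omega^+$ since $g_\delta\le(u/(u+\delta))^{p-1}$ and $u-v\le u$.

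Hence the integrand is dominated by $p|x|^{-sp}u^p\in L^1$ by \eqref{eq:FS}. Since $g_\delta\to0$ pointwise, dominated convergence sends the right side to $0$.

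The rest is verbatim. Fatou applied to the nonnegative integrand $A_\delta-B_\delta$ gives $\iint(A-B)K\le0$. If $|\Omega^+|>0$, then $\Omega^+\times B_R$ has positive measure, because $u\le v$ outside $D$; on it $A-B>0$, which is a contradiction. Therefore $u\le v$ a.e. in $\R^{N}$.
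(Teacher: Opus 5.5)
Your proof follows the paper's route, with one step that fails as written. The shared steps are: Lipschitz composition (using $u\le M$ and that $F_{i,\delta}$ vanishes for $b\ge a$) to get $\varphi_{i,\delta}\in\Dsp$; approximation by test functions with compact support in $D$; and replacing \eqref{H2} by the bound $0\le\sigma_{\delta}^{+}g_{\delta}\le p\,u^{p-1}(u-v)\le p\,u^{p}$ together with \eqref{eq:FS} and dominated convergence. The Fatou step and the contradiction on $\Omega^{+}\times B_{R}$ are also as in the paper.

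The failing step is in your admissibility paragraph. To pass $\Lambda\to\infty$ in the right-hand sides you use the majorant $C|x|^{-sp}(u+1)^{p-1}u$ and claim it is integrable on $\R^{N}\setminus B_{R'}$ by \eqref{eq:FS} and $u\le M$. That is false in general, because $u\le M$ gives $u^{p}\le M^{p-1}u$, which is the wrong direction. For example, $u=\min\{1,|x|^{-\beta}\}$ with $\gamma^{*}<\beta\le N-sp$ lies in $\Dsp$ (Lemma~\ref{lem:barenergy}), yet $|x|^{-sp}u\notin L^{1}(\R^{N}\setminus B_{R'})$. This range covers the barrier of Proposition~\ref{prop:optinf} when $p\ge2$. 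It is exactly the obstruction the lemma is built to avoid, and you notice it yourself one paragraph later.

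The repair is already in your proposal. On $\Omega^{+}$, monotonicity of $t\mapsto t/(t+\delta)$ gives $(z/(z+\delta))^{p-1}\le(u/(u+\delta))^{p-1}$ for $z\in\{u,v\}$. Hence $0\le z^{p-1}\varphi_{i,\delta}\le p\,u^{p}$ for each $i$ separately, which is the paper's bound. Alternatively, as the paper does, note that $\phi\mapsto\int_{\R^{N}}|x|^{-sp}z^{p-1}\phi\dx$ is continuous on $\Dsp$ by H\"older's inequality and \eqref{eq:FS}, so no pointwise majorant is needed.

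A related slip: $\varphi_{2,\delta}\le p(u-v)^{+}$ is false, since its denominator is $(v+\delta)^{p-1}$. The correct bound $\varphi_{2,\delta}\le p\big((M+1)/\delta\big)^{p-1}u$ still gives $\varphi_{2,\delta}\in L^{p^{*}_{s}}(\R^{N})$, which is all you need.

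Your approximation also differs slightly from the paper's. You truncate at infinity with $\theta_{\Lambda}$. The paper instead approximates $\varphi_{i,\delta}$ in $\Dsp$ by $\cc(\R^{N})$ functions and multiplies by one fixed smooth $\theta$ vanishing on $B_{(R+R')/2}$. A fixed $\theta$ is a bounded multiplier on $\Dsp$, so no limit at infinity arises. Your route also works, but $\theta_{\Lambda}\varphi\to\varphi$ in $\Dsp$ needs more than the Lipschitz bound on $\theta_{\Lambda}$. The error term $\int_{\R^{N}}|\varphi(y)|^{p}\int_{\R^{N}}|\theta_{\Lambda}(x)-\theta_{\Lambda}(y)|^{p}|x-y|^{-N-sp}\dx\dy$ has a weight whose $L^{N/sp}$ norm does not decay in $\Lambda$. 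So you must split at $|y|=\eps\Lambda$ and use the tail smallness $\|\varphi\|_{L^{p^{*}_{s}}(\R^{N}\setminus B_{\eps\Lambda})}\to0$.
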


\begin{proof}
We follow the proof of Proposition~\ref{thm:comp} with $h=|x|^{-sp}$. However, neither \eqref{H1} nor \eqref{H2} is assumed here. Let $\Omega^{+}:=\{u>v\}$, and let $\sigma_{\delta}$, $\varphi_{i,\delta}$, $A_{\delta}$, $B_{\delta}$, $g_{\delta}$ be as given in Proposition~\ref{thm:comp}.

We first check admissibility. We fix $\delta\in(0,1]$. The maps $F_{i,\delta}$ of Lemma~\ref{rem:Hcheck} vanish when $b\ge a$. So their nonzero part lies in $\{0\le b<a\le M\}$, inside the compact square $[0,M]^{2}$. On this square the denominators are at least $\delta^{p-1}$. So $F_{1,\delta}$ and $F_{2,\delta}$ are Lipschitz there, with a constant $L_{\delta}$. Since they vanish when $b\ge a$, they are
Lipschitz on $[0,M]\times[0,\infty)$ with the same constant. Hence $[\varphi_{i,\delta}]_{s,p}\le L_{\delta}\big([u]_{s,p}+[v]_{s,p}\big)$. Also
$0\le\varphi_{i,\delta}\le p((M+1) \delta ^{-1})^{p-1}u$, so $\varphi_{i,\delta}\in
L^{p^{*}_{s}}(\R^{N})$. By the characterization of $\Dsp$ in \cite{BGV21} (see Notations),  $\varphi_{i,\delta}\in\Dsp$, and $\varphi_{i,\delta}=0$ on $B_{R'}$. We fix
$\theta\in \mathcal{C}^{\infty}(\R^{N})$ with $\theta=0$ on $B_{(R+R')/2}$ and $\theta=1$ outside $B_{R'}$.
Multiplication by $\theta$ is continuous on $\Dsp$. Indeed,
$$[\theta f]_{s,p}\le\|\theta\|_{\infty}[f]_{s,p}+\left(\int_{\R^N}|f(y)|^{p}\kappa(y)\dy\right)^{\frac1p}  \text{ with } \kappa(y):=\int_{\R^N}\frac{|\theta(x)-\theta(y)|^{p}}{|x-y|^{N+sp}}\dx.$$ The function $\kappa$ is bounded and
$O(|y|^{-N-sp})$ as $|y|\to\infty$, so $\kappa\in L^{\frac{N}{sp}}(\R^{N})$, and H\"older inequality gives
$$\int_{\R^N} |f|^{p}\kappa\le\|\kappa\|_{L^{\frac{N}{sp}}}\|f\|_{L^{p^{*}_{s}}}^{p}.$$ So if $\phi_{k}\in\cc(\R^{N})$
tend to $\varphi_{i,\delta}$ in $\Dsp$, then $\theta\phi_{k}\in\cc(D)$ tend to
$\theta\varphi_{i,\delta}=\varphi_{i,\delta}$. Both sides of the weak inequalities for $u$ and $v$ are continuous in $\Dsp$. For the left-hand sides we use H\"older inequality. For the right-hand sides we use H\"older inequality and \eqref{eq:FS}, extended to $\Dsp$ by density. Hence, by density, $\varphi_{1,\delta}$ and $\varphi_{2,\delta}$ are admissible, and \eqref{eq:ABdelta} holds.

Next we let $\delta\downarrow0$. The right-hand side of \eqref{eq:ABdelta} is
$$\nu\int_{\Omega^{+}}|x|^{-sp}\big[u^{p-1}\varphi_{1,\delta}-v^{p-1}\varphi_{2,\delta}\big]\dx.$$ Here
$z^{p-1}\varphi_{i,\delta}=\big(z/(z+\delta)\big)^{p-1}\sigma_{\delta}^{+}$, with $z=u$ for $i=1$ and $z=v$ for $i=2$. On $\Omega^{+}$, the mean value theorem gives
$\sigma_{\delta}\le p(u+\delta)^{p-1}(u-v)$. Since $t\mapsto t/(t+\delta)$ is increasing and $v<u$, we get
$\big(z/(z+\delta)\big)^{p-1}\le\big(u/(u+\delta)\big)^{p-1}$ on $\Omega^{+}$. Hence
\[
  0\le z^{p-1}\varphi_{i,\delta}\le pu^{p-1}(u-v)\le pu^{p} \text{ on }\Omega^{+}, \; \delta\in(0,1].
\]
This bound shows that the term $(u+1)^{p-1}$ in \eqref{H2} comes only from the estimate used in the proof of Proposition~\ref{thm:comp}.
By \eqref{eq:FS}, extended to $\Dsp$, the function $|x|^{-sp}u^{p}$ is integrable. As $\delta\downarrow 0$,
both $z^{p-1}\varphi_{i,\delta}$ tend to $\sigma^{+}$, because $u,v>0$. So the integrand tends to $0$, and
by dominated convergence the right-hand side of \eqref{eq:ABdelta} tends to $0$. From here the proof of Proposition~\ref{thm:comp} applies unchanged. It uses $|\R^{N}\setminus D|>0$, which holds. Therefore, $u\le v$ almost everywhere in $\R^{N}$.
\end{proof}

We now show that $\gamma_+$ cannot be improved at infinity.
\begin{proposition}\label{prop:optinf}
Let $0<\mu<\bar\mu$ and let $U\in\Dsp$ be a positive radially symmetric nonincreasing weak
solution of \eqref{eq:P}. For every $\beta\in(\gamma_{+}(\mu),\gamma_{c})$ there is $c_{\beta}>0$ such
that
\begin{equation}\label{eq:optinf}
  U(x)\ge c_{\beta}|x|^{-\beta}\qquad\text{for }|x|\ge1.
\end{equation}
Consequently, no bound $U(x)=O(|x|^{-\eta})$ as $|x|\to\infty$ holds with $\eta>\gamma_{+}(\mu)$. In
particular, for $p\ge2$ the exponent $\gamma_{+}(\mu)$ in \eqref{eq:intro-final-bounds} cannot be
replaced by a larger one.
\end{proposition}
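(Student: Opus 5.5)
The plan mirrors Proposition~\ref{prop:optorigin}, now with a bounded barrier at infinity and Lemma~\ref{lem:extcomp} replacing Lemma~\ref{lem:loccomp}. Fix $\beta\in(\gamma_{+},\gamma_{c})$. By Proposition~\ref{thm:props} the function $\Phi$ is strictly decreasing on $(\gamma^*,\tfrac{N}{p-1})$, so $\delta_\beta:=\mu-\Phi(\beta)>0$. By \eqref{eq:homog}, $\omega:=|x|^{-\beta}$ satisfies $\pLap\omega=(\mu-\delta_\beta)|x|^{-sp}\omega^{p-1}$ in $\R^N\setminus\{0\}$.

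\textbf{Step 1: a bounded subsolution.} Set $\psi:=\min\{R^{-\beta},|x|^{-\beta}\}$, a truncation of $\omega$ inside $B_R$. For $|x|\ge R/\theta$ with $\theta$ small, the truncation changes $\pLap$ only through pairs with $|y|<R$. There $\psi(y)\le\omega(y)$, so the error
\[
E(x)=\int_{B_R}\big[\Jp(\omega(x)-\psi(y))-\Jp(\omega(x)-\omega(y))\big]|x-y|^{-N-sp}\dy
\]
is nonnegative, which again has the wrong sign. We need to show it is small compared with $|x|^{-sp}\omega(x)^{p-1}$.
\begin{itemize}
\item Both arguments of $\Jp$ are negative, since $\omega(x)\ll\psi(y)\le\omega(y)$.
\item For $p\ge2$, $|\Jp(a)-\Jp(b)|\le C(|a|+|b|)^{p-2}|a-b|$.
\item Together with $|x-y|\ge|x|/2$ this gives
\[
E(x)\le C|x|^{-N-sp}\int_{B_R}|y|^{-\beta(p-1)}\dy\le C|x|^{-N-sp}R^{N-\beta(p-1)},
\]
which is finite because $\beta(p-1)<N-sp$.
\item Dividing, $E/(|x|^{-sp}\omega^{p-1})\le C(R/|x|)^{N-\beta(p-1)}$, which is at most $\delta_\beta/2$ once $|x|\ge R/\theta$.
\end{itemize}
Hence $\lambda\psi$ is a weak subsolution of $\pLap w\le\mu|x|^{-sp}w^{p-1}$ in $D:=\R^N\setminus\overline{B_{R/\theta}}$ for every $\lambda>0$. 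The weak form is justified exactly as in Proposition~\ref{prop:optorigin}. By Lemma~\ref{lem:barenergy}-(ii), after scaling, $\psi\in\Dsp$, and $0<\psi\le R^{-\beta}$.

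\textbf{Step 2: ordering near the boundary.} This is the main obstacle: Lemma~\ref{lem:extcomp} requires $\{\lambda\psi>U\}\subset\R^N\setminus B_{R'}$ for some $R'>R/\theta$. $U$ is positive, continuous and radially nonincreasing, so $U\ge U(2R/\theta)>0$ on $B_{2R/\theta}$. Choosing $\lambda R^{-\beta}<U(2R/\theta)$ gives $\lambda\psi<U$ on $B_{2R/\theta}$. So the set $\{\lambda\psi>U\}$ lies outside $B_{2R/\theta}$, where it may be unbounded; this is exactly the situation Lemma~\ref{lem:extcomp} is built for. Also, $U$ is a weak supersolution in $D$, because $U^{p_s^*-1}\ge0$, and $U\in\Dsp$.

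\textbf{Step 3: conclusion.} Lemma~\ref{lem:extcomp} with $u=\lambda\psi$, $v=U$, $\nu=\mu$ gives $U\ge\lambda|x|^{-\beta}$ for $|x|\ge R$. On the compact annulus $1\le|x|\le R$, $U$ is bounded below by $U(R)>0$. Shrinking the constant therefore yields \eqref{eq:optinf} for $|x|\ge1$.

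The consequence follows as at the origin. Given $\eta>\gamma_+$, pick $\beta\in(\gamma_+,\min\{\eta,\gamma_c\})$. Then $U=O(|x|^{-\eta})$ would force $c_\beta\le C|x|^{\beta-\eta}\to0$, a contradiction.
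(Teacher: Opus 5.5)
Your proposal is correct and is essentially the paper's proof. The paper uses the same truncated barrier with $R=1$ (i.e.\ $\psi=\min\{1,|x|^{-\beta}\}$), shows that the nonnegative error $E(x)$ is $O(|x|^{-N-sp})=o\big(|x|^{-sp}\omega^{p-1}\big)$, takes $\lambda=\tfrac12U(2R_\beta)$ so that $\{\lambda\psi>U\}$ lies outside $B_{2R_\beta}$, and concludes with Lemma~\ref{lem:extcomp}.

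One point needs fixing. Your increment bound $|\Jp(a)-\Jp(b)|\le C(|a|+|b|)^{p-2}|a-b|$ requires $p\ge2$, but the first part of the statement is claimed for all $p>1$. Since $b\le a<0$ here, use instead
$$0\le\Jp(a)-\Jp(b)=|b|^{p-1}-|a|^{p-1}\le|b|^{p-1}\le|y|^{-\beta(p-1)},$$
which holds for every $p>1$ and is the bound the paper uses.
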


\begin{proof}
We fix $\beta\in(\gamma_{+},\gamma_{c})$ and write $\omega:=|x|^{-\beta}$.
By Proposition~\ref{thm:props}, $\Phi$ is strictly decreasing
on $(\gamma^{*},\frac{N}{p-1})$. Since $\gamma^{*}<\gamma_{+}<\beta$, the number
$d_{\beta}:=\mu-\Phi(\beta)$ is positive. By \eqref{eq:homog}, due to Del Pezzo and Quaas
\cite{DPQ25}, $$\pLap\omega=(\mu-d_{\beta})|x|^{-sp}\omega^{p-1} \text{ in } \R^{N}\setminus\{0\}.$$
We put $\psi:=\min\{1,\omega\}$. Then $\psi\le\omega$, and
$\psi=\omega$ outside $B_{1}$. Let $|x|\ge2$. For every $y$,
$\psi(x)-\psi(y)\ge\omega(x)-\omega(y)$, with equality unless $|y|<1$. Since $\Jp$ is increasing,
\begin{align*}
    &\pLap\psi(x)=\pLap\omega(x)+E(x), \text{ where } \\
    & E(x):=\int_{B_{1}}\Big[\Jp\big(\omega(x)-1\big)-\Jp\big(\omega(x)-\omega(y)\big)\Big]
  \frac{\dy}{|x-y|^{N+sp}} \ge 0.
\end{align*}
For $y\in B_{1}$ we have $\omega(x)<1<\omega(y)$, so both arguments
of $\Jp$ are negative, and the bracket lies between $0$ and
$\big(\omega(y)-\omega(x)\big)^{p-1}\le|y|^{-\beta(p-1)}$. Since $|x-y|\ge\frac12|x|$,
\[
  0\le E(x)\le2^{N+sp}|x|^{-N-sp}\int_{B_{1}}|y|^{-\beta(p-1)}\dy=C|x|^{-N-sp},
\]
and the integral is finite because $\beta(p-1)<\gamma_{c}(p-1)=N-sp<N$. Hence
\[
  \frac{E(x)}{|x|^{-sp}\omega(x)^{p-1}} \le  C|x|^{\beta(p-1)-N} \longrightarrow 0,
  \quad \text{ as } |x|\to\infty.
\]
We fix $R_{\beta}\ge2$ such that the left-hand side is at most $\frac12d_{\beta}$ for $|x|\ge R_{\beta}$.
Then
\begin{equation}\label{eq:psisubinf}
  \pLap\psi \le  \Big(\mu-\frac{d_{\beta}}{2}\Big)|x|^{-sp}\psi^{p-1} \le  \mu|x|^{-sp}\psi^{p-1}
  \qquad\text{in }D:=\R^{N}\setminus\overline{B_{R_{\beta}}} .
\end{equation}
This also holds in the weak sense, by the argument used for \eqref{eq:psisub}. For
$0\le\varphi\in\cc(D)$, the integrands of \eqref{eq:weakdef} for $\psi$ and for $\omega$ differ only when
one point lies in $B_{1}$, where $\varphi$ vanishes, and the two left-hand sides differ by
$\int\varphi E\dx$. This gives \eqref{eq:psisubinf} in the
weak sense. By homogeneity, the same holds for $\lambda\psi$, for every $\lambda>0$.

 By Theorem~\ref{thm:radialmain}, $U$ is
continuous on $\R^{N}\setminus\{0\}$. Since $U$ is positive and radially nonincreasing,
$U(x)\ge U(2R_{\beta})>0$ for $0<|x|\le2R_{\beta}$. We put $\lambda:=\frac12U(2R_{\beta})$ and
$w:=\lambda\psi$. Since $\psi\le1$, we have $w<U$ on $\overline{B_{2R_{\beta}}}\setminus\{0\}$. Hence
$\{w>U\}\subset\R^{N}\setminus B_{2R_{\beta}}$.

We take $u=w$, $v=U$, $\nu=\mu$, $R=R_{\beta}$ and $R'=2R_{\beta}$. We have $0<w\le\lambda$. Also $w\in\Dsp$, by Lemma~\ref{lem:barenergy}-(ii). By \eqref{eq:psisubinf}, $w$ is a weak subsolution in $D$. Since $U$ solves \eqref{eq:P} and $U^{p^{*}_{s}-1}\ge0$, it is a weak
supersolution in $D$. Applying Lemma~\ref{lem:extcomp} gives $w\le U$ almost everywhere in $\R^N$. By continuity, $w\le U$ at every $x\neq0$. 

For $|x|\ge1$ we have $\psi(x)=|x|^{-\beta}$, so $U(x)\ge\lambda|x|^{-\beta}$.
This is \eqref{eq:optinf} with $c_{\beta}=\lambda$. Finally, suppose $U(x)\le C|x|^{-\eta}$ for large $|x|$, with $\eta>\gamma_{+}$. We choose
$\beta\in\big(\gamma_{+},\min\{\eta,\gamma_{c}\}\big)$. Then $c_{\beta}|x|^{-\beta}\le C|x|^{-\eta}$, that
is $c_{\beta}\le C|x|^{\beta-\eta}$, for all large $|x|$. The right-hand side tends to $0$, which is a
contradiction.
\end{proof}

As a consequence, we find the exact range of integrability.
\begin{corollary}\label{cor:nonint}
Let $0<\mu<\bar\mu$, and let $U$ be as in Proposition~\ref{prop:optinf}. Then
$U\notin L^{q}(\R^{N})$ if $q>\frac{N}{\gamma_{-}(\mu)}$ or if $0<q<\frac{N}{\gamma_{+}(\mu)}$.
\end{corollary}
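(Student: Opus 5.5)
The plan is to turn the pointwise lower bounds of Propositions~\ref{prop:optorigin} and \ref{prop:optinf} directly into divergence of $\int U^{q}$, one end at a time. Neither the upper bounds of Theorem~\ref{thm:cond} nor the restriction $p\ge2$ should be needed, since both lower-bound propositions are proved without them (Proposition~\ref{prop:optinf} is stated without it; if Proposition~\ref{prop:optorigin} really needs $p\ge2$, the first case inherits that assumption).

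\emph{Case $q>N/\gamma_-(\mu)$.} We have $q\gamma_->N$, so we can choose $\alpha\in(0,\gamma_-)$ with $q\alpha>N$. Concretely, take $\alpha\in(N/q,\gamma_-)$; this interval is nonempty, and $N/q>0$. Proposition~\ref{prop:optorigin} then gives $U(x)\ge c_\alpha|x|^{-\alpha}$ for $0<|x|<r_\alpha$. Hence
\[
\int_{\R^N}U^q\dx\ge c_\alpha^q\int_{B_{r_\alpha}}|x|^{-q\alpha}\dx=\infty ,
\]
because $q\alpha\ge N$.

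\emph{Case $0<q<N/\gamma_+(\mu)$.} Now $q\gamma_+<N$. Since $\gamma_+<\gamma_c$, we can pick $\beta\in(\gamma_+,\gamma_c)$ close enough to $\gamma_+$ that $q\beta<N$; this is possible by continuity, because the inequality $q\gamma_+<N$ is strict. Proposition~\ref{prop:optinf} gives $U(x)\ge c_\beta|x|^{-\beta}$ for $|x|\ge1$. Hence
\[
\int_{\R^N}U^q\dx\ge c_\beta^q\int_{\{|x|\ge1\}}|x|^{-q\beta}\dx=\infty ,
\]
because $q\beta<N$.

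There is no genuine obstacle here. The only points to check are the strict inequalities that make the two intervals for $\alpha$ and $\beta$ nonempty. Note also that the endpoint cases $q=N/\gamma_\pm$ are excluded from the statement, consistent with the caveat in the abstract.
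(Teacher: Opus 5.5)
Your proposal is correct and follows the paper's proof essentially verbatim: choose $\alpha\in(N/q,\gamma_-)$ and apply Proposition~\ref{prop:optorigin} near the origin, then choose $\beta\in\big(\gamma_+,\min\{N/q,\gamma_c\}\big)$ and apply Proposition~\ref{prop:optinf} on $\{|x|\ge1\}$. Your remark about $p\ge2$ is also apt: Proposition~\ref{prop:optorigin} is stated with that hypothesis, and the paper only says in a later remark that both lower-bound results hold for every $p>1$.
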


\begin{proof}
Let $q>N/\gamma_{-}$. We choose $\alpha\in(N/q,\gamma_{-})$. By Proposition~\ref{prop:optorigin},
$U^{q}\ge c_{\alpha}^{q}|x|^{-\alpha q}$ on $B_{r_{\alpha}}$. Since $\alpha q>N$, this is not integrable on
$B_{r_{\alpha}}$. Now let $0<q<N/\gamma_{+}$. We choose $\beta\in\big(\gamma_{+},\min\{N/q,\gamma_{c}\}\big)$.
By Proposition~\ref{prop:optinf}, $U^{q}\ge c_{\beta}^{q}|x|^{-\beta q}$ for $|x|\ge1$. Since $\beta q<N$,
this is not integrable on $\{|x|\ge1\}$.
\end{proof}

\begin{remark}
 Propositions~\ref{prop:optorigin} and \ref{prop:optinf} and Corollary~\ref{cor:nonint} hold for every $p>1$. In Theorem~\ref{thm:D}, the restriction $p\ge2$ comes only from the upper bounds.
\end{remark}

\section{Proofs of the main theorems}\label{sec:proofs}
In this section we complete the proofs of the main results stated in Section~\ref{subsec:mainresults}. Theorem~\ref{thm:integ} is proved in Section~\ref{sec:radial}, where its integrability estimate is used directly in the subsequent analysis of radial solutions.

\begin{proof}[{\bf Proof of Theorem \ref{thm:A}:}]
We split the statement into five parts.

\noi (i)  \emph{The paired representation.} This is Proposition~\ref{prop:rep}. By Lemma~\ref{lem:pv}, the principal value in \eqref{eq:Phidef} is well defined.

\noi (ii) \emph{The sign of $\Phi'$.} Proposition~\ref{thm:props}-(ii) gives the formula
\eqref{eq:dPhi}. In this formula all factors are positive, except $r^{\gamma}-r^{\beta(\gamma)}$. For
$0<r<1$, this factor has the sign of $\beta(\gamma)-\gamma$. Hence $\Phi'(\gamma)$ has the sign
of $\beta(\gamma)-\gamma$.

\noi (iii) \emph{Monotonicity and the maximizer.} We have $\beta(\gamma)-\gamma=p(\gamma^{*}-\gamma)$. So,
by (ii), $\Phi$ is strictly increasing on $(0,\gamma^{*})$ and strictly decreasing on
$(\gamma^{*},\frac{N}{p-1})$. Therefore $\gamma^{*}$ is the unique maximizer. This is also
Proposition~\ref{thm:props}-(iii).

\noi (iv) \emph{The maximum.} By Proposition~\ref{thm:props}-(iv), $2\bar\mu=C_H$.

\noi (v) \emph{The two roots.} Let $0<\mu<\bar\mu$. Corollary~\ref{cor:roots} gives two roots
$\gamma_{-}(\mu)<\gamma^{*}<\gamma_{+}(\mu)$ in $(0,\gamma_{c})$. There is no other root. Indeed,
$\Phi<0$ on $(\gamma_{c},\frac{N}{p-1})$ by Proposition~\ref{thm:props}-(i). Also,
$\Phi$ is strictly monotone on each side of $\gamma^{*}$, by (iii). Corollary~\ref{cor:roots}
also gives the monotonicity of $\gamma_{\pm}$ in $\mu$, their limits, and the two profiles
$|x|^{-\gamma_{\pm}(\mu)}$ of \eqref{eq:intro-hardy}.
\end{proof}

\begin{proof}[{\bf Proof of Theorem \ref{thm:C}:}]
Let $u\in\Dsp$. By Corollary~\ref{cor:roots}, both $\gamma_{-}$ and $\gamma_{+}$ lie in
$(0,\gamma_{c})$, and $\Phi(\gamma_{\pm})=\mu$. So we apply Theorem~\ref{thm:gsrep} with
$\gamma=\gamma_{-}$. Its left-hand side is then exactly $\mathcal{D}_{\mu}(u)$. This gives
\[
  \mathcal{D}_{\mu}(u) \ge c_{p}\mathcal{E}_{\gamma_{-}}\big(|x|^{\gamma_{-}}u\big).
\]
In the same way, with $\gamma=\gamma_{+}$,
\[
  \mathcal{D}_{\mu}(u) \ge c_{p}\mathcal{E}_{\gamma_{+}}\big(|x|^{\gamma_{+}}u\big).
\]
We use the larger of the two right-hand sides. This is \eqref{eq:intro-two-groundstates}.
\end{proof}

\begin{proof}[{\bf Proof of Theorem \ref{thm:D}:}]
Let $U$ be as in Theorem~\ref{thm:D}. By Definition~\ref{def:optimal}, we need an upper bound and lower bounds at each end.

We start with the upper bounds. By Theorem~\ref{thm:cond}, $U$ satisfies \eqref{E0} and \eqref{E1}. By Theorem~\ref{thm:radialmain}, $U$ is continuous on $\R^{N}\setminus\{0\}$ and satisfies \eqref{M}. The same theorem then gives
$U(x)\le C|x|^{-\gamma_{-}(\mu)}$ near the origin and $U(x)\le C|x|^{-\gamma_{+}(\mu)}$ 
at infinity. This is \eqref{eq:intro-final-bounds}.

Next, we prove the lower bounds. Near the origin, Proposition~\ref{prop:optorigin} gives $U(x)\ge c_{\alpha}|x|^{-\alpha}$ for every $\alpha\in(0,\gamma_{-})$. At infinity, Proposition~\ref{prop:optinf} gives $U(x)\ge c_{\beta}|x|^{-\beta}$ for every $\beta\in(\gamma_{+},\gamma_{c})$. Therefore, in view of Definition~\ref{def:optimal}, 
$\gamma_{-}$ and $\gamma_{+}$ are the optimal exponents of $U$ at the origin and at infinity.

Finally we prove the integrability statement. We have shown that $U$ satisfies \eqref{E0} and \eqref{E1}. So Corollary~\ref{cor:sharpint} applies, and it gives $U\in L^{q}(\R^{N})$ for $N/\gamma_{+}<q<N/\gamma_{-}$. Furthermore, Corollary~\ref{cor:nonint} gives $U\notin L^{q}(\R^{N})$ for $0<q<N/\gamma_{+}$ and for $q>N/\gamma_{-}$.
\end{proof}

\appendix
\section{The principal value}\label{app:pv}
The principal value in \eqref{eq:Phidef} was taken over the sets $\{|\log r|>\eps\}$. The next lemma
shows that the usual exclusion of a ball around the singular point gives the same value.

\begin{lemma}\label{lem:pv}
Let $\gamma\in\big(0,\tfrac{N}{p-1}\big)$ and put $g(z):=\Jp\big(1-|z|^{-\gamma}\big)$ and
$K(z):=|e_{1}-z|^{-N-sp}$. Then both limits
\begin{equation*}
     \lim_{\eps\downarrow0}\int_{\{|\log|z||>\eps\}}gK\dz \; \text{ and } \;
  \lim_{\eps\downarrow0}\int_{\R^{N}\setminus B_{\eps}(e_{1})}gK\dz
\end{equation*}
exist and are equal. If $p>\frac{1}{1-s}$ the integral converges absolutely and no exclusion is needed.
\end{lemma}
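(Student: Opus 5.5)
We work in the variable $v=\log|z|$, or equivalently in polar coordinates $z=r\omega$. The only singular point of $gK$ is $z=e_{1}$, i.e. $r=1$ and $\omega=e_{1}$. Away from any fixed neighbourhood of $e_{1}$ the integrand is absolutely integrable for $\gamma\in(0,\frac{N}{p-1})$. Near $0$ we have $|g|\le C|z|^{-\gamma(p-1)}$ with $\gamma(p-1)<N$. Near infinity $|g|\le1$ and $K\le C|z|^{-N-sp}$. Hence both limits reduce to limits over a small ball $B_{\rho}(e_{1})$.

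\textbf{Step 1: the first limit.} Splitting $\{|\log r|>\eps\}$ into the parts $v>\eps$ and $v<-\eps$, the pairing computation in the proof of Proposition~\ref{prop:rep} shows that the truncated integral equals
\[
\int_{\eps}^{\infty}\big(1-e^{-\gamma v}\big)^{p-1}\big(1-e^{-\beta v}\big)e^{Nv}\Psi(e^{v})\dv .
\]
This integrand is $O(v^{p-sp-1})$ near $v=0$, so it is integrable there. Hence the first limit exists and equals \eqref{eq:rep}.

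\textbf{Step 2: comparing the two exclusions.} Inside $B_{\rho}(e_{1})$, write $g(z)=g_{1}(z)+R(z)$ with $g_{1}(z)=\Jp(\gamma(|z|-1))$. This is the leading term, since $1-|z|^{-\gamma}=\gamma(|z|-1)+O((|z|-1)^{2})$. The remainder satisfies $|R(z)|\le C\big||z|-1\big|^{p-1}\min\{1,\big||z|-1\big|\}$ when $p\ge2$, and a similar bound holds when $1<p<2$ by the Hölder continuity of $\Jp$. In both cases $|R|K$ is integrable near $e_{1}$ once one uses $\big||z|-1\big|\le|z-e_{1}|$, and this holds whenever the extra power beats $sp$. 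Then I compare the integral of $g_{1}K$ over $\{|\log|z||>\eps\}\cap B_{\rho}(e_{1})$ with its integral over $B_{\rho}(e_{1})\setminus B_{\eps}(e_{1})$. The function $g_{1}$ is odd under the reflection $z\mapsto 2\frac{z}{|z|}-z$ up to higher order, that is, under $|z|-1\mapsto 1-|z|$ with $\omega$ fixed. The kernel $K$ is invariant under this reflection to leading order. I therefore plan to show that the symmetric difference of the two excluded regions, each intersected with $B_{\rho}$, contributes $o(1)$. On the set where both exclusions keep $z$, the odd leading part cancels in pairs and only absolutely integrable errors remain. Concretely, I expect the difference to be bounded by $\int_{B_{C\eps}(e_{1})}|g_{1}-\tilde g_{1}|K$, where $\tilde g_{1}$ is the reflected value, and this is $O(\eps^{p-sp})$.

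\textbf{Step 3: absolute convergence and the main obstacle.} When $p>\frac1{1-s}$ we have $p-1>sp$, so $|g|K\le C|z-e_{1}|^{p-1-N-sp}$ is integrable near $e_{1}$ and no exclusion is needed. This case is immediate. The main obstacle is Step 2 in the range $p\le\frac1{1-s}$, and in particular the case $1<p<2$, where $\Jp$ is not Lipschitz. There I would first try to perform the reflection exactly, using $r\mapsto 1/r$ together with \eqref{eq:Psiinv} rather than a Taylor expansion. The ball $B_{\eps}(e_{1})$ in polar coordinates is a region $\{|r-1|^{2}+2r(1-\cos\theta)<\eps^{2}\}$, which is symmetric under $r\mapsto1/r$ up to an $O(\eps^{2})$ distortion in $r$. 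Under this approach the mismatch region has measure $O(\eps^{N+1})$. On that region the integrand is $O(\eps^{p-1-N-sp})$, which gives an error $O(\eps^{p-sp})\to0$.
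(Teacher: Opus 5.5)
Your proposal contains a valid proof, but the argument that carries it is the exact inversion sketched in Step~3, and this is a genuinely different route from the paper's. Put $\iota(z):=z/|z|^{2}$. Pointwise one has $K(\iota z)=|z|^{N+sp}K(z)$, and you need this identity pointwise, not only in the spherically averaged form \eqref{eq:Psiinv}, because the ball exclusion is not rotation invariant about the origin. One also has $g(\iota z)=-|z|^{\gamma(p-1)}g(z)$ and $|\det D\iota|=|z|^{-2N}$, so $\int_{\iota E}gK\dz=-\int_{E}|z|^{-\beta}gK\dz$.

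On an $\iota$-invariant neighbourhood of $e_1$ the annular exclusion is exactly invariant; this is your Step~1. On the other hand $\iota(B_\eps(e_1))=\{|z-e_1|<\eps|z|\}$. Hence the ball-excluded integral equals $\int_{\{|z|>1,\,|z-e_1|>\eps\}}(1-|z|^{-\beta})gK\dz$ plus an integral over the shell $\{|z|>1,\ \eps\le|z-e_1|<\eps|z|\}$. That shell has measure $O(\eps^{N+1})$ and integrand $O(\eps^{p-1-N-sp})$, so it contributes $O(\eps^{p-sp})$, as you say. You should state explicitly that $|(1-|z|^{-\beta})gK|\le C|z-e_1|^{p-N-sp}$ near $e_1$. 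Dominated convergence then gives the same limit for both exclusions, namely \eqref{eq:rep}.

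By contrast, the paper linearizes at $e_1$ with $h=\gamma^{p-1}\Jp(\zeta_1)\chi$, which is odd under the reflection $\zeta_1\mapsto-\zeta_1$ across the tangent hyperplane. The ball and a slab then kill $h$ exactly. The price is twofold. First, an $L^1$ bound on $g\chi-h$ is needed, and for $1<p<2$ it requires the split $\{|\zeta_1|\gtrless M|\zeta|^{2}\}$, because $1-|z|^{-\gamma}$ and $\gamma\zeta_1$ are not comparable near the tangent plane. Second, the annulus-versus-slab symmetric difference needs a three-scale estimate. Your inversion is an exact symmetry of $g$ and $K$ at once, so it needs no linearization and no case distinction in $p$, and it identifies the common value with \eqref{eq:rep} directly. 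The paper's argument uses only oddness of the leading term, so it would survive perturbations that lack an exact symmetry.

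Step~2 should be dropped rather than repaired.
\begin{itemize}
\item Hölder continuity of $\Jp$ only gives $|R|\le C\big||z|-1\big|^{2(p-1)}$, and this bound is not integrable against $K$ when $2(p-1)\le sp$. The bound $|R|\le C\big||z|-1\big|^{p}$ does hold for every $p>1$, but for a different reason: $1-|z|^{-\gamma}$ and $\gamma(|z|-1)$ depend only on $|z|$ and have ratio $1+O\big(\big||z|-1\big|\big)$. This is a real advantage of your $g_1$ over the paper's $h$.
\item The proposed bound $\int_{B_{C\eps}(e_1)}|g_1-\tilde g_1|K$ is not small. Since $g_1$ is exactly odd, $|g_1-\tilde g_1|=2|g_1|$, and this integral is infinite when $p\le\frac{1}{1-s}$. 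In any case, the annular excluded set is not contained in $B_{C\eps}(e_1)$.
\end{itemize}
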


\begin{proof}
We write $z=e_{1}+\zeta$ and $\zeta_{1}:=\zeta\cdot e_{1}$. By the estimates at the end of the proof of
Proposition~\ref{prop:rep}, the integrand is absolutely integrable outside any ball $B_{2\delta}(e_{1})$. So only the behaviour near $\zeta=0$ matters. We fix $\delta<\tfrac14$ and a
radial $\chi\in \cc(B_{2\delta})$ with $\chi\equiv1$ on $B_{\delta}$.

From $|e_{1}+\zeta|=1+\zeta_{1}+O(|\zeta|^{2})$ we get $1-|z|^{-\gamma}=\gamma\zeta_{1}+O(|\zeta|^{2})$.
We put $h(\zeta):=\gamma^{p-1}\Jp(\zeta_{1})\chi(\zeta)$. Since $h$ is odd in $\zeta_{1}$ and
$\chi(\zeta)|\zeta|^{-N-sp}$ is even in $\zeta_{1}$,
\begin{equation}\label{eq:oddvanish}
  \int_{E}h(\zeta) |\zeta|^{-N-sp} d\zeta=0
\end{equation}
for every measurable $E$ invariant under $\zeta_{1}\mapsto-\zeta_{1}$.

Next we show that $R(\zeta):=\big[g(e_{1}+\zeta)\chi(\zeta)-h(\zeta)\big]|\zeta|^{-N-sp}$ lies in
$L^{1}(B_{2\delta})$. We set $a:=1-|z|^{-\gamma}$ and $b:=\gamma\zeta_{1}$, so $|a-b|\le C|\zeta|^{2}$. If $p\ge2$, then $|a|+|b|\le C|\zeta|$ gives
$$|\Jp(a)-\Jp(b)|\le C(|a|+|b|)^{p-2}|a-b|\le C|\zeta|^{p},$$ and
$\int_{B_{2\delta}}|\zeta|^{p-N-sp}d\zeta<\infty$ because $p>sp$. 

Now let $1<p<2$. We choose $M>0$ large and split $B_{2\delta}$ into the regions $\{|\zeta_1|>M|\zeta|^2\}$ and $\{|\zeta_1|\le M|\zeta|^2\}$. On the first region we have $a=b+O(|\zeta|^2)$ and $b=\gamma\zeta_1$. So the choice of $M$ gives
$|a|\asymp |b|\asymp |\zeta_1|$, and the mean value theorem yields $|J_p(a)-J_p(b)| \le C|\zeta_1|^{p-2}|\zeta|^2.$ Writing $\zeta=\rho\omega$, we obtain
\begin{equation*}
\int_{B_{2\delta}}|\zeta_1|^{p-2}|\zeta|^{2-N-sp} \d\zeta =\left(\int_{\mathbb S^{N-1}}|\omega_1|^{p-2} \d\omega\right)\int_0^{2\delta}\rho^{p-1-sp} \d\rho<\infty.
\end{equation*}
On the second region, $|a|+|b|\le C|\zeta|^2.$ Since $J_p$ is H\"older continuous of order $p-1$, we have $|J_p(a)-J_p(b)|
 \le C|\zeta|^{2(p-1)}.$

On this second region we write $\zeta=(t,y)\in\mathbb R\times \mathbb R^{N-1}$, with $t=\zeta_1$ and $r=|y|$. Here $|t|\le M|\zeta|^2=M(t^2+r^2)$. Taking $\delta>0$ smaller if necessary, we obtain $|t|\le C r^2 \, \text{ and }\, |\zeta|\asymp r.$
 Therefore, 
 \begin{equation}
 \begin{aligned}
&\int_{\{|\zeta|<2\delta,\; |\zeta_1|\le M|\zeta|^2\}}
|J_p(a)-J_p(b)|\,|\zeta|^{-N-sp}\,d\zeta \\
&\quad\le C\int_0^{2\delta} r^{2(p-1)}r^{-N-sp}\,r^{N-2}\left(\int_{|t|\le Cr^2}dt\right)\dr  \\ 
&\quad\le C\int_0^{2\delta} r^{2p-2-sp}\dr<\infty,
\end{aligned}
 \end{equation}
since $2p-1-sp=(p-1)+(p-sp)>0$. Together with the estimate on the first region, this proves that $R\in L^1(B_{2\delta})$.

It remains to compare the two exclusions. 
Since $R\in L^1(B_{2\delta})$, the choice of exclusion does not affect its limit. So we only need to consider $h$.
We continue with the notation $\zeta=(t,y)$, $t=\zeta_1$ and $r=|y|$.
The ball exclusion is symmetric under $t \mapsto -t$. Hence the integral of $h$ over it is zero by \eqref{eq:oddvanish}. The same holds for the symmetric slab $B_\varepsilon:=\{|t|<\varepsilon\}.$ For the annular exclusion we set
\begin{gather*}
 A_\varepsilon:= \bigl\{|\log|e_1+\zeta||<\varepsilon\bigr\},   S_\varepsilon:=A_\varepsilon\triangle B_\varepsilon.
\end{gather*}
We only need to prove 
\begin{equation}\label{eq:Sdiff}
    \int_{S_\varepsilon} |h(\zeta)| |\zeta|^{-N-sp} d\zeta\longrightarrow0 \text{ as }\varepsilon\downarrow0.
\end{equation}
For fixed $y$, let $A_\varepsilon^y$ and $S_\varepsilon^y$ denote the corresponding sections in the $t$-variable. Since
$ |e_1+\zeta|^2=(1+t)^2+r^2$, for $\delta$ and $\varepsilon$ small we have
$ A_\varepsilon^y =\left(-1+\sqrt{e^{-2\varepsilon}-r^2},-1+\sqrt{e^{2\varepsilon}-r^2}\right).$
A Taylor expansion, uniform for $0\le r\le2\delta$, gives 
\begin{gather*}
\left|-1+\sqrt{e^{-2\varepsilon}-r^2}+\varepsilon\right|
 +\left|-1+\sqrt{e^{2\varepsilon}-r^2}-\varepsilon\right| \le C(\varepsilon^2+r^2).
\end{gather*}
Consequently,
\begin{equation}\label{eq:i}
 |S_\varepsilon^y| \le C\min\{\varepsilon,\varepsilon^2+r^2\},
\end{equation}
and every $(t,y)\in S_\varepsilon$ satisfies $|t|\le C(\varepsilon+r^2).$ 

We split the $r$-integration into three regions. If $0\le r\le\varepsilon$, then $S_\varepsilon^y$ lies in a $C\varepsilon^2$-neighbourhood of $t=\pm\varepsilon$. Hence
\begin{gather*}
 |t|\asymp\varepsilon,  |\zeta|\ge c\varepsilon,  |S_\varepsilon^y|\le C\varepsilon^2.
\end{gather*}
Since $|h|\le C|t|^{p-1}$, the integral over this region is bounded by 
\begin{equation}\label{eq:ii}
 C\varepsilon^{p-1}\varepsilon^{-N-sp}\varepsilon^2 \int_0^\varepsilon r^{N-2} dr \le C\varepsilon^{p-sp}=o_{\eps}(1).
\end{equation}
If $\varepsilon\le r\le\sqrt{\varepsilon}$, then by \eqref{eq:i} we have
$ |S_\varepsilon^y|\le Cr^2, |t|\le C\varepsilon, |\zeta|\ge r.$ So this part is bounded by
\begin{equation}\label{eq:iii}
C\varepsilon^{p-1}\int_\varepsilon^{\sqrt{\varepsilon}}r^{-sp} dr=o_{\eps}(1).
\end{equation}
Indeed, this quantity is $O\big(\varepsilon^{p-1+(1-sp)/2}\big)$ if $sp<1$, $O\big(\varepsilon^{p-1}|\log\varepsilon|\big)$ if $sp=1$, and $O\big(\varepsilon^{p-sp}\big)$ if $sp>1$.
Finally, if $\sqrt{\varepsilon}\le r\le2\delta$, then $ |S_\varepsilon^y|\le C\varepsilon, |t|\le Cr^2, |\zeta|\ge r.$ So the integral over this region is bounded by 
\begin{equation}\label{eq:iv}
C\varepsilon \int_{\sqrt{\varepsilon}}^{2\delta} r^{2p-sp-4} dr = o_{\eps}(1).
\end{equation}
Indeed, if $2p-sp>3$ this is $O(\varepsilon)$, if $2p-sp=3$ it is $O(\varepsilon|\log\varepsilon|)$, and if $2p-sp<3$, it is 
$O\!\left(\varepsilon^{(2p-sp-1)/2}\right)=o_{\eps}(1)$, since  $2p-sp-1=(p-1)+(p-sp)>0.$ Combining \eqref{eq:ii}--\eqref{eq:iv}, we get \eqref{eq:Sdiff}. Thus the annular exclusion and the usual ball exclusion give the same principal value.  Finally, $|g|\le C|\zeta|^{p-1}$ near $e_{1}$, so $|gK|\le C|\zeta|^{p-1-N-sp}$, which is integrable when $p-1-sp>0$. This completes the proof.
\end{proof}

\section{Further properties of power solutions}\label{app:power}
In this appendix we collect some results on power solutions of \eqref{eq:pureHardy} and \eqref{eq:P}. They are of independent interest, but we do not need them for the main theorems. They are a barrier estimate, the cylinder formulation, a touching lemma, compactness under scaling, and a blow-up dichotomy for the lower bound.

\subsection{A barrier estimate}
The next lemma is a simple barrier estimate near the origin. We do not use it in the main text.

\begin{lemma}\label{lem:improve}
Let $\gamma\in(\gamma_{-},\gamma^{*})$ and $A>0$. Then, for every
$\gamma'$ satisfying
\[
  \gamma_{-}<\gamma'\le\gamma
  \; \text{and } \;
  \gamma'\ge \gamma-\frac{P_s}{p-1}(\gamma^{*}-\gamma),
\]
there exists $A'>0$ such that $A'|x|^{-\gamma'}$ is a supersolution near the origin of the equation obtained from \eqref{eq:P}
by replacing the critical term with $A^{p_{s}^{*}-1}|x|^{-\gamma(p_{s}^{*}-1)}$.
For the limiting value of $\gamma'$, $\gamma^{*}-\gamma'=\left(1+\frac{P_s}{p-1}\right)(\gamma^{*}-\gamma).$
\end{lemma}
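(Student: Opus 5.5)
The plan is to apply identity \eqref{eq:barrier} of Proposition~\ref{lem:barrier} with $\gamma'$ and $A'$ in place of $\gamma$ and $A$. The critical term is replaced by the fixed forcing $F(x):=A^{p_{s}^{*}-1}|x|^{-\gamma(p_{s}^{*}-1)}$. For $W':=A'|x|^{-\gamma'}$, formula \eqref{eq:homog} gives
\[
  \pLap W'-\mu|x|^{-sp}(W')^{p-1}-F
  =|x|^{-\gamma'(p-1)-sp}\Big[A'^{\,p-1}\big(\Phi(\gamma')-\mu\big)-A^{p_{s}^{*}-1}|x|^{\theta}\Big],
\]
where
\[
  \theta:=\gamma'(p-1)+sp-\gamma(p_{s}^{*}-1).
\]
Since $\gamma_{-}<\gamma'\le\gamma<\gamma^{*}<\gamma_{+}$, Proposition~\ref{thm:props} gives $\Phi(\gamma')-\mu>0$. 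It remains to see where the bracket is positive.

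The exponent $\theta$ simplifies by \eqref{eq:Xiid}. Indeed,
\[
  \gamma(p_{s}^{*}-1)=\gamma(p-1)+sp+P_s(\gamma-\gamma^{*}),
\]
so
\[
  \theta=(p-1)(\gamma'-\gamma)+P_s(\gamma^{*}-\gamma).
\]
The hypothesis $\gamma'\ge\gamma-\frac{P_s}{p-1}(\gamma^{*}-\gamma)$ is exactly $\theta\ge0$. We split into two cases.

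If $\theta>0$, then $|x|^{\theta}\to0$ as $x\to0$. The bracket is then positive on $B_{r_{0}}\setminus\{0\}$ for every $A'>0$, where
\[
  r_{0}=\Big[A'^{\,p-1}(\Phi(\gamma')-\mu)A^{1-p_{s}^{*}}\Big]^{1/\theta}.
\]

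If $\theta=0$, the bracket is the constant $A'^{\,p-1}(\Phi(\gamma')-\mu)-A^{p_{s}^{*}-1}$. It is positive once
\[
  A'>\Big(A^{p_{s}^{*}-1}/(\Phi(\gamma')-\mu)\Big)^{1/(p-1)},
\]
and then the supersolution property holds on all of $\R^{N}\setminus\{0\}$.

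So a pointwise supersolution exists in both cases. The weak formulation follows as in Proposition~\ref{lem:barrier}, since \eqref{eq:homog} holds weakly for $\gamma'<\gamma_{c}$.

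For the limiting identity, we take $\theta=0$, that is $\gamma'=\gamma-\frac{P_s}{p-1}(\gamma^{*}-\gamma)$. Then
\[
  \gamma^{*}-\gamma'=(\gamma^{*}-\gamma)+\frac{P_s}{p-1}(\gamma^{*}-\gamma)=\Big(1+\frac{P_s}{p-1}\Big)(\gamma^{*}-\gamma).
\]

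There is no real obstacle beyond bookkeeping. The two points to check are that the range of admissible $\gamma'$ is nonempty and stays above $\gamma_{-}$, which is assumed explicitly, and that the sign of $\theta$ is determined by the stated lower bound. The borderline $\theta=0$ requires choosing $A'$ large rather than shrinking the neighbourhood.
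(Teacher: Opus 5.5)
Your proposal is correct and is essentially the paper's argument: you substitute $A'|x|^{-\gamma'}$ into the identity \eqref{eq:barrier}, use \eqref{eq:Xiid} to convert the exponent comparison into the stated lower bound on $\gamma'$, and use $\Phi(\gamma')>\mu$ for $\gamma'\in(\gamma_-,\gamma_+)$. Your explicit split into the case $\theta>0$ (any $A'$ works on a small ball) and the case $\theta=0$ (take $A'$ large) just spells out what the paper compresses into the single condition $\gamma'(p-1)+sp\ge\gamma(p_{s}^{*}-1)$.
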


\begin{proof}
The required inequality is
$A'^{p-1}(\Phi(\gamma')-\mu)|x|^{-\gamma'(p-1)-sp}\ge A^{p_{s}^{*}-1}|x|^{-\gamma(p_{s}^{*}-1)}$
for small $|x|$, which holds when $\gamma'(p-1)+sp\ge\gamma(p_{s}^{*}-1)$ and
$\Phi(\gamma')>\mu$. By \eqref{eq:Xiid} the first is
$\gamma'\ge\gamma-\frac{P_s}{p-1}(\gamma^{*}-\gamma)$. For the second, note that
$\Phi(\gamma')>\mu$ holds precisely when $\gamma_{-}<\gamma'<\gamma_{+}$, by
Proposition~\ref{thm:props} and Corollary~\ref{cor:roots}. This is guaranteed by
$\gamma_{-}<\gamma'\le\gamma<\gamma^{*}<\gamma_{+}$. Such $\gamma'$ exist because $\gamma>\gamma_{-}$
and $\gamma-\frac{P_s}{p-1}(\gamma^{*}-\gamma)<\gamma$. The final identity follows by direct calculation.
\end{proof}

\subsection{Cylinder formulation}\label{sec:cylinder} 
We now rewrite the Hardy equation on the cylinder. Under this change of variables, dilations in $\R^N$ become translations in the cylinder variable. We fix $\mu>0$ and $\gamma\in\big(0,\tfrac{N}{p-1}\big)$.

\begin{lemma}\label{lem:cyl}
Let $x=e^{-t}\theta, t\in\R,\, \theta\in\Sph^{N-1}$ and define $v(t,\theta):=e^{-\gamma t}w(e^{-t}\theta).$
Equivalently, $w(x)=|x|^{-\gamma}v(t,\theta).$ Then $w$ solves \eqref{eq:pureHardy} in $\R^N\setminus\{0\}$ if and only if
\begin{equation}\label{eq:cyl}
A_\gamma v=\mu v^{p-1} \, \text{ on }\R\times\Sph^{N-1},
\end{equation}
where
\begin{equation*}
A_\gamma v(t,\theta)=\mathrm{P.V.}
\iint_{\Sph^{N-1}\times\R}
\Jp\bigl(v(t,\theta)-e^{\gamma\xi}v(t+\xi,\sigma)\bigr)
\frac{e^{-N\xi}\,d\xi\,d\sigma}
{\bigl(1-2e^{-\xi}\theta\cdot\sigma+e^{-2\xi}\bigr)^{\frac{N+sp}{2}}}.
\end{equation*}
The operator $A_\gamma$ is invariant under translations in $t$ and rotations on the sphere. It also satisfies
$A_\gamma(\lambda v)=\lambda^{p-1}A_\gamma v \text{ for }\lambda>0$ and $A_\gamma \mathbf{1}=\Phi(\gamma)$.
\end{lemma}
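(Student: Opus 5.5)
The proof is a direct change of variables in the pointwise definition \eqref{eq:op}, followed by reading off the symmetries. Fix $x=e^{-t}\theta$ and write the integration variable as $y=e^{-(t+\xi)}\sigma$ with $\xi\in\R$, $\sigma\in\Sph^{N-1}$. Then $\dy=e^{-N(t+\xi)}\,d\xi\,d\sigma$ and
\[
|x-y|^{2}=e^{-2t}\bigl(1-2e^{-\xi}\theta\cdot\sigma+e^{-2\xi}\bigr).
\]
Since $w(y)=e^{\gamma(t+\xi)}v(t+\xi,\sigma)$ and $w(x)=e^{\gamma t}v(t,\theta)$, we get
\[
w(x)-w(y)=e^{\gamma t}\bigl(v(t,\theta)-e^{\gamma\xi}v(t+\xi,\sigma)\bigr).
\]
Substituting and collecting powers of $e^{t}$ gives
\[
\pLap w(x)=e^{\gamma(p-1)t}\,e^{-Nt}\,e^{(N+sp)t}A_\gamma v(t,\theta)=e^{(\gamma(p-1)+sp)t}A_\gamma v(t,\theta).
\]

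The right-hand side of \eqref{eq:pureHardy} transforms the same way:
\[
\mu|x|^{-sp}w^{p-1}=\mu e^{spt}e^{\gamma(p-1)t}v^{p-1}.
\]
The common positive factor cancels, so the two equations are equivalent.

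The step that needs care is the principal value, because the singular set $y=x$ corresponds to $(\xi,\sigma)=(0,\theta)$. One option is to define $A_\gamma$ through the image of the excluded balls $B_\eps(x)$, in which case the equivalence is a tautology. Alternatively, one can argue as in Lemma~\ref{lem:pv} that the image of the excluded ball differs from a symmetric neighbourhood of $(0,\theta)$ by a set of vanishing contribution. For the weak formulation I would instead change variables in \eqref{eq:weakdef}, pairing test functions $\varphi(x)$ with $e^{\dots t}\varphi(e^{-t}\theta)$; the Jacobians match in the same way.

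For the structural properties, translation invariance in $t$ holds because the kernel depends only on $\xi$ and $\theta\cdot\sigma$, not on $t$. Rotation invariance follows because the measure $d\sigma$ and the product $\theta\cdot\sigma$ are invariant under orthogonal maps. The homogeneity $A_\gamma(\lambda v)=\lambda^{p-1}A_\gamma v$ follows from $\Jp(\lambda\tau)=\lambda^{p-1}\Jp(\tau)$ for $\lambda>0$.

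Finally, take $v\equiv\mathbf 1$, which corresponds to $w=|x|^{-\gamma}$. The formula for $\pLap w$ above, together with \eqref{eq:homog}, gives $A_\gamma\mathbf 1=\Phi(\gamma)$. One can also check this directly against \eqref{eq:Phidef} with $r=e^{-\xi}$.
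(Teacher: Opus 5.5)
Your proposal is correct and follows the paper's proof essentially verbatim. You use the same substitution $y=e^{-(t+\xi)}\sigma$ and the same Jacobian and kernel computation, giving $\pLap w(x)=e^{(\gamma(p-1)+sp)t}A_\gamma v(t,\theta)$; you cancel the common factor against $\mu|x|^{-sp}w^{p-1}$; and you read off $A_\gamma\mathbf 1=\Phi(\gamma)$ either from \eqref{eq:homog} or, as the paper does, from \eqref{eq:Phidef} with $r=e^{-\xi}$. Your extra remarks on how the principal value is transported and on the translation, rotation and homogeneity properties only make explicit what the paper leaves implicit.
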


\begin{proof}
We write $y=e^{-\tau}\sigma,\,\xi:=\tau-t$.
Then $dy=e^{-N\tau}\,d\tau\,d\sigma$ and $|x-y|^2=e^{-2t}\bigl(1-2e^{-\xi}\theta\cdot\sigma+e^{-2\xi}\bigr).$
Also, $w(x)-w(y)=e^{\gamma t}\bigl(v(t,\theta)-e^{\gamma\xi}v(t+\xi,\sigma)\bigr).$
Substituting these identities into $\pLap w$ gives $\pLap w(x)=e^{\gamma(p-1)t+spt}A_\gamma v(t,\theta)$. On the other hand, the right-hand side of \eqref{eq:pureHardy} is $$\mu e^{\gamma(p-1)t+spt}v(t,\theta)^{p-1}.$$ Thus \eqref{eq:pureHardy} is equivalent to \eqref{eq:cyl}. 
Finally, substituting $\rho=e^{-\xi}$ in $A_\gamma\mathbf{1}$ and comparing with
\eqref{eq:Phidef} gives $A_\gamma\mathbf{1}=\Phi(\gamma)$.
\end{proof}

In particular, the power function $A|x|^{-\gamma}$ corresponds to the constant $v\equiv A$.

\subsection{Touching lemma}
We first show what happens when a power function touches a solution of \eqref{eq:pureHardy}.

\begin{lemma}\label{lem:touch}
Let $\gamma\in\big(0,\tfrac{N}{p-1}\big)$ and let $w>0$ be a continuous weak solution of
\eqref{eq:pureHardy} in $\R^{N}\setminus\{0\}$ with $w\in L^{p-1}_{sp}(\R^{N})$. Suppose $A>0$ satisfies
\begin{equation*}
      w(x) \le A|x|^{-\gamma}\;\text{on }\R^{N}\setminus\{0\},
  \quad w(x_{0})=A|x_{0}|^{-\gamma}\ \text{ for some }x_{0}\neq0 .
\end{equation*}
Then $\Phi(\gamma)\le\mu$ and if $\Phi(\gamma)=\mu$ then $w\equiv A|x|^{-\gamma}$. The
same holds with both inequalities reversed and $\Phi(\gamma)\ge\mu$.
\end{lemma}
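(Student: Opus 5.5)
The plan is to evaluate the equation pointwise at the touching point $x_0$ and compare it with the equation for the barrier $W:=A|x|^{-\gamma}$. By \eqref{eq:homog}, $W$ satisfies $\pLap W(x_0)=\Phi(\gamma)|x_0|^{-sp}W(x_0)^{p-1}$. Since $w(x_0)=W(x_0)$ and $w\le W$ everywhere, we have $w(x_0)-w(y)\ge W(x_0)-W(y)$ for every $y$. Monotonicity of $J_p$ then gives, formally,
\[
  \pLap w(x_0)-\pLap W(x_0)=\mathrm{P.V.}\int_{\R^N}\frac{J_p\big(w(x_0)-w(y)\big)-J_p\big(W(x_0)-W(y)\big)}{|x_0-y|^{N+sp}}\dy\ \ge 0,
\]
with equality only if $w\equiv W$ a.e.; continuity upgrades this to everywhere on $\R^N\setminus\{0\}$. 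Since the right-hand sides at $x_0$ are $\mu|x_0|^{-sp}W(x_0)^{p-1}$ and $\Phi(\gamma)|x_0|^{-sp}W(x_0)^{p-1}$, we get $\mu\ge\Phi(\gamma)$. If $\Phi(\gamma)=\mu$, the integrand is a nonnegative function with zero integral, so it vanishes a.e., and hence $w=W$. The reversed statement follows by exchanging the roles of $w$ and $W$.

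The main obstacle is that $w$ is only a continuous weak solution, so $\pLap w(x_0)$ need not exist pointwise. I would handle this in one of two ways.

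\emph{Option 1 (viscosity).} Use the known equivalence of continuous weak solutions and viscosity solutions for the fractional $p$-Laplacian (Korvenp\"a\"a--Kuusi--Lindgren \cite{KKL19}). The smooth function $W$ touches $w$ from above at $x_0$, and $W$ is $C^{1,1}$ near $x_0$ with $\nabla W(x_0)\neq0$, so it is an admissible test function. The viscosity subsolution inequality then gives $\pLap W(x_0)\le\mu|x_0|^{-sp}w(x_0)^{p-1}$; the tail condition $w\in L^{p-1}_{sp}$ makes the nonlocal part well defined. This yields $\Phi(\gamma)\le\mu$ directly.

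\emph{Option 2 (weak form, preferred if self-containedness matters).} Work in the weak formulation. The difference $W-w\ge0$ vanishes at $x_0$, so it suffices to prove the rigidity claim: if $\Phi(\gamma)\ge\mu$, then $W$ is a weak supersolution of the Hardy equation, and $W\ge w$ with contact at $x_0$ forces $W\equiv w$. To get this, I would apply a strong minimum principle for $W-w$, exploiting that the $J_p$-difference is strictly positive wherever $W\neq w$. Concretely, test the two equations with a bump $\varphi\ge0$ supported near $x_0$ and subtract. The kernel contribution from the far set $\{W>w\}$ is strictly positive and of order one. The local contribution near $x_0$ is controlled because both functions agree at $x_0$ and are continuous. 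The lower-order term $\mu|x|^{-sp}(W^{p-1}-w^{p-1})$ is small on the support of $\varphi$. Shrinking the support of $\varphi$ then gives a contradiction unless $|\{W>w\}|=0$.

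To prove the first claim in this option, I would argue by contradiction. If $\Phi(\gamma)>\mu$, the same shrinking-support argument applies, now using the positive gap $(\Phi(\gamma)-\mu)|x_0|^{-sp}W(x_0)^{p-1}$ in place of strict positivity of the far contribution.
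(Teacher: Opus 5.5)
Your Option 1 is the paper's route: invoke the weak/viscosity equivalence of \cite{KKL19} and compare with the barrier $W=A|x|^{-\gamma}$ using monotonicity of $\Jp$. The inequality $\Phi(\gamma)\le\mu$ comes out correctly that way. The gap is in the equality case.

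\textbf{Option 1 does not give rigidity.} As you formulate it, you apply the viscosity inequality to $W$ itself and get only $\pLap W(x_0)\le\mu|x_0|^{-sp}w(x_0)^{p-1}$. When $\Phi(\gamma)=\mu$, this says nothing about $w$. Your first-paragraph argument (a nonnegative integrand with zero integral) needs the pointwise value $\pLap w(x_0)$, which is exactly what is unavailable.

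\textbf{Option 2 does not close the gap.} You delegate rigidity to Option 2, but its key step fails. Test with a bump $\varphi$ supported in $B_\rho(x_0)$. The favourable far contribution from $\{W>w\}$ is of order $\int\varphi\sim\rho^{N}$. The local term $\frac12\iint_{B_{2\rho}(x_0)^2}[\Jp(\delta W)-\Jp(\delta w)]\,\delta\varphi\,|x-y|^{-N-sp}$ scales like $\rho^{N-sp}$ times increments of $w$ on $B_{2\rho}(x_0)$. Continuity of $w$ and contact at $x_0$ give no rate of vanishing for $W-w$ at $x_0$, so you cannot show this term is $o(\rho^N)$. Making such an argument work essentially means reproving the weak-to-viscosity implication, which \cite{KKL19} obtains through a comparison argument, not by direct bump testing.

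\textbf{The fix.} Keep the intermediate object that the viscosity definition actually provides. For $0<r<|x_0|/2$ set $V_r:=W$ in $B_r(x_0)$ and $V_r:=w$ outside. The viscosity subsolution property gives
\[
\pLap V_r(x_0)\le\mu|x_0|^{-sp}w(x_0)^{p-1}.
\]
Since $V_r\le W$ with $V_r(x_0)=W(x_0)$, monotonicity of $\Jp$ gives
\[
\pLap V_r(x_0)\ge\pLap W(x_0)=\Phi(\gamma)|x_0|^{-sp}w(x_0)^{p-1}.
\]
If $\Phi(\gamma)=\mu$, both inequalities are equalities, and the difference $\pLap V_r(x_0)-\pLap W(x_0)$ is exactly
\[
\int_{\R^N\setminus B_r(x_0)}\Big[\Jp\big(W(x_0)-w(y)\big)-\Jp\big(W(x_0)-W(y)\big)\Big]\frac{\dy}{|x_0-y|^{N+sp}}=0 .
\]
The integrand is nonnegative, so $w=W$ a.e.\ outside $B_r(x_0)$. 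Letting $r\downarrow0$ and using continuity gives $w\equiv W$. The reversed statement follows by the same argument, now using that $w$ is a viscosity supersolution touched from below.
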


\begin{proof}
We set $v(x):=A|x|^{-\gamma}$, so $\pLap v=\Phi(\gamma)|x|^{-sp}v^{p-1}$ by \eqref{eq:homog}. By the
equivalence of weak and viscosity solutions \cite{KKL19}, $w$ is a viscosity subsolution of
\eqref{eq:pureHardy}. We fix $0<r<|x_{0}|/2$ and set $V_{r}:=v$ in $B_{r}(x_{0})$ and $V_{r}:=w$ in $\R^N \setminus B_{r}(x_{0})$.
Then $V_{r}\in L^{p-1}_{sp}(\R^{N})$, $V_{r}$ is smooth in $B_{r}(x_{0})$ with
$\nabla V_{r}(x_{0})=\nabla v(x_{0})\neq0$, and $V_{r}$ touches $w$ from above at $x_{0}$. The principal
value $\pLap V_{r}(x_{0})$ exists. For $p\ge2$, second-order cancellation ensures its existence, and for $p \in (1,2)$ we use $\int_{\Sph^{N-1}}|e \cdot \omega|^{p-2}\d\omega<\infty$, valid since $p>1$. Hence
\begin{equation}\label{eq:visc}
  \pLap V_{r}(x_{0})\le\mu |x_{0}|^{-sp}w(x_{0})^{p-1}.
\end{equation}
On the other hand $V_{r}\le v$ everywhere with $V_{r}(x_{0})=v(x_{0})$, so
$V_{r}(x_{0})-V_{r}(y)\ge v(x_{0})-v(y)$ for every $y$, and since $\Jp$ is increasing,
\begin{equation}\label{eq:compare}
  \pLap V_{r}(x_{0})\ge\pLap v(x_{0})
   = \Phi(\gamma) |x_{0}|^{-sp}w(x_{0})^{p-1}.
\end{equation}
Comparing \eqref{eq:visc} and \eqref{eq:compare} give $\Phi(\gamma)\le\mu$. If
$\Phi(\gamma)=\mu$, then both are equalities, so
\[
  \int_{\R^{N}\setminus B_{r}(x_{0})}
  \Big[\Jp\big(v(x_{0})-w(y)\big)-\Jp\big(v(x_{0})-v(y)\big)\Big]\frac{\dy}{|x_{0}-y|^{N+sp}}=0 .
\]
The integrand is nonnegative because $w\le v$ and $\Jp$ is increasing, so it vanishes almost
everywhere and $w=v$ in $\R^N \setminus B_{r}(x_{0})$. Letting $r\downarrow0$ gives $w\equiv v$ a.e. in $\R^N$.
\end{proof}

Applying Lemma~\ref{lem:touch} at both extrema gives the following result.

\begin{corollary}\label{cor:attained}
Let $w$ satisfy $c|x|^{-\gamma}\le w\le C|x|^{-\gamma}$ on $\R^{N}\setminus\{0\}$ and let $v$ be as in
Lemma~\ref{lem:cyl}. If both $\sup v$ and $\inf v$ over $\R\times\Sph^{N-1}$ are attained, then
$\Phi(\gamma)=\mu$, so $\gamma\in\{\gamma_{-}(\mu),\gamma_{+}(\mu)\}$, and
$w\equiv C_{0}|x|^{-\gamma}$ for some $C_{0}>0$. If $\Phi(\gamma)=\mu$ is known, it is enough that either $\sup v$ or $\inf v$ is attained.
\end{corollary}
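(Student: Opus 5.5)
The corollary follows from two applications of Lemma~\ref{lem:touch}, one at the supremum and one at the infimum of $v$. By Lemma~\ref{lem:cyl}, $w(x)=|x|^{-\gamma}v(t,\theta)$ with $x=e^{-t}\theta$. The two-sided bound $c|x|^{-\gamma}\le w\le C|x|^{-\gamma}$ gives $c\le v\le C$, so $S:=\sup v$ and $I:=\inf v$ are finite and positive. We assume throughout that $w$ satisfies the standing hypotheses of Lemma~\ref{lem:touch}: it is a positive continuous weak solution of \eqref{eq:pureHardy} lying in $L^{p-1}_{sp}(\R^N)$. The tail condition is in any case automatic from the upper bound when $\gamma(p-1)<N$.

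\textbf{Step 1: the supremum.} Suppose $S$ is attained at $(t_0,\theta_0)$, and put $x_0=e^{-t_0}\theta_0$. Then $w\le S|x|^{-\gamma}$ on $\R^N\setminus\{0\}$, with equality at $x_0$. The first part of Lemma~\ref{lem:touch}, with $A=S$, gives $\Phi(\gamma)\le\mu$.

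\textbf{Step 2: the infimum.} Suppose $I$ is attained at some point $x_1$. Then $w\ge I|x|^{-\gamma}$ everywhere, with equality at $x_1$. The reversed version of Lemma~\ref{lem:touch} gives $\Phi(\gamma)\ge\mu$.

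\textbf{Step 3: conclusion when both are attained.} Steps 1 and 2 together give $\Phi(\gamma)=\mu$. By Theorem~\ref{thm:A} (Corollary~\ref{cor:roots} together with Proposition~\ref{thm:props}-(i)), the only roots of $\Phi=\mu$ in $\big(0,\frac{N}{p-1}\big)$ are $\gamma_\pm(\mu)$, so $\gamma\in\{\gamma_-(\mu),\gamma_+(\mu)\}$. This uses $0<\mu<\bar\mu$. If $\mu=\bar\mu$ the root is the single point $\gamma^*$; if $\mu>\bar\mu$ we would obtain a contradiction, so under the hypotheses no such $w$ exists. I would state the statement as intended for $0<\mu<\bar\mu$. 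With $\Phi(\gamma)=\mu$ established, the rigidity part of Lemma~\ref{lem:touch} applied in Step 1 gives $w\equiv S|x|^{-\gamma}$, so $C_0=S$. It follows that $S=I$.

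\textbf{Step 4: when $\Phi(\gamma)=\mu$ is known in advance.} In this case only one extremum is needed. If $\sup v$ is attained, the rigidity clause of Lemma~\ref{lem:touch} gives $w\equiv S|x|^{-\gamma}$. If $\inf v$ is attained, the reversed rigidity clause gives $w\equiv I|x|^{-\gamma}$.

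\textbf{Main obstacle.} The only delicate point is that Lemma~\ref{lem:touch} needs $w$ to be a genuine continuous weak solution in the tail space, so that the weak-to-viscosity equivalence of \cite{KKL19} applies. The corollary's hypotheses must be read as including this. Everything else is a direct combination of the touching lemma with the root structure of $\Phi$.
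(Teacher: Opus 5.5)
Your proposal is correct and follows essentially the same route as the paper: apply Lemma~\ref{lem:touch} at the attained supremum to get $\Phi(\gamma)\le\mu$, apply its reversed version at the attained infimum to get $\Phi(\gamma)\ge\mu$, and then use the rigidity clause. Your two side remarks are accurate clarifications of points the paper leaves implicit: the root identification $\gamma\in\{\gamma_-(\mu),\gamma_+(\mu)\}$ needs $0<\mu<\bar\mu$, and $w$ must carry the standing hypotheses of Lemma~\ref{lem:touch}.
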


\begin{proof}
Applying Lemma~\ref{lem:touch} with $A=\sup v$ to get $\Phi(\gamma)\le\mu$, and with $A=\inf v$ and reversed inequalities, we get $\Phi(\gamma)\ge\mu$. Then using the second part of Lemma~\ref{lem:touch} the conclusion holds. 
\end{proof}
\subsection{Scaling and compactness}
We record the scaling of \eqref{eq:pureHardy} and prove a compactness result for rescaled solutions.

The next lemma shows how the Hardy equation behaves under scaling.
\begin{lemma}\label{lem:rescale}
For $\lambda>0$ put $w_{\lambda}(x):=\lambda^{\gamma}w(\lambda x)$. If $w$ solves \eqref{eq:pureHardy}
in $\R^{N}\setminus\{0\}$ then so does $w_{\lambda}$, and if $w$ satisfies the bound $c|x|^{-\gamma}\le w\le C|x|^{-\gamma}$, then  $w_{\lambda}$ satisfies the same bound
$c|x|^{-\gamma}\le w_{\lambda}\le C|x|^{-\gamma}$. On the cylinder, this scaling corresponds to a translation in the $t$-variable. More precisely, if $\lambda=e^{-t_\lambda}$, then $w_\lambda$ corresponds to $v(t+t_\lambda,\theta)$.
\end{lemma}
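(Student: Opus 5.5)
The plan is to verify the two claims of Lemma~\ref{lem:rescale} by direct change of variables, and then read off the cylinder interpretation from Lemma~\ref{lem:cyl}.

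\textbf{Equation.} Fix $\lambda>0$ and $x\neq0$. In the principal value defining $\pLap w_\lambda(x)$ we substitute $y=z/\lambda$. Then $\dy=\lambda^{-N}\dz$ and $|x-y|^{-N-sp}=\lambda^{N+sp}|\lambda x-z|^{-N-sp}$. The differences transform as $w_\lambda(x)-w_\lambda(y)=\lambda^{\gamma}\big(w(\lambda x)-w(z)\big)$. Hence
\[
\pLap w_\lambda(x)=\lambda^{\gamma(p-1)+sp}\,\pLap w(\lambda x)=\lambda^{\gamma(p-1)+sp}\mu|\lambda x|^{-sp}w(\lambda x)^{p-1}=\mu|x|^{-sp}w_\lambda(x)^{p-1}.
\]
The excluded balls $B_\eps(x)$ are mapped to balls $B_{\lambda\eps}(\lambda x)$, so the principal value is preserved. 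In the weak formulation \eqref{eq:weakdef} we test with $\varphi(x)=\phi(\lambda x)$ and use the same substitution in both variables. Since $\varphi\in\cc(\R^N\setminus\{0\})$ if and only if $\phi$ is, this shows that $w_\lambda$ is a weak solution. The membership $w_\lambda\in L^{p-1}_{sp}(\R^N)$ and $w_\lambda\in W^{s,p}_{\mathrm{loc}}(\R^N\setminus\{0\})$ is preserved by dilation. For the tail space this holds because $(1+|y|)^{-N-sp}$ and $(1+|y|/\lambda)^{-N-sp}$ are comparable.

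\textbf{Bound.} The two-sided bound follows pointwise:
\[
c|x|^{-\gamma}=\lambda^{\gamma}c|\lambda x|^{-\gamma}\le\lambda^\gamma w(\lambda x)\le\lambda^{\gamma}C|\lambda x|^{-\gamma}=C|x|^{-\gamma}.
\]

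\textbf{Cylinder.} We use $v(t,\theta)=e^{-\gamma t}w(e^{-t}\theta)$ from Lemma~\ref{lem:cyl} and write $\lambda=e^{-t_\lambda}$. Then
\[
v_\lambda(t,\theta)=e^{-\gamma t}w_\lambda(e^{-t}\theta)=e^{-\gamma t}e^{-\gamma t_\lambda}w(e^{-(t+t_\lambda)}\theta)=v(t+t_\lambda,\theta).
\]
This is consistent with the translation invariance of $A_\gamma$.

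No step is a real obstacle. The only point needing care is that the principal value and the weak formulation commute with dilation, and this holds because the kernel is homogeneous of degree $-N-sp$.
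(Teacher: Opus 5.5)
Your proof is correct and follows the same route as the paper. The paper likewise combines the scaling $\pLap\big(w(\lambda\,\cdot)\big)(x)=\lambda^{sp}(\pLap w)(\lambda x)$ with the homogeneity $\pLap(\kappa u)=\kappa^{p-1}\pLap u$, and uses $\lambda^{\gamma}|\lambda x|^{-\gamma}=|x|^{-\gamma}$ for the bound; you additionally check the weak formulation and the cylinder identity $v_\lambda(t,\theta)=v(t+t_\lambda,\theta)$ explicitly, which the paper leaves implicit.
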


\begin{proof}
Since $\pLap\big(w(\lambda \cdot)\big)(x)=\lambda^{sp}\big(\pLap w\big)(\lambda x)$ and
$\pLap(\kappa u)=\kappa^{p-1}\pLap u$ for $\kappa>0$,
\begin{equation*}
     \pLap w_{\lambda}(x)=\lambda^{\gamma(p-1)+sp}\big(\pLap w\big)(\lambda x)
  =\lambda^{\gamma(p-1)+sp}\mu|\lambda x|^{-sp}w(\lambda x)^{p-1}=\mu|x|^{-sp}w_{\lambda}(x)^{p-1}.
\end{equation*}
The bound is preserved because $\lambda^{\gamma}|\lambda x|^{-\gamma}=|x|^{-\gamma}$.
\end{proof}

The following lemma shows that the family produced by Lemma~\ref{lem:rescale} is precompact.

\begin{lemma}\label{lem:compact}
Let $\{w_{j}\}$ be positive continuous weak solutions of \eqref{eq:pureHardy} in $\R^{N}\setminus\{0\}$, such that for all $j$, $c|x|^{-\gamma}\le w_{j}\le C|x|^{-\gamma}$ with some $c,C$. Then a subsequence of $\{w_{j}\}$ converges locally uniformly on $\R^{N}\setminus\{0\}$ to a positive continuous weak solution
$w_{\infty}$ of \eqref{eq:pureHardy} with the same bound. Equivalently, if $v$ weakly solves \eqref{eq:cyl} with $c\le v\le C$ and $\{ t_{j}\} \subset\R$, then a subsequence of $v(\cdot+t_{j},\cdot)$ converges locally uniformly to a solution $\tilde v$ of \eqref{eq:cyl} with $c\le\tilde v\le C$.
\end{lemma}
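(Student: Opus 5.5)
The plan is to obtain local H\"older equicontinuity of $\{w_j\}$ on compact subsets of $\R^N\setminus\{0\}$, extract a subsequence by Arzel\`a--Ascoli, and pass to the limit in the weak formulation.

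\emph{Step 1: uniform local bounds.} Fix a compact annulus $K=\{a\le|x|\le b\}$ with $0<a<b$. On the slightly larger annulus $K'=\{a/2\le|x|\le 2b\}$ the two-sided bound gives $c(2b)^{-\gamma}\le w_j\le C(a/2)^{-\gamma}$, uniformly in $j$. The right-hand side $\mu|x|^{-sp}w_j^{p-1}$ is therefore bounded in $L^\infty(K')$, uniformly in $j$. The tails are also uniform: since $w_j\le C|x|^{-\gamma}$ and $\gamma(p-1)<N$ (because $\gamma<\frac{N}{p-1}$), the quantities $\int(1+|y|)^{-N-sp}w_j^{p-1}\dy$ are bounded independently of $j$. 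The same holds for the localized tails centered at points of $K$.

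\emph{Step 2: H\"older estimates and extraction.} I would invoke the interior H\"older estimates of \cite{DKP16} (or \cite{BLS18}) for $\pLap w=f$ with $f\in L^\infty$. These give $[w_j]_{C^{0,\alpha}(K)}\le C_K$, with $C_K$ depending only on the $L^\infty(K')$ bounds, the tail bounds and $\|f_j\|_\infty$, hence independent of $j$. A Caccioppoli inequality, obtained by testing with $w_j\phi^p$, likewise gives a uniform bound on $[w_j]_{W^{s,p}}$ over compacts in $\R^N\setminus\{0\}$. Arzel\`a--Ascoli and a diagonal argument over an exhaustion by annuli then give a subsequence converging locally uniformly to some $w_\infty$. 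The limit is continuous and satisfies $c|x|^{-\gamma}\le w_\infty\le C|x|^{-\gamma}$, so it is positive.

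\emph{Step 3: passing to the limit.} This is the main obstacle. Fix $\varphi\in\cc(\R^N\setminus\{0\})$ with support in $K$. We need
\[
\iint \Jp(w_j(x)-w_j(y))(\varphi(x)-\varphi(y))\,|x-y|^{-N-sp}\dxy \;\longrightarrow\; \iint \Jp(w_\infty(x)-w_\infty(y))(\varphi(x)-\varphi(y))\,|x-y|^{-N-sp}\dxy .
\]
I would split $\R^{2N}$ into two regions.

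On $K'\times K'$, pointwise convergence holds. For equi-integrability, H\"older's inequality bounds the integral over near-diagonal pairs with $|x-y|<\delta$ by $[w_j]_{W^{s,p}(K')}^{p-1}$ times $\big(\iint_{|x-y|<\delta}|\varphi(x)-\varphi(y)|^p|x-y|^{-N-sp}\big)^{1/p}$, which is small uniformly in $j$. Away from the diagonal the integrand is bounded, so dominated convergence applies.

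On pairs with one point $x\in K$ and the other $y\notin K'$, the integrand is bounded by
\[
C\big(1+|y|^{-\gamma(p-1)}\big)(1+|y|)^{-N-sp},
\]
which is integrable near $0$ because $\gamma(p-1)<N$ and integrable at infinity. Dominated convergence again applies.

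The right-hand side converges by uniform convergence on $K$. Hence $w_\infty$ is a weak solution of \eqref{eq:pureHardy}.

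\emph{Step 4: cylinder version.} The cylinder statement follows from Lemma~\ref{lem:cyl} and Lemma~\ref{lem:rescale}. The translates $v(\cdot+t_j,\cdot)$ correspond to the rescalings $w_{\lambda_j}$, and these satisfy the same bounds, so Steps 1--3 apply to them. Locally uniform convergence on $\R^N\setminus\{0\}$ translates into locally uniform convergence on $\R\times\Sph^{N-1}$, and the limit $\tilde v$ satisfies $c\le\tilde v\le C$.
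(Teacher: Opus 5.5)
Your proposal is correct and follows essentially the same route as the paper. Both arguments use uniform local $L^\infty$ and tail bounds, a Caccioppoli inequality for uniform local $W^{s,p}$ bounds, interior H\"older estimates with Arzel\`a--Ascoli and a diagonal argument, and then split the weak formulation into a local part and a part with one point far from $\operatorname{supp}\varphi$, the latter handled by dominated convergence. The only variation is on the local part: the paper uses that $\Jp(w_j(x)-w_j(y))|x-y|^{-(N+sp)/p'}$ is bounded in $L^{p'}$ and converges a.e., hence weakly, whereas you cut off near the diagonal and apply H\"older's inequality. For your version you should state explicitly that Fatou's lemma transfers the uniform $W^{s,p}$ bound to $w_\infty$. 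This is needed both to make the near-diagonal part of the limit integral small and to get $w_\infty\in W^{s,p}_{\mathrm{loc}}$, which the definition of weak solution requires.
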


\begin{proof}
We observe that Lemma~\ref{lem:rescale} identifies the two formulations. For $\rho>1$, denote
$A_{\rho}:=B_{\rho}\setminus\overline{B_{1/\rho}}$, and define $f_{j}:=\mu|x|^{-sp}w_{j}^{p-1}$. Constants below may depend on $N,p,s,\mu,\gamma,c,C,\rho$, but not on $j$.
From the hypothesis, 
\begin{align}\label{uni-bound-1}
    \|w_{j}\|_{L^{\infty}(A_{4\rho})}\le C(4\rho)^{\gamma} \text{ and } \|f_{j}\|_{L^{\infty}(A_{4\rho})}\le\mu(4\rho)^{sp}\big(C(4\rho)^{\gamma}\big)^{p-1}.
\end{align}
For the tails, if $|x_{0}|\in[\tfrac{1}{2\rho},2\rho]$ and $0<r<\tfrac{1}{8\rho}$, then
\begin{equation}\label{uni-bound-2}
    \int_{\R^{N}\setminus B_{r}(x_{0})}\frac{w_{j}(y)^{p-1}}{|x_{0}-y|^{N+sp}}\dy
  \le C^{p-1}\int_{\R^{N}\setminus B_{r}(x_{0})}\frac{|y|^{-\gamma(p-1)}}{|x_{0}-y|^{N+sp}}\dy
  =:T(\rho,r) < \infty, 
\end{equation}
where finiteness of $T(\rho,r)$ comes since $\gamma(p-1)<N$ near the origin and $\gamma(p-1)+sp>0$ at infinity.
We observe that for every $y \in B_{2r}(x_{0})$, $|y| \le |x_0| + |y-x_0| \le 2\rho + 1/4\rho < 4\rho$ and $|y| \ge |x_0| -|x_0-y| \ge \frac12\rho-1/4\rho = 1/4\rho$, which implies $B_{2r}(x_{0})\subset A_{4\rho}$.
Let $\eta\in \cc(B_{2r}(x_{0}))$ with
$0\le\eta\le1$, $\eta\equiv1$ on $B_{r}(x_{0})$, $|\nabla\eta|\le2/r$. The Caccioppoli inequality for
weak solutions of $\pLap u=f$ with $f\in L^{\infty}$ (see \cite[Theorem~1.4]{DKP16} and
\cite[S3]{Coz17}) and \eqref{uni-bound-1}, \eqref{uni-bound-2} give
$$\iint_{B_{2r}\times B_{2r}} \frac{|w_{j}(x)\eta(x)-w_{j}(y)\eta(y)|^{p}}{|x-y|^{N+sp}}\dxy\le\Lambda_{1},$$
where $\Lambda_{1}$ does not depend on $j$. Covering $\overline{A_{2\rho}}$ by
finitely many such balls, we obtain 
\begin{equation}\label{eq:unifWsp}
  \sup_{j}\Big([w_{j}]^{p}_{W^{s,p}(A_{2\rho})}+\|w_{j}\|^{p}_{L^{p}(A_{2\rho})}\Big)=:\Lambda_{2}<\infty .
\end{equation}
By the interior H\"older estimates of \cite{DKP16}, there exists $\beta\in(0,1)$ such that $\|w_{j}\|_{C^{0,\beta}(\overline{A_{\rho}})}\le\Lambda_{3}$.
By the Arzel\`{a}--Ascoli theorem and a diagonal argument, a subsequence of $\{w_j\}$ converges
uniformly on compact subsets of $\R^{N}\setminus\{0\}$ to a continuous function $w_{\infty}$ with the same
two-sided bound.
We put $Q_{\rho}:=A_{2\rho}\times A_{2\rho}$, and
\[
  D_{j}(x,y):=\frac{w_{j}(x)-w_{j}(y)}{|x-y|^{\frac{N+sp}{p}}}, \quad
  \Theta_{j}(x,y):=|D_{j}(x,y)|^{p-2}D_{j}(x,y)=\frac{\Jp\big(w_{j}(x)-w_{j}(y)\big)}{|x-y|^{\frac{N+sp}{p'}}} .
\]
By \eqref{eq:unifWsp}, $\|D_{j}\|^{p}_{L^{p}(Q_{\rho})}\le\Lambda_{2}$, and hence
$\|\Theta_{j}\|^{p'}_{L^{p'}(Q_{\rho})}\le\Lambda_{2}$. Since $w_j \ra w_{\infty}$ uniformly on
$\overline{A_{2\rho}}$, we get $D_{j}\to D_{\infty}$ and hence $\Theta_{j}\to\Theta_{\infty}$ almost
everywhere on $Q_{\rho}$. Since $|Q_{\rho}|<\infty$, a sequence bounded in $L^{p'}(Q_{\rho})$
and convergent almost everywhere converges weakly in $L^{p'}(Q_{\rho})$. Hence
\begin{equation}\label{eq:weakTheta}
  \Theta_{j}\rightharpoonup \Theta_{\infty} \text{ in } L^{p'}(Q_{\rho}).
\end{equation}
So weak compactness in $L^{p'}$ is enough, and we need no pointwise bound.
We fix $\phi\in \cc(A_{\rho})$. Since the integrand
$\Jp(w_{j}(x)-w_{j}(y))(\phi(x)-\phi(y))|x-y|^{-N-sp}$ is symmetric in $(x,y)$ and $\phi$ vanishes
outside $A_{\rho}$, the weak formulation \eqref{eq:weakdef} splits as
\begin{equation*}
\frac12\underbrace{\iint_{Q_{\rho}}\Theta_{j} d_{\phi}\dxy}_{\mathrm{I}_{j}}
+\underbrace{\iint_{A_{\rho} \times \R^{N}\setminus A_{2\rho}}
\frac{\Jp\big(w_{j}(x)-w_{j}(y)\big) \phi(x)}{|x-y|^{N+sp}}\dy\dx}_{\mathrm{II}_{j}}
= \int_{A_{\rho}}f_{j} \phi\dx,
\end{equation*}
with $d_{\phi}(x,y):=\big(\phi(x)-\phi(y)\big)|x-y|^{-(N+sp)/p}$. As $\phi$ is Lipschitz and $sp<p$,
$d_{\phi}\in L^{p}(Q_{\rho})$, so \eqref{eq:weakTheta} gives
$$\mathrm{I}_{j}\underset{j \ra \infty}{\longrightarrow} \iint_{Q_{\rho}}\Theta_{\infty}d_{\phi} \dxy.$$ 
In $\mathrm{II}_{j}$ there is no singularity since $x\in\operatorname{supp}\phi$ and $y\notin A_{2\rho}$ imply $|x-y|\ge d(\rho)>0$. Moreover
$\big|\Jp(w_{j}(x)-w_{j}(y))\phi(x)\big|\le\Lambda_{4}\big(1+|y|^{-\gamma(p-1)}\big)$. This bound is independent of $j$ and 
is integrable against $|x-y|^{-N-sp}$. Hence dominated convergence applies and we get 
\begin{align*}
\mathrm{II}_{j} \underset{j \ra \infty}{\longrightarrow} \iint_{A_{\rho} \times \R^{N}\setminus A_{2\rho}}
\frac{\Jp\big(w_{\infty}(x)-w_{\infty}(y)\big)}{|x-y|^{N+sp}}\phi(x)\dy\dx.
\end{align*}
Finally, $f_{j}\to f_{\infty}$ uniformly on $\operatorname{supp}\phi$.
By weak lower semicontinuity in \eqref{eq:unifWsp}, $w_{\infty}\in W^{s,p}(A_{2\rho})$ for
every $\rho$, and $w_{\infty}\in L^{p-1}_{sp}(\R^{N})$ by the two-sided bound. Passing to the limit above
gives \eqref{eq:weakdef} for $w_{\infty}$ against $\phi$, where $\rho$ and $\phi$ are arbitrary.
\end{proof}

\subsection{A blow-up dichotomy} Comparison gives no lower bound. Indeed, a power subsolution $A|x|^{-\gamma}$ of \eqref{eq:pureHardy} needs $\gamma\notin[\gamma_-,\gamma_+]$. If $\gamma>\gamma_+$, it is too large at the origin. If $\gamma<\gamma_-$, it decays too slowly at infinity. Also, the minimum of two subsolutions is not a subsolution. The next result describes what happens if a lower bound fails. For radial solutions, lower bounds are proved in Section~\ref{sec:optimal}.
\begin{proposition}\label{prop:dichotomy}
Let $u$ be a positive weak solution of \eqref{eq:P}, continuous on $\R^{N}\setminus\{0\}$ and
satisfying \eqref{M}, \eqref{E0} and \eqref{E1}, and let $r_{0},R_{0},C$ be as in
Theorem~\ref{thm:sharpupper}. For $0<r<r_{0}$ set $u_{r}(y):=r^{\gamma_{-}}u(ry)$, so
that
\begin{equation}\label{eq:blowup}
  \pLap u_{r}=\mu|y|^{-sp}u_{r}^{p-1}+r^{\kappa_{0}}u_{r}^{p_{s}^{*}-1},
  \quad \kappa_{0}=P_s(\gamma^{*}-\gamma_{-})>0,
\end{equation}
and $u_{r}\le C'|y|^{-\gamma_{-}}$ on $\R^{N}\setminus\{0\}$ uniformly in $r$. Then one of the
following holds.
\begin{enumerate}
\item[\rm(a)] $\displaystyle\liminf_{x\to0}|x|^{\gamma_{-}}u(x)>0$, so that $u\asymp|x|^{-\gamma_{-}}$
near the origin.
\item[\rm(b)] there is $r_{j}\downarrow0$ with $u_{r_{j}}\to0$ locally uniformly on
$\R^{N}\setminus\{0\}$.
\end{enumerate}
\end{proposition}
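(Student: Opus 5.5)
The plan has three steps: derive the rescaled equation, transfer the upper bound to the rescaled family, and then use compactness to choose between (a) and (b).

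\textbf{Step 1: the rescaled equation.} I first check \eqref{eq:blowup} by scaling, as in Lemma~\ref{lem:rescale}. For $u_r(y)=r^{\gamma_-}u(ry)$,
\[
\pLap u_r(y)=r^{\gamma_-(p-1)+sp}\big(\pLap u\big)(ry).
\]
Inserting \eqref{eq:P}, the Hardy term becomes $\mu|y|^{-sp}u_r^{p-1}$. The critical term becomes
\[
r^{\gamma_-(p-1)+sp-\gamma_-(p_s^*-1)}\,u_r^{p_s^*-1},
\]
and by \eqref{eq:Xiid} this exponent equals $P_s(\gamma^*-\gamma_-)=\kappa_0>0$. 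The weak formulation transforms in the same way after the change of variables in the test function.

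\textbf{Step 2: the uniform upper bound.} For $|y|<r_0/r$, Theorem~\ref{thm:sharpupper} gives $u_r(y)\le C|y|^{-\gamma_-}$. For $|ry|\ge r_0$, I use \eqref{M} together with continuity, which give $u\le C_1|x|^{-\gamma^*}$ for $|x|\ge r_0$. Then
\[
u_r(y)\le C_1r^{\gamma_--\gamma^*}|y|^{-\gamma^*}=C_1|y|^{-\gamma_-}(r|y|)^{\gamma_--\gamma^*}\le C_1r_0^{\gamma_--\gamma^*}|y|^{-\gamma_-}.
\]
Hence $u_r\le C'|y|^{-\gamma_-}$ uniformly in $r$.

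\textbf{Step 3: the dichotomy.} Suppose (a) fails. Then there are $x_j\to0$ with $|x_j|^{\gamma_-}u(x_j)\to0$. Put $r_j:=|x_j|$ and $\theta_j:=x_j/r_j$, so that $u_{r_j}(\theta_j)\to0$ and, after a subsequence, $\theta_j\to\theta_0\in\Sph^{N-1}$.

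I then run the compactness argument of Lemma~\ref{lem:compact}. The proof there uses only the upper bound, through the tail bound \eqref{uni-bound-2}, and a local $L^\infty$ right-hand side. In the present setting the right-hand side is $\mu|y|^{-sp}u_{r}^{p-1}+r^{\kappa_0}u_{r}^{p_s^*-1}$, which is bounded on each annulus uniformly for $r\le r_0$. So the Caccioppoli inequality and the H\"older estimates of \cite{DKP16} apply uniformly. A further subsequence converges locally uniformly on $\R^N\setminus\{0\}$ to a continuous $w_\infty\ge0$ with $w_\infty\le C'|y|^{-\gamma_-}$. Passing to the limit in the weak formulation exactly as in Lemma~\ref{lem:compact} (the extra term vanishes since $r_j^{\kappa_0}\to0$), $w_\infty$ is a weak solution of \eqref{eq:pureHardy} in $\R^N\setminus\{0\}$. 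Moreover $w_\infty(\theta_0)=0$.

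To conclude (b), I use the strong minimum principle for nonnegative weak supersolutions. Since $w_\infty\ge0$, it satisfies $\pLap w_\infty\ge0$ in $\R^N\setminus\{0\}$. At a zero of a continuous nonnegative function the nonlocal operator equals $-\int w_\infty(y)|\theta_0-y|^{-N-sp}\dy\le0$, with strict inequality unless $w_\infty\equiv0$; I would justify this in the weak or viscosity sense via \cite{KKL19}. Hence $w_\infty\equiv0$, which is (b) along the sequence $r_j$.

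The main obstacle is the strong minimum principle for merely nonnegative weak supersolutions on the punctured space. I would prove it using the weak Harnack inequality of \cite{DKP14} in a ball around $\theta_0$, together with the tail term, which is positive if $w_\infty\not\equiv0$.
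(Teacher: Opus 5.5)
Your proof is correct and follows the paper's route: scaling to \eqref{eq:blowup}, a uniform bound for $u_r$, the compactness argument of Lemma~\ref{lem:compact} applied to the rescaled equation, and a strong minimum principle for the limiting Hardy solution. There are two minor differences. You get the bound for $r|y|\ge r_0$ from \eqref{M} alone, where the paper invokes \eqref{E1}. You also pass to the limit before using the weak Harnack inequality, so a single zero $w_\infty(\theta_0)=0$ of the limit replaces the paper's step of showing that $u_{r_j}$ vanishes uniformly on an annulus via weak Harnack and a covering argument.
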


\begin{proof}
Equation \eqref{eq:blowup} follows from the scaling in Lemma~\ref{lem:rescale}. By \eqref{eq:Xiid} the critical term is multiplied by $r^{\gamma_{-}(p-1)+sp-\gamma_{-}(p_{s}^{*}-1)}=r^{P_s(\gamma^{*}-\gamma_{-})}$.
For the uniform bound, split according to the size of $r|y|$. If $r|y|<r_{0}$, then
Theorem~\ref{thm:sharpupper} gives $u(ry)\le C(r|y|)^{-\gamma_{-}}$, hence
$u_{r}(y)\le C|y|^{-\gamma_{-}}$. If $r_{0}\le r|y|\le R_{0}$, then $u$ is bounded on that annulus by
continuity while $(r|y|)^{-\gamma_{-}}$ is bounded below there, so the same bound holds after enlarging
the constant. If $r|y|>R_{0}$, then Theorem~\ref{thm:sharpupper} gives
$u(ry)\le C(r|y|)^{-\gamma_{+}}\le CR_{0}^{\gamma_{-}-\gamma_{+}}(r|y|)^{-\gamma_{-}}$, because
$\gamma_{-}<\gamma_{+}$. In all three cases $u_{r}(y)\le C'|y|^{-\gamma_{-}}$ with $C'$ independent of
$r$, by using \eqref{E1}.

Suppose (a) fails. Then there exists $r_{j}\downarrow0$ and $|y_{j}|=1$ such that $u_{r_{j}}(y_{j})\to0$. Since
$u_{r_{j}}\ge0$ is a weak supersolution of $\pLap w\ge0$ on $\R^{N}\setminus\{0\}$, the weak Harnack
inequality of \cite[Theorem ~1.2]{DKP14} applies on balls $B_{2\sigma}(y_{j})$ that stay away from the origin.

Hence, for some admissible $t>0$, 
\[
  \Big(\fint_{B_{\sigma}(y_{j})}u_{r_{j}}^{t}\Big)^{1/t}\le C\inf_{B_{2\sigma}(y_{j})}u_{r_{j}}
  \le Cu_{r_{j}}(y_{j})\to0 .
\]
The constant $C$ is independent of $j$ since the coefficients in \eqref{eq:blowup} are uniformly bounded on fixed annuli. The
uniform H\"older bounds of \cite{DKP16,BLS18,GL24} then give uniform convergence to zero in a neighbourhood of $y_{j}$. 
By covering the annulus $\{\tfrac12<|y|<2\}$ with finitely many overlapping balls, we obtain
$u_{r_{j}}\to0$ uniformly on $\{\tfrac12<|y|<2\}$.

The compactness argument of Lemma~\ref{lem:compact} also applies to \eqref{eq:blowup}. Indeed, the extra term $r^{\kappa_{0}}u_{r}^{p_{s}^{*}-1}$ tends to zero uniformly on compact annuli. Therefore, any
limit solves the pure Hardy equation. Hence, after passing to a subsequence $u_{r_j} \to u_{\infty}$ locally uniformly in $\mathbb{R}^N \setminus \{0\}$ where $u_{\infty}$ is a weak solution of \eqref{eq:pureHardy}. Moreover, $u_{\infty}\le C'|y|^{-\gamma_{-}}$ and $u_{\infty}\equiv0$ on $\{\tfrac12<|y|<2\}$. We can write \eqref{eq:pureHardy} as $\pLap u_{\infty}+c u_{\infty}^{p-1}=0$ with
$c=-\mu|y|^{-sp}\in L^{\infty}_{\mathrm{loc}}(\R^{N}\setminus\{0\})$. Applying the strong minimum principle of
\cite{DPQ17} gives $u_{\infty}\equiv0$ in $\R^{N}\setminus\{0\}$, which is (b).
\end{proof}

\section{Pointwise coercivity estimates}\label{app:coerc}
Here we prove the pointwise inequalities used in Sections~\ref{sec:verify} and \ref{sec:scheme}. We keep the notation of Section~\ref{sec:route}. The functions $G_M$, $H_M$ are defined in \eqref{eq:GH}.

\subsubsection{\textbf{The pure-power case}} The inequality \eqref{eq:routeC} holds for pure powers. This is the
case $G(X)=X^{m}$, $H(X)=X^{\frac{r}{p}}$ of \eqref{eq:routebracket}, for which $X^{p-1}G(X)=X^{r}$.

\begin{lemma}\label{lem:purepower}
Let $p>2$, $r\ge p$ and set $m:=r-p+1$. There is $c>0$ such that
\begin{equation}\label{eq:purepower}
  \Jp(aX-bY)\big(aX^{m}-bY^{m}\big)-\Jp(a-b)\big(aX^{r}-bY^{r}\big)
  \ge c(ab)^{\frac{p}{2}}\left|X^{\frac{r}{p}}-Y^{\frac{r}{p}}\right|^{p},
\end{equation}
for all $a,b>0$ and $X,Y\ge0$.
\end{lemma}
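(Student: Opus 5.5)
The plan is to reduce \eqref{eq:purepower} to a two-variable inequality by homogeneity and symmetry, and then prove it by a compactness argument together with an explicit analysis of the degenerate regimes. Both sides are invariant under the exchange $(a,X)\leftrightarrow(b,Y)$. Both are homogeneous of degree $p$ in $(a,b)$ and of degree $r$ in $(X,Y)$; this uses $m+p-1=r$. So we may assume $a\ge b$ and put $t:=b/a\in(0,1]$.

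The boundary cases come first.
\begin{enumerate}
\item[(1)] If $Y=0$, the left side equals $a^{p}X^{r}\big(1-(1-t)^{p-1}\big)$. This is at least $c\,a^{p}t\,X^{r}\ge c\,(ab)^{p/2}X^{r}$, because $t\le t^{p/2}$ fails but $t\ge t^{p/2}$ holds for $t\le1$ and $p\ge2$.
\item[(2)] If $X=0$, the left side equals $b\,(bY)^{p-1}Y^{m}-(a-b)^{p-1}bY^{r}=a^{p}Y^{r}t\big(t^{p-1}+(1-t)^{p-1}\big)\ge c\,t\,a^{p}Y^{r}$, which is again enough.
\end{enumerate}
Otherwise we normalize $Y=1$ and $X=z>0$. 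The claim becomes
\[
F(t,z):=\Jp(z-t)\,(z^{m}-t)-(1-t)^{p-1}(z^{r}-t)\ \ge\ c\,t^{p/2}\,\big|z^{r/p}-1\big|^{p},\qquad t\in(0,1],\ z>0 .
\]
For $r=p$ ($m=1$) this is exactly Lemma~\ref{lem:gspoint}, i.e.\ \cite[(2.16)]{FS08} after the substitution used in its proof. For general $r$ I would compare $F(t,z)$ with that base case.

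The key identity is $F(t,1)=0$, and $z=1$ is a double zero of $F(t,\cdot)$. I would verify $\partial_zF(t,1)=0$ by direct differentiation. Next I would check that $\partial_z^2F(t,1)\ge c\,t\,(1-t)^{p-2}+c\,t^{p/2}$, with $c$ depending on $r$. The right side of the claim behaves like $(r/p)^{p}t^{p/2}|z-1|^{p}$ near $z=1$. Since $p>2$, the quadratic lower bound dominates the $p$-th power locally, uniformly in $t\in(0,1]$.

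Away from $z=1$ the two sides are compared directly.
\begin{enumerate}
\item[(a)] As $z\to\infty$, the leading terms are $z^{p-1+m}=z^{r}$ and $(1-t)^{p-1}z^{r}$. Their difference is $\big(1-(1-t)^{p-1}\big)z^{r}\ge c\,t\,z^{r}$. The lower-order corrections are controlled by \eqref{eq:incr}.
\item[(b)] As $z\to0$, one gets $F\to t\big(t^{p-1}+(1-t)^{p-1}\big)$, matching case (2) above.
\item[(c)] At $t=1$, $F=|z-1|^{p-2}(z-1)(z^{m}-1)$. The H\"older argument of Claim~1 in the proof of Theorem~\ref{thm:integ} gives $F\ge c\,|z^{r/p}-1|^{p}$.
\end{enumerate}
On the remaining compact region $\{t\in[\delta,1],\ z\in[\delta,\delta^{-1}],\ |z-1|\ge\delta\}$, positivity of $F$ gives the bound by continuity.

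The main obstacle is uniformity as $t\downarrow0$ with $z$ in a fixed range. There $F$ and the target both vanish, but at different orders: the target is of order $t^{p/2}$, and one needs $F\gtrsim t^{p/2}$ and not merely $F>0$. I would first write
\[
F(t,z)=\big[\Jp(z-t)-(1-t)^{p-1}z^{p-1}\big](z^{m}-t)+(1-t)^{p-1}t\,(z^{p-1}-1),
\]
which is obtained by adding and subtracting $(1-t)^{p-1}z^{p-1}(z^{m}-t)$. I would then expand in $t$: $\Jp(z-t)-(1-t)^{p-1}z^{p-1}=O(t)$ with a sign-definite leading coefficient $(p-1)z^{p-2}(z-1)$. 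Summing, the order-$t$ coefficient of $F$ is
\[
(p-1)z^{p-2}(z-1)z^{m}+z^{p-1}-1-z^{r}+z^{p-1}\cdot 0 ,
\]
to be computed carefully; I expect it to be $\gtrsim|z-1|^{2}\min\{1,z^{r-2}\}$. That would give $F\ge c\,t\,|z-1|^{2}\ge c\,t^{p/2}|z-1|^{p}$ on bounded $z$-ranges. If this first-order coefficient turns out not to be sign-definite, I would instead try to reduce to the case $r=p$. The idea is to write $F$ as an integral over the exponent and use the $r=p$ inequality \eqref{eq:FSpoint} applied with $A=z^{r/p}$ and an effective $t$. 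I regard this step as the one most likely to need adjustment.
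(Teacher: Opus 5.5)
\textbf{There is a genuine gap, and it sits at the corner $(t,z)\to(1,1)$, not at $t\downarrow0$.} Your claimed bound $\partial_z^2F(t,1)\ge c\,t(1-t)^{p-2}+c\,t^{p/2}$ is false. Differentiating twice and using $r=m+p-1$ gives exactly
\[
\partial_z^2F(t,1)=(p-1)(2r-p)\,t(1-t)^{p-2},
\]
which tends to $0$ as $t\uparrow1$. This agrees with your own item (c): $F(1,z)=|z-1|^{p-2}(z-1)(z^m-1)\sim m|z-1|^{p}$ vanishes to order $p>2$ at $z=1$, not to order $2$.

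So the neighbourhood of $z=1$ on which the quadratic Taylor term controls $F(t,\cdot)$ shrinks like $1-t$. Your local argument is therefore not uniform in $t\in(0,1]$. In the transition regime $|z-1|\sim 1-t$, neither that argument nor (c) applies. Continuity cannot close it either, because both sides vanish at $(1,1)$. This is exactly the regime the paper handles with the blow-up variable $\lambda=(1-t)/|z-1|$ and the positivity of the limiting profiles $F^{\pm}$ (Step~2 and Case~2 of Step~3).

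A second piece is also missing. Your compactness step on $\{t\in[\delta,1],\ z\in[\delta,\delta^{-1}],\ |z-1|\ge\delta\}$ assumes $F(t,z)>0$ for $z\neq1$, which you never prove; this is the paper's Step~1. A minor point: in your decomposition the last term should be $(1-t)^{p-1}t\,(1-z^{p-1})$. The order-$t$ coefficient is then $1-z^{p-1}+(p-1)z^{r-1}(z-1)$.

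\textbf{Your fallback idea of comparing with $r=p$ does repair everything,} but through an exact identity rather than an integral over the exponent. Write $F_r$ for your $F$ and use $z^r-z^p=z^{p-1}(z^m-z)$. Then
\[
F_r(t,z)-F_p(t,z)=(z^m-z)\Bigl[\Jp(z-t)-\bigl((1-t)z\bigr)^{p-1}\Bigr].
\]
Since $(z-t)-(1-t)z=t(z-1)$ and $m\ge1$, both factors have the sign of $z-1$. Also $F_p(t,z)=\mathfrak{B}(z,t,1,t)$. Hence Lemma~\ref{lem:gspoint} gives $F_r\ge F_p\ge c_p\,t^{p/2}|z-1|^p$.

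For $0\le z\le 2$ this already gives the claim, because $|z^{r/p}-1|\le \tfrac rp\,2^{r/p-1}|z-1|$ there. For $z\ge2$, use $(z-t)^{p-1}(z^m-t)\ge z^r(1-\tfrac t2)^p$ to get
\[
F_r\ge z^r\bigl[(1-\tfrac t2)^p-(1-t)^{p-1}\bigr]\ge c\,t\,z^r\ge c\,t^{p/2}|z^{r/p}-1|^p .
\]
The middle inequality, that the bracket is at least $ct$ on $(0,1]$, is where $p>2$ is used. This is the same comparison the paper uses in the mixed case of Proposition~\ref{prop:trunc}. It delivers positivity, the corner and the small-$t$ regime at once, so the three-step compactness argument becomes unnecessary.
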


\begin{proof}
Both sides of \eqref{eq:purepower} are unchanged under the exchange
$(a,X)\leftrightarrow(b,Y)$, so we may assume $a\ge b$. Both sides are continuous in $(X,Y)$ on
$[0,\infty)^{2}$, so we only need to prove \eqref{eq:purepower} for $X,Y>0$ and then let $X\downarrow0$
or $Y\downarrow0$. We assume therefore $X,Y>0$ and put
\[
  t:=\frac ba\in(0,1],\quad A:=\frac XY\in[0,\infty),\quad \alpha:=\frac rp\ge1 .
\]
Since $\Jp(\lambda u)=\lambda^{p-1}\Jp(u)$ for $\lambda>0$ and $p-1+m=r$, the left-hand side of
\eqref{eq:purepower} equals $a^{p}Y^{r}\Theta(t,A)$ and the right-hand side equals
$ca^{p}Y^{r}t^{\frac{p}{2}}|A^{\alpha}-1|^{p}$, where
\begin{equation}\label{eq:Theta}
  \Theta(t,A):=\Jp(A-t)\big(A^{m}-t\big)-(1-t)^{p-1}\big(A^{r}-t\big).
\end{equation}
So \eqref{eq:purepower} is equivalent to
\begin{equation}\label{eq:Qinf}
  \inf\Big\{Q(t,A): 0<t\le1, A\ge0, A\neq1 \Big\}>0,
  \quad Q(t,A):=\frac{\Theta(t,A)}{t^{\frac{p}{2}}|A^{\alpha}-1|^{p}}.
\end{equation}
We use two identities repeatedly. For every $t$ we have
\begin{equation}\label{eq:doublevanish}
  \Theta(t,1)=0,\quad \partial_{A}\Theta(t,1)=0,\quad
  \partial_{A}^{2}\Theta(t,1)=t(1-t)^{p-2}(p-1)(2r-p)>0\ \ (0<t<1),
\end{equation}
so $\Theta(t,\cdot)$ vanishes to second order at $A=1$, uniformly in $t$.
 
\medskip
\noi \textbf{Step 1:} We show $\Theta(t,A)>0$ for $A\neq1$. Differentiating \eqref{eq:Theta} and using
$r-1-(p-2)=m$,
\[
  \partial_{A}\Theta(t,A)=A^{r-1}\,\Omega(t,1/A),\quad
  \Omega(t,B):=|1-tB|^{p-2}\,g(t,B)-r(1-t)^{p-1},
\]
with $g(t,B):=r-t(p-1)B^{m}-tmB$. We fix $t$, and note that $\partial_B g(t,B) = -t[m(p-1)B^{m-1}+m]<0$ for $B>0$, and hence $g(t,\cdot)$ is strictly decreasing. Further,
$g(t,1)=r(1-t)\ge0$, while the map $B\mapsto|1-tB|^{p-2}$ is nonincreasing (since $p > 2$) and nonnegative on $(0,1/t]$. Hence on
$(0,1]$ both factors are nonnegative and nonincreasing, so $\Omega(t,\cdot)$ is nonincreasing there. As
$\Omega(t,1)=0$ we get $\Omega(t,B)\ge0$ for $B\in(0,1]$. 
Now let $B\ge1$. As long as $g(t,B)>0$ the same
monotonicity gives $\Omega(t,B)\le\Omega(t,1)=0$. Since
$g(t,1/t)=(p-1)\big(1-t^{1-m}\big)\le0$ because $m\ge1$ and $t\le1$, we have $g(t,\cdot)\le0$ on
$[1/t,\infty)$, where $|1-tB|^{p-2}g(t,B)\le0\le r(1-t)^{p-1}$. So $\Omega(t,B)\le0$ for $B\ge1$.

Consequently, $\partial_{A}\Theta(t,A)\ge0$ for $A\ge1$ and $\le0$ for $0<A\le1$, so $A=1$ minimizes
$\Theta(t,\cdot)$, with value $0$ by \eqref{eq:doublevanish}. The minimum is strict for $A\neq1$. For
$t<1$ because $\Omega(t,\cdot)$ is then strictly decreasing, and for $t=1$ because
$\Theta(1,A)=\Jp(A-1)(A^{m}-1)>0$. Also $\Theta(t,0)=t^{p}+t(1-t)^{p-1}>0$.

\medskip
\noi \textbf{Step 2:} We show that the functions $F^{\pm}$ defined below are positive on $[0,\infty)$.
We put, for $\lambda\ge0$,
\[
  F^{+}(\lambda):=\Jp(\lambda-1)(\lambda-m)-\lambda^{p-1}(\lambda-r),\quad
  F^{-}(\lambda):=(1+\lambda)^{p-1}(m+\lambda)-\lambda^{p-1}(r+\lambda).
\]
Indeed $F^{\pm}(0)=m>0$, and for $\lambda>0$ we have
$F^{\pm}(\lambda)=\lambda^{p}f^{\pm}(1/\lambda)$ with
\[
  f^{+}(s)=\Jp(1-s)(1-ms)-(1-rs),\quad  f^{-}(s)=(1+s)^{p-1}(ms+1)-rs-1,
\]
using $r=m+p-1$. Both satisfy $f^{\pm}(0)=(f^{\pm})'(0)=0$. For $f^{-}$,
\[
  (f^{-})''(s)=(p-1)(p-2)(1+s)^{p-3}(ms+1)+2m(p-1)(1+s)^{p-2}>0 ,
\]
so $f^{-}$ is strictly convex and therefore positive on $(0,\infty)$. Indeed, since $(f^{-})'(s)$ is increasing and $(f^{-})'(0) =0$, we have $(f^{-})'(s) >0$, which again implies $f^{-}$ is strictly increasing. Since $f^{-}(0)=0$, we must have $f^{-}>0$ on $(0,\infty)$. For $f^{+}$ we split $[0, \infty)$ into three sub-intervals. On $(0,\frac{1}{m})$, we have $1-s>0$ and $1-ms>0$, and
$(f^{+})''(s)=(p-1)(p-2)(1-s)^{p-3}(1-ms)+2m(p-1)(1-s)^{p-2}>0$, so $f^{+}>0$ on $(0,\frac{1}{m})$. On
$[\frac{1}{m},1]$ write $f^{+}(s)=(rs-1)-(1-s)^{p-1}(ms-1)$. Here $0\le ms-1\le rs-1$ and
$0\le(1-s)^{p-1}<1$, so $f^{+}>0$ on $[\frac{1}{m},1]$. On $(1,\infty)$,
$f^{+}(s)=(s-1)^{p-1}(ms-1)+(rs-1)>0$. This proves Step 2.

\medskip
\noi \textbf{Step 3:} This step finally proves \eqref{eq:Qinf}. Suppose $Q(t_{n},A_{n})\to0$ and pass to a subsequence with $t_{n}\to t_{0}\in[0,1]$ and $A_{n}\to A_{0}\in[0,\infty]$. Now we consider three different cases based on the range of $t_0$.

\vspace{1.0mm}
\noi \emph{\textbf{Case 1}.} In this case, we assume $0<t_{0}<1$. If $A_{0}\in(0,\infty)\setminus\{1\}$ then $\Theta(t_n,A_n) \to\Theta(t_{0},A_{0})>0$
by Step 1 while the denominator tends to a finite positive limit. If $A_{0}=0$ then
$Q(t_n,0) \to t_{0}^{\frac{p}{2}}+t_{0}^{1-\frac{p}{2}}(1-t_{0})^{p-1}>0$. If $A_{0}=\infty$ then 
\begin{align*}
    \frac{\Theta(t_n,A_n)}{A_n^{r}} = \frac{\Jp(A_n-t_n)\big(A_n^{m}-t_n\big)-(1-t_n)^{p-1}\big(A_n^{r}-t_n\big)}{A_n^{r}} \longrightarrow 1-(1-t_0)^{p-1},
\end{align*}
and hence $Q(t_n,A_n) \to \big[1-(1-t_{0})^{p-1}\big]t_{0}^{-\frac{p}{2}}>0$. If $A_{0}=1$ then second-order Taylor expansion of $\Theta(t_n,\cdot)$ gives
\begin{align}\label{taylor-case1-1}
    \Theta(t_n, A) \approx \Theta(t_n, 1) + \partial_A\Theta(t_n, A)\Bigr|_{A=1} (A - 1) + \frac{1}{2}\partial_A^2\Theta(t_n, A)\Bigr|_{A=1} (A - 1)^2,
\end{align}
where, using \eqref{eq:doublevanish}, $\Theta(t_n,1)=0$ and $\partial_{A}\Theta(t_n,1)=0$. Therefore,  
$$\Theta(t_n,A) \approx \frac12\partial^{2}_{A}\Theta(t_n,A)\Bigr|_{A=1}(A-1)^{2}$$ with $\partial^{2}_{A}\Theta(t_n,1)\to
\partial^{2}_{A}\Theta(t_{0},1)>0$. The denominator is $\asymp|A_n-1|^{p}$, since as $A_n \to 1$, the first-order Taylor approximation of the inner function gives $A_n^\alpha - 1 \approx \alpha(A_n-1)$. So
$Q(t_n,A_n)\asymp|A_n-1|^{2-p}\to\infty$ as $A_n \ra 1$, because $p>2$. Therefore, each alternative contradicts $Q(t_n,A_n)\to0$.

\vspace{1.0mm}
\noi\emph{\textbf{Case 2}.} In this case, we assume $t_{0}=1$. For $A_{0}\neq1$ argue as above. For $A_{0}=1$ set $\delta_n:=1-t_n$, $h_n:=A_n-1$ and
$\lambda_n:=\frac{\delta_n}{|h_n|}$, and fix $\Lambda>1$ large enough that $1- \frac1\Lambda\ge\tfrac12$. If $\lambda_n \le \Lambda$, write $h_n=\pm\frac{1}{n}$ and
$\delta_n= \frac{\lambda_n}{n}$. 

We claim the following relation:
\begin{align}\label{relation-1}
   n^p\Theta(1-\lambda_n\varepsilon,1\pm\frac1n) = F^{\mp}(\lambda_n)+o_{n}(1). 
\end{align}
We only consider $A_n = 1+\frac1n$ and $t_n=1-\frac{\lambda_n}{n}$. The remaining case will follow similarly. We note $J_p(A_n-t_n) = (\frac1n(1 + \lambda_n))^{p-1}$, $A_n^m - t_n = (1 + \frac{m}{n}) - (1 - \frac{\lambda_n}{n}) + o_n(1) = \frac1n(m + \lambda_n) + o_n(1)$, $(1 - t_n)^{p-1} = (\frac{\lambda_n}{n})^{p-1}$, and $A_n^r - t_n = (1 + \frac{r}{n}) - (1 - \frac{\lambda_n}{n}) + o_n(1) = \frac{1}{n}(r + \lambda_n) + o_n(1)$. Hence from the definition \eqref{eq:Theta},
\begin{align*}
    \Theta(1-\frac{\lambda_n}{n},1+\frac{1}{n}) & = \left(\frac{1}{n}\right)^{p-1}(1 + \lambda_n)^{p-1} \left[\frac{1}{n}(m + \lambda_n) + o_n(1)\right] - \left(\frac{\lambda_n}{n}\right)^{p-1} \left[\frac{1}{n}(r + \lambda_n) + o_n(1) \right] \\
    & = \left(\frac{1}{n}\right)^{p} \left[ (1 + \lambda_n)^{p-1}(m + \lambda_n) - \lambda_n^{p-1}(r + \lambda_n) \right] + o_n(1) \\
    &= n^{-p} F^{-}(\lambda_n)+o_n(1).
\end{align*}
Thus, \eqref{relation-1} holds.

Further, since $A_n^\alpha - 1 = (1 \pm \frac{1}{n})^\alpha - 1 \approx \pm \frac{\alpha}{n}$, we have 
$n^{p}t_n^{\frac{p}{2}}|A_n^{\alpha}-1|^{p}\to\alpha^{p}$, in the compact
interval $[0,\Lambda]$, so $$Q(t_n,A_n) \longrightarrow \alpha^{-p}F^{\mp}(\lambda_0) \ge \alpha^{-p} \min_{\la \in [0,\Lambda]} \left(\min\{F^{+}(\la),F^{-}(\la)\}\right)>0,$$ 
where $\la_n \ra \la_0 \in [0, \Lambda]$, and the positivity holds using Step 2. If $\lambda_n\ge\Lambda$, we use no expansion. In this case, $|h_n|\le \frac{\delta_n}{\Lambda}$, so
$\frac{\delta_n}{2} \le \delta_n(1-\frac1\Lambda)\le\xi-t_n\le\delta_n(1+\frac1\Lambda)\le2\delta_n$ for every $\xi$ between $1$ and $A_n$. Indeed, since $|\xi-1| \le |A_n-1|=|h_n| \le \frac{\delta_n}{\Lambda}$, we get $\delta_n(1-\frac1\Lambda)\le \xi-1 + \delta_n \le \delta_n(1+\frac1\Lambda)$.
In view of \eqref{eq:doublevanish}, $\partial^{2}_{A}\Theta(t_n,\xi) \asymp \delta_n^{p-2}$ for $\xi$
between $1$ and $A_n$. Hence, using \eqref{taylor-case1-1}, $\Theta(t_n,A_n) \ge c\delta_n^{p-2}h_n^{2}$ and since $A_n^\al - 1 \approx \al(A_n - 1) = \al h_n$, we get  $Q(t_n,A_n) \ge c\lambda_n^{p-2}\ge c\Lambda^{p-2}$.
Both alternatives contradict $Q(t_n,A_n)\to0$.

\vspace{1.0mm}
\noi\emph{\textbf{Case 3.}} Finally, we assume $t_{0}=0$. We show 
\begin{align}\label{lb-1}
  \Theta(t_n,A_n)\ge c t_n |A_n^{\alpha}-1|^{p}, \text{ for } t_n \text{ small, }  
\end{align} 
which gives
$Q(t_n,A_n) \ge ct_n^{1-\frac{p}{2}}\to\infty$. We fix $\delta_{0}\in(0,\tfrac12)$.

Let $A_n\ge2$. Since $m\ge1$ and $A_n\ge1$, we have 
\begin{align*}
  (A_n-t_n)^{p-1}(A_n^{m}-t_n)\ge A_n^{r}\left(1-\frac{t_n}{A_n}\right)^{p-1}\left(1-\frac{t_n}{A_n}\right)=A_n^{r}\left(1-\frac{t_n}{A_n}\right)^{p},  
\end{align*}
which implies 
\begin{align*}
 \Theta(t_n,A_n) \ge A_n^{r}\left[\left(1- \frac{t_n}{2}\right)^{p}-(1-t_n)^{p-1}\right]\ge c_{p}t_nA_n^{r},   
\end{align*}
for $t_n$ small, the bracket being
$(\tfrac p2-1)t_n+O(t_n^{2})$, while $|A_n^{\alpha}-1|^{p}\le A_n^{r}$. Hence \eqref{lb-1} holds.

Let $|A-1|\le\delta_{0}$. By \eqref{eq:doublevanish}, $\partial_{t}\Theta(s,1)=\partial_{A}
\partial_{t}\Theta(s,1)=0$, so $\partial_{t}\Theta(s,A)=\tfrac12\partial_{t}\partial^{2}_{A}
\Theta(s,\xi)(A-1)^{2}$. Since $\partial_{t}\partial^{2}_{A}\Theta(0,1)=(p-1)(2r-p)>0$, continuity gives
$\partial_{t}\Theta(s,A)\ge\tfrac14(p-1)(2r-p)(A-1)^{2}$ for $s$ and $\delta_{0}$ small, and integrating
in $t$ yields $\Theta\ge\tfrac14(p-1)(2r-p)\,t(A-1)^{2}\ge ct|A^{\alpha}-1|^{p}$, where the last bound follows from $A^\alpha - 1 \approx \alpha(A-1)$ and since $(A-1)^2$ dominates $\vert{}A-1\vert{}^p$ near $1$, since $p>2$. Thus, when $|A_n-1|\le\delta_{0}$, \eqref{lb-1} holds. 

Now, we consider $A_n\in[0,2]$ with $|A_n-1|\ge\delta_{0}$. Here $\Theta(0,A_n)=0$ and
$\partial_{t}\Theta(0,A_n)=D(A_n):=1-A_n^{p-1}+(p-1)A_n^{r-1}(A_n-1)$. By the mean value theorem, for $A_n \in (0,1)$,
$1-A_n^{p-1}>(p-1)A_n^{p-2}(1-A_n)>(p-1)A_n^{r-1}(1-A_n)$ since $r-1>p-2>0$, and symmetrically for $A_n>1$. Hence
$D(A_n)>0$ for $A_n\neq1$, and $D\ge c_{\delta_{0}}>0$ on the compact set considered, since $A_n$ is bounded away from $1$. The partial derivative $\partial_t\Theta$ contains a factor involving $\abs{A_n - t_n}^{p-2}$. Because $p>2$ the
exponent in $|A_n-t_n|^{p-2}$ is positive, so $\partial_{t}\Theta$ is continuous on $[0,t_{1}]\times[0,2]$,
hence uniformly continuous. Therefore $\partial_{t}\Theta(s,A_n)\ge\tfrac12D(A_n)$ for $s$ small, and
integrating gives $$\Theta(t_n,A_n) \ge\tfrac12t_nD(A_n)\ge ct_n \ge \tilde{c}t_n|A_n^{\alpha}-1|^{p}.$$

All cases contradict $Q(t_{n},A_{n})\to0$. So \eqref{eq:Qinf} holds, and this proves the lemma.
\end{proof}

The inequality \eqref{eq:purepower} loses a second form of coercivity, which is present when $X$ and $Y$
are close but $a$ and $b$ are not. The following proposition recovers it. We write $H(X):=X^{\frac{r}{p}}$.

\begin{proposition}\label{prop:strong}
Let $p>2$, $r\ge p$ and $m=r-p+1$. There is $c>0$ such that, for all $a,b>0$ and
$X,Y\ge0$,
\begin{align}\label{eq:strong}
   &\Jp(aX-bY)\big(aX^{m}-bY^{m}\big)-\Jp(a-b)\big(aX^{r}-bY^{r}\big)\no\\
   &\ge cab|a-b|^{p-2}\big(H(X)+H(Y)\big)^{p-2}\big|H(X)-H(Y)\big|^{2}.
\end{align}
\end{proposition}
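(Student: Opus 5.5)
We reduce \eqref{eq:strong} to a two-variable inequality exactly as in the proof of Lemma~\ref{lem:purepower}. Both sides are invariant under $(a,X)\leftrightarrow(b,Y)$, so we assume $a\ge b$, and by continuity we may take $X,Y>0$. With $t=b/a\in(0,1]$, $A=X/Y$ and $\alpha=r/p$, the left-hand side is $a^{p}Y^{r}\Theta(t,A)$, with $\Theta$ as in \eqref{eq:Theta}. The right-hand side is
\[
c\,a^{p}Y^{r}\,t(1-t)^{p-2}(A^{\alpha}+1)^{p-2}|A^{\alpha}-1|^{2},
\]
because $ab|a-b|^{p-2}=a^{p}t(1-t)^{p-2}$ and $H(X)^{p-2}H(Y)^{2}$-type products scale as $Y^{r}$. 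The claim is therefore
\[
\inf\Big\{\widetilde Q(t,A): 0<t<1,\ A\ge0,\ A\neq1\Big\}>0,\qquad
\widetilde Q(t,A):=\frac{\Theta(t,A)}{t(1-t)^{p-2}(A^{\alpha}+1)^{p-2}|A^{\alpha}-1|^{2}}.
\]
At $t=1$ the right-hand side vanishes, so nothing needs proving there. By Step~1 of Lemma~\ref{lem:purepower}, $\Theta>0$ for $A\neq1$. We argue by contradiction: take a sequence with $\widetilde Q(t_n,A_n)\to0$ and a subsequence with $t_n\to t_0\in[0,1]$ and $A_n\to A_0\in[0,\infty]$.

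\textbf{Interior case} $0<t_0<1$. If $A_0\notin\{1,\infty\}$, positivity of $\Theta$ gives the contradiction. If $A_0=\infty$, then $\Theta\sim(1-(1-t_0)^{p-1})A^{r}$ while the denominator is $\asymp A^{\alpha p}=A^{r}$, so the limit is positive. If $A_0=1$, \eqref{eq:doublevanish} gives $\Theta\approx\frac12 t_0(1-t_0)^{p-2}(p-1)(2r-p)(A-1)^{2}$, and the denominator is $\approx t_0(1-t_0)^{p-2}2^{p-2}\alpha^{2}(A-1)^{2}$. The ratio tends to a positive constant. This is exactly where the quadratic weight is natural, in contrast with Lemma~\ref{lem:purepower}, where the ratio blew up.

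\textbf{Case} $t_0=0$. We reuse Case~3 of Lemma~\ref{lem:purepower}, which gives $\Theta\ge ct$ times a function of $A$. For $|A-1|\le\delta_0$ it gives $\Theta\ge ct(A-1)^{2}$, which is comparable to the new denominator. For $A\le2$ with $|A-1|\ge\delta_0$ it gives $\Theta\ge ct\,D(A)\ge c t$. For $A\ge2$ it gives $\Theta\ge ctA^{r}$, and the denominator is $\le CtA^{\alpha(p-2)+2\alpha}=CtA^{r}$. In all three subcases $\widetilde Q\ge c$.

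\textbf{Case} $t_0=1$. This is the delicate one; there $(1-t)^{p-2}\to0$ helps us. Put $\delta=1-t$ and $h=A-1$. If $A_0\neq1$, the numerator tends to $\Theta(1,A_0)>0$ (or grows like $A^{r}$ when $A_0=\infty$), while the denominator carries the factor $\delta^{p-2}\to0$, so $\widetilde Q\to\infty$. If $A_0=1$, the denominator is $\asymp\delta^{p-2}h^{2}$. We split on $\lambda=\delta/|h|$ as in Case~2 of Lemma~\ref{lem:purepower}. For $\lambda\ge\Lambda$, that proof gives $\Theta\ge c\delta^{p-2}h^{2}$ directly, so $\widetilde Q\ge c$. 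For $\lambda\le\Lambda$, \eqref{relation-1} gives $\Theta\approx |h|^{p}F^{\mp}(\lambda)$ with $F^{\pm}$ bounded below by Step~2. The denominator is $\asymp\lambda^{p-2}|h|^{p}\le\Lambda^{p-2}|h|^{p}$, so $\widetilde Q\ge c\min F^{\pm}/\Lambda^{p-2}>0$.

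The main obstacle is making the expansion \eqref{relation-1} uniform for $\lambda$ in the compact range $[0,\Lambda]$, including $\lambda\to0$, where $F^{\pm}(0)=m>0$ still works. We handle this by writing $h=\pm\epsilon$ and $\delta=\lambda\epsilon$, and noting that $\epsilon^{-p}\Theta$ converges to $F^{\mp}(\lambda)$ uniformly in $\lambda\in[0,\Lambda]$, since all the Taylor remainders are $O(\epsilon)$ uniformly there. Every case contradicts $\widetilde Q(t_n,A_n)\to0$, which proves \eqref{eq:strong}.
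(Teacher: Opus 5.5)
Your proposal is correct and follows essentially the same route as the paper: you reduce to the quotient of $\Theta(t,A)$ by $t(1-t)^{p-2}(A^{\alpha}+1)^{p-2}|A^{\alpha}-1|^{2}$ and run the same compactness--contradiction argument, split into $0<t_0<1$, $t_0=0$ and $t_0=1$ (the last with the $\lambda=\delta/|h|$ dichotomy), reusing Steps 1--3 of Lemma~\ref{lem:purepower}. Your explicit treatment of $A_0=\infty$ at $t_0=1$, and of the uniformity of the expansion \eqref{relation-1} for $\lambda\in[0,\Lambda]$, tidies up two points that the paper passes over quickly.
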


\begin{proof}
Following the approach used at the beginning of the proof of Lemma~\ref{lem:purepower}, we may assume
$a\ge b$ and, by continuity, $X,Y>0$, and with $t=\frac{b}{a}$, $A=X/Y$ and $\alpha=\frac{r}{p}$ the two sides of
\eqref{eq:strong} equal $a^{p}Y^{r}\Theta(t,A)$ and $c\,a^{p}Y^{r}D_{2}(t,A)$, where
\begin{equation}\label{eq:D2}
  D_{2}(t,A):=t(1-t)^{p-2}\big(A^{\alpha}+1\big)^{p-2}\big(A^{\alpha}-1\big)^{2} .
\end{equation}
For $t=1$, the right-hand side vanishes and the inequality is Step 1 in that proof, so we only need to
show
\begin{equation}\label{eq:Sinf}
  \inf\left\{\frac{\Theta(t,A)}{D_{2}(t,A)}: 0<t<1, A\ge0, A\neq1\right\}>0 .
\end{equation}
We use Step 1 and Step 2 in the proof of Lemma~\ref{lem:purepower}.

Suppose now that \eqref{eq:Sinf} fails, so that $\frac{\Theta(t_{n},A_{n})}{D_{2}(t_{n},A_{n})}\to0$
for some sequence. So pass to a subsequence with $t_{n}\to t_{0}\in[0,1]$ and $A_{n}\to A_{0}\in[0,\infty]$.

\vspace{1.0 mm}
\noi\emph{\textbf{Case $0<t_{0}<1$.}} If $A_{0}\in(0,\infty)\setminus\{1\}$ then $\Theta(t_n,A_n) \to\Theta(t_{0},A_{0})>0$
by Claim 1 while $D_{2}$ tends to a finite positive limit. If $A_{0}=0$ then
$\Theta(t_n,A_n) \to t_{0}^{p}+t_{0}(1-t_{0})^{p-1}>0$ and $D_{2}\to t_{0}(1-t_{0})^{p-2}>0$. If
$A_{0}=\infty$ then $\Theta(t_n,A_n)/A_n^{r}\to1-(1-t_{0})^{p-1}>0$, while
$(A_n^{\alpha}+1)^{p-2}(A_n^{\alpha}-1)^{2}=A_n^{r}(1+o_n(1))$ gives
$D_{2}(t_n,A_n)/A_n^{r}\to t_{0}(1-t_{0})^{p-2}>0$. If $A_{0}=1$ then, by \eqref{eq:doublevanish} and
$(A_n^{\alpha}-1)^{2}=\alpha^{2}(A_n-1)^{2}(1+o_n(1))$,
\[
  \frac{\Theta(t_n,A_n)}{D_{2}(t_n,A_n)}\ \longrightarrow\ \frac{(p-1)(2r-p)}{2^{p-1}\alpha^{2}}>0,
\]
a limit which does not depend on $t_{0}$.

\vspace{1.0 mm}
\noi\emph{\textbf{Case $t_{0}=1$.}} If $A_{0}\neq1$ then $D_{2}(t_n,A_n)\to0$ while $\Theta(t_n,A_n)\to\Theta(1,A_{0})>0$,
so the quotient tends to $+\infty$. If $A_{0}=1$, set $\delta_n=1-t_n$, $h_n=A_n-1$ and $\lambda_n=\frac{\delta_n}{|h_n|}$ as in
the proof of Lemma~\ref{lem:purepower}, and fix $\Lambda\ge2$. Since the corner expansion is uniform
only on compact $\lambda$-intervals, the two regimes $\lambda_n\le\Lambda$ and $\lambda_n\ge\Lambda$ must be treated separately.

If $\lambda_n \le \Lambda$, write $h_n=\pm \frac{1}{n}$ and $\delta_n=\frac{\lambda_n}{n}$. Then, from \eqref{relation-1}, we have $n^p\Theta=F^{\mp}(\lambda_n)+o_n(1)$ and
$n^{p}D_{2}=2^{p-2}\alpha^{2}\lambda_n^{p-2}+o_n(1)$. Since
$\lambda_n^{p-2}\le\Lambda^{p-2}$ on this range, Step 2 in the proof of Lemma~\ref{lem:purepower} gives
\[
  \liminf_{n \ra \infty} \frac{\Theta (t_n,A_n)}{ D_{2}(t_n,A_n) } \ge \frac{\min_{\la \in [0,\Lambda]} \left(\min\{F^{+}(\la),F^{-}(\la)\}\right)}{2^{p-2}\alpha^{2}\Lambda^{p-2}}> 0 .
\]

If $\lambda_n \ge \Lambda$, then exactly as in the proof of
Lemma~\ref{lem:purepower}, $\partial_{A}^{2}\Theta(t_n,\xi) \asymp \delta_n^{p-2}$ for every $\xi$ between $1$ and $A_n$. Therefore, $\Theta(t_n,A_n)\ge c\delta_n^{p-2}h_n^{2}$ by \eqref{eq:doublevanish}.
On the other hand $t\le1$, $(A_n^{\alpha}+1)^{p-2}\le C$ and $(A_n^{\alpha}-1)^{2}\le Ch_n^{2}$, so
$D_{2}(t_n,A_n)\le C\delta_n^{p-2}h_n^{2}$. Hence $\frac{\Theta(t_n,A_n)}{D_{2}(t_n,A_n)} \ge \frac{c}{C}>0$. 

In either regime, the quotient is bounded below by a positive constant.

\vspace{1.0 mm}
\noi\emph{\textbf{Case $t_{0}=0$.}} The three regions treated in the corresponding case of the proof of
Lemma~\ref{lem:purepower} give, for $t_n$ small, $\Theta(t_n,A_n)\ge ct_nA_n^{r}$ when $A_n\ge2$,
$\Theta(t_n,A_n)\ge ct_n(A_n-1)^{2}$ when $|A_n-1|\le\delta_{0}$, and $\Theta\ge ct_n$ when $A_n \in [0,2]$ with
$|A_n-1|\ge\delta_{0}$. Further, since $(1-t_n)^{p-2}\le1$ and 
$(A_n^{\alpha}+1)^{p-2}(A_n^{\alpha}-1)^{2}\le CA_n^{r}$ for $A_n\ge2$, $\le C(A_n-1)^{2}$ for $|A_n-1|\le\delta_{0}$,
and $\le C$ for $A_n\in[0,2]$, in the same three regions $D_{2}\le C t_n A_n^{r}$,
$D_{2}\le Ct_n(A_n-1)^{2}$ and $D_{2}\le Ct_n$. So the quotient is bounded below.

Every case contradicts the assumption, so \eqref{eq:Sinf} holds.
\end{proof}

\noi We use the case $r=p$ below.

\begin{corollary}\label{cor:Q}
Let $p>2$. Then there is $c>0$ such that
\begin{equation}\label{eq:Q}
  \mathfrak{B}(aX,bY,a,b) \ge cab|a-b|^{p-2}(X+Y)^{p-2}|X-Y|^{2}
\end{equation}
for all $a,b>0$ and $X,Y\ge0$, with $\mathfrak{B}$ as in \eqref{eq:bracket}.
\end{corollary}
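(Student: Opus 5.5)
This is the case $r=p$ of Proposition~\ref{prop:strong}, so I would reduce to it directly rather than redo the case analysis. With $r=p$ we have $m=r-p+1=1$, hence $G(X)=X$ and $H(X)=X^{r/p}=X$. The left-hand side of \eqref{eq:strong} becomes
\[
\Jp(aX-bY)\big(aX-bY\big)-\Jp(a-b)\big(aX^{p}-bY^{p}\big)
=|aX-bY|^{p}-\Jp(a-b)\big(aX^{p}-bY^{p}\big).
\]

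I would then identify this with $\mathfrak{B}(aX,bY,a,b)$ from \eqref{eq:bracket}. By definition,
\[
\mathfrak{B}(aX,bY,a,b)=|aX-bY|^{p}-\Jp(a-b)\Big(\frac{(aX)^{p}}{a^{p-1}}-\frac{(bY)^{p}}{b^{p-1}}\Big),
\]
and since $X,Y\ge0$ and $a,b>0$ we have $(aX)^p/a^{p-1}=aX^p$ and $(bY)^p/b^{p-1}=bY^p$. So the two expressions agree; this is remark (a) after \eqref{eq:routeC}.

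On the right-hand side of \eqref{eq:strong}, $H(X)+H(Y)=X+Y$ and $H(X)-H(Y)=X-Y$. The inequality therefore reads exactly as \eqref{eq:Q}, with the constant $c$ of Proposition~\ref{prop:strong} for $r=p$.

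The hypothesis $r\ge p$ in Proposition~\ref{prop:strong} includes $r=p$, and $p>2$ is assumed in both statements. There is no real obstacle. The only point to check is that the proof of Proposition~\ref{prop:strong} (through Lemma~\ref{lem:purepower}) uses $m\ge1$ and not $m>1$. Going back through it: Step 1 uses $g(t,1/t)\le0$, which needs only $m\ge1$. Step 2 uses $F^{\pm}(0)=m>0$ and, on $[1/m,1]$, the bound $ms-1\le rs-1$; with $m=1$ that interval is the single point $s=1$, where $f^{+}(1)=r-1>0$. Finally, $\partial_A^2\Theta(t,1)=t(1-t)^{p-2}(p-1)p>0$. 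All of these hold at $m=1$.
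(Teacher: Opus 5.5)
Your proposal is correct and is exactly the paper's argument: the paper also obtains \eqref{eq:Q} by taking $r=p$ (so $m=\alpha=1$ and $H(X)=X$) in Proposition~\ref{prop:strong}, and identifies the left-hand side there with $\mathfrak{B}(aX,bY,a,b)$. Your extra check that the proof of Proposition~\ref{prop:strong} needs only $m\ge1$ is accurate, in particular $g(t,1/t)=0$, $f^{+}(1)=p-1>0$ and $\partial_A^2\Theta(t,1)=t(1-t)^{p-2}(p-1)p>0$; it is not strictly needed, since that proposition is already stated for all $r\ge p$.
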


\begin{proof}
We apply Proposition~\ref{prop:strong} with $r=p$, so that $m=1$, $\alpha=1$ and $H(X)=X$. Its left-hand
side becomes $\Jp(aX-bY)(aX-bY)-\Jp(a-b)(aX^{p}-bY^{p})=\mathfrak{B}(aX,bY,a,b)$.
\end{proof}

Next we extend Lemma~\ref{lem:purepower} to the truncated powers $G_M$.
\begin{proposition}\label{prop:trunc}
Let $p\ge2$, $r>p$ and $m=r-p+1$, and let $G_{M},H_{M}$ be as in \eqref{eq:GH}. We write
$\mathfrak{K}_{M}$ for \eqref{eq:routebracket} with $G=G_{M}$. Then there is $c>0$, independent of
$M$, such that
\begin{equation}\label{eq:truncC}
  \mathfrak{K}_{M}(a,b,X,Y) \ge c(ab)^{\frac{p}{2}}\big|H_{M}(X)-H_{M}(Y)\big|^{p}
\end{equation}
for all $a,b>0$ and $X,Y\ge0$.
\end{proposition}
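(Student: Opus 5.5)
The plan is to reduce the truncated inequality to the pure-power inequality of Lemma~\ref{lem:purepower} and the Frank--Seiringer inequality of Lemma~\ref{lem:gspoint}, using scaling in $M$ and a case split according to the position of $X,Y$ relative to $M$. The first step removes $M$. From \eqref{eq:GH} we have $G_{\lambda M}(\lambda t)=\lambda^{m}G_{M}(t)$ and $H_{\lambda M}(\lambda t)^{p}=\lambda^{r}H_{M}(t)^{p}$. Since $\Jp$ is $(p-1)$-homogeneous and $p-1+m=r$, both sides of \eqref{eq:truncC} are multiplied by $\lambda^{r}$ under $(X,Y,M)\mapsto(\lambda X,\lambda Y,\lambda M)$. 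So it suffices to prove \eqref{eq:truncC} for $M=1$ with a constant $c$ depending only on $p,r$. By the symmetry $(a,X)\leftrightarrow(b,Y)$ we may also assume $X\ge Y$.

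\emph{Case $X,Y\le1$.} Here $G_{1}(t)=t^{m}$ and $H_{1}(t)=t^{r/p}$, so \eqref{eq:truncC} is exactly Lemma~\ref{lem:purepower} for $p>2$. For $p=2$ it follows from the identity $\mathfrak{K}=ab(X-Y)(G(X)-G(Y))$ together with the argument of Claim~1 in the proof of Theorem~\ref{thm:integ}, since $(G_{1}')^{1/2}\ge c\,H_{1}'$.

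\emph{Case $X,Y\ge1$.} Here $G_{1}(t)=t$ and $H_{1}(t)=t$. Then $\mathfrak{K}_{1}(a,b,X,Y)=\mathfrak{B}(aX,bY,a,b)$, and Lemma~\ref{lem:gspoint} gives \eqref{eq:truncC} with $c=c_{p}$.

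\emph{Mixed case $X>1\ge Y$.} This is the main obstacle. I would normalise as in the proof of Lemma~\ref{lem:purepower}, with $t=b/a\in(0,1]$ (after also allowing $a<b$, i.e.\ treating both orderings of $t$), and keep $Y\in(0,1]$ and $X>1$ as free variables. Writing $X=A Y$, the inequality becomes
\[
  \Theta_{Y}(t,A)\ \ge\ c\,t^{p/2}\,\bigl|H_{1}(AY)Y^{-r/p}-1\bigr|^{p}.
\]
Here $\Theta_{Y}$ is obtained from \eqref{eq:Theta} by replacing $A^{m}$ with $G_{1}(AY)/Y^{m}$ and $A^{r}$ with $(AY)^{p-1}G_{1}(AY)/Y^{r}$. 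These replacements agree with the pure powers for $A\le1/Y$ and are linear, respectively of order $p$, for $A>1/Y$.

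I would then repeat the three steps of Lemma~\ref{lem:purepower}.

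Step~1 (strict positivity for $A\neq1$). I would again write $\partial_{A}\Theta_{Y}$ as a product with a factor $g$. On the truncated range, $g$ is $r-t(p-1)B^{m}-tmB$ with $m$ replaced by $1$. That factor is still decreasing, and the sign argument there used only $m\ge1$, so it should go through. Using the continuity of $G_{1}$ and $H_{1}$ at the junction $A=1/Y$, one gets that $\Theta_{Y}(t,\cdot)$ decreases on $(0,1)$ and increases on $(1,\infty)$.

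Step~2 (compactness). Suppose the quotient tends to $0$ along a sequence $(t_{n},A_{n},Y_{n})$. Pass to limits $t_{0}\in[0,1]$, $A_{0}\in[0,\infty]$ and $Y_{0}\in[0,1]$, and track $A_{n}Y_{n}$.
\begin{itemize}
\item If $A_{n}Y_{n}\to1$ or $A_{n}$ stays bounded, the limit expression is a continuous $\Theta_{Y_{0}}$. The corner regimes $A_{0}=1$ and $t_{0}\in\{0,1\}$ are handled exactly by the expansions \eqref{eq:doublevanish}, \eqref{relation-1} and the $t_{0}=0$ analysis. In these regimes only values of $G_{1}$ near $A=1$ matter, which are pure powers since $Y\le1<AY$ forces $A>1$ and the junction sits at $A=1/Y\ge1$. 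When $Y_{0}=1$ the junction approaches $A=1$; the one-sided second derivatives at $A=1$ are both positive multiples of $t(1-t)^{p-2}$ (with $m$ replaced by $1$ on one side), so the argument still works.
\item If $A_{n}Y_{n}\to\infty$, divide by the leading power. In this regime the linear behaviour of $G_{1}$ dominates and the limit is a $\mathfrak{B}$-type expression, which is bounded below through \eqref{eq:FSpoint}.
\end{itemize}

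The hard point is uniformity near the junction when $t\to0$ or $t\to1$ together with $Y_{0}=1$. There I would try first to sandwich $\Theta_{Y}$ between the two extreme cases $m$ and $1$, using the monotonicity of $G_{1}(s)/s$. Since all constants arise from finitely many limit regimes in the $(t,A,Y)$-compactification, $c$ depends only on $p,r$, and hence is independent of $M$ by the first step.
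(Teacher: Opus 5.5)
Your reduction to $M=1$ by scaling is correct, and so are the two pure regimes: $X,Y\le1$ via Lemma~\ref{lem:purepower} (or the Claim~1 argument when $p=2$), and $X,Y\ge1$ via $\mathfrak{K}_{1}=\mathfrak{B}(aX,bY,a,b)$ and Lemma~\ref{lem:gspoint}. These match the paper.

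The gap is the mixed case. It is the only new content of the proposition, and you do not prove it. You outline a compactness argument in $(t,A,Y)$, leave its hardest regime explicitly open, and several of its assertions are inaccurate.
\begin{itemize}
\item In the mixed case $AY=X>1$, so $A>1/Y\ge1$ and $G_{1}(AY)$ is always on its linear branch. Hence $A_{0}=1$ forces $Y_{0}=1$, so the corner at $A=1$ always sits on the junction. This contradicts your remark that only pure-power values of $G_{1}$ matter there.
\item At that corner the expansion \eqref{relation-1} would have to be redone with a third small parameter $1-Y_{n}$ competing with $1-t_{n}$ and $A_{n}-1$. That is exactly the uniformity you defer with ``I would try first to sandwich''.
\item If $Y_{n}\to0$ with $A_{n}Y_{n}$ bounded, every term of $\Theta_{Y_{n}}$ and of the right-hand side blows up like $Y_{n}^{-r}$. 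So there is no limiting $\Theta_{Y_{0}}$, and the case $A_{n}Y_{n}\to L\in(1,\infty)$ is not covered by either of your bullets.
\end{itemize}
As written, the $M$-independent constant, which is the whole point of the proposition, is not established.

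The missing idea is that no compactness is needed: in the mixed regime $\mathfrak{K}_{M}$ dominates the linear bracket pointwise. Put $X=M\xi$ and $Y=M\zeta$ with $0\le\xi\le1\le\zeta$. Every power of $M$ cancels, giving $\mathfrak{K}_{M}=M^{r}\mathcal{K}$ with $\mathcal{K}:=\Jp(a\xi-b\zeta)(a\xi^{m}-b\zeta)-\Jp(a-b)(a\xi^{r}-b\zeta^{p})$. For $a\ge b$ and $t=b/a$ one has
\[
  \frac{\mathcal{K}}{a^{p}}-\mathfrak{B}(\xi,t\zeta,1,t)=(\xi^{m}-\xi)\big[\Jp(\xi-t\zeta)-(1-t)^{p-1}\xi^{p-1}\big]\ge0 .
\]
Both factors are nonpositive, because $\xi^{m}\le\xi$ and $\xi-t\zeta\le(1-t)\xi$. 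For $b\ge a$ and $t=a/b$ one gets in the same way
\[
  \frac{\mathcal{K}}{b^{p}}-\mathfrak{B}(\zeta,t\xi,1,t)=t(\xi-\xi^{m})\big[\Jp(\zeta-t\xi)-(1-t)^{p-1}\xi^{p-1}\big]\ge0 ,
\]
since now $\zeta-t\xi\ge(1-t)\xi$. Lemma~\ref{lem:gspoint} then bounds either bracket below by $c_{p}t^{p/2}|\zeta-\xi|^{p}$. Convexity gives $0\le\zeta-\xi^{r/p}\le\frac{r}{p}(\zeta-\xi)$, which turns this into \eqref{eq:truncC} with the explicit constant $c_{p}(p/r)^{p}$.

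Your suggested sandwich between the cases $m$ and $1$ points in this direction. Only the one-sided comparison with the linear case is needed, though, and it should be used on the whole mixed regime, replacing the compactness argument rather than patching it near the junction.
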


\begin{proof}
We recall $T_{M}(X)=\min\{X,M\}$, and that both sides of \eqref{eq:truncC} are unchanged under
$(a,X)\leftrightarrow(b,Y)$.

If $p=2$ the bound follows at once from case (b) after \eqref{eq:routeC}, applied to $G=G_{M}$, which is
nondecreasing and locally Lipschitz. With $g:=\int_{0}^{\cdot}(G_{M}')^{\frac12}$ we have
$g'=\sqrt{r-1}\,\tau^{(r-2)/2}$ and $H_{M}'=\tfrac r2\tau^{(r-2)/2}$ on $(0,M)$, and
$g'=H_{M}'=M^{(r-2)/2}$ on $(M,\infty)$, so $g'\ge\min\{2\sqrt{r-1}/r,1\}\,H_{M}'$ and therefore
$|g(X)-g(Y)|\ge c|H_{M}(X)-H_{M}(Y)|$. We assume from now on $p>2$, and split into three regimes.
\begin{enumerate}
\item[{\rm(i)}] If $X,Y\le M$ then $G_{M}(X)=X^{m}$ and $H_{M}(X)=X^{\frac{r}{p}}$, and likewise for $Y$, so the
required bound is exactly \eqref{eq:purepower}.
\item[{\rm(ii)}] If $X,Y\ge M$ then $G_{M}(X)=M^{r-p}X$ and $H_{M}(X)=M^{\frac{r-p}{p}}X$, so
$\mathfrak{K}_{M}=M^{r-p}\,\mathfrak{B}(aX,bY,a,b)$ with $\mathfrak{B}$ as in \eqref{eq:bracket}. By
Lemma~\ref{lem:gspoint}, $\mathfrak{K}_{M}\ge c_{p}M^{r-p}(ab)^{\frac{p}{2}}|X-Y|^{p}
=c_{p}(ab)^{\frac{p}{2}}|H_{M}(X)-H_{M}(Y)|^{p}$, with $c_{p}$ independent of $M$.
\item[{\rm(iii)}] The mixed case $X<M<Y$. Writing $X=M\xi$ and $Y=M\zeta$ with $0\le\xi\le1\le\zeta$
gives $G_{M}(X)=M^{m}\xi^{m}$, $G_{M}(Y)=M^{m}\zeta$, $H_{M}(X)=M^{\frac{r}{p}}\xi^{\frac{r}{p}}$ and
$H_{M}(Y)=M^{\frac{r}{p}}\zeta$, so that every power of $M$ cancels and \eqref{eq:truncC} becomes
\begin{equation}\label{eq:mixC}
  \mathcal{K}\ \ge\ c\,(ab)^{\frac{p}{2}}\big|\xi^{\frac{r}{p}}-\zeta\big|^{p},
  \quad
  \mathcal{K}:=\Jp(a\xi-b\zeta)\big(a\xi^{m}-b\zeta\big)-\Jp(a-b)\big(a\xi^{r}-b\zeta^{p}\big).
\end{equation}
Suppose first $a\ge b$ and set $t:=\frac{b}{a}\in(0,1]$. Dividing by $a^{p}$,
$\mathcal{K}/a^{p}=\Jp(\xi-t\zeta)(\xi^{m}-t\zeta)-(1-t)^{p-1}(\xi^{r}-t\zeta^{p})=:\mathcal{K}_{L}$.
We put $\mathcal{S}_{L}:=|\xi-t\zeta|^{p}-(1-t)^{p-1}(\xi^{p}-t\zeta^{p})$. Since
$|\xi-t\zeta|^{p}=\Jp(\xi-t\zeta)(\xi-t\zeta)$ and $\xi^{r}-\xi^{p}=\xi^{p-1}(\xi^{m}-\xi)$,
\[
  \mathcal{K}_{L}-\mathcal{S}_{L}
  =\big(\xi^{m}-\xi\big)\Big[\Jp(\xi-t\zeta)-(1-t)^{p-1}\xi^{p-1}\Big].
\]
Here $\xi^{m}\le\xi$ because $m\ge1$ and $\xi\le1$, while $\xi-t\zeta\le\xi-t\xi=\xi(1-t)$ because
$\zeta\ge\xi$, so $\Jp(\xi-t\zeta)\le(1-t)^{p-1}\xi^{p-1}$ by monotonicity of $\Jp$. Both factors are
nonpositive, so $\mathcal{K}_{L}\ge\mathcal{S}_{L}$. If instead $b\ge a$, set $t:=a/b$ and divide by
$b^{p}$ to get
$\mathcal{K}/b^{p}=\Jp(\zeta-t\xi)(\zeta-t\xi^{m})-(1-t)^{p-1}(\zeta^{p}-t\xi^{r})=:\mathcal{K}_{H}$. With
$\mathcal{S}_{H}:=|\zeta-t\xi|^{p}-(1-t)^{p-1}(\zeta^{p}-t\xi^{p})$ the same computation gives
\[
  \mathcal{K}_{H}-\mathcal{S}_{H}
  =t\big(\xi-\xi^{m}\big)\Big[\Jp(\zeta-t\xi)-(1-t)^{p-1}\xi^{p-1}\Big]\ \ge\ 0,
\]
because now $\zeta-t\xi\ge\xi(1-t)$. In either orientation
$\mathcal{S}_{L}=\mathfrak{B}(\xi,t\zeta,1,t)$ and $\mathcal{S}_{H}=\mathfrak{B}(\zeta,t\xi,1,t)$ with
$\mathfrak{B}$ as in \eqref{eq:bracket}, so Lemma~\ref{lem:gspoint} gives
$\mathcal{S}\ge c_{p}\,t^{\frac{p}{2}}|\zeta-\xi|^{p}$. Finally, with $\alpha:=\frac{r}{p}>1$, convexity of
$\xi\mapsto\xi^{\alpha}$ gives $\xi^{\alpha}\ge1+\alpha(\xi-1)$, so
$\zeta-\xi^{\alpha}\le(\zeta-1)+\alpha(1-\xi)\le\alpha(\zeta-\xi)$ and therefore
$|\zeta-\xi|^{p}\ge\alpha^{-p}|\zeta-\xi^{\alpha}|^{p}$. Combining, and restoring $a$ or $b$,
gives \eqref{eq:mixC} with $c=c_{p}(p/r)^{p}$.
\end{enumerate}
Taking the smallest of the three constants proves \eqref{eq:truncC}.
\end{proof}

We next prove a quadratic version of Proposition~\ref{prop:trunc}. Here $G_M$, $H_M$ are as in \eqref{eq:GH}.

\begin{proposition}\label{prop:truncquad}
Let $p>2$, $r\ge p$ and $m=r-p+1$, and let $G_{M},H_{M}$ be as in \eqref{eq:GH}. Then there is
$c>0$, independent of $M$, such that
\begin{equation}\label{eq:truncquad}
  \mathfrak{K}_{M}(a,b,X,Y) \ge cab|a-b|^{p-2}
  \big(H_{M}(X)+H_{M}(Y)\big)^{p-2}\big|H_{M}(X)-H_{M}(Y)\big|^{2}
\end{equation}
for all $a,b>0$ and $X,Y\ge0$.
\end{proposition}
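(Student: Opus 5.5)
The plan copies the proof of Proposition~\ref{prop:trunc} and splits into the same three regimes according to the position of $X,Y$ relative to $M$. In each regime the target is the pure-power quadratic bound of Proposition~\ref{prop:strong}, or its $r=p$ case Corollary~\ref{cor:Q}, transported to the truncated functions. The key point is that all powers of $M$ scale out, so the constant does not depend on $M$.

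\emph{Regime (i), $X,Y\le M$.} Here $G_M(t)=t^m$ and $H_M(t)=t^{r/p}$, so \eqref{eq:truncquad} is exactly \eqref{eq:strong}.

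\emph{Regime (ii), $X,Y\ge M$.} Here $G_M(t)=M^{r-p}t$ and $H_M(t)=M^{(r-p)/p}t$, so
\[
\mathfrak K_M=M^{r-p}\mathfrak B(aX,bY,a,b).
\]
The right-hand side of \eqref{eq:truncquad} equals $M^{r-p}$ times $cab|a-b|^{p-2}(X+Y)^{p-2}|X-Y|^2$. Corollary~\ref{cor:Q} therefore closes this case.

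\emph{Regime (iii), $X<M<Y$.} By the symmetry $(a,X)\leftrightarrow(b,Y)$ this is the only mixed case. Scale $X=M\xi$, $Y=M\zeta$ with $0\le\xi\le1\le\zeta$. Then $\mathfrak K_M=M^r\mathcal K$ with $\mathcal K$ as in \eqref{eq:mixC}, and the right-hand side becomes
\[
M^r\,cab|a-b|^{p-2}(\xi^{\alpha}+\zeta)^{p-2}|\zeta-\xi^{\alpha}|^2,\qquad \alpha=r/p.
\]
The proof of Proposition~\ref{prop:trunc}(iii) already gives $\mathcal K\ge\mathcal S$ in both orientations, with $\mathcal S=\mathfrak B(\xi,t\zeta,1,t)$ or $\mathfrak B(\zeta,t\xi,1,t)$ after dividing by $a^p$ or $b^p$. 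Undoing the normalization, this says $\mathcal K\ge\mathfrak B(a\xi,b\zeta,a,b)$. Corollary~\ref{cor:Q} then gives
\[
\mathcal K\ge cab|a-b|^{p-2}(\xi+\zeta)^{p-2}|\zeta-\xi|^2.
\]
It remains to compare $(\xi+\zeta)^{p-2}|\zeta-\xi|^2$ with $(\xi^{\alpha}+\zeta)^{p-2}|\zeta-\xi^{\alpha}|^2$, using two elementary facts:
\begin{itemize}
\item $\xi^\alpha\le\xi\le1\le\zeta$ gives $\xi^{\alpha}+\zeta\le\xi+\zeta$; since $p>2$, the first factor is handled.
\item Convexity, exactly as at the end of Proposition~\ref{prop:trunc}, gives $0\le\zeta-\xi^{\alpha}\le\alpha(\zeta-\xi)$.
\end{itemize}
Together these give the required bound with constant $c\alpha^{-2}$.

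The main obstacle is to confirm that the reduction $\mathcal K\ge\mathfrak B$ from Proposition~\ref{prop:trunc}(iii) really holds in the unnormalized form needed for Corollary~\ref{cor:Q}. That is, one must check that $\mathcal K_L\ge\mathcal S_L$ and $\mathcal K_H\ge\mathcal S_H$ translate into $\mathcal K\ge\mathfrak B(a\xi,b\zeta,a,b)$ for every orientation of $a,b$. The identities in that proof are homogeneous of degree $p$ in $(a,b)$, so this should go through. Taking the minimum of the three regime constants finishes the proof.
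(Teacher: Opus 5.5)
Your proposal is correct and follows the paper's proof essentially verbatim: the same three regimes, \eqref{eq:strong} when $X,Y\le M$, Corollary~\ref{cor:Q} when $X,Y\ge M$, and in the mixed case the comparison $\mathcal K\ge\mathcal S$ from Proposition~\ref{prop:trunc}(iii) followed by the same two elementary comparisons $\xi^{\alpha}+\zeta\le\xi+\zeta$ and $\zeta-\xi^{\alpha}\le\alpha(\zeta-\xi)$, giving the constant $c\,(p/r)^{2}$. The step you flag as the main obstacle is immediate by degree-$p$ homogeneity, since $\mathfrak B(a\xi,b\zeta,a,b)/a^{p}=\mathcal S_L$ when $a\ge b$ and $\mathfrak B(a\xi,b\zeta,a,b)/b^{p}=\mathcal S_H$ when $b\ge a$, so applying Corollary~\ref{cor:Q} to $\mathfrak B(a\xi,b\zeta,a,b)$ is exactly the paper's application of it to the normalized $\mathcal S$.
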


\begin{proof}
As before we may assume $a\ge b$, and we treat the same three regimes, writing $\alpha=\frac{r}{p}\ge1$.

\noi{\rm(i)} If $X,Y\le M$ then $G_{M}(X)=X^{m}$ and $H_{M}(X)=X^{\alpha}$, and likewise for $Y$, so
\eqref{eq:truncquad} is \eqref{eq:strong}.

\noi{\rm(ii)} If $X,Y\ge M$ then $G_{M}(X)=M^{r-p}X$ and $H_{M}(X)=M^{\frac{r-p}{p}}X$, so
$\mathfrak{K}_{M}=M^{r-p}\mathfrak{B}(aX,bY,a,b)$, while
\begin{align*}
    \big(H_{M}(X)+H_{M}(Y)\big)^{p-2}\big|H_{M}(X)-H_{M}(Y)\big|^{2}
  &=M^{\frac{(r-p)(p-2)}{p}+\frac{2(r-p)}{p}}(X+Y)^{p-2}|X-Y|^{2} \\
  &=M^{r-p}(X+Y)^{p-2}|X-Y|^{2}, 
\end{align*}
so the two powers of $M$ cancel and the claim is exactly \eqref{eq:Q}.

\noi{\rm(iii)} In the mixed case $X=M\xi$, $Y=M\zeta$ with $0\le\xi\le1\le\zeta$, every power of $M$
cancels as in Proposition~\ref{prop:trunc}-(iii), and the claim becomes
$\mathcal{K}\ge c(ab)\,|a-b|^{p-2}(\xi^{\alpha}+\zeta)^{p-2}|\xi^{\alpha}-\zeta|^{2}$.
The comparison $\mathcal{K}\ge\mathcal{S}$ proved there is unchanged, and
$\mathcal{S}_{L}=\mathfrak{B}(\xi,t\zeta,1,t)$, $\mathcal{S}_{H}=\mathfrak{B}(\zeta,t\xi,1,t)$, so
\eqref{eq:Q} gives in either orientation
$\mathcal{S}\ge c\,t(1-t)^{p-2}(\xi+\zeta)^{p-2}(\zeta-\xi)^{2}$. Finally $\alpha\ge1$ and
$\xi\le1$ give $\xi^{\alpha}\le\xi$, hence $\xi^{\alpha}+\zeta\le\xi+\zeta$, while the convexity bound
$\zeta-\xi^{\alpha}\le\alpha(\zeta-\xi)$ of Proposition~\ref{prop:trunc}-(iii) gives
$(\zeta-\xi^{\alpha})^{2}\le\alpha^{2}(\zeta-\xi)^{2}$. Therefore
\[
  (\xi+\zeta)^{p-2}(\zeta-\xi)^{2} \ge \alpha^{-2}
  \left(\xi^{\alpha}+\zeta\right)^{p-2}\left(\zeta-\xi^{\alpha}\right)^{2},
\]
and \eqref{eq:truncquad} follows in this regime, with $c$ replaced by $c\,(p/r)^{2}$.

Taking the smallest of the three constants proves \eqref{eq:truncquad}.
\end{proof}

We combine the last two propositions.
\begin{corollary}\label{cor:truncboth}
Let $p>2$, $r>p$ and $m=r-p+1$. There is $c_*>0$, independent of $M$, such that
\begin{equation}\label{eq:truncboth}
\begin{aligned}
  \mathfrak{K}_{M}(a,b,X,Y)\ge c_*\Big[&(ab)^{\frac{p}{2}}\big|H_{M}(X)-H_{M}(Y)\big|^{p}\\
  &+ab\,|a-b|^{p-2}\big(H_{M}(X)+H_{M}(Y)\big)^{p-2}\big|H_{M}(X)-H_{M}(Y)\big|^{2}\Big]
\end{aligned}
\end{equation}
for all $a,b>0$ and $X,Y\ge0$.
\end{corollary}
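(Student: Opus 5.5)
The plan is to add the two previous inequalities, Proposition~\ref{prop:trunc} and Proposition~\ref{prop:truncquad}, and to take the smaller of their constants. Both hold under the present hypotheses. We are given $p>2$ and $r>p$, which lies inside the range $r>p$, $p\ge2$ of Proposition~\ref{prop:trunc}. It also lies inside the range $r\ge p$, $p>2$ of Proposition~\ref{prop:truncquad}. Neither constant depends on $M$.

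Let $c_1>0$ be the constant in \eqref{eq:truncC} and $c_2>0$ the constant in \eqref{eq:truncquad}. Fix $a,b>0$ and $X,Y\ge0$, and write
\[
P:=(ab)^{\frac p2}\big|H_{M}(X)-H_{M}(Y)\big|^{p},\qquad
Q:=ab\,|a-b|^{p-2}\big(H_{M}(X)+H_{M}(Y)\big)^{p-2}\big|H_{M}(X)-H_{M}(Y)\big|^{2}.
\]
The two propositions give $\mathfrak K_M\ge c_1P$ and $\mathfrak K_M\ge c_2Q$. Averaging these two bounds yields
\[
\mathfrak K_M\ge\tfrac12(c_1P+c_2Q)\ge\tfrac12\min\{c_1,c_2\}(P+Q).
\]
This is \eqref{eq:truncboth} with $c_*:=\tfrac12\min\{c_1,c_2\}$, which is independent of $M$.

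There is no real obstacle here. One only needs to note that the quantity $\mathfrak K_M$ is the same in both propositions, namely \eqref{eq:routebracket} with $G=G_M$. One also needs to check that the parameter ranges overlap, as done above. The strict inequality $r>p$ is exactly what makes Proposition~\ref{prop:trunc} available. Both $P$ and $Q$ are nonnegative, so combining the two bounds linearly is legitimate.
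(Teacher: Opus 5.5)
Your proposal is correct and is exactly the paper's argument: add the bounds of Propositions~\ref{prop:trunc} and \ref{prop:truncquad} and take $c_*$ to be one half of the smaller of their two constants. Your check that the hypotheses $p>2$, $r>p$ fall within the stated ranges of both propositions is a correct addition that the paper leaves implicit.
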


\begin{proof}
Add Propositions~\ref{prop:trunc} and \ref{prop:truncquad} and take for $c_*$ one half of the smaller of their two constants.
\end{proof}

\begin{remark}\label{rem:pis2}
Lemma~\ref{lem:purepower}, Proposition~\ref{prop:strong}, Corollary~\ref{cor:Q}, Proposition~\ref{prop:truncquad}, Corollary~\ref{cor:truncboth} and Proposition~\ref{prop:LC} are stated for $p>2$. For $p=2$ the two terms on the right of \eqref{eq:truncboth} coincide, so Corollary~\ref{cor:truncboth} follows from Proposition~\ref{prop:trunc}, and the proof of Proposition~\ref{prop:LC} works without change, with $|\Jp(q+z)-\Jp(q)|=|z|$.
\end{remark}

\noi \textbf{Acknowledgment.}
Nirjan Biswas acknowledges the support of the National Board for Higher Mathematics Postdoctoral Fellowship (0204/16(9)/2024/RD-II/6761).

\noi \textbf{Data availability.} This research has no associated data.

\noi \textbf{Conflict of interest.} The authors declare that they have no conflicts of interest.

\noi \textbf{AI assistance statement.} The authors used GPT to assist with improving the clarity, grammar, and overall presentation of the manuscript.

\bibliographystyle{abbrvnat}

\end{document}